\def\gaia#1{\textcolor{teal}{{gaia:} #1 }}
\def\gaiabis#1{\textcolor{red}{{gaiabis:} #1 }}
\newcommand{\Z}{{\mathbb Z}}
\newcommand{\C}{\mathbb{C}}
\newcommand{\rL}{{\mathrm L}}
\newcommand{\p}[1]{{\mathbb{P}^{#1}}}
\newcommand{\cccO}{{\mathcal O}}
\newcommand{\cR}{{\mathcal R}}
\newcommand{\Ku}{{\mathrm{Ku}}}
\newcommand{\ch}{\operatorname{ch}}
\newcommand{\supp}{\operatorname{Supp}}
\newcommand{\cale}{{\mathcal E}}
\newcommand{\calf}{{\mathcal F}}
\newcommand{\calg}{{\mathcal G}}
\newcommand{\cG}{{\mathcal G}}
\newcommand{\cF}{{\mathcal F}}
\newcommand{\calh}{{\mathcal H}}
\newcommand{\cali}{{\mathcal I}}
\newcommand{\calm}{{\mathrm M}}
\newcommand{\calmi}{{\mathrm {MI}}}
\newcommand{\caln}{{\mathcal N}}
\newcommand{\calq}{{\mathcal Q}}
\newcommand{\calr}{{\mathcal R}}
\newcommand{\cals}{{\mathcal S}}
\newcommand{\calt}{{\mathcal T}}
\newcommand{\calv}{{\mathcal V}}
\newcommand{\inhom}{{\mathcal H}{\it om}}
\newcommand{\inext}{{\mathcal E}{\it xt}}
\newcommand{\Ext}{\operatorname{Ext}}
\newcommand{\ext}{\operatorname{ext}}
\newcommand{\Ogr}{\operatorname{OG}}
\newcommand{\Lgr}{\operatorname{LG}}
\newcommand{\Spl}{\operatorname{Spl}}
\newcommand{\Hom}{\operatorname{Hom}}
\newcommand{\RHom}{\operatorname{R\mathcal{H}\textit{om}}}
\newcommand{\Hilb}{\operatorname{Hilb}}
\DeclareMathOperator{\Quot}{Quot}
\DeclareMathOperator{\coker}{coker}
\DeclareMathOperator{\im}{im}
\DeclareMathOperator{\codim}{{codim}}
\DeclareMathOperator{\rk}{{rk}}
\DeclareMathOperator{\Pic}{{Pic}}
\DeclareMathOperator{\Coh}{Coh}
\DeclareMathOperator{\Gr}{{Gr}}
\newtheorem{theorem}{Theorem}[section]
\newtheorem{mthm}{Main Theorem}
\newtheorem*{mthm*}{Main Theorem}
\newtheorem{proposition}[theorem]{Proposition}
\newtheorem*{theorem*}{Theorem}
\newtheorem*{claim}{{\bf Claim}}
\newtheorem{Lemma}[theorem]{Lemma}
\newtheorem{corollary}[theorem]{Corollary}
\theoremstyle{definition}
\newtheorem{step}{Step}
\newtheorem{substep}{Substep}
\newtheorem{remark}[theorem]{Remark}
\newtheorem{definition}[theorem]{{\bf Definition}}
\newcommand{\D}[1]{{\mathbb#1}}
\newcommand{\calu}{{\mathcal U}}
\title{Higher-rank instanton sheaves on Fano threefolds}
\author{Gaia Comaschi}
\address{Gaia Comaschi, \newline
\indent Université de Pau et des Pays de l'Adour, CNRS, LMAP UMR 5142, F-64013 Pau, France
}
\email{gaia.comaschi@gmail.com}
\author{Daniele Faenzi}
\address{Daniele Faenzi, \newline
\indent
Université Bourgogne Europe, CNRS, IMB UMR 5584, F-21000 Dijon, France
}
\email{daniele.faenzi@ube.fr}
\keywords{Instanton bundle; Fano threefold; Moduli space of instantons;  Restriction of stable sheaves; Curvilinear Kuznetsov component; Monads.}
\subjclass{14J60; 14F06; 14F08; 14D21.}
\begin{document}
\emergencystretch=3em

\maketitle

\begin{abstract}
    We define higher-rank instanton sheaves on smooth Fano threefolds $X$ of Picard rank one and show that their topological type depends on two integers, namely the rank $n$ (or the half of it, if the Fano index of $X$ is odd) and the charge $k$. We determine the minimal charge $k_0$ of slope-stable $n$-instanton bundles, with the possible exception of Fano threefolds of index one and degree $H_X^3 < 4$, as an integer depending only on $H_X^3$ and $n$ and we prove the existence of slope-stable $n$-instanton bundles of charge $k \ge k_0$.

    Next, we study several properties of a general element of an irreducible component, which we call the main component, of the moduli space of instanton sheaves.
    This component is defined inductively on the rank and the charge, a natural procedure based on elementary transformations along rational curves, starting with minimal instantons. 
    The questions we address include the generic splitting over certain rational curves contained in $X$, as well as stable restriction to a K3 section $S$ of $X$. We obtain applications to Lagrangian subvarieties of moduli spaces of sheaves on $S$.

    Finally, we study the acyclic extension associated with an instanton sheaf on Fano threefolds with curvilinear Kuznetsov component and give a monadic description in the case $H^3(X,\Z)=0$.
\end{abstract}

\setcounter{tocdepth}{1}
\tableofcontents

\section{Introduction}
The notion of instanton bundle in higher-dimensional algebraic geometry appeared in the context of Yang-Mills gauge theory. Indeed, via Penrose's twistor transform, anti self-dual solutions (gauge instantons) of the Yang-Mills equation on the 4-dimensional sphere $S^4$ correspond, up to a reality condition, to certain complex algebraic  vector bundles on the complex projective space $\D{P}^3$, which we call instanton bundles.
This has been carried out originally in \cite{ADHM}, where a special emphasis is put on the case of instantons of rank $2$ and on the resulting interpretation of the moduli space of instantons in terms of linear algebra data.
The ADHM data has been thoroughly investigated to study instantons as a higher-rank analogue of the Hilbert scheme on smooth projective surfaces, where deep results have been obtained on the topology of the moduli space of instantons
in connection with Donaldson invariants, see for instance \cite{gottsche-nakajima,nakajima-yoshioka}.

In higher dimension, the situation is less clear. Several notions of instanton sheaves have been explored in various settings.
Some definitions have been given in higher rank (see \cite{Jardim}) and over some other varieties besides the projective space, see e.g. \cite{Faenzi,BF,Kuz, CA, malaspina-marchesi-pons}. One may extend the definition to include sheaves which are not locally free or even move to the derived category and look at complexes of sheaves, see \cite{CM, CJMM}.
The definition of instanton, in all these generalizations, essentially consists of three main ingredients: the choice of topological invariants, constraints on the behavior of the intermediate cohomology and a notion of stability.

However, in many cases the very basic question of non-emptiness of instanton moduli spaces remains unclear. Notably, the question of whether there exist stable instanton bundles for given choices of topological invariants is unsettled in many cases, notably for Fano threefolds.

Moreover, once non-emptiness of the moduli space is proved for a given topological datum, it is natural to ask what are the main features of a sufficiently general element in one component of the moduli space.
Among these questions, one could ask about the splitting type of the bundle on rational curves on $X$, e.g. does the bundle have \textit{generic splitting}? Trivial splitting over a generic line is conjectured \cite[Conjecture 3.16]{Kuz} for an instanton bundle of rank $2$ when $i_X=2$.
Further natural questions are smoothness and irreducibility of the moduli space of instanton bundles. A positive answer for $\p{3}$ is provided in \cite{jardim-verbitsky} for instantons with trivial generic splitting. Irreducibility of the moduli space of instanton bundles of rank $2$ on $\p{3}$ is proved in \cite{Tikhomirov:even,Tikhomirov:odd}. These questions have been explored in some more specific situations, for example symplectic bundles on $\p 3$, see \cite{bruzzo-markushevich-tikhomirov}, or in special cases, e.g. see \cite{andrade-santiago-silva-sobral} for instantons of rank 3 and charge 2 on $\p 3$.
\medskip

In this article, we focus on instanton sheaves on Fano threefolds of Picard rank one and partial answers to some of the questions above, mainly about non-emptiness of the moduli space of slope-stable instanton bundles of given rank and charge. We actually construct an irreducible component of this moduli space, whose general element is unobstructed and well-behaved with respect to restrictions to rational curves and to anti-canonical sections.

\medskip

Let us start with our definition of instanton sheaf -- we refer to Remark \ref{instanton-conditions} for a discussion of related notions.
\begin{definition} \label{def:instanton}
Let $X$ be a Fano threefold of Picard rank one and index \mbox{$i_X=2q_X+r_X$}, with $r_X \in \{0,1\}$. An \textbf{instanton sheaf} is a torsion-free sheaf $E$ on $X$ such that the following conditions hold.
\begin{enumerate}[label=\roman*)]
    \item \label{instanton-i} $\ch(E)=\ch(\RHom(E,\cccO_X(-r_X)))$;
    \item \label{instanton-ii}  $H^1(E(-q_X-t))=H^2(E(-q_X+t))=0$, for $t \in \{0,1\}$;
    \item \label{instanton-iii} $E$ is slope-semistable.
\end{enumerate}
\end{definition}
The first observation is that the topological constraints on the Chern characters imply that $\ch(E)$ depends on 2 integers, namely, the (half)rank and an invariant $k$ depending on the second Chern class, which we refer to as the \textbf{charge} of the instanton. As it turns out, an instanton $E$ can be of any rank when $i_X$ is even, while $E$ must be of even rank when $i_X$ is odd. With this in mind, we write $\rk(E)=n$ when $i_X$ is even and $\rk(E)=2n$ when $i_X$ is odd. Then one has
\begin{equation} \label{character of instanton}
\ch(E)=n \ch(F_0)-k \ch(\cccO_l(q_X-1)),
\end{equation}
where $l\subset X$ is a line and $F_0$ is a slope-stable instanton bundle of the minimal possible charge and rank. Let us mention that $F_0=\cccO_X$ when $i_X$ is even, while when $i_X$ is odd, $F_0$ is a rank-2 bundle playing an important role in the geometry of $X$. Namely, when $i_X=3$, so that $X$ is a quadric threefold, $F_0$ is the spinor bundle on $X$, while, for $i_X=1$, $F_0$ is a Mukai bundle, see \cite{BMK} for the most recent developments on this.

An instanton $E$ satisfying \eqref{character of instanton} will be referred to as an $(n,k)$-\textbf{instanton}. We will frequently write $\gamma(n,k)$ for the Chern character of an $(n,k)$-instanton. We will write $\calmi_X(n,k)$ for the moduli space of Gieseker-semistable $(n,k)$-instantons, seen as an open piece of Maruyama's moduli space $\calm_X(\gamma(n,k))$ of semistable sheaves on $X$ with Chern character $\gamma(n,k)$. Here, an $S$-equivalence class is an instanton if some representative of it is an instanton sheaf. We use a similar notation when working over an anticanonical section $S$ of $X$.

Studying non-emptiness of $\calmi_X(n,k)$ involves two main steps: the first is to identify which are the vectors $v\in \oplus_{i=0}^3 H^{i,i}(X,\D{Z})\simeq \D{Z}^4$ for which instantons $E$ on $X$ with $\ch(E)=v$ may exist. The second step involves the actual construction of the sheaves.
Constructing strictly slope-semistable non-locally free instantons presents no serious difficulty. Indeed, to this aim it is sufficient to consider kernels of surjections $E^n\twoheadrightarrow T$ for $T$ a purely one-dimensional sheaf with $H^i(T(-q_X))=0$, $i=0,1$ and $E$ a slope-stable rank-2 instanton bundle.
Much greater difficulties arise to show the existence of \textbf{locally free slope-stable} instantons.
Our main goal is to show that these difficulties can be overcome. Indeed, we show the following result.
\begin{mthm}\label{thm:main}
Let $X$ be a smooth Fano threefold with $\Pic(X) \simeq \langle H_X \rangle$ and $-K_X=i_XH_X$.
When the index $i_X$ of $X$ is one, put $g$ for the genus of $X$.
Let $k_0^n$ be the integer defined as:
\begin{itemize}
    \item[$(\heartsuit)$] $k_0^n= \lceil\frac{n}{2}\rceil$ if $i_X=4;$
    \item[$(\clubsuit)$] $k_0^n= \lceil\frac{n}{3}\rceil$ if $i_X=3;$
    \item[$(\diamondsuit)$] $k_0^n= n$  if $i_X=2$;
    \item[$(\spadesuit)$] $k_0^1=0, \ k_0^n=1$ if $n\ge 2$ for $g$ odd, resp. $k_0^n=n$ for $g$ even, if $i_X=1$.
\end{itemize}
Then, for all $n\ge 2-r_X$
and $k\ge k_0^n$, $X$ supports unobstructed slope-stable $(n,k)$-instanton bundles whose general splitting on lines is $0^n$ when $i_X$ is even, resp. $(0^n,-1^n)$ when $i_X$ is odd.

Moreover, for all $n \ge 2$, there are no slope-stable $(n,k)$-instanton sheaves with $k<k_0^n$,
except perhaps when $i_X=1$ and $g\le 3$ or $g=4$ and $X$ is contained in a singular quadric.
\end{mthm}

Here, the splitting of a rank-$n$ bundle $E$ over a rational curve $C$ is denoted by the string of integers $(d_1,\ldots,d_n)$, to indicate that $E|_C \simeq \oplus \cccO_{\p 1}(d_i)$. It should be noted that, over manifolds of dimension $3$ and higher, having a necessary and sufficient condition on the topological invariants that guarantees non-emptiness of the moduli space of stable vector bundles with such invariants is in general quite rare. Our result shows that moduli of instanton bundles are nicely behaved in this respect. Still, as mentioned, we don't know much about many basic properties of such moduli space -- singularities, connectedness, number of irreducible components, dimension of the components: all these points remain unclear.
However, we do construct a generically smooth component of the moduli space, whose general element behaves nicely in terms of stability of restriction, and, to some extent, splitting over rational curves and vanishing of cohomology.

The result is based on an inductive argument.
Starting from a slope-stable $(n,k)$-instanton bundle $E_n^k$, on a Fano threefold $X$, available by induction, we can indeed produce a strictly slope-semistable $(n+1,k)$ instanton $E_{n+1}^k$, whenever $\Ext^1(E_n^k,F_0) \ne 0$ --
this is actually always the case for values of $k$ greater than $k_0^n$.
We will then prove that such a $E_{n+1}^k$ deforms to a slope-stable instanton.

When $i_X > 1$ the results we obtain are somehow optimal, as we are able to determine, for each rank, the minimal value $k_0^n$ of the charge of an instanton bundle, and we construct slope-stable instanton bundles of charge $\ge k_0^n$.
Also, the bundles we construct this way admit global sections after a twist by $\cccO_X(1)$.
The base case of the induction is worth commenting. Indeed, it is given by rank-2 ’t Hooft bundles, namely rank-2 instanton bundles $E$ such that $h^0(E(1))>0$. Rank-2 't Hooft bundles of charge $\ge k_0$ on Fano threefolds of index $\ge 2$ can be obtained via the Serre correspondence. This is well-known for $i_X=3,4$, where the bundles are obtained from sets of disjoint lines. For Del Pezzo threefolds we will construct families of elliptic curves that give rise to rank-2 't Hooft instantons.

The case $i_X=1$, treated in \S \ref{section:prime}, turns out to be a bit more difficult, even for the rank 2. We will extend some known results on the existence of rank-2 instanton bundles of arbitrary charge to all Fano threefolds of index one, provided that $H_X$ is very ample. For rank $2$, the only point to address is the existence in arbitrary charge, which was known under the additional assumption that $X$ is ordinary, that is to say $X$ carries an unobstructed line.
We will then focus on the higher-rank range. For $n\ge 2$, letting $k_0^n=1$ for $g$ odd and $k_0^n=n$ for $g$ even, we will prove the existence of slope-stable $(n,k)$-instanton bundles for all  $k\ge k_0^n$. We are able to prove that $k_0^n$ is indeed the minimal value of $k$ for slope-stable instantons when $g\ge 4$, as the cases with $g\le 3$ present some extra difficulty that we do not address here. Again we proceed by induction, constructing slope-semistable instantons at first and then showing that these always deform to slope-stable ones. One difference compared to the previous cases essentially lies in the moduli space where such deformation takes place.
\medskip

Next, we look at the restrictions $E|_S$, where $E$ is an instanton on $X$ and $S$ is a K3 surface obtained as general anticanonical divisor. We show that for the slope-stable instanton bundles $E$ constructed via the techniques presented in the previous sections, $E|_S$ is slope-stable as well.
We show the following result.

\begin{mthm}
\label{mainthm:anticanonical-stable}
Let $X$ be a smooth Fano threefold of Picard number one, assume that if $i_X=1$, $g\ge 4$ and $X$ is not contained in a singular quadric for $g=4$. Let $S$ be a general element in the linear system $|\cccO_X(-K_X)|$. Then for all  $n,k$ with $n\ge 2-r_X, \ k\ge k_0^n$, the following holds.
\begin{enumerate}[label=\roman*)]
    \item There exists an open non-empty subscheme $\calr_S(n,k)\subset \calmi_X(n,k)$
parametrising $(n,k)$-instanton bundles $E$ on $X$ satisfying the statement of Theorem \ref{thm:main} and such that the restriction $E|_S$ is slope-stable;
\item the image of $\calr_S(n,k)$ in the moduli space $\calm_S(n,k)$ under the restriction map $E \mapsto E|_S$ is a Lagrangian subvariety for the standard symplectic structure on $\calm_S(n,k)$.
\end{enumerate}
\end{mthm}

We conclude the article by describing some properties of instantons on certain Fano threefolds $X$ whose derived category has a distinctive structure.
In \S \ref{section:curvilinear}, we consider the case of Fano threefolds whose derived category contains a component equivalent to the derived category of a curve $\Gamma$.
We say that $X$ has a curvilinear Kuznetsov component. These are the Del Pezzo threefold $Y_4$ and the prime Fano threefolds $X_g, \ g\in\{7,9,10\}$. We first show that slope-stable $(n,k)$-instantons $E$ on $X$ correspond to certain vector bundles on $\Gamma$. Then we prove that for $Y_4$ and $X_{10}$ these bundles are simple. On $X_7, \ X_9$ the same holds for $E$ belonging to a 
subscheme of $\calmi_X(n,k)$ which is dense in the component we constructed.
Finally, we describe the canonical resolutions of the instanton itself in terms of the Fourier-Mukai transform of these bundles.
\medskip

The paper is organised as follows.
In the next section, we provide some standard material on Fano threefolds and instantons.
In \S \ref{section:even} we look at constructions of instantons on Fano threefolds of even Fano index, first looking at $\p{3}$ and then to Del Pezzo threefolds, and show Theorem \ref{thm:main} in this case. In \S \ref{section:odd} we look at Fano threefolds of odd Fano index constructing instantons on them
and finishing the proof of Theorem \ref{thm:main}; this is first dealt with on a quadric threefold and then on prime Fano threefolds. Next we study stability of restriction to K3 sections in \S \ref{sec:restriction} and prove Theorem \ref{mainthm:anticanonical-stable}. Finally in \S \ref{section:curvilinear} and \S \ref{section:H3=0}, we shift our focus to Fano threefolds $X$ whose derived category has a nice description, either because $D^b(X)$ has a curvilinear Kuznetsov component (such category is equivalent to $D^b(\Gamma)$, with $\Gamma$ a smooth projective curve) or because
$D^b(X)$ has a full exceptional collection.
We prove that, on such varieties, instantons, just as in the rank-2 case, admit a description given via acyclic extensions in terms of sheaves on the curve $\Gamma$, or in terms of monads.

\medskip

We would like to thank Roberto Vacca for pointing out an incomplete argument in the first version of this paper.
We are grateful to the referees for suggesting many improvements to the manuscript.

\section{Generalities}\label{sect:gen}
\subsection{Preliminaries}
Let $X$ be a smooth Fano threefold of Picard rank 1. We denote by $H_X$ the ample generator of $\Pic(X)$.
We recall that the index of $X$ is the integer $i_X$ such that the anticanonical line bundle $K_X$ is isomorphic to $\cccO_X(-i_X).$ It is well-known that $i_X\in \{1,2,3,4\}$, essentially by Kobayashi-Ochiai's Theorem, \cite{kobayashi-ochiai:characterization}, see also \cite{algebraic-geometry-V}. Let us now denote by $q_X$ and $r_X$ the integers such that $i_X=2 q_X+ r_X$, with $r_X\in \{0,1\}$. Notice that $q_X\in\{0,1,2\}$.
We will often write the Chern classes of a coherent sheaf $E$ on $X$ as integers, omitting to mention the generator of the corresponding cohomology group, which we fix to be the class $H_X$ of a hyperplane section $H_X$, respectively, the class $l_X$ of a line, the class $p_X$ of a point, for $H^{1,1}(X)$, respectively, for $H^{2,2}(X)$, $H^{3,3}(X)$.

Given a coherent sheaf $E$ on $X$, we recall that $E$ is said to be Gieseker-\textbf{(semi)stable} if it is pure and, for any proper subsheaf $F\subset E$, the following inequality holds:
$$ p_F(t) \: (\le) < p_E(t).$$
Here, $p_E(t)$ denotes the \textbf{reduced Hilbert polynomial} of the sheaf $E$, namely its Hilbert polynomial divided by its rank. We refer to \cite[\S I.1.2]{HL} for more details on this notion.
If $E$ is torsion-free, we say that $E$ is \textbf{slope-(semi)-stable} if for every proper saturated subsheaf $F\subset E$, the following inequality holds:
$$ \mu(F):=\frac{c_1(F)}{\rk(F)}\: (\le) < \frac{c_1(E)}{\rk(E)}:=\mu(E).$$
According to the convention mentioned above, the first Chern class of a coherent sheaf $E$ is an integer, so that the \textbf{slope} $\mu(E)$ is a rational number.
Sometimes we will only say "stable" or "semistable" to mean "Gieseker-stable" or "Gieseker-semistable".

\medskip
We said that our notion of instanton sheaf is based on three conditions. The first one is about the Chern character, the second one is a cohomological vanishing, and the third one is a stability condition.
From the conditions \ref{instanton-ii} and \ref{instanton-iii} of Definition \ref{def:instanton} we deduce the following result.

\begin{Lemma} \label{basic-vanishing} Let $E$ be an instanton sheaf. Then the following assertions hold:
\begin{enumerate}[label=\roman*)]
    \item $h^i(E(-q_X))=0$, for all integers $i$;
    \item $h^1(E(-q_X-t))=h^2(E(-q_X+t))=0$, for all integers $t\ge 0$.
\end{enumerate}
\end{Lemma}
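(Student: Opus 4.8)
The plan is to first extract the numerical content of condition \ref{instanton-i}, then settle part \emph{i)} using $\mu$-semistability together with Serre duality, and finally obtain part \emph{ii)} by two parallel inductions on $n$ based on restriction to a general hyperplane section $S\in|H_X|$. The starting point is the slope: comparing the degree-one parts in the identity $\ch(E)=\ch(\RHom(E,\calo(-r_X)))$ of \ref{instanton-i} gives $2c_1(E)=-r_X\rk(E)$, so that $\mu(E)=-r_X/2$ and hence $\mu(E(-q_X))=-i_X/2<0$. Since $E$ is torsion-free, a nonzero section of a twist $E(m)$ yields a saturated subsheaf $\calo_X\into E(m)$ of slope $0$, so $\mu$-semistability forces $H^0(E(m))=0$ whenever $\mu(E(m))<0$, that is for all $m\le -q_X$. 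Dually, $E^\vee:=\inhom(E,\calo)$ is reflexive and $\mu$-semistable of slope $r_X/2$, and Serre duality gives $H^3(E(m))\cong H^0(E^\vee(-i_X-m))^\ast$, which vanishes as soon as $\mu(E^\vee(-i_X-m))<0$, i.e. for all $m\ge -q_X$.

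For part \emph{i)} I would then simply combine these endpoints at $m=-q_X$: the vanishings just obtained give $H^0(E(-q_X))=H^3(E(-q_X))=0$, while the case $t=0$ of \ref{instanton-ii} gives $H^1(E(-q_X))=H^2(E(-q_X))=0$. Together these yield $h^i(E(-q_X))=0$ for every $i$, which is the assertion.

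For part \emph{ii)} the plan is an induction on $n$, with the cases $n=0,1$ being exactly \ref{instanton-ii}. Fix a general smooth $S\in|H_X|$; since $E$ is torsion-free one has $\operatorname{Tor}_1(E,\calo_S)=0$ for general $S$, so the restriction sequence $0\to E(m-1)\to E(m)\to E|_S(m)\to 0$ is exact. Taking cohomology, the implication $H^1(E(-q_X-n))=0\Rightarrow H^1(E(-q_X-n-1))=0$ holds provided $H^0(E|_S(-q_X-n))=0$, and symmetrically $H^2(E(-q_X+n))=0\Rightarrow H^2(E(-q_X+n+1))=0$ holds provided $H^2(E|_S(-q_X+n+1))=0$; by adjunction $K_S=\calo_S(1-i_X)$ and Serre duality on $S$, the latter is $H^0((E|_S)^\vee(-q_X-r_X-n))^\ast$. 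Both surface groups carry the twist of a sheaf of strictly negative slope on $S$ for every $n\ge 0$ (indeed the normalized slopes are $-i_X/2-n<0$), so both vanish as soon as $E|_S$ is $\mu$-semistable, and the two inductions then close up in parallel.

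The main obstacle is precisely this $\mu$-semistability of the restriction of $E$ to a general member of the \emph{minimal} linear system $|H_X|$. Unlike the high-degree restriction theorem of Mehta--Ramanathan (whose twist jumps by $\deg S$ and so would only reach an arithmetic progression of values of $n$), restriction to $|H_X|$ does not preserve $\mu$-semistability for purely formal reasons. I would therefore invoke a restriction theorem with an explicit numerical bound of Flenner--Bogomolov type, checked in the relevant rank and degree range, or else verify directly that the twists appearing above admit no destabilizing sections on $S$; this is the only non-routine input, the remaining slope bookkeeping being elementary.
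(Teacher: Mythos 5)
Part \emph{i)} of your proposal is correct and coincides with the paper's argument: $\mu$-semistability kills $H^0(E(-q_X))$ and, via Serre duality, $H^3(E(-q_X))=\hom(E,\calo(-q_X-r_X))^{\phantom{*}}$, and the middle cohomologies are the $t=0$ case of condition \ref{instanton-ii}. The gap is in part \emph{ii)}: your induction hinges on the $\mu$-semistability of $E|_S$ for a \emph{general} $S\in|H_X|$, and you correctly identify this as the non-routine input, but the fallback you propose does not exist in the generality needed. Flenner's bound for restriction to $|dH_X|$ fails at $d=1$ as soon as the rank grows (for $d=1$ the left side of the inequality is a fixed small constant while the right side grows like $H_X^3\,(r^2-1)/4$), and Mehta--Ramanathan, as you note, only reaches high multiples of $H_X$. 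The only general tool at degree one is Maruyama's theorem for rank $<\dim X$, which is exactly why the paper's Remark 2.4 can simplify the definition \emph{in rank 2 only}; the present lemma is for arbitrary rank, where no restriction theorem applies, so as written your induction does not close.

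The paper's proof shows that this input is unnecessary, and this is precisely the reason Definition \ref{def:instanton} imposes the vanishings at \emph{both} $t=0$ and $t=1$ rather than just $t=0$. The surface vanishings you want are extracted from the threefold ones through the same restriction sequence: taking $m=-q_X$ in $0\to E(m-1)\to E(m)\to E|_S(m)\to 0$, the segment $H^0(E(-q_X))\to H^0(E|_S(-q_X))\to H^1(E(-q_X-1))$ has both outer terms zero (semistability and the $t=1$ condition), so $h^0(E|_S(-q_X))=0$; since $E|_S$ is torsion-free for general $S$, multiplication by a section of $\calo_S(1)$ embeds $H^0(E|_S(m))$ into $H^0(E|_S(m+1))$, hence $h^0(E|_S(-q_X-n))=0$ for all $n\ge 0$, and your $h^1$ induction closes with no stability statement on $S$. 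Dually, at $m=-q_X+1$ the segment $H^2(E(-q_X+1))\to H^2(E|_S(-q_X+1))\to H^3(E(-q_X))$ gives $h^2(E|_S(-q_X+1))=0$, and since twisting up on $S$ is surjective on $H^2$ (the cokernel of $E|_S(m)\to E|_S(m+1)$ is supported on a curve), $h^2(E|_S(m))=0$ for all $m\ge -q_X+1$, closing the $h^2$ induction. So your "verify directly that the twists admit no destabilizing sections" hedge is salvageable, but the correct direct verification goes through the long exact sequence and the $t=1$ conditions, not through slope arguments on $S$.
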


\begin{proof}
The slope-semistability assumption ensures that $\hom(\cccO_X(q_X),E)=0$ and $h^3(E(-q_X))=\hom(E,\cccO_X(-q_X-r_X))=0$, hence we have
\begin{equation} \label{orthogonal}
h^i(E(-q_X))=0, \qquad \mbox{for all integers $i$}.
\end{equation}
Consider now a general hyperplane section $S:=X\cap H$ for $H\in |\cccO_X(1)|$ and the short exact sequence
$$0\to E(-1)\to E\to E|_S\to 0.$$
The previous vanishing implies $h^0(E(-q_X-t))=h^3(E(-q_X+t))=0$, for all $t\ge 0$, hence from \ref{instanton-ii} of Definition \ref{def:instanton} we deduce, inductively, that $h^1(E(-q_X-t))=h^2(E(-q_X+t))=0$ for all integers $t\ge 0$.
\end{proof}

\begin{remark}
The conditions \ref{instanton-i} and \ref{instanton-ii} of Definition \ref{def:instanton} can be rephrased in categorical terms.
Indeed we consider the bounded derived category of coherent sheaves $D^b(X)$.
If $E$ is an instanton sheaf on $X$, then \eqref{orthogonal} says that
$$E \in \cccO_X(q_X)^{\perp}:=\{ F\in D^b(X)\mid \Ext^\bullet(\cccO_X(q_X),F)=0 \}.$$

Moreover, the functor $\Phi: F\mapsto \RHom(F,\cccO_X(-r_X))$ induces an anti-equivalence of $\cccO_X(q_X)^\perp$. Indeed, if $F$ is an object of $\cccO_X(q_X)^\perp$, then, for all integers $i$:
\[
\Ext^i(\cccO_X(q_X),\Phi(F)) \simeq \Ext^i(F,\cccO_X(-q_X-r_X)) \simeq \Ext^{3-i}(\cccO_X(q_X),F)^*=0
\]
Hence, Definition \ref{def:instanton} tells us that an instanton sheaf $E$ belongs to $\cccO_X(q_X)^{\perp}$ and that $\Phi(E)$ is an object in $\cccO_X(q_X)^{\perp}$ such that $\ch(E)=\ch(\Phi(E)).$
\end{remark}

\begin{remark} \label{instanton-conditions}
Let us discuss briefly the conditions of Definition \ref{def:instanton}, also 
with respect to similar notions in the literature.
\begin{enumerate}[label=\roman*)]
    \item Let $E$ be a torsion-free sheaf satisfying item \ref{instanton-i} and \ref{instanton-iii} of Definition \ref{def:instanton}. For all the applications envisaged here, including Theorems \ref{thm:main} and \ref{mainthm:anticanonical-stable}, as well as Theorems \ref{thm:curve} and \ref{prop-monad},
    item \ref{instanton-ii} of Definition \ref{def:instanton} can be replaced by the more economical condition
    \[
    h^1(E(-q_X))=0, \qquad \mbox{if $i_X \in \{3,4\}$},
    \]
    or, in the case $i_X \in \{1,2\}$, by:
    \[
    h^1(E(-q_X))=h^1(E(-1-q_X))=h^2(E(1-q_X))=0, 
    \]
    Indeed, $\chi(E(-q_X))=0$ follows from item \ref{instanton-i} and slope-semistability easily implies $h^0(E(-q_X))=h^3(E(-q_X))=0$, so $h^2(E(-q_X))=0$ follows. Also,
    by slope-semistability, $h^0(E(1-q_X))=0$ if $i_X \ge 3$, which gives back item \ref{instanton-ii} of Definition \ref{def:instanton}, by the basic argument of Lemma \ref{basic-vanishing}.
    \item Let $E$ be a torsion-free sheaf satisfying item \ref{instanton-i} of Definition \ref{def:instanton}. Then, conditions \ref{instanton-ii} and \ref{instanton-iii} are satisfied if 
    \(
    h^1(E(-q_X))=0,
    \)
    and $E|_S$ is slope-stable,
    for some hyperplane section $S$ of $X$. Semistability on 
    $S$ is intended with respect to the restriction of $H_X$ to $S$.
    If $E$ is slope-semistable of rank two and $S$ is general enough, by \cite{Maruyama:small}, $E|_S$ is slope-semistable, cf. Remark \ref{rmk:rank2}.
    \item A different definition of instanton sheaf was given in \cite{CA}, encompassing previous work, e.g. \cite{Faenzi,jardim-miro-roig,Kuz}. A comparison between that notion and our definition (at least for locally free instanton sheaves of rank 2), is given in \cite[\S 1]{CA}. The two notions are similar, but not identical in general.
    
    In summary, if $E$ is an instanton sheaf according to Definition \ref{def:instanton}, then $\cale = E(2-q_X)$ is an instanton in the sense of \cite{CA} if $i_X \in \{3,4\}$. For 
    $i_X \in \{1,2\}$, this happens under the further condition $h^0(E(1-q_X))=h^3(E(-1-q_X))=0$. Note that this is not satisfied for minimal instantons if $i_X \in \{1,2\}$. For $i_X=1$, we see that actually instanton bundles $E$ as defined here will not satisfy $h^0(E(1))=0$ if the charge is relatively small with respect to the rank.
        
        Conversely, if $\cale$ is an \textit{ordinary instanton} on $X$, in the sense of \cite{CA}, which is slope-semistable, then $E=\cale(q_X-2)$ is an instanton sheaf according to Definition \ref{def:instanton}.
        The two notions of charge, however, are typically off by a constant that depends on the topological invariants of $X$.
    \item Later on we will also use torsion instantons, see Definition \ref{def:rank0-inst}. These have been studied on Fano threefolds notably in \cite{CM}. 
\end{enumerate}
\end{remark}

\subsection{Rank-2 instantons}
Before dealing with the higher-rank range, which is the main focus of this article, we recall some basic facts about the rank-2 instantons. This case has been largely investigated, notably on $\p 3$, starting with \cite{ADHM}, where certain special bundles, called ’t Hooft instantons, are also introduced. 
Our goal in this subsection is to introduce a similar construction for Fano threefolds of Picard number one.

\begin{remark}\label{rmk:rank2}
In the rank-2 case, the cohomological conditions of Definition \ref{def:instanton} can be reduced just to $H^i(E(-q_X))=0, \ i=1,2$. This is because, since $\rk(E)=2$, given a general hyperplane section $S\subset X$, the sheaf $E|_S$ is still slope-semistable, according to \cite{Maruyama:small}.
Therefore $h^0(E(-n))=h^3(E(-i_X+n))=0$ for all $n\ge 0,$
which allows one to show, inductively:
\[H^1(E(-q_X-n))=H^2(E(-q_X+n))=0, \qquad \forall\: n\ge 0.\]
If moreover $E$ is locally free it actually suffices to require $H^1(E(-q_X))=0$ since by Serre duality, and as $E^*\simeq E(r_X)$, $H^2(E(-q_X))\simeq H^1(E(-q_X))^*$.
\end{remark}

We recall an effective and widely applied technique to construct rank-2 bundles on a threefold $X$, namely \textbf{Serre's correspondence}. It establishes a correspondence between pairs $(F,s)$ for $F$ a rank-2 vector bundle and $s\in H^0(F)$ a global section with torsion-free cokernel and pairs $(e, C)$ for $C$ a locally complete intersection curve (l.c.i. for short) and $e\in \Ext^1(\cali_C, \cccO_{X})\simeq H^0(\omega_C(d))$ a nonvanishing extension class corresponding to:
$$ 0\rightarrow \cccO_{X}\xrightarrow{s} E\rightarrow \cali_C(c_1(E))\rightarrow 0;$$
On a side note, we mention that this can be generalised to torsion-free sheaves, see \cite{CM}.

In what follows we will need to construct rank-2 slope-stable and \textbf{unobstructed} instantons $E$, namely $\Ext^2(E,E)=0$. This can be done via the Serre correspondence starting from l.c.i. curves that are unobstructed as well, which in this context means that $h^1(\caln_{C/X})=0$.
We recall that the unobstructedness of a curve $C\subset X$ is actually equivalent to the unobstructedness of its sheaf of ideals $\cali_C$. This is due to the following result.
\begin{Lemma}\label{lem:curve-unobstructed}
Let $C$ be a locally complete intersection curve on a smooth projective threefold $X$ such that $H^1(\cccO_X)=0$. Then $H^i(\caln_C)\simeq \Ext^{i+1}(\cali_C,\cali_C)$ for $i=0,1$.
\end{Lemma}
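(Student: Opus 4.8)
The plan is to compute $\Ext^{i+1}(\IC,\IC)$ via the local-to-global spectral sequence
$$E_2^{p,q}=H^p\bigl(X,\inext^q(\IC,\IC)\bigr)\Rightarrow \Ext^{p+q}(\IC,\IC),$$
once the relevant $\inext$-sheaves have been identified. The two decisive inputs are that, since $C$ is l.c.i.\ of codimension two in the smooth threefold $X$, the ideal sheaf $\IC$ admits a length-one locally free (Koszul) resolution at every point, so that $\operatorname{pd}(\IC)=1$; and that $H^j(\ox)=0$ for $j>0$ (only $H^1(\ox)=0$ entering the degree-zero identity, the higher vanishings being available here since $X$ is Fano).

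First I would establish the sheaf computations
$$\inhom(\IC,\IC)\simeq\ox,\qquad \inext^1(\IC,\IC)\simeq\caln_C,\qquad \inext^q(\IC,\IC)=0\ \ (q\ge2).$$
The vanishing for $q\ge2$ is immediate from $\operatorname{pd}(\IC)=1$. For the first isomorphism, $\IC$ is reflexive of rank one and $C$ has codimension two, so any local endomorphism of $\IC$ extends across $C$ to a regular function; the same argument gives $\inhom(\IC,\ox)\simeq\ox$, the inclusion $\IC\hookrightarrow\ox$ inducing the identity. To pin down $\inext^1(\IC,\IC)$ I would apply $\inhom(\IC,-)$ to the structure sequence $0\to\IC\to\ox\to\OC\to0$. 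Using $\inhom(\IC,\OC)\simeq\caln_C$ (any map $\IC\to\OC$ annihilates $\IC^2$, hence factors through the conormal sheaf) together with the fact that the natural map $\inext^1(\IC,\ox)\to\inext^1(\IC,\OC)$ is an isomorphism of line bundles on $C$ (a local Koszul check), one sees the connecting morphism $\delta\colon\caln_C\to\inext^1(\IC,\IC)$ is injective (because $\inhom(\IC,\IC)\to\inhom(\IC,\ox)$ is the identity) and surjective (because $\inext^1(\IC,\IC)\to\inext^1(\IC,\ox)$ is zero, its image being the kernel of an isomorphism). Thus $\delta$ is an isomorphism $\caln_C\xrightarrow{\sim}\inext^1(\IC,\IC)$.

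With these identifications the spectral sequence has only the rows $q=0,1$, and since $\caln_C$ is supported on the curve $C$ one has $E_2^{p,1}=H^p(\caln_C)=0$ for $p\ge2$. Hence $E_2^{p,0}=H^p(\ox)$ and $E_2^{p,1}=H^p(\caln_C)$, the only possibly nonzero differentials being $d_2\colon E_2^{0,1}\to E_2^{2,0}=H^2(\ox)$ and $d_2\colon E_2^{1,1}\to E_2^{3,0}=H^3(\ox)$. I would then read off the abutment in total degrees $1$ and $2$: as $E_2^{1,0}=H^1(\ox)$, $E_2^{2,0}=H^2(\ox)$ and $E_2^{3,0}=H^3(\ox)$ all vanish, the surviving terms give $\Ext^1(\IC,\IC)\simeq E_\infty^{0,1}=H^0(\caln_C)$ and $\Ext^2(\IC,\IC)\simeq E_\infty^{1,1}=H^1(\caln_C)$, which is exactly the claim for $i=0,1$.

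The main obstacle is the correct identification $\inext^1(\IC,\IC)\simeq\caln_C$ rather than the conormal sheaf $\caln_C^\vee$: the naive local quotient $\IC/\IC^2$ read off from the Koszul resolution must be corrected by the twist coming from the determinant of the resolution (equivalently by $\wedge^2\caln_C$), and the connecting-homomorphism argument above is the cleanest way to make this global. The second delicate point is the vanishing of the spectral sequence differentials landing in the bottom row; here this is automatic from $H^2(\ox)=H^3(\ox)=0$, but in general it is guaranteed by the splitting induced by the trace map $\operatorname{tr}\colon\Ext^\bullet(\IC,\IC)\to H^\bullet(\ox)$, so that $H^1(\ox)=0$ alone secures the degree-zero identity while $H^2(\ox)=0$ is what is needed for the degree-one one.
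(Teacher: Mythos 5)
Your proof is correct and follows essentially the same route as the paper's: apply $\inhom(\cali_C,\,\cdot\,)$ to $0\to\cali_C\to\calo_X\to\calo_C\to 0$, use $\operatorname{pd}(\cali_C)=1$ and the fact that the induced surjection between the line bundles $\inext^1(\cali_C,\calo_X)$ and $\inext^1(\cali_C,\calo_C)$ on $C$ is an isomorphism to get $\inext^1(\cali_C,\cali_C)\simeq\caln_C$ together with $\inext^{q}(\cali_C,\cali_C)=0$ for $q\ge 2$, then conclude via the local-to-global spectral sequence. Your closing remark is moreover a genuine sharpening of the paper's statement: the lemma invokes only $H^1(\calo_X)=0$, but, as you correctly observe (via the trace splitting, which kills the differentials into the bottom row and gives $\Ext^2(\cali_C,\cali_C)\simeq H^1(\caln_C)\oplus H^2(\calo_X)$), the case $i=1$ also requires $H^2(\calo_X)=0$ — harmless in the paper, where the lemma is only applied with $X$ Fano.
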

\begin{proof}
Consider the exact sequence
$$0\to \cali_C\to \cccO_{X}\to \cccO_C\to 0.$$
Applying
$\inhom(\cali_C, \: \cdot \: )$ to it and recalling that the sheaf $\cali_C$ has homological dimension 1, we get an exact sequence of $\cccO_C$ modules:
\begin{equation}\label{eq:local-norm}
0\rightarrow \caln_C\rightarrow \inext^1(\cali_C,\cali_C)\rightarrow \inext^1(\cali_C, \cccO_{X})\xrightarrow{\gamma} \inext^1(\cali_C,\cccO_C)\rightarrow 0.
\end{equation}
We also get that
$\inext^1(\cali_C,\cccO_{X})\simeq \inext^2(\cccO_C,\cccO_{X})\simeq \omega_C(-K_X)$ is a line bundle on $C$ and the same holds for $\inext^1(\cali_C, \cccO_C)\simeq \inext^2(\cccO_C,\cccO_C)$. Therefore the surjection $\gamma$ in $\eqref{eq:local-norm}$ must actually be an isomorphism. As a consequence $\caln_C\simeq \inext^1(\cali_C,\cali_C)$.
Using $H^1(\cccO_X)=0$ and applying the local to global spectral sequence
$$H^p(\inext^q(\cali_C,\cali_C))\Rightarrow \Ext^{p+q}(\cali_C,\cali_C)$$
and $\Ext^{i+1}(\cali_C,\cali_C)\simeq H^i(\inext^1(\cali_C,\cali_C))\simeq H^i(\caln_C)$, for all $i \ge 0$.
\end{proof}

We use the Serre correspondence to construct
rank-2 \textbf{'t Hooft bundles}. These are rank-2 instanton bundles with $h^0(E(1))>1$, which arise from locally complete intersection curves $C$ of arithmetic genus $p_a(C)=3-2(q_X+r_X)$, i.e. $p_a(C)=-1$ for $i_X=3,\: 4$ and $p_a(C)=1$ for $i_X=1,2$.
This is a generalization of the classical 't Hooft instantons of \cite{ADHM}, which arise when $C$ is the union of disjoint lines in $\p 3$. Incidentally, we recall that classical 't Hooft bundles have a distinguished description in terms of the corresponding solutions of the Yang Mills equations, see \cite{Atiyah} -- we don't know if anything similar could be said for the bundles we construct here.

\begin{Lemma}\label{lem:rank2-thooft-i}
Let $X$ be a Fano threefold of Picard rank one of index $i_X\in\{3,4\}$, resp. $i_X\in\{1,2\}$, $C\subset X$ a l.c.i. curve of arithmetic genus $p_a(C)=1-\deg(C)$, resp. a connected l.c.i curve of $p_a(C)=1$, and $E$ a rank-2 vector bundle arising from a nonvanishing extension
\[e\in \Ext^1(\cali_C(1-r_X),\cccO_X(-1))\simeq H^0(\cccO_C)^*.\]
Then $E$ is a rank-2 't Hooft bundle and the following hold:
\begin{enumerate}[label=\roman*)]
    \item \label{unob-i} if $C$ is unobstructed then $E$ is unobstructed;
    \item  \label{unob-ii} if $i_X$ is odd, $E$ is slope-stable;
    \item  \label{unob-iii} if $i_X$ is even, $E$ is Gieseker-semistable if and only if it is slope-stable if and only if $C$ is non-degenerate.
    \item \label{unob-iv} the general line $l\subset X$ is not jumping for $E$, that is to say, $h^1(E|_l(-1))=0.$
\end{enumerate}
\end{Lemma}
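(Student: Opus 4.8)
The plan is to extract all four assertions from the defining extension. The class $e$ gives
\[
0\to\calo(-1)\to E\to\cali_C(1-r_X)\to 0,\qquad c_1(E)=-r_X,
\]
and its nowhere vanishing means precisely that its image in $H^0(\inext^1(\cali_C(1-r_X),\calo(-1)))\simeq H^0(\calo_C)$ is everywhere nonzero, which is what makes $E$ locally free. Twisting by $\calo(1)$ gives $0\to\calo\to E(1)\to\cali_C(2-r_X)\to 0$, so $h^0(E(1))\ge 1$ and $E$ is of 't Hooft type. To see that $E$ is an instanton I would twist the sequence by $\calo(-q_X)$: the quotient becomes $\cali_C(-1)$ when $i_X\in\{3,4\}$ and $\cali_C$ when $i_X\in\{1,2\}$, and the associated cohomology sequence, together with the Fano vanishings for $\calo_X$ and either $H^0(\calo_C(-1))=0$ for a disjoint union of lines or the isomorphism $H^0(\calo_X)\simeq H^0(\calo_C)$ coming from connectedness, yields $H^1(E(-q_X))=0$; then $H^2(E(-q_X))=0$ follows from $E^\ast\simeq E(r_X)$ and Serre duality, so Remark~\ref{rmk:rank2} applies.

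For \ref{unob-ii} and \ref{unob-iii} I would use that a saturated rank-one subsheaf of $E$ is a line bundle $\calo(a)$, and that a nonzero map $\calo(a)\to E$ with $a\ge 0$ yields, after composing with a section of $\calo(a)$, a nonzero global section of $E$; hence $H^0(E)=0$ is equivalent to the absence of a rank-one subsheaf of slope $\ge 0$. The defining sequence embeds $H^0(E)$ into $H^0(\cali_C(1-r_X))$. For $i_X$ odd this is $H^0(\cali_C)=0$, so $H^0(E)=0$; as $\mu(E)=-\tfrac12<0$ this already gives $\mu$-stability. For $i_X$ even, using $H^1(\calo_X(-1))=0$, the sequence gives $H^0(E)\simeq H^0(\cali_C(1))$, the space of hyperplanes containing $C$; thus $H^0(E)=0$ exactly when $C$ is non-degenerate, which (as $\mu(E)=0$) is equivalent to $\mu$-stability. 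Since $\mu$-stability implies Gieseker semistability, only the converse in \ref{unob-iii} remains: if $C$ is degenerate, a nonzero section of $E$ must vanish on some $Z\ne\emptyset$ — it vanishes somewhere because $C\ne\emptyset$ forces $E\not\simeq\calo^{\oplus2}$ — giving $0\to\calo\to E\to\cali_Z\to 0$ with $p_\calo(t)>p_E(t)$ for $t\gg 0$, so that $E$ is not Gieseker semistable.

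For the unobstructedness \ref{unob-i} the plan is to propagate the hypothesis $\Ext^2(\cali_C,\cali_C)=0$ — equivalent to $H^1(\caln_C)=0$ by Lemma~\ref{lem:curve-unobstructed} — through the defining sequence. Applying $\Hom(-,E)$ squeezes $\Ext^2(E,E)$ between $\Ext^2(\calo(-1),E)=H^2(E(1))$ and $\Ext^2(\cali_C(1-r_X),E)$, while applying $\Hom(\cali_C(1-r_X),-)$ exhibits the latter as built from a subspace of $\Ext^2(\cali_C(1-r_X),\cali_C(1-r_X))=\Ext^2(\cali_C,\cali_C)=0$ and a quotient of $\Ext^2(\cali_C(1-r_X),\calo(-1))$. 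The boundary terms are line-bundle cohomology groups killed by the Fano and instanton vanishings (e.g. $H^2(E(1))\simeq H^2(\cali_C(2-r_X))=0$), the only non-formal input being the unobstructedness of $C$; the remaining book-keeping is to check that the one-dimensional contributions appearing when $p_a(C)=1$ map to zero. Intrinsically, the Serre correspondence is a morphism of deformation theories, so unobstructedness of $C$ together with these vanishings forces $\Ext^2(E,E)=0$.

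The point \ref{unob-iv} is the one I expect to be the main obstacle. A general line $l\subset X$ is disjoint from $C$, since lines meeting $C$ form a proper subfamily of the Hilbert scheme of lines, so restricting the defining sequence gives $0\to\calo_l(-1)\to E|_l\to\calo_l(1-r_X)\to 0$. For $i_X$ odd this splits, since $\Ext^1(\calo_l,\calo_l(-1))=0$, and $E|_l\simeq\calo_l\oplus\calo_l(-1)$ is the balanced generic type, so $l$ is not jumping. For $i_X$ even the extension class lies in $\Ext^1(\calo_l(1),\calo_l(-1))\simeq H^1(\calo_l(-2))\simeq\C$, and $E|_l\simeq\calo_l\oplus\calo_l$, i.e. $h^1(E|_l(-1))=0$, precisely when the image of $e$ under the restriction map $\Ext^1_X(\cali_C(1),\calo(-1))\to H^1(\calo_l(-2))$ is nonzero. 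The crux is to prove this nonvanishing for a general line. I would argue by semicontinuity: the jumping locus is closed in the family of lines, so it suffices to exhibit one non-jumping line disjoint from $C$, which I would extract from the geometry of the configuration $C$, ruling out the alternative that $E$ jumps on every line — which is incompatible with the $\mu$-stability proved above.
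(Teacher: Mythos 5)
Your treatment of the 't Hooft/instanton property, of items \ref{unob-i}--\ref{unob-iii}, and of \ref{unob-iv} in the odd-index case matches the paper's proof essentially step by step: same defining sequence, same identification of (semi)stability with $H^0(E)=0$, resp.\ $H^0(\cali_C(1))=0$, via $p_\calo>p_E$, and the same squeeze of $\Ext^2(E,E)$ between $\Ext^2(\calo(-1),E)$ and $\Ext^2(\cali_C(1-r_X),E)$, where the boundary vanishing you defer to ``book-keeping'' is exactly the paper's Serre-duality computation $\Ext^2(\cali_C(1-r_X),\calo(-1))^*\simeq H^1(\cali_C(2-i_X-r_X))$, which vanishes since $C$ is a union of disjoint lines, resp.\ connected. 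The genuine gap is precisely where you flagged it: item \ref{unob-iv} for $i_X$ even. Restricting to a line disjoint from $C$ leaves you needing the nonvanishing of the restricted extension class in $\Ext^1(\calo_l(1),\calo_l(-1))\simeq\C$, and you produce no non-jumping line; your fallback, that jumping on every line would contradict the $\mu$-stability of $E$, is a Grauert--M\"ulich-type assertion that is only available on $\D{P}^3$. On a Del Pezzo threefold ($i_X=2$, which the even case includes) only finitely many lines pass through a general point, so the fibers of the evaluation map from the universal family of lines are zero-dimensional and the standard restriction argument over this family does not apply; the incompatibility you invoke is neither proved by you nor used in the paper.

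The paper sidesteps the extension class entirely by a torsion trick: choose a line $l$ meeting $C$ transversely in exactly one point $p$ (such lines always exist). Then $\cali_C(1)\otimes\calo_l\simeq\calo_l\oplus\calo_p$, so restriction of the defining sequence gives $0\to\calo_l(-1)\to E|_l\to\calo_l\oplus\calo_p\to 0$, and since $E|_l$ is locally free, the kernel of the composite surjection $E|_l\twoheadrightarrow\calo_l\oplus\calo_p\twoheadrightarrow\calo_l$ is a degree-zero line bundle on $l$, i.e.\ $\calo_l$, forcing $E|_l\simeq\calo_l^2$; semicontinuity then yields the statement for the general line. The presence of the point $p$ in the restricted quotient is exactly what excludes the unbalanced splitting $\calo_l(-1)\oplus\calo_l(1)$, with no need to control the restriction of $e$ or to invoke any restriction theorem — this is the idea missing from your proposal.
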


\begin{proof}
Consider the short exact sequence defined by the extension class $e$:
\begin{equation}\label{eq:rank2-thooft}
0\rightarrow \cccO_X(-1)\rightarrow E\rightarrow \cali_C(1-r_X)\rightarrow 0.
\end{equation}

We have $H^1(E(-q_X))=H^1(\cali_C(1-r_X-q_X))$. Indeed, if $i_X\in\{3,4\}$, then $r_X+q_X=2$ hence this term vanishes. If $i_X\in\{1,2\}$, then $r_X+q_X=1$ and we get $H^1(\cali_C)=0$ since we are assuming that $C$ is connected. We can thus conclude that $E$ is an instanton bundle.

If $i_X$ is odd, since $c_1(E)=-1$, the slope-semistability of $E$ is equivalent to its slope-stability which is equivalent to $H^0(E)=0.$ This always holds since $H^0(\cali_C)=0.$
If $i_X$ is even, $E$ is slope-semistable, resp. slope-stable, if and only if $H^0(E(-1))=0=H^0(\cali_C)$, resp. $H^0(E)=0=H^0(\cali_C(1))$.
Since a direct computation shows that $p_{\cccO_{X}}>p_E$ we conclude that $E$ is Gieseker-semistable if and only if it is slope-stable which holds if and only if $C$ is not contained in any hyperplane.

Let us now prove \ref{unob-i}.
From Lemma \ref{lem:curve-unobstructed}, the unobstructedness of $C$ is equivalent to $\Ext^2(\cali_{C},\cali_{C})=0$.
Then, we apply the functors $\Hom(\cdot, E)$ and $\Hom(\cali_{C}, \cdot)$ to the short exact sequence \eqref{eq:rank2-thooft}.
Using Lemma \ref{basic-vanishing}, this gives the exact sequences:
\begin{align*}
\Ext^1(\cccO_X(-1), E)& \to
 \Ext^2(\cali_{C}(1-r_X),E) \to \Ext^2(E,E)\to 0 \\
\Ext^2(\cali_{C}(1-r_X),\cccO_X(-1))&\to \Ext^2(\cali_{C}(1-r_X),E)\to \Ext^2(\cali_{C}, \cali_{C})\to 0.
\end{align*}

Note that
 $\Ext^2(\cali_{C}(1-r_X),\cccO_X(-1))^*\simeq H^1(\cali_{C}(2-i_X-r_X))$. For $i_X\in \{3,4\}$ we find $H^1(\cali_C(-2))=0$ whilst for $i_X \in \{1,2\}$ we have $H^1(\cali_C)=0$, as $C$ is connected.
 Therefore if we assume $C$ unobstructed, $\ext^2(\cali_C,\cali_C)=0$ by Lemma \ref{lem:curve-unobstructed}, so that $\ext^2(\cali_C(1-r_X), E)=0$ and $\ext^2(E,E)=0$ as well.
\medskip

It remains to show that a general line is not jumping for $E$.
When $i_X$ is odd, take a line $l\subset X$ disjoint from the curve $C$, which is an open condition on the family of lines on $X$. Restricting \eqref{eq:rank2-thooft} to $l$ we get that $E|_l$ is an extension of $\cccO_l$ by $\cccO_l(-1)$ so that $E|_l\simeq \cccO_l\oplus \cccO_l(-1).$
When $i_X$ is even, we choose a line $l$ meeting $C$ transversely at one point $p\in C$.
This time, restricting \eqref{eq:rank2-thooft} to $l$, we get:
$$ 0\rightarrow \cccO_l(-1)\rightarrow E|_l\rightarrow \cccO_l\oplus \cccO_p\rightarrow 0.$$
Since $E|_l$ is locally free, the kernel of the induced surjection $E|_l\twoheadrightarrow \cccO_l\oplus \cccO_p\twoheadrightarrow\cccO_l$ must be $\cccO_l$ yielding $E|_l\simeq \cccO_l^2.$ By semicontinuity the same will hold for the general line on $X$.

\end{proof}

The Serre correspondence is particularly effective to construct instanton bundles of low charges,
in particular those having minimal charge (meaning the minimal one allowed by the constraints in \ref{def:instanton}), or having charge slightly greater than the minimal one.
In the upcoming sections we will describe in detail the behaviour of the slope-stable rank-2 instantons having the lowest possible charge. We will see that these bundles are actually 't Hooft instantons and for each value of $i_X$ we will also describe the corresponding curves on $X$.

\subsection{Instanton bundles of higher rank} We choose to focus now our attention on instanton sheaves of higher rank. 

\subsubsection{The Chern character of instantons}
To begin with, let us look at their Chern character. 
From now on we will denote by $F_0$ a minimal slope-stable instanton on $X$, in the sense that it has the minimal possible values of rank and charge admitted by Definition \ref{def:instanton}.
As already pointed out in the previous section, $F_0\simeq \cccO_{X}$ for $i_X$ even whilst $F_0$ is a locally free rank-2 instanton for $i_X$ odd. Note that the main features of these bundles in the odd index cases will be summarized in Sections \ref{section:quadric} and \ref{section:prime}.

\begin{Lemma}\label{lem:chern}
Let $E$ be an instanton. Then there exists a pair of integers $(n,k)$ such that
\begin{equation}\label{eq:chern-char}
\ch(E)=n \ch(F_0)-k\ch(\cccO_l(q_X-1)),
\end{equation}
for $F_0$ a minimal instanton.
\end{Lemma}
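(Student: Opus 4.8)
The plan is to show that condition \ref{instanton-i} alone (conditions \ref{instanton-ii}, \ref{instanton-iii} will not be needed) imposes a purely numerical constraint on $\ch(E)$, namely that it lie on a distinguished plane in $H^{\mathrm{even}}(X,\D{Q})\simeq \D{Q}^4$, and then to identify that plane as the span of the two displayed characters. First I would rewrite \ref{instanton-i}. Since $\RHom(E,\calo(-r_X))=E^\vee\otimes\calo(-r_X)$ with $E^\vee$ the derived dual, at the level of Chern characters one has $\ch(\RHom(E,\calo(-r_X)))=\ch(E)^\vee\, e^{-r_X H_X}$, where $(\,\cdot\,)^\vee$ is the involution of $H^{\mathrm{even}}(X,\D{Q})$ acting by $(-1)^i$ on the degree-$2i$ part. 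Thus \ref{instanton-i} says exactly that $\ch(E)$ is fixed by the linear involution $\sigma(v):=v^\vee\, e^{-r_X H_X}$ of $H^{\mathrm{even}}(X,\D{Q})$; that $\sigma^2=\mathrm{id}$ follows from $(e^{-r_X H_X})^\vee=e^{r_X H_X}$.

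Next I would compute the fixed space $V^+:=\ker(\sigma-\mathrm{id})$. Writing $v=\rho+\alpha H_X+\beta\, l_X+\delta\, p_X$ and using $H_X^2=H_X^3\, l_X$ and $H_X\cdot l_X=p_X$, the equation $\sigma(v)=v$ reduces to two independent relations: $\alpha=-\tfrac{r_X}{2}\rho$, together with one relation expressing $\delta$ linearly in $\rho,\alpha,\beta$. Equivalently, in the monomial basis $\sigma$ has diagonal entries $1,-1,1,-1$, so $\operatorname{tr}(\sigma)=0$ and $\dim V^+=\tfrac12(4+\operatorname{tr}\sigma)=2$. I would then exhibit two explicit elements of $V^+$: the character $\ch(F_0)$, which lies in $V^+$ because $F_0$ is itself an instanton and so satisfies \ref{instanton-i}; and $\ch(\calo_l(q_X-1))$, which a Grothendieck--Riemann--Roch computation evaluates as $\ch(\calo_l(q_X-1))=l_X-\tfrac{r_X}{2}p_X$ and which is directly checked to be $\sigma$-fixed. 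Since $\rk(F_0)>0=\rk(\calo_l(q_X-1))$, these two vectors are linearly independent, hence form a basis of $V^+$.

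Consequently $\ch(E)=n\,\ch(F_0)-k\,\ch(\calo_l(q_X-1))$ for unique $n,k\in\D{Q}$, with $n$ read off from the rank as $n=\rk(E)/\rk(F_0)$ and $k$ read off from the coefficient of $l_X$ once $n$ is fixed. The last and only non-formal step is integrality, which I expect to be the main obstacle, since it is where one must use that $\ch(E)$ comes from an honest sheaf (integral $c_1,c_2$) rather than merely from $V^+\otimes\D{Q}$. For $n$: the relation $\alpha=-\tfrac{r_X}{2}\rho$ forces $c_1(E)=-\tfrac{r_X}{2}\rk(E)H_X$, and $c_1(E)\in\Z$ then makes $\rk(F_0)$ divide $\rk(E)$ (automatic for $r_X=0$, where $\rk(F_0)=1$; forcing $\rk(E)$ even for $r_X=1$, where $\rk(F_0)=2$), so $n\in\Z$. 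For $k$: substituting $\ch_2=\tfrac12 c_1^2-c_2$ and, in the case $r_X=1$, $c_1(E)^2=n^2H_X^3\, l_X$ and $c_1(F_0)^2=H_X^3\, l_X$, one finds $k=\tfrac12 H_X^3\,n(1-n)+c_2(E)-n\,c_2(F_0)$, which lies in $\Z$ because $c_2(E),c_2(F_0)\in\Z$ and $n(n-1)$ is even (the case $r_X=0$ gives simply $k=c_2(E)$). This completes the identification, with everything before the integrality bookkeeping being linear algebra plus one Riemann--Roch evaluation.
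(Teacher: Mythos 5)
Your proof is correct, and it takes a genuinely different route from the paper's. The paper splits into cases by parity of $i_X$: for $i_X$ even it observes directly that condition \ref{instanton-i} of Definition \ref{def:instanton} kills the odd Chern classes, so $\ch(E)=(n,0,-k,0)$; for $i_X$ odd it extracts the two relations $c_1(E)=-\tfrac{\rk(E)}{2}H_X$ and $\ch_3(E)=-\tfrac{c_1(E)H_X^2}{12}-\tfrac{\ch_2(E)}{2}$, deduces that the rank is even, and then verifies by direct computation that $\ch(E)-n\ch(E^m)$ is an integer multiple of $\ch(\calo_l(q_X-1))=l_X-\tfrac{1}{2}p_X$ for an auxiliary rank-2 instanton $E^m$, finally setting $E^m=F_0$. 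You instead recast condition \ref{instanton-i} uniformly as the fixed-point equation for the involution $\sigma(v)=v^\vee e^{-r_X H_X}$ on $H^{\mathrm{even}}(X,\D{Q})$, use lower-triangularity of $\sigma$ in the monomial basis together with the trace formula $\dim V^+=\tfrac{1}{2}\bigl(4+\operatorname{tr}\sigma\bigr)=2$ to see \emph{a priori} that the solution space is a plane, and then only need to exhibit $\ch(F_0)$ and $\ch(\calo_l(q_X-1))=l_X-\tfrac{r_X}{2}p_X$ (your Riemann--Roch evaluation of the latter is correct, and it is indeed $\sigma$-fixed) as two independent fixed vectors; this makes the degree-6 relation automatic rather than separately verified, and treats both parities at once. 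The integrality bookkeeping at the end coincides with the paper's: evenness of the rank for $r_X=1$ gives $n\in\Z$, and $k=\tfrac{H_X^3}{2}n(1-n)+c_2(E)-n\,c_2(F_0)$ (resp.\ $k=c_2(E)$ for $r_X=0$) lies in $\Z$ because $n(n-1)$ is even --- this is exactly the expression the paper records just after the lemma. What your route buys is uniformity in $r_X$ and a dimension count guaranteeing that no further numerical constraint on $\ch(E)$ has been overlooked; what the paper's buys is a shorter, fully explicit computation with no structural overhead.
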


\begin{proof}
Let us start with the case $i_X$ even. For an instanton $E$, the condition $\ch(E)=\ch(\RHom(E,\cccO_{X}))$ implies that the odd Chern classes of $E$ vanish. Hence $\ch(E)$ has the form $(n, 0, -k, 0)=n \ch(F_0)-k\ch(\cccO_X(q_X-1))$ for $n=\rk(E)$ and $k=c_2(E)$.

Assume now that $i_X$ is odd and let $\ch_j(E)$ be the $j$-th Chern character of a nonzero instanton sheaf $E$, for $j \in \{0,\ldots,3\}$. The condition $\ch(E)=\ch(\RHom(E,\cccO_X(-1))$ imposes
\begin{equation} \label{conditions odd}
 c_1(E)=-\frac{\rk(E) H_X}{2}, \qquad \ch_3(E)=-\frac{c_1(E)H_X^2}{12}-\frac{\ch_2(E)}{2}.
\end{equation}
Note that the first equation implies that the rank of $E$ must be even, hence $\rk(E)=2n$ for some positive integer $n$.
Let now $E^m$ be a rank-2 instanton of second Chern class $c_2(E^m)=m.$ From the conditions above $c_1(E^m)=-H_X$ and $c_3(E^m)=0$. Then, going back to our arbitrary instanton sheaf $E$ of rank $2n$, we check directly that, under the condition \eqref{conditions odd}, we have
\[
\ch(E)-n\ch(E^m)=-k_m l_X+\frac{k_m}{2}p_X=-k_m\ch(\cccO_l(q_X-1)),
\]
where $k_m=\ch_2(E)-n\ch_2(E^m)$ is an integer that depends on $m$. Setting $E^m=F_0$, we obtain the formula in display in \eqref{eq:chern-char}.
\end{proof}

\begin{remark}\label{rmk:chern-symm}
The invariance of the Chern character of an instanton $E$ with respect to the functor \mbox{$E\mapsto \RHom(E,\cccO_X(-r_X))$} also tells us that $\chi(E,F_0)=\chi(F_0,E)$.
\end{remark}

From now on, an instanton $E$ on $X$ whose Chern character can be expressed as in \eqref{eq:chern-char} will be referred to as a $(n,k)$-\textbf{instanton}.
The Chern character $\ch(E)\in \bigoplus_{i \ge 0} H^{i,i}(X,\mathbb{Z})$ of a $(n,k)$-instanton will be denoted by $\gamma(n,k).$
The number $k$ will be referred to as the \textbf{charge} of the instanton. This will coincide with the second Chern class of $E$ when $i_X$ is even whilst when $i_X$ is odd we have the relation
$$
k=\frac{nd_X}{2}-\frac{n^2d_X}{2} -n c_2(F_0)+c_2(E), \qquad \mbox{where $d_X=H_X^3$}.
$$

\subsubsection{Existence of slope-semistable instanton sheaves of higher rank}

The main existence issue we will deal with is the following: for which values of $n$ and $k$, does $X$ carry an $(n,k)$-instanton? If one just wants to look at slope-semistable, not necessarily locally free $(n,k)$-instantons, it is actually quite easy to answer this question. Sheaves $F$ of this kind can indeed be obtained by taking kernels of surjections $E\twoheadrightarrow T$ for $E$ and $T$, respectively, a slope-semistable and a \textbf{torsion instanton}, in which case we will say that $F$ is the \textbf{elementary transformation} of $E$ along $T$. We refer to \cite{CM} for a more detailed discussion of the features of the instanton sheaves obtained using this technique.
\begin{definition}\label{def:rank0-inst}
Let $X$ be a Fano threefold of Picard rank one. A \textbf{torsion} instanton is a coherent sheaf $T$ of pure dimension one and such that $h^i(T(-q_X))=0$ for $i=0,1$. The integer $d=\deg(\supp(T))$ is called the \textbf{degree} of $T$.
\end{definition}

\begin{Lemma}\label{lem:et}
Let $X$ be a Fano threefold of even (respectively odd) index. Then for all $n\ge 2 $ (respectively, for all $n\ge 1$) and $k\ge 0$, $X$ carries a strictly slope-semistable $(n,k)$-instanton.
\end{Lemma}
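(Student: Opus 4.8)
The plan is to realise every admissible class $\gamma(n,k)$ as an elementary transformation of the split instanton $E:=F_0^{\oplus n}$ along a suitable degree-$k$ rank zero instanton. I would first observe that $E$ is itself a strictly $\mu$-semistable $(n,0)$-instanton: its Chern character equals $n\,\ch(F_0)=\gamma(n,0)$ by \eqref{eq:chern-char}, the vanishings of Definition~\ref{def:instanton}~\ref{instanton-ii} are inherited summand by summand from $F_0$, and $E$ is $\mu$-semistable as a direct sum of copies of the $\mu$-stable sheaf $F_0$ of one and the same slope (and it is decomposable, hence not $\mu$-stable). This already gives the case $k=0$.

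For $k\ge 1$, I would take $k$ general pairwise disjoint lines $l_1,\dots,l_k\subset X$ and set $T:=\bigoplus_{i=1}^{k}\calo_{l_i}(q_X-1)$. This $T$ is pure of dimension one with $\deg(\supp T)=k$, and since $\calo_{l_i}(q_X-1)(-q_X)|_{l_i}\simeq\calo_{\mathbb{P}^1}(-1)$ we get $h^0(T(-q_X))=h^1(T(-q_X))=0$, so $T$ is a rank zero instanton of degree $k$ in the sense of Definition~\ref{def:rank0-inst}. From $\chi(T(-q_X))=0$ and $\dim\supp T=1$ one computes $\ch_3(T)=-\tfrac{r_X}{2}k$, hence $\ch(T)=k\,\ch(\calo_l(q_X-1))$; consequently the kernel $F$ of any surjection $E\onto T$ satisfies $\ch(F)=\ch(E)-\ch(T)=\gamma(n,k)$ by \eqref{eq:chern-char}.

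Next I would produce the surjection $E\onto T$ line by line. By adjunction $\Hom(E,\calo_{l_i}(q_X-1))\simeq\Hom(F_0|_{l_i}^{\oplus n},\calo_{l_i}(q_X-1))$, and for a general line $F_0|_{l_i}$ has its generic splitting type, namely $\calo_{l_i}$ for $i_X$ even and $\calo_{l_i}\oplus\calo_{l_i}(-1)$ for $i_X$ odd. A direct inspection then shows that $\calo_{l_i}(q_X-1)$ is a quotient of $F_0|_{l_i}^{\oplus n}$ as soon as $n\ge 2$ (indeed $n\ge 1$ already suffices when $i_X$ is odd); since the $l_i$ are disjoint these local surjections assemble into the desired $E\onto T$. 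To finish, I would check that $F:=\ker(E\onto T)$ is an instanton: it is torsion-free as a subsheaf of the locally free $E$, and $\mu$-semistable of slope $\mu(E)$ as a subsheaf of the $\mu$-semistable $E$ of the same slope. For the cohomology I would run the long exact sequence of $0\to F\to E\to T\to 0$ at the twists $m=-q_X\pm t$, $t\in\{0,1\}$: the equalities $H^0(T(-q_X-t))=0$ and $H^1(T(-q_X+t))=0$ are immediate from the description of $T$, and combined with $H^1(E(-q_X-t))=H^2(E(-q_X+t))=0$ (Definition~\ref{def:instanton}~\ref{instanton-ii} for $E$) they yield $H^1(F(-q_X-t))=H^2(F(-q_X+t))=0$. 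Strictness for $n\ge 2$ follows by exhibiting the subsheaf $\ker(F_0\to T)\subset F$ coming from a single summand of $E$: it has rank $\rk(F_0)<\rk(F)$ and slope $\mu(F)$, so it saturates to a proper destabilising subobject.

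The main obstacle is the construction of the surjection $E\onto T$, i.e.\ guaranteeing that a degree-$k$ rank zero instanton occurs globally as a quotient of $F_0^{\oplus n}$; this is exactly where the generic splitting of $F_0$ on a line and the numerical hypothesis on $n$ are used (for $i_X$ even with $q_X=2$ one genuinely needs $n\ge 2$ to surject onto $\calo_l(1)$). I also note the borderline odd-index case $n=1$: here $F=\ker(F_0\onto T)$ is of rank two with odd first Chern class, so any $\mu$-semistable such sheaf is automatically $\mu$-stable; thus the statement for $n=1$ and $i_X$ odd is witnessed by a stable—and in particular $\mu$-semistable—instanton rather than a strictly semistable one.
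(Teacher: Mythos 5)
Your proof is correct and follows essentially the same route as the paper's: an elementary transformation $F_0^{\oplus n}\onto T$ with $T=\bigoplus_{i=1}^k\calo_{l_i}(q_X-1)$ supported on disjoint lines that are non-jumping for $F_0$ (your explicit line-by-line construction of the surjection via the generic splitting type is exactly the point the paper delegates to Lemma \ref{lem:rank2-thooft-i}), with strict semistability witnessed by the kernel of the composition $F_0\hookrightarrow F_0^{\oplus n}\to T$. Your remark on the borderline case $n=1$, $i_X$ odd is well taken and is a defect of the statement rather than of your argument: a rank-2 torsion-free sheaf with $c_1=-1$ can never be strictly $\mu$-semistable (no integral $c_1$ gives slope $-\tfrac12$ in rank one), the paper's own strictness argument is vacuous there, and your reading---that the construction then yields a $\mu$-stable, hence $\mu$-semistable, instanton---is the correct one.
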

\begin{proof}
Instantons $E$ satisfying the Lemma can be obtained via elementary transformation $F_0^n\twoheadrightarrow T$, for $T$ a torsion instanton of degree $k$.
A torsion instanton $T$ of such a kind always exists: it will be enough to take $T\simeq \bigoplus_{i=1}^k\cccO_{l_i}((q_X-1))$ for disjoint lines $l_1,\ldots, l_k$ on $X$, with the additional assumption that none of these lines is jumping for $F_0$, i.e. $h^1(F_0|_{l_i}(-1))=0$,
for $i_X$ odd. That a general line on $X$ indeed satisfies this condition is ensured by the fact that $F_0$ is a 't Hooft bundle,  see Lemma \ref{lem:rank2-thooft-i}. A more detailed description of the corresponding curves will be presented later on.

A sheaf $E$ obtained in this way is slope-semistable since this holds for its double dual, as indeed we have $E^{**}\simeq F_0^n$. Since the kernel of the composition $F_0\hookrightarrow F_0^n\to T$ injects into $E$, $E$ is strictly slope-semistable.
It is immediate to check that $h^1(E(-q_X-n))=H^2(E(-q_X+n))=0$ for $n\in\{ 0,1\}$, so we conclude that $E$ is a $(n,k)$-instanton.
\end{proof}

\subsubsection{Minimal charge of slope-stable instanton bundle of higher rank}

Things become more intricate after strengthening our stability assumption. When we pass to the higher-rank cases we can immediately observe that checking the stability of sheaves becomes typically much more tricky.
Here, we look for \textbf{slope-stable instanton bundles}, which is among the strongest conditions that one can desire. To accomplish this task we go through two main steps; in the first place, for each $n$, we need to determine the minimal value of $k$ for which we might have existence $(n,k)$-instantons. If we assume $i_X\ge 2$, whenever $n$ is fixed, it is easy to establish a first lower bound for the value of $k$ that must be respected if  we hope to have the existence of $(n,k)$-instantons that are at least Gieseker-semistable.

\begin{Lemma}\label{lem:min-k-gen}
Let $E_n^k$ be a 
$(n,k)$-instanton on a smooth Fano threefold of Picard number one and index $i_X\ge 2$.
Assume that $E_n^k$ is Gieseker-semistable if $i_X$ is even index and that $E_n^k$ is slope-semistable if $i_X=3$.
Then $k\ge \lceil \frac{n}{q_X}\rceil$ and $k\ge 0$, respectively.
\end{Lemma}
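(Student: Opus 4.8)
The plan is to extract the bound from a carefully chosen vanishing of cohomology for the twisted instanton, using Gieseker (resp. $\mu$-) semistability to control the relevant Hom-spaces. Let me set up the numerology first. Recall $i_X=2q_X+r_X$ with $r_X\in\{0,1\}$, so that for $i_X\ge 2$ we have $q_X\in\{1,2\}$. The hypothesis \ref{instanton-ii} gives $H^1(E(-q_X))=H^2(E(-q_X))=0$, and by Lemma \ref{basic-vanishing} also $H^0(E(-q_X))=H^3(E(-q_X))=0$, so $\chi(E(-q_X))=0$.

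First I would compute $\chi(E(-q_X))$ in terms of $n$ and $k$ by Hirzebruch--Riemann--Roch, plugging in the Chern character $\gamma(n,k)$ of an $(n,k)$-instanton from \eqref{eq:chern-char}. The key point is that $\calo_l(q_X-1)$ is supported on a line, so its contribution to $\chi(E(-q_X))$ is (up to sign) linear in $k$, while the $n\ch(F_0)$ part contributes a term growing like a multiple of $n$ determined by $q_X$ and the invariants of $F_0$ and $X$. Setting $\chi(E(-q_X))=0$ then produces an exact linear relation between $k$ and $n$; after that, the inequality $k\ge \lceil n/q_X\rceil$ will have to come not just from this equality but from a separate non-vanishing input, since $\chi=0$ alone fixes $k$ rather than bounding it. So I expect that the honest source of the inequality is a \emph{vanishing of $H^0$ of a suitable twist}, forced by semistability, combined with positivity of the remaining cohomology.

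Concretely, the second step is to consider the twist $E(-q_X+1)$ (i.e.\ pushing up by one from the central twist). Semistability should force $H^0(E(-q_X+1))$ to be controlled: since $\mu(E)$ is fixed by condition \ref{instanton-i} and the slope of $\calo(q_X-1)$ sits just below the threshold where sections are forbidden, $\mu$-semistability gives $\Hom(\calo(q_X-1),E)=0$, i.e.\ $H^0(E(-q_X+1))=0$, provided $q_X-1<\mu(E)$ is not violated; this is where the even/odd and $i_X=3$ case distinctions in the hypotheses enter. Granting the appropriate $H^0$ and $H^3$ vanishing, $\chi(E(-q_X+1))=-h^1+h^2\le 0$ (or the dual statement), and computing this Euler characteristic by Riemann--Roch yields a numerical inequality. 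The arithmetic should collapse, after isolating $k$, to exactly $k\ge \lceil n/q_X \rceil$ for $i_X$ even, and to $k\ge 0$ for $i_X=3$, where $q_X=1$ weakens the bound. I would organize the computation by treating $i_X=4$ ($q_X=2$, $r_X=0$) and $i_X=2$ ($q_X=1$, $r_X=0$) and $i_X=3$ ($q_X=1$, $r_X=1$) as three Riemann--Roch computations, unified as far as possible by writing everything in terms of $q_X$, $r_X$, and $d_X=H_X^3$.

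The main obstacle I anticipate is twofold. First, getting the $H^0$-vanishing of the off-center twist $E(-q_X+1)$ to follow cleanly from the stated semistability hypothesis: one must check that the slope of $\calo(q_X-1)$ does not exceed $\mu(E)$, and the reason the hypotheses ask for only Gieseker semistability when $i_X$ is even but $\mu$-semistability when $i_X=3$ is precisely that in the borderline $\mu$-slope-equal situations one needs the finer Gieseker comparison of reduced Hilbert polynomials (using that $p_{\calo(q_X-1)}$ sits strictly below $p_E$) rather than slopes alone to kill sections. I would expect to invoke $p_F(t)<p_E(t)$ for the destabilizing candidate, exactly as in the 't Hooft stability argument of Lemma \ref{lem:rank2-thooft-i}. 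Second, the ceiling function $\lceil n/q_X\rceil$ must be squeezed out of an a priori rational inequality; since $k$ is an integer, I would derive a bound of the shape $q_X k\ge n$ (plus lower-order terms that vanish or have the right sign) and then round up. The delicate part is verifying that the lower-order Riemann--Roch contributions—those involving $\td(X)$, $c_1(F_0)$, and $\ch_3$—do not shift the bound and indeed reorganize into the clean form $q_X k - n\ge 0$; I would double-check this against the known minimal charges $k_0^n$ announced in the Main Theorem for $i_X\in\{3,4\}$, which must be consistent with the bound proved here.
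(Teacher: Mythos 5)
Your overall mechanism is the right one and, for $q_X=1$, coincides with the paper's: use the semistability hypothesis to kill $h^0$ of the relevant twist (Gieseker semistability when the slopes tie for $i_X$ even, $\mu$-semistability sufficing for $i_X=3$ since $\mu(E)=-\frac{1}{2}<0$), note that $h^2$ and $h^3$ of that twist vanish by condition \ref{instanton-ii} of Definition \ref{def:instanton} and by semistability, and conclude from $\chi\le 0$; your diagnosis of why the hypotheses ask for Gieseker versus $\mu$-semistability in the two cases is exactly on point. However, there is a concrete gap: your chosen twist $E(-q_X+1)$ produces the wrong Euler characteristic when $q_X=2$, i.e.\ on $\D{P}^3$. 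There $E(-q_X+1)=E(-1)$ and, by additivity of $\chi$ on $\gamma(n,k)=n\ch(F_0)-k\ch(\calo_l(q_X-1))$, one gets $\chi(E(-1))=n\,\chi(\calo_{\D{P}^3}(-1))-k\,\chi(\calo_l)=-k$, because $\chi(\calo_{\D{P}^3}(-1))=0$: the rank term drops out entirely and your inequality collapses to $k\ge 0$, not to $k\ge\lceil \frac{n}{2}\rceil$. The paper instead imposes $\chi(E)\le 0$ for the \emph{untwisted} sheaf in all cases: $\chi(E)=n\chi(F_0)-k\,\chi(\calo_l(q_X-1))=n-q_Xk$ for $i_X$ even (since $F_0\simeq\calo$ and $\chi(\calo_l(q_X-1))=q_X$), while $\chi(E)=-k$ for $i_X=3$ (since $\chi(\cals)=0$), which yields $k\ge\lceil\frac{n}{q_X}\rceil$ and $k\ge 0$ respectively, with integrality of $k$ giving the ceiling as you anticipated. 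Your plan works verbatim for $i_X\in\{2,3\}$ only because there $E(-q_X+1)=E$; for $i_X=4$ you must move up one more twist, and then the vanishing $h^2(E)=0$ is no longer directly in Definition \ref{def:instanton} (which covers $E(-2)$ and $E(-1)$) but requires the inductive extension in Lemma \ref{basic-vanishing}.

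Two smaller remarks. Your step 1 is idle in a stronger sense than you state: $\chi(E(-q_X))=0$ holds identically for the character $\gamma(n,k)$ (on $\D{P}^3$ both $\chi(\calo(-2))$ and $\chi(\calo_l(-1))$ vanish), so it produces no relation between $n$ and $k$ at all. And no full Hirzebruch--Riemann--Roch computation is needed anywhere: additivity of $\chi$ on the two summands of $\gamma(n,k)$ disposes of all the ``lower-order term'' worries you raise at the end, since the only inputs are $\chi(F_0)\in\{1,0\}$ and $\chi(\calo_l(q_X-1))=q_X$.
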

\begin{proof}
The inequality is obtained imposing $\chi(E_n^k)\le 0$. Indeed, if $\chi(E_n^k)>0$, then $E_n^k$ admits a non-zero global section, because
Lemma \ref{basic-vanishing} gives
$h^2(E_n^k)=0$. But this is impossible due to the semistability assumption on $E_n^k$.
\end{proof}

Trying to establish a lower bound for $k$ when $i_X=1$ in a similar way is not effective. Indeed, when $i_X=1$, any instanton sheaf $E$ verifies $\chi(E)=0$.

We will see that in the even index cases, this lower bound is actually sharp; whilst for the odd indices further constraints will be required.

The second main step consists in the actual construction of instantons for all values of $k$ greater than the minimal one.

\begin{remark}\label{rmk:et}
The technique of the elementary transformation that we applied in Lemma \ref{lem:et} is also used in \cite{Faenzi} to prove the existence of rank-2 instanton bundles of high charge on Del Pezzo threefolds. Performing the elementary transformation of a slope-stable instanton bundle $E$ along a line $l\subset X$ which is not a jumping line of $E$, we get
$$ 0\rightarrow F\rightarrow E\rightarrow \cccO_l\rightarrow 0.$$
Here, $F$ is slope-stable non-locally free instanton sheaf of charge $c_2(F)=c_2(E)+1$, with $F^{**}\simeq E$. 
 
 As it turns out, the sheaf $F$ deforms flatly to a locally free instanton. This is proved showing that the dimension of the non-locally free deformations of $F$ is strictly less than the dimension of the Gieseker-moduli space at $[F]$. We will recall the key ingredients of this argument in \S \ref{section:prime}, where we will adapt it to prove the existence of rank-2 instanton bundles on Fano threefolds of index 1,  see Proposition \ref{prop:rank2-index1}.

It is natural to ask if a similar approach can be exploited in the higher rank cases,
but unfortunately the answer is negative.
The reasons for the effectiveness of this technique in the rank-2 case are essentially two. First, the fact that for any non-locally free rank-2 instanton $E$, the sheaves $E^{**}$ and $E^{**}/E$ are, respectively, an instanton bundle and a torsion instanton. This affords an estimate of the dimension of the non-locally free deformations of $E$. Second, for rank-2 instantons, local freeness is equivalent to reflexivity.
But both these properties no longer hold for rank greater than 2, see \cite{CM} for details.
\end{remark}

\subsubsection{Slope-destabilizing subsheaves and unobstructedness}

Starting from a slope-stable $(n,k)$-instanton $E_n^k$ on a Fano threefold $X$, we will construct a strictly slope-semistable $(n+1,k)$-instanton $E_{n+1}^k$ arising from an extension $\Ext^1(E_{n}^k,\cccO_{X})$ for $i_X$ even, resp. $\Ext^1(E_{n}^k, F_0)$ for $i_X$ odd.
The fact that $E_{n+1}^k$ is obtained in this way enables us to classify all its slope-destabilizing subsheaves.
This is due to the following result.
\begin{Lemma}\label{lem:ext-ss}
Let $X$ be an $m$-dimensional projective variety of Picard rank one and let $E$ be a sheaf arising from a non-zero extension $e\in \Ext^1(F, G),$ with $F$ and $G$ slope-stable vector bundles with $\mu(F)=\mu(G)=\mu(E)$ and $G\not\simeq F$. Then $E$ is a strictly slope-semistable vector bundle and $F$ is, up to isomorphism, the unique torsion-free quotient of $E$ having slope $\mu(E).$
\end{Lemma}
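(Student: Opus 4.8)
Throughout write $\mu:=\mu(E)=\mu(F)=\mu(G)$ and use that both $\rk$ and $c_1$ are additive in short exact sequences. The plan is to treat the two assertions in turn. First I would observe that $E$ is locally free, being an extension of the locally free sheaf $F$ by the locally free sheaf $G$ (such an extension is locally split). For $\mu$-semistability, take a saturated subsheaf $A\subset E$, put $A'=A\cap G$ and $B=\im(A\to F)$, giving $0\to A'\to A\to B\to 0$ with $A'\subseteq G$ and $B\subseteq F$. Since $F,G$ are $\mu$-stable hence $\mu$-semistable, every subsheaf of $G$ or of $F$ has slope $\le \mu$ (pass to the saturation inside $G$, resp. $F$, and note that saturating can only increase the slope); additivity of $c_1$ and $\rk$ then yields $c_1(A)=c_1(A')+c_1(B)\le \mu\,\rk(A)$, i.e. $\mu(A)\le\mu$. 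Thus $E$ is $\mu$-semistable, and it is strictly so because $G\subset E$ is a proper saturated subsheaf (its cokernel $F$ is nonzero and torsion-free) with $\mu(G)=\mu(E)$.

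For the second assertion let $q\colon E\twoheadrightarrow Q$ be a $\mu$-stable torsion-free quotient with $\mu(Q)=\mu$ — stability of $Q$ is the pertinent hypothesis, since the quotient $Q=E$ (with zero kernel) is itself torsion-free of slope $\mu$ but is not $\mu$-stable. I would examine the restriction $\psi=q|_G\colon G\to Q$. If $\psi=0$ then $q$ factors through a surjection $\bar q\colon F\twoheadrightarrow Q$; as $F$ is $\mu$-stable, a nonzero proper kernel of $\bar q$ would force $\mu(Q)>\mu$, so $\bar q$ is an isomorphism and $Q\cong F$. If instead $\psi\ne 0$, then $\im(\psi)$ is at once a nonzero quotient of the $\mu$-stable $G$ and a subsheaf of the $\mu$-stable $Q$, which squeezes $\mu\le\mu(\im\psi)\le\mu$; hence $\psi$ is injective and $\im(\psi)$ is of full rank in $Q$ with $c_1(\im\psi)=c_1(Q)$, so that $Q/\im(\psi)$ is torsion supported in codimension $\ge 2$.

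The crux — and the step I expect to be the genuine obstacle — is to promote this full-rank inclusion $\psi\colon G\hookrightarrow Q$ to an isomorphism, since $\mu$-stability of $Q$ constrains only proper-rank subsheaves and does not, on its own, forbid a full-rank subsheaf with codimension-$2$ cokernel. I would dualise: applying $\inhom(-,\calo_X)$ to $0\to G\to Q\to Q/\im(\psi)\to 0$ and using $\inhom(T,\calo_X)=\inext^1(T,\calo_X)=0$ for a sheaf $T$ of codimension $\ge 2$, one gets $Q^{\vee}\cong G^{\vee}$ and hence $Q^{\vee\vee}\cong G^{\vee\vee}=G$. The composite $G\xrightarrow{\psi}Q\hookrightarrow Q^{\vee\vee}\cong G$ is then an injective endomorphism of the locally free sheaf $G$; its determinant is a nonzero global section of $\calo_X$, thus a nonzero constant, so the composite is an isomorphism. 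Consequently $\psi$ is a split monomorphism, and as $Q$ is torsion-free its torsion cokernel must vanish, giving $\psi\colon G\xrightarrow{\sim}Q$. But then $q$ splits the inclusion $G\hookrightarrow E$, contradicting $e\ne 0$. Therefore the case $\psi\ne 0$ is impossible, and every $\mu$-stable torsion-free quotient of $E$ of slope $\mu$ is isomorphic to $F$, which is the desired uniqueness.
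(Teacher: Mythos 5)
Your treatment of local freeness and of strict $\mu$-semistability is correct; it is the standard kernel/image decomposition, and differs only cosmetically from the paper, which instead plays the maximal destabilizing subsheaf $L$ against the stability of $F$ (so that $\Hom(L,F)=0$ forces $L\hookrightarrow G$). The genuine problem is in the uniqueness half: you prove only that every \emph{$\mu$-stable} torsion-free quotient of slope $\mu$ is isomorphic to $F$, and you justify this restriction by emending the (admittedly loosely stated) lemma. But the correct emendation is ``proper nonzero quotient'', not ``stable quotient'': the paper's own proof shows that every nonzero proper subsheaf $L\subset E$ with $E/L$ torsion-free of slope $\mu(E)$ equals $G$, with no stability imposed on $E/L$, and this stronger form is exactly what is used later --- in Step 6 of the proof of Theorem \ref{thm:exits-quadric} and Step 3 of the proof of Theorem \ref{thm:exist-index-1}, the lemma must rule out \emph{arbitrary} torsion-free quotients of slope $-\frac{1}{2}$ (of any Hilbert polynomial $R$, resp.\ Chern character $v$) in order to conclude that the loci $W_R$, resp.\ $Z_v$, are proper for $R\ne P_{E_n^k}$, resp.\ $v\ne\gamma(n,k)$. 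Moreover your argument genuinely uses stability of $Q$: any torsion-free quotient of $E$ of slope $\mu$ is automatically $\mu$-semistable, and your squeeze giving injectivity of $\psi$ survives under mere semistability, but the next step --- that $\im(\psi)$ has full rank in $Q$ --- does not; for a strictly semistable $Q$ the inclusion $G\hookrightarrow Q$ may have smaller rank, its cokernel is then not of codimension $\ge 2$, and the dualization $Q^{\vee}\simeq G^{\vee}$ collapses. So, as written, the proof does not exclude strictly semistable proper quotients of slope $\mu$, and that is a real gap relative to what the paper states and needs.

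The gap is repairable within your framework in a few lines: given $0\ne L\subsetneq E$ with $Q:=E/L$ torsion-free of slope $\mu$, note $Q$ is $\mu$-semistable, and a maximal saturated slope-$\mu$ subsheaf of $Q$ produces a further \emph{stable} torsion-free quotient $Q''$ of slope $\mu$; running your dichotomy on the composite $E\onto Q''$, the case $\psi\ne 0$ splits $e$ (contradiction), and the case $\psi=0$ identifies $\ker(E\to Q'')=G$, whence $L\subseteq G$; then $\mu(L)=\mu$ and stability of $G$ make $G/L$ torsion, and $G/L\hookrightarrow Q$ with $Q$ torsion-free forces $L=G$ and $Q\simeq F$. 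For comparison, the paper avoids any reduction by arguing entirely on the subsheaf side, setting $K=L\cap G$ and $Q=\im(L\to F)$, using twice that torsion sheaves such as $G/K$ inject into the torsion-free $E/L$, together with the single vanishing $\Ext^1(F/L,G)=0$ obtained from Serre duality for a sheaf supported in codimension $\ge 2$. Your reflexive-hull/determinant trick in the stable case is a genuinely different (and correct) mechanism replacing that vanishing --- it implicitly uses smoothness for $\inext^1(T,\calo)=0$ and properness for $\End(G)=\C$, assumptions no stronger than the Serre duality in the paper's proof --- but by itself it establishes less than the lemma claims.
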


\begin{proof}
By construction $E$ fits in a short exact sequence
$$0\rightarrow G\rightarrow E\rightarrow F\rightarrow 0.$$
So $E$ cannot be slope-stable, since it contains $G$. If  $E$ is unstable it would admit a slope-semistable subsheaf $L$ with $\rk(L)<\rk(E)$, $\mu(L)>\mu(E)$. But the slope-stability of $F$ implies that $\Hom(L,F)=0$ so that $L$ would inject in $G$ which is clearly impossible as $G$ is slope-stable as well. Therefore $E$ is strictly slope-semistable.

To prove the lemma we need to show that every subsheaf $L\hookrightarrow E$ of slope $\mu(E)$ such that $E/L$ is torsion-free is equal to $G$.
For such a subsheaf, which must be slope-semistable, consider the composition $L\to F$, denote by $Q$ its image and by $K$ the kernel of $L\twoheadrightarrow Q$.

Let us suppose that $Q\ne 0.$
Since $F$ is slope-stable, we must have $\mu(Q)=\mu(L)=\mu(K)=\mu(G)=\mu(L)=\mu(E)$.
Now, if $K\ne 0$, we must have $G\simeq K$, for otherwise $G/K$ would be a torsion subsheaf of $E/L$. So $E/L\simeq F/Q$ which is torsion unless $F\simeq Q$, so $E\simeq L$ as well which is not possible since $L$ is a proper subsheaf of $E$.

Then we would have $K=0$, so $L\simeq Q$ and $E/L$ would be an extension of $F/L$ by $G$. Note that $F/L\ne 0$, since otherwise $E\twoheadrightarrow G$ which would imply that the sequence splits. Note that, since $F$ is slope-stable and $\mu(F)=\mu(L)$, we must have $\rk(F)=\rk(L)$ and $c_1(F)=c_1(L)$, 
so $\codim(\supp(F/L))\ge 2$. Hence $\Ext^1(F/L,G)\simeq \Ext^{m-1}(G,F/L\otimes \omega_X)^*\simeq H^{m-1}(G^*\otimes F/L\otimes \omega_X)^*=0$ which again contradicts the torsion-freeness of $E/L$.

Then we must have $Q=0$, which gives $K\simeq L$ hence $L\hookrightarrow G$ so again, since $G$ is slope-stable and $\mu(L)=\mu(G),$ we get $\rk(L)=\rk(G)$, whence $G/L$ is torsion.
But $G/L$ injects into $E/L$, which is torsion-free, so $G = L$.
\end{proof}

The way we use the previous lemma is to characterize the strictly slope-semistable deformations of $E_{n+1}^k$, in the sense that a general deformation will also admit a slope-stable $(n,k)$-instanton as its unique torsion-free quotient having the same slope. This allows one to determine the dimension of the family of strictly slope-semistable deformations of $E_{n+1}^k$. Based on this, we will prove that $E_{n+1}^k$ deforms to a slope-stable instanton.

In order to control the deformations of $E_{n+1}^k$, we need to check its unobstructedness. Once again this will be proved inductively: starting from an unobstructed $(n,k)$-instanton $E_n^k$ on a Fano threefold of even index, resp. $E_{n}^k$ such that $\Ext^2(F_0,E_n^k)=\Ext^2(E_n^k,F_0)=0$ on a Fano threefold $X$ of odd index, we get the unobstructedness of the extensions of $E_{n}^k$ by $\cccO_{X}$, resp. $F_0$.
\begin{Lemma}\label{lem:ext-unobstructed}
Let $E$ and $F$ be coherent sheaves on a scheme $X$. Suppose that $E$ and $F$ are unobstructed and that $\Ext^2(E,F)=0=\Ext^2(F,E)$.
Let $e\in \Ext^1(F,E), \ e\ne 0$ be an extension class corresponding to a non-split exact sequence:
$$ 0\rightarrow F\rightarrow E'\rightarrow E\rightarrow 0.$$
Then $E'$ is unobstructed.
\end{Lemma}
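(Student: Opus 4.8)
The goal is to establish $\Ext^2(E',E')=0$, which is precisely the meaning of unobstructedness for $E'$. The plan is to reduce this single vanishing to the four inputs $\Ext^2(E,E)=\Ext^2(F,F)=\Ext^2(E,F)=\Ext^2(F,E)=0$ by successive applications of the long exact sequences in $\Ext$ attached to the defining extension $0\to F\to E'\to E\to 0$.

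First I would apply the contravariant functors $\Hom(-,F)$ and $\Hom(-,E)$ to this short exact sequence. The resulting long exact sequences contain the fragments
\[
\Ext^2(E,F)\to \Ext^2(E',F)\to \Ext^2(F,F)
\quad\text{and}\quad
\Ext^2(E,E)\to \Ext^2(E',E)\to \Ext^2(F,E).
\]
Since in each fragment the outer terms vanish, by hypothesis and by the unobstructedness of $E$ and of $F$ respectively, I obtain $\Ext^2(E',F)=0$ and $\Ext^2(E',E)=0$.

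Then I would apply the covariant functor $\Hom(E',-)$ to the same sequence, which yields the fragment
\[
\Ext^2(E',F)\to \Ext^2(E',E')\to \Ext^2(E',E).
\]
Having just shown that both outer terms vanish, I conclude $\Ext^2(E',E')=0$, as desired.

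There is no genuine obstacle here: the argument is a purely formal diagram chase in the category of coherent sheaves on $X$, and the nonsplitness of $e$ plays no role in the vanishing, mattering only for the later applications of the lemma. The only point requiring minimal care is to carry out the reductions in the correct order, namely to clear the mixed terms $\Ext^2(E',F)$ and $\Ext^2(E',E)$ first and only then treat $\Ext^2(E',E')$.
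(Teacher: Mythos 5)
Your proof is correct and follows essentially the same route as the paper: both arguments first clear the mixed terms $\Ext^2(E',F)$ and $\Ext^2(E',E)$ via the contravariant long exact sequences (using the four hypothesized vanishings), then conclude $\Ext^2(E',E')=0$ from the covariant sequence in the second argument. Your remark that nonsplitness of $e$ is irrelevant to the vanishing is also accurate; the paper's proof does not use it either.
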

\begin{proof}
Since $\Ext^2(F,F)=0=\Ext^2(E,F)$, we have that $\Ext^2(E',F)$ vanishes as well. Therefore $\Ext^2(E',E')$ injects into $\Ext^2(E',E)$.
But the vanishing of $\Ext^2(F,E)$ implies that $\Ext^2(E,E)$ surjects onto $\Ext^2(E',E)$ so that the latter must be zero as we are assuming $E$ unobstructed. We thus conclude that $\Ext^2(E',E')=0$.
\end{proof}

If $i_X\ne 1$, we will be able to prove the existence of slope-stable instanton bundles that moreover admit global sections after a twist by $\cccO_X(1)$.
We remark that for most values of $k$, $(n,k)$-instantons $E$ such that $h^0(E(1))\ne 0$ form proper closed subsets of their moduli space.

\section{Instantons on Fano threefolds of even index}

\label{section:even}

We will begin by treating the case when $i_X$ is even.
Recall that in this context $F_0\simeq \cccO_{X}$ and the Chern character $\gamma(n,k)$ of a $(n,k)$-instanton $E_n^k$ has the form:
\begin{equation} \label{chern-n-k}
    \gamma(n,k)=n \ch(\cccO_{X}) -k \ch(\cccO_l(q_X-1))=(n,0,-k,0).
\end{equation}

The integer $n$ is just the rank of $E$ while $k$ is the second Chern class of $E$. We saw that the instanton conditions impose $k\ge \lceil \frac{n}{q_X}\rceil$, see Lemma \ref{lem:min-k-gen} -- recall that $q_X=2$ if $X=\p 3$ and $q_X=1$ if $X$ is a Del Pezzo threefold.

We will show that this lower bound is sharp, constructing inductively slope-stable $(n,k)$-instanton bundles for all $k\ge \lceil\frac{n}{q_X}\rceil$. These will be obtained by deforming strictly slope-semistable $(n,k)$-instantons arising as extensions of a slope-stable $(n-1,k)$ instanton by $\cccO_{X}$.

\subsection{Instantons of high rank on $\D{P}^3$}

We first direct our attention towards the case $X=\D{P}^3$, that is to say, $i_X=4$.
The first thing that we need to do is to recall the well-known construction of slope-stable unobstructed rank-2 instanton bundles of charge $k\ge 1$ that will provide us the base of our inductive argument. We just pick the construction from Lemma \ref{lem:rank2-thooft-i}, through unobstructed curves of degree $k+1$ and arithmetic genus $-k$, for instance, the union of $k+1$ disjoint lines, giving "classical" 't Hooft bundles.

\begin{Lemma}\label{lem:rank2-thooft+unobstructed}
For $k>0,$ let $C_k$ be the union of $k+1$ disjoint lines and let $E_2^k$ be a 't Hooft bundle arising from a non-zero extension class in $\Ext^1(\cali_{C_k}(1),\cccO_{\p{3}}(-1))$. Then $E_2^k$ is slope-stable and unobstructed and has trivial splitting on a sufficiently general line.
\end{Lemma}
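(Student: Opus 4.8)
The plan is to realize $E_2^k$ as a special instance of the bundles produced in Lemma~\ref{lem:rank2-thooft-i} and to read off all three assertions from its conclusions, so the bulk of the work is checking that $C_k$ satisfies the hypotheses of that lemma on $X=\p{3}$, where $i_X=4$, $q_X=2$ and $r_X=0$. First I would record the numerical data of $C_k$: being a disjoint union of $k+1$ lines, it is l.c.i. of degree $\deg(C_k)=k+1$, and since the $l_i$ are disjoint one has $\chi(\calo_{C_k})=k+1$, whence $p_a(C_k)=1-\chi(\calo_{C_k})=-k=1-\deg(C_k)$. This is precisely the genus constraint required in Lemma~\ref{lem:rank2-thooft-i}, and the relevant extension group is $\Ext^1(\cali_{C_k}(1),\calo(-1))\simeq H^0(\calo_{C_k})^*\simeq \C^{k+1}$.

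Next I would produce a nowhere vanishing extension class, which is what guarantees that the Serre construction returns a locally free sheaf (a genuine 't Hooft bundle). Using $\inhom(\cali_{C_k}(1),\calo(-1))\simeq \calo(-2)$ together with $H^1(\op{3}(-2))=H^2(\op{3}(-2))=0$, the local-to-global spectral sequence yields $\Ext^1(\cali_{C_k}(1),\calo(-1))\simeq H^0(\inext^1(\cali_{C_k}(1),\calo(-1)))$. A direct computation of the local Ext sheaf on each component gives $\inext^1(\cali_{C_k}(1),\calo(-1))\simeq \omega_{C_k}(-K_X)(-2)\simeq \calo_{C_k}$, which is globally generated; hence a general class restricts to a nonzero element on each line and is therefore nowhere vanishing, producing a locally free $E_2^k$.

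With these hypotheses in place, the three claims follow from Lemma~\ref{lem:rank2-thooft-i}. For unobstructedness I would invoke part~\ref{unob-i} after checking that $C_k$ is unobstructed: since $\caln_{l_i}\simeq \calo_{l_i}(1)^{\oplus 2}$ and the lines are disjoint, $\caln_{C_k}\simeq \bigoplus_i \calo_{l_i}(1)^{\oplus 2}$ has $h^1=0$. For $\mu$-stability I would use part~\ref{unob-iii}, which reduces the question to the non-degeneracy of $C_k$; as $k>0$, the curve $C_k$ contains at least two disjoint (hence skew) lines, and two skew lines already span $\p{3}$, so $C_k$ is non-degenerate. Finally, part~\ref{unob-iv} says the general line is not jumping, i.e. $h^1(E_2^k|_l(-1))=0$; since $c_1(E_2^k)=0$ and $\rk(E_2^k)=2$, writing $E_2^k|_l\simeq \calo_l(a)\oplus \calo_l(-a)$ with $a\ge 0$, this vanishing forces $a=0$, that is $E_2^k|_l\simeq \calo_l^{\oplus 2}$, the asserted trivial splitting.

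The only genuinely delicate point is the second step: while the numerical and geometric verifications are routine, one must make sure that the Serre construction actually outputs a vector bundle rather than merely a torsion-free sheaf. This is exactly a Cayley--Bacharach type condition, and I expect the identification $\inext^1(\cali_{C_k}(1),\calo(-1))\simeq \calo_{C_k}$ together with its global generation to be the key computation; everything else is a direct application of Lemma~\ref{lem:rank2-thooft-i}.
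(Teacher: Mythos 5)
Your proposal is correct and follows essentially the same route as the paper: the paper's proof likewise reduces everything to Lemma~\ref{lem:rank2-thooft-i}, with the only point checked explicitly being the unobstructedness of $C_k$ via $\caln_{C_k}\simeq \bigoplus_{i=0}^{k}\calo_{l_i}(1)^{\oplus 2}$, exactly as you do. Your extra verifications (the genus count $p_a(C_k)=1-\deg(C_k)$, the identification $\inext^1(\cali_{C_k}(1),\calo(-1))\simeq\calo_{C_k}$ ensuring a general class is nowhere vanishing, and non-degeneracy from two skew lines) are all sound and merely make explicit what the paper leaves implicit.
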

\begin{proof}
This is simply Lemma \ref{lem:rank2-thooft-i}. The only thing left to check is the unobstructedness of $C_k$; but this is immediate since, if $C_k$ is supported on $l_0,\ldots ,l_k$, then $\caln_{C_k} \simeq \bigoplus_{i=0}^k \cccO^2_{l_i}(1)$ hence $H^1(\caln_{C_k/\D{P}^3})=0.$
\end{proof}

The minimal charge for a Gieseker-semistable instanton is 1. Moreover, for a rank-2 instanton $E$ of charge 1, $\chi(E(1))>0$. We conclude that a minimal rank-2 instanton indeed is a 't Hooft instanton. For higher rank, we have the following result.

\begin{theorem}\label{thm:exist-P3} Let $n,k$ be integers with $n \ge 2$ and $k \ge \lceil \frac{n}{2} \rceil$.
Then there exists a slope-stable unobstructed $(n,k)$-instanton bundle $E$ on $\D{P}^3$
satisfying the following:
\begin{enumerate}[label=\roman*)]

\item\label{T1}
the cokernel sheaf $F$ of a general morphism $\cccO_{\D{P}^3}(-1)\xrightarrow{s} E$
is unobstructed, slope-stable, torsion-free, of homological dimension at most 1, and satisfies:
\begin{align*}
    &\ext^i(F,\cccO_{\D{P}^3}(-j))=0, && \mbox{for $i\ne 1, \: j=0,1;$} \\
   &h^i(F)=0, && \mbox{for $i\ne 1$},\\
    &h^i(F(1))=0,&& \mbox{for $i\ge 2$}.
\end{align*}
\item \label{T2} For all integers $q$ with $1\le q\le n-1$ we have $H^0(\bigwedge ^q E)=0$.
\item \label{T3} For a general line $l\subset \D{P}^3$, $E$ has trivial splitting over $l$, namely $E|_l\simeq \cccO_l^{n}$.
\end{enumerate}
\end{theorem}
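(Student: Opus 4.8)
The plan is to argue by induction on the rank, with base case $n=2$ furnished by Lemma \ref{lem:rank2-thooft+unobstructed}, which already supplies $\mu$-stable, unobstructed, generically trivially split rank-$2$ bundles for every $k\ge 1=\lceil 2/2\rceil$. For the inductive step I would, starting from a $\mu$-stable unobstructed $(n,k)$-instanton $E_n^k$ with $k\ge\lceil\frac{n+1}{2}\rceil$, build a rank-$(n+1)$ instanton $E_{n+1}^k$ from a non-split class $e\in\Ext^1(E_n^k,\calo)$, fitting into
\[0\to\calo\to E_{n+1}^k\to E_n^k\to 0.\]
Here the only point is that $\Ext^1(E_n^k,\calo)\ne 0$: using Lemma \ref{basic-vanishing} together with Serre duality one checks $\hom(E_n^k,\calo)=\ext^2(E_n^k,\calo)=\ext^3(E_n^k,\calo)=0$, so $\ext^1(E_n^k,\calo)=-\chi(E_n^k,\calo)=2k-n$ (using $\chi(E_n^k,\calo)=\chi(E_n^k)=n-2k$ via Remark \ref{rmk:chern-symm}), which is $\ge 1$ precisely in the range $k\ge\lceil\frac{n+1}{2}\rceil$ needed at rank $n+1$. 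That $E_{n+1}^k$ is again an instanton follows from the sequence twisted by $\calo(-q_X)$, since $\calo(-q_X)=\calo(-2)$ is acyclic on $\D{P}^3$.

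Then I would deform $E_{n+1}^k$ to a $\mu$-stable bundle. By Lemma \ref{lem:ext-ss} it is strictly $\mu$-semistable with $E_n^k$ as its unique slope-$0$ torsion-free quotient, and by Lemma \ref{lem:ext-unobstructed} it is unobstructed (the hypotheses $\ext^2(E_n^k,\calo)=0$ and $\ext^2(\calo,E_n^k)=h^2(E_n^k)=0$ come from Lemma \ref{basic-vanishing}). Lemma \ref{lem:ext-ss} also shows that every strictly $\mu$-semistable deformation of $E_{n+1}^k$ is an extension of a $\mu$-stable $(n,k)$-instanton by $\calo$, so the strictly semistable locus has dimension at most $\ext^1(E_n^k,E_n^k)+\ext^1(E_n^k,\calo)-1$, whereas the moduli space is smooth of dimension $\ext^1(E_{n+1}^k,E_{n+1}^k)$ at $[E_{n+1}^k]$. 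Using additivity of $\chi(-,-)$ along the sequence, the identity $\chi(\calo,E_n^k)=\chi(E_n^k,\calo)$ of Remark \ref{rmk:chern-symm}, and $\ext^{2}=\ext^{3}=0$ throughout, the required strict inequality collapses to
\[\hom(E_{n+1}^k,E_{n+1}^k)>1+n-2k,\]
which holds since $\hom\ge 1$ and $n-2k\le -1$ in our range. Hence a general deformation is $\mu$-stable; openness of local freeness, of unobstructedness, and of the instanton vanishings then yields a $\mu$-stable unobstructed $(n+1,k)$-instanton bundle, which carries the induction.

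For the generic splitting \ref{T3} I would restrict the defining sequence to a general line $l$ with $E_n^k|_l\cong\calo_l^{n}$; as $\Ext^1(\calo_l^{n},\calo_l)=0$ it splits and $E_{n+1}^k|_l\cong\calo_l^{n+1}$, and being non-jumping is an open condition preserved under deformation. For \ref{T2} I would use the filtration $0\to\textstyle\bigwedge^{q-1}E_n^k\to\bigwedge^q E_{n+1}^k\to\bigwedge^q E_n^k\to 0$: induction kills $H^0$ for $2\le q\le n-1$, the top case $q=n$ follows from $\bigwedge^{n}E_{n+1}^k\cong(E_{n+1}^k)^{*}$ and $\Hom(E_{n+1}^k,\calo)=0$ (the connecting map being cup-product with $e\ne 0$), and upper semicontinuity then transfers these vanishings to any stable member of the family; the remaining case $q=1$, i.e.\ $H^0(E)=0$, is immediate from $\mu$-stability.

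The delicate point is \ref{T1}, which presupposes $h^0(E(1))\ne 0$. The extension satisfies $h^0(E_{n+1}^k(1))\ge h^0(\calo(1))=4>0$, but this quantity is only upper semicontinuous, so the \emph{a priori} difficulty — and the main obstacle — is to show that the $\mu$-stable locus still meets the 't Hooft locus $\{h^0(E(1))>0\}$, which is \emph{not} a generic condition in moduli for large $k$. I would handle this by realising $E$ through a Serre-type sequence $0\to\calo(-1)\xrightarrow{s}E\to F\to 0$ and comparing the dimension of this 't Hooft family, governed by the unobstructed deformations of the cokernel $F$, with that of its strictly semistable sublocus, cut out exactly as in the second step. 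Granting that a general $s$ vanishes along a smooth curve, $F$ is torsion-free of homological dimension $\le 1$ with $c_1(F)=1$, and its $\mu$-stability together with the listed $\Ext$- and cohomology-vanishings would follow from those of $E$ by applying $\Hom(-,\calo(-j))$ and $H^\bullet$ to the defining sequence and invoking Serre duality and Lemma \ref{basic-vanishing}.
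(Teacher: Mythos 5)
Your inductive skeleton tracks the paper closely: the base case via Lemma \ref{lem:rank2-thooft+unobstructed}, the non-split extension $0\to\calo\to E_{n+1}^k\to E_n^k\to 0$ with $\ext^1(E_n^k,\calo)=2k-n>0$, unobstructedness via Lemma \ref{lem:ext-unobstructed}, and your arguments for \ref{T2} and \ref{T3} are essentially the paper's (Steps 1, 2 and 6). Your deformation-to-stability count is also numerically correct, and it is in fact the argument the paper uses for $i_X=1$ (Theorem \ref{thm:exist-index-1}); but as stated it has two repairable imprecisions. First, $E_{n+1}^k$ is Gieseker \emph{unstable} (it is destabilized by $\calo$, since $\chi(E_{n+1}^k)=n+1-2k<0$ forces $p_{\calo}>p_{E_{n+1}^k}$), so it is not a point of any Gieseker--Maruyama moduli space; the smooth point of dimension $\ext^1(E_{n+1}^k,E_{n+1}^k)$ you invoke must live in the moduli of \emph{simple} sheaves $\Spl_{\D{P}^3}(\gamma(n+1,k))$, after checking simplicity as the paper does in its index-one proof. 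Second, the claim that every strictly $\mu$-semistable deformation is an extension of a stable $(n,k)$-instanton by $\calo$ does not follow from Lemma \ref{lem:ext-ss} alone, which concerns the fixed sheaf; one needs the stratification of the strictly semistable locus by the finitely many Hilbert polynomials of destabilizing quotients and the exclusion of the strata not passing through $[E_{n+1}^k]$, as in the paper's Step 6 for the quadric.

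The genuine gap is \ref{T1}, and you correctly identify it as the delicate point but do not close it. Your stable bundle is produced by deforming $E_{n+1}^k$ in a moduli problem where $h^0(E(1))\ne 0$ is not generic, so it need not satisfy \ref{T1}; and your proposed repair — comparing the 't Hooft family with ``its strictly semistable sublocus, cut out exactly as in the second step'' — cannot be run as stated, because the second-step count lives in a different moduli problem (all simple deformations of $E_{n+1}^k$, where stable deformations may leave the 't Hooft locus), while the properness of the strictly semistable sublocus \emph{inside} the extension family is exactly what still has to be proved. What the paper does instead (Steps 3--6) is shift the whole deformation to the cokernel: it shows $F_{n+1}^k$ is unobstructed and $\mu$-stable, then proves by a \emph{new} parameter count in $\calm_{\D{P}^3}(\ch(F_{n+1}^k))$ that the general deformation $F$ has $h^0(F)=0$ — sheaves with a section are extensions $0\to\calo\to F\to F'$ with $\ch(F')=\ch(F_n^k)$, sweeping a locus of dimension at most $\ext^1(F_n^k,F_n^k)+\ext^1(F_n^k,\calo)-1$, which is smaller than $\ext^1(F_{n+1}^k,F_{n+1}^k)$ by exactly $-\chi(E_n^k)=2k-n>0$ — and finally builds the universal extension over $\D{P}(\cale)$, $\cale=\inext^1_{p_1}(\mathbf{F},\calo_{U\times\D{P}^3}(-1))$ (locally free and compatible with base change because $\ext^i(\mathbf{F}_t,\calo(-1))=0$ for $i\ne 1$). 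In that family every member has a section after twist by $\calo(1)$ \emph{by construction}, and $\mu$-stability of the general member is recovered from Hoppe's criterion: $H^0(\mathbf{E}_x)\into H^0(F)=0$ handles $q=1$, your exterior-power sequences plus semicontinuity handle $2\le q\le n$, with $H^0(\bigwedge^{n}\mathbf{E}_x)\simeq\Hom(\mathbf{E}_x,\calo)=0$ at the top. Without this count on $F$ and the universal-extension parameter space, your two deformation arguments remain disconnected and neither delivers a single bundle that is simultaneously $\mu$-stable, unobstructed, and satisfies \ref{T1} — which is precisely what your induction must hand to the next rank.
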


Before passing to the actual proof of the Theorem let us summarize its main steps.
To begin with we review the base of the induction (Step \ref{step1}), given by rank-2 't Hooft instanton bundles satisfying the Theorem.

We then pass to the inductive step:
from a $(n,k)$-instanton bundle $E_n^k$ satisfying the Theorem, we construct a strictly slope-semistable, but Gieseker-unstable $(n+1,k)$-instanton $E_{n+1}^k$ from a non-split extension class in $\Ext^1(E_n^k,\cccO_{\p{3}})$, see Step \ref{step2}.

Next, we lift a general $s\in H^0(E_n^k(1))$ to $H^0(E_{n+1}^k(1))$ and we prove that the cokernel $F_{n+1}^k$ of the corresponding morphism $\cccO_{\p{3}}(-1)\to E_{n+1}^k$ is unobstructed and slope-stable (see Steps \ref{step3} and \ref{step4}). By construction $F_{n+1}^k$ arises from a non-zero element in $\Ext^1(F_n^k, \cccO_{\p{3}})$ for $F_n^k:=\coker(s).$

In Step \ref{step5} we show that $F_{n+1}^k$ deforms, in the Gieseker-moduli space $\calm_{\D{P}^3}(\ch(F_{n+1}^k))$, to a sheaf admitting no quotient of type $F_n^k$.

Finally, via universal extensions, we will construct a projective bundle over  $\calm_{\D{P}^3}(\ch(F_{n+1}^k))$ that will play the role of the moduli space of extensions of sheaves in
$\calm_{\D{P}^3}(\ch(F_{n+1}^k))$ by $\cccO_{\p{3}}(-1)$ (see Step \ref{step6}). Thanks to the conclusions drawn in Step 5, we show that, in this moduli space, $[E_{n+1}^k]$ deforms to a slope-stable $(n+1,k)$-instanton bundle.

\begin{proof}
We prove the theorem by induction on the rank $n$.
\begin{step}[Check the base of the induction] \label{step1}
For $n=2$, the theorem holds due to the existence of the rank-2 't Hooft bundles. By Lemma \ref{lem:rank2-thooft+unobstructed}, 't Hooft $E_2^k$ of rank-2 is unobstructed, slope-stable (which is equivalent to satisfying \ref{T2}) and satisfies \ref{T3}. A non-zero global section $s$ of $E_2^k(1)$ vanishes on a locally complete intersection curve $C_k$ of degree $k+1$ and arithmetic genus $-k$.
These are supported on $\le k+1$ disjoint lines and will consist of the union of $k+1$ disjoint lines, generically. We checked that the curves $C_k$ of this kind are unobstructed and $\cali_{C_k}(1)$ satisfies \ref{T1}.
\end{step}

\begin{step}[Construction of an instanton of higher rank]  \label{step2}
Let us now pass to the proof of the induction step.
Consider integers $n\ge 2$ and $k\ge \lceil \frac{n+1}{2}\rceil$. By induction there exists an instanton bundle $E_{n}^k$ of rank $n$, charge $k$ and satisfying the theorem. We have $\ext^3(E_{n}^k, \cccO_{\p{3}})=0$ by stability and $\ext^2(E_{n}^k,\cccO_{\p{3}})=0$ by \ref{T2}. Since $\chi(E_{n}^k,\cccO_{\p{3}})=n-2k<0$ we deduce that there
exists a non-split extension:
\begin{equation}\label{eq:semistable}
    0\rightarrow \cccO_{\p{3}}\rightarrow E_{n+1}^k\rightarrow E_{n}^k\rightarrow 0.
\end{equation}
with $E_{n+1}^k$ a strictly slope-semistable (and Gieseker-unstable) instanton bundle.

We claim that $E_{n+1}^k$ deforms to an instanton bundle satisfying the statement of the theorem. To begin with, we observe that, due to Lemma \ref{lem:ext-unobstructed}, $E_{n+1}^k$ is unobstructed. Note also that, since $E_{n}^k$ satisfies \ref{T3}, the same holds for $E_{n+1}^k$, as to check it, it suffices to restrict \eqref{eq:semistable} to a general line $l\subset \D{P}^3$.

We take now a general global section $s'\in H^0(E_{n}^k(1))$ and we lift it to a global section $s\in H^0(E_{n+1}^k(1))$. Since $s$ does not lie in the image of $H^0(\cccO_{\p{3}}(1))\hookrightarrow H^0(E_{n+1}^k(1))$, $\coker(s)$ is a torsion-free sheaf $F_{n+1}^k$.
\end{step}
\begin{step}[Check the unobstructedness of $F_{n+1}^k$] \label{step3}
By construction, the sheaf $F_{n+1}^k$ fits in a commutative diagram:
\begin{equation}\label{cd-non-inst-onemore}
\begin{tikzcd}
&  &\cccO_{\p{3}}(-1)\arrow[r, equal]\arrow[d, "s"] &\cccO_{\p{3}}(-1)\arrow[d, "s'"]   \\
0  \arrow[r] & \cccO_{\p{3}}\arrow[r]\arrow[d, equal] & E_{n+1}^k\arrow[r]\arrow[d, equal] & E_{n}^k\arrow [r]\arrow[d]& 0\\
0\arrow[r] &\cccO_{\p{3}}\arrow[r] & F_{n+1}^k\arrow[r]& F_{n}^k \arrow[r]
& 0
\end{tikzcd}
\end{equation}
Since $s'$ is general, we may assume that $F_{n}^k$ satisfies \ref{T2}.
From the diagram we compute that $F_{n+1}^k$ has homological dimension $\le 1$ and that $F_{n+1}^k$ satisfies all cohomological conditions of \ref{T1} except for $h^0(F_{n+1}^k)$, as actually $h^0(F_{n+1}^k)=1$.
Notice that since $F_{n}^k$ satisfies \ref{T1}, $F_{n+1}^k$ is unobstructed by Lemma \ref{lem:ext-unobstructed}.
\end{step}

\begin{step}[Show that $F_{n+1}^k$ is slope-stable] \label{step4}
We use the last row of \eqref{cd-non-inst-onemore}. Suppose, by contradiction, that $F_{n+1}^k$ is not slope-stable. Then, since $c_1(F_{n+1}^k)=1$, we have a  slope-semistable subsheaf $L$ of $F_{n+1}^k$  with $c_1(L)\ge 1$. The sheaf $L$ maps to zero via the map $F_{n+1}^k\to F_{n}^k$, since $F_{n}^k$ is slope-stable of first Chern class 1, while the $c_1$ of the image of $L$ in $F_{n}^k$ must be at least one. But then $L\to F_{n+1}^k$ would factor through $\cccO_{\p{3}}$, which is clearly impossible.
\end{step}

\begin{step}[Deform $F_{n+1}^k$ to a slope-stable sheaf having no quotient of type $F_n^k$] \label{step5}
Next, we claim that $F_{n+1}^k$ deforms to a slope-stable sheaf having no global sections.
A general deformation $F$ of $F_{n+1}^k$ is an unobstructed slope-stable sheaf of homological dimension $\le 1$, satisfying $\ext^i(F,\cccO_{\p{3}}(-j))=0$ for $i\ne 1$, $h^i(F(j))=0$ for $i\ge 2$ and $j=0, 1$. Let us check that $h^0(F)=0$. Assuming the contrary, we get an exact sequence
\begin{equation}\label{ses}
0\rightarrow \cccO_{\p{3}} \rightarrow F\rightarrow F'\rightarrow 0.
\end{equation}
Here, $F'$ has $c_1(F')=c_1(F)=1$ and admits no quotients of degree $\le 0$, due to the slope-stability of $F$, hence it is slope-stable. From \eqref{ses} we deduce that, for all $j \in \{0,1\}$, one has $\ext^i(F,\cccO_{\p{3}}(-j))=0$ for $i\ne 1$ and $h^i(F(j))=0$ for $i\ge 2$. We can readily prove that, as $F$ is unobstructed, the same holds for $F'$. Thus the sheaf $F'$ represents a smooth point in  \mbox{$\calm_{\D{P}^3}(v)$}, the moduli space of Gieseker-semistable torsion-free sheaves having Chern character $v=\ch(F_{n}^k)$.

The assumption that a general deformation $F$ has nonvanishing global sections would then lead to $\ext^1(F_{n+1}^k,F_{n+1}^k) \le \ext^1(F_{n}^k, F_{n}^k)+ \ext^1(F_{n}^k,\cccO_{\p{3}})-1$. However, this is a contradiction, for a direct computation shows, by our choice of $k$:
\begin{align*}
    \ext^1(F_{n+1}^k,F_{n+1}^k)-\ext^1(F_{n}^k,F_{n}^k)- \ext^1(F_{n}^k,\cccO_{\p{3}})-1&=\\
     -\chi(F_{n+1}^k,F_{n+1}^k)+\chi(F_{n}^k,F_{n}^k)+ \chi(F_{n}^k,\cccO_{\p{3}})-1&=\chi(\cccO_{\p{3}},F_{n}^k)=\chi(E_{n}^k)>0.
\end{align*}
\end{step}
\medskip
\begin{step}[Use the universal extension to construct a slope-stable deformation of $E_{n+1}^k$] \label{step6}
Summarizing the previous steps, we proved that $F_{n+1}^k$ deforms to a slope-stable sheaf having no global sections, so we may consider a family of sheaves $\mathbf{F}$ over $U\times \p{3}$, where $U$ is  an appropriate open subset $U$ of $\calm_{\D{P}^3}(\ch(F_{n+1}^k))$, specializing to $F_{n+1}^k$ and all sheaves $\mathbf{F}_t$ corresponding to points $t\in U$ satisfy \ref{T1}. Let us use it to construct a family $\mathbf{E}$ specializing $E_{n+1}^k$ and whose general element satisfies the theorem.
Consider the relative Ext sheaf $$\cale:=\inext^1_{p_1}(\mathbf{F},\cccO_{U \times \D{P}^3}(-1)).$$
Here, $p_1$ is the projection onto the first factor, $\inext^1_{p_1}(\,\cdot\,,\,\cdot\,)=R^1({p_1}_*\inhom(\,\cdot\,, \,\cdot\, ))$
and for a coherent sheaf $\calf$ on $U\times \D{P}^3$ we define $\calf(n):=\calf\otimes {p_2}^*\cccO_{\p{3}}(n)$.
Since $\Ext^2(\mathbf{F}_t,\cccO_{\p{3}}(-1))=0$ for all $t$ in $U$, we get $\inext^2_{p_1}(\mathbf{F},\cccO_{U\times \D{P}^3}(-1))=0$ and $\inext^1_{p_1}(\mathbf{F},\cccO_{U\times \D{P}^3}(-1))$ commutes with base change (see \cite[Theorem 1.4]{Lange} and \cite[Theorem A.5]{Kleiman}).

Moreover, since for all $t\in U$, $\ext^1(\mathbf{F}_t,\cccO_{\p{3}}(-1))=-\chi(\mathbf{F}_t,\cccO_{\p{3}}(-1))$, $\inext^1_{p_1}(\mathbf{F},\cccO_{U\times \D{P}^3}(-1))$ is locally free on $U$ hence ${p_1}_*(\inhom(\mathbf{F}, \cccO_{U\times \D{P}^3}(-1))$ commutes with base change as well (due to \cite[Theorem 1.4]{Lange}) and is, a fortiori, the zero sheaf.

Applying now \cite[Corollary 4.5]{Lange}, we get the existence of a universal extension on $\D{P}(\cale)\times \D{P}^3$:
\begin{equation}
0\rightarrow \cccO_{\D{P}(\cale)\times \D{P}^3}\otimes {p_1}^*\cccO_{\D{P}(\cale)}(-1)\rightarrow \mathbf{E}\rightarrow \hat{\mathbf{F}}\rightarrow 0
\end{equation}
where the sheaf $\hat{\mathbf{F}}$ on $\D{P}(\cale)\times \D{P}^3$ is the pullback of $\mathbf{F}$.

We obtain a family $\mathbf{E}$ of instanton bundles over $\D{P}(\cale)$. In view of the unobstructedness of $E^k_{n+1}$ and semicontinuity of Ext, up to shrinking $U$, we can assume that this family consists of unobstructed instantons.
By construction, a general element of the family satisfies \eqref{T1}.

Finally let us show that, for $x$ general in $\D{P}(\cale)$ we have $ H^0(\bigwedge^q \mathbf{E}_x)=0$ for $q=1,\ldots, n-1$. Taking exterior powers of the exact sequence \eqref{eq:semistable}, we obtain exact sequences:
$$ 0\rightarrow \bigwedge ^{q-1}E^k_{n}\rightarrow \bigwedge^q E_{n+1}^k\rightarrow \bigwedge ^{q}E^k_{n}\rightarrow 0$$
for $2\le q\le n-1$. Due to the inductive hypothesis, we deduce the vanishing of $H^0(\bigwedge ^q E_{n+1}^k)$ for $2\le q\le n-1$. Still from \eqref{eq:semistable}, we also check that $H^0(\bigwedge ^{n} E_{n+1}^k)\simeq \Hom(E_{n+1}^k,\cccO_{\p{3}})=0$. Indeed, applying the functor $\Hom(\:\cdot\:,\cccO_{\p{3}})$,  we get that, by construction, the induced morphism $\Hom(\cccO_{\p{3}},\cccO_{\p{3}})\to \Ext^1(E_{n}^k,\cccO_{\p{3}})$ is injective, for this is nothing but the map sending the identity to the extension class defining \eqref{eq:semistable}. 

Consider now a general deformation $\mathbf{E}_x$ of $E^k_n$. By semicontinuity $H^0(\bigwedge ^q \mathbf{E}_x)=0$ for $2\le q\le n-1$.
In addition to this, we have that, by construction, $\mathbf{E}_x$ fits in
$$ 0\rightarrow \cccO_{\p{3}}(-1)\rightarrow \mathbf{E}_x \rightarrow F\rightarrow 0,$$
with $F=\mathbf{F}_{\pi(x)}
$, where $\pi : \D{P}(\cale) \to U$ is the projection. Since we are assuming $x$ general, the sheaf $F$ will have no nonzero global section. Therefore the same holds for $\mathbf{E}_x$. So, finally, $H^0(\bigwedge ^q \mathbf{E}_x)=0$ for $1\le q\le n-1$. This implies slope-stability of $\mathbf{E}_x$ by the Hoppe's criterion, see for instance \cite[Theorem 3]{jardim-menet-prata-sa-earp}.
\end{step}
\end{proof}

As an immediate consequence of the theorem, we obtain that the moduli space $\calmi_{\D{P}^3}(n,k)$ of $(n,k)$-instanton bundles is non-empty and that it has an irreducible component whose general point corresponds to a slope-stable and unobstructed bundle. We deduce the following result.

\begin{corollary}
For all $n\ge 2, \ k\ge \lceil \frac{n}{2}\rceil$, there exists a generically smooth $(1-n^2+4nk)$-dimensional component of the moduli space $\calmi_{\D{P}^3}(n,k)$ of $(n,k)$-instantons whose general point is a slope-stable locally free instanton having general splitting $0^n$.
\end{corollary}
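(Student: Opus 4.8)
The plan is to deduce the corollary from Theorem \ref{thm:exist-P3} by a dimension count at a smooth point, so that the real content of the statement is already contained in the theorem. That theorem produces, for every $n\ge 2$ and $k\ge\lceil n/2\rceil$, a $\mu$-stable locally free $(n,k)$-instanton bundle $E$ that is moreover unobstructed and has trivial splitting $0^n$ on a general line. The crucial observation is that unobstructedness means $\ext^2(E,E)=0$, so $[E]$ is a smooth point of Maruyama's moduli space $\calm_{\D{P}^3}(\gamma(n,k))$; hence $[E]$ lies on a unique irreducible component, which is generically smooth (smooth at $[E]$) of dimension $\ext^1(E,E)$. Since $E$ is an instanton, $[E]$ belongs to the open subscheme $\calmi_{\D{P}^3}(n,k)\subset\calm_{\D{P}^3}(\gamma(n,k))$, so this is also a component of the instanton moduli.

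Next I would compute $\ext^1(E,E)$. Because $E$ is $\mu$-stable it is simple, so $\ext^0(E,E)=1$; we have $\ext^2(E,E)=0$ by unobstructedness; and $\ext^3(E,E)\simeq\hom(E,E(-4))^*=0$ by Serre duality on $\D{P}^3$ together with $\mu$-semistability, since there is no nonzero morphism from the semistable sheaf $E$ (of slope $0$) to its twist $E(-4)$ of strictly smaller slope. The alternating Euler characteristic then gives $\ext^1(E,E)=\ext^0(E,E)+\ext^2(E,E)-\ext^3(E,E)-\chi(E,E)=1-\chi(E,E)$.

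It remains to evaluate $\chi(E,E)$ by Hirzebruch--Riemann--Roch. Since $\gamma(n,k)=(n,0,-k,0)$ has vanishing odd part we have $\ch(E^*)=\ch(E)=(n,0,-k,0)$, and with $\td(\D{P}^3)=1+2H+\tfrac{11}{6}H^2+H^3$ one finds $\chi(E,E)=\int_{\D{P}^3}\ch(E^*)\,\ch(E)\,\td(\D{P}^3)=n^2-4nk$. Hence $\ext^1(E,E)=1-n^2+4nk$, which is exactly the asserted dimension (and specialises, for $n=2$, to the classical value $8k-3$ for rank-$2$ instantons on $\D{P}^3$, a useful sanity check).

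Finally I would settle the qualitative properties of the general member by openness. Local freeness and $\mu$-stability are open conditions satisfied by $E$, and by property \ref{T3} together with upper-semicontinuity of $h^1(\mathbf{E}|_l(-1))$ in a family, the locus of bundles with trivial splitting on a general line is open and contains $[E]$. As $[E]$ is a smooth point, the component is the closure of a neighbourhood of $[E]$, so its general point is a $\mu$-stable locally free instanton with general splitting $0^n$. The substance of the statement lies entirely in Theorem \ref{thm:exist-P3}; the only point in the corollary that is not purely formal is the vanishing $\ext^3(E,E)=0$, which I isolate above, the remainder being Riemann--Roch bookkeeping and standard semicontinuity, so I do not expect any serious obstacle here.
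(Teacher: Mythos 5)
Your proposal is correct and follows exactly the route the paper intends: the paper states this corollary as an immediate consequence of Theorem \ref{thm:exist-P3}, leaving implicit precisely the deduction you spell out (unobstructedness gives a smooth point on a unique, generically smooth component of dimension $\ext^1(E,E)$, and $\ext^1(E,E)=1-\chi(E,E)=1-n^2+4nk$ via simplicity, $\ext^2(E,E)=0$, the Serre-duality vanishing $\ext^3(E,E)\simeq\hom(E,E(-4))^*=0$, and Riemann--Roch, with openness of local freeness, $\mu$-stability and trivial general splitting handling the qualitative claims). Your Riemann--Roch bookkeeping checks out, including the sanity check $8k-3$ for $n=2$.
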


Looking at the sheaves arising as cokernels of the (twisted) global sections of the instanton bundles constructed in the previous theorem, we get the following result.

\begin{corollary}
For all $n\ge 2, \ k\ge \lceil \frac{n}{2}\rceil$, the moduli space \mbox{$\calm_{\D{P}^3}(\gamma(n,k)-\ch(\cccO_{\p{3}}(-1)))$} of Gieseker-semistable torsion-free sheaves with Chern character \mbox{$\gamma(n,k)-\ch(\cccO_{\p{3}}(-1))$}
has a generically smooth component of dimension $n(4-n)+4k(n-1)$ whose general point is a slope-stable sheaf.
\end{corollary}

\subsection{Instanton bundles of high rank on Del Pezzo threefolds}

Now, we focus on Fano threefolds of index $2$, called Del Pezzo threefolds, and Picard number 1.
We recall that there exists five families of deformation equivalent threefolds $Y_d$ of this kind, distinguished by the degree $d$ of the ample generator of the Picard group, with $d \in \{1,2,3,4,5\}$. For the rest of the section, $Y$ will denote any Fano threefold of index $2$ and $d$ will denote its degree.

As we pointed out, on a Fano threefold $Y_d$ of index $2$, the Chern character $\gamma(n,k)$ of a $(n,k)$-instanton $E_n^k$ has the same form as on $\D{P}^3,$ $\gamma(n,k)=(n, 0, -k, 0).$
We will prove that, for all $n\ge 2$, $Y$ carries slope-stable $(n,k)$-instanton bundles for all $k\ge n$. This was the lower bound for $k$ determined in Lemma \ref{lem:min-k-gen}.
We adopt an inductive method almost equivalent to the one used on the projective space.
Also this time the base of the induction will consist of the rank-2 bundles analogous to 't Hooft bundles, as shown in Lemma \ref{lem:rank2-thooft-i}. They are obtained via Serre's correspondence from elliptic curves.
More precisely a rank-2 't Hooft bundle of charge $k$ corresponds to a l.c.i. unobstructed and non-degenerate elliptic curve of degree $d+k$.
Given an integer $d$, we let $\Hilb_{dt}(Y)$ be the Hilbert scheme of closed subschemes of $Y$ having Hilbert polynomial $dt$, so that curves (i.e. purely one-dimensional closed subschemes of $Y$) of arithmetic genus $1$ and degree $d$ give points of this Hilbert scheme.
We will prove the following result:
\begin{proposition}\label{prop:elliptic}
For all $k\ge 2$, there exist a generically smooth $2(d+k)$-dimensional component $\mathcal{H}_{d+k}$ of $\Hilb_{(d+k)t}(Y)$ whose generic point is a smooth elliptic curve which is non-degenerate and unobstructed.
\end{proposition}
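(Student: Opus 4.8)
The plan is to reduce the statement to an infinitesimal study of the normal bundle and then to construct the required curves by induction on $k$, attaching a general line at each step and smoothing the resulting nodal curve. First I would record the numerology. If $C\subset Y$ is a smooth elliptic curve of degree $e=d+k$, then from the conormal/normal sequence $0\to T_C\to T_Y|_C\to \caln_{C/Y}\to 0$, using $T_C\simeq\calo_C$ and $-K_Y=2H$, one gets $\deg\caln_{C/Y}=-K_Y\cdot C=2e$, whence by Riemann--Roch on the genus-one curve $C$, $\chi(\caln_{C/Y})=2e=2(d+k)$. Since the Hilbert scheme at $[C]$ has Zariski tangent space $H^0(\caln_{C/Y})$ and obstructions landing in $H^1(\caln_{C/Y})$, it suffices to exhibit, for every $k\ge 2$, a single smooth non-degenerate elliptic curve $C$ of degree $d+k$ with $H^1(\caln_{C/Y})=0$. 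Such a $[C]$ is then a smooth point lying on a unique component $\mathcal H_{d+k}$ of dimension $h^0(\caln_{C/Y})=\chi(\caln_{C/Y})=2(d+k)$; smoothness of the curve, its genus, and non-degeneracy (an open condition, as $h^0(\cali_C(1))$ is upper semicontinuous) then persist on a dense open subset of $\mathcal H_{d+k}$.

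For the base case $k=2$ I would produce an elliptic normal curve of degree $d+2$ spanning $\p{d+1}$ and contained in $Y$. Such a curve is automatically non-degenerate, since it spans the ambient projective space of $Y_d\subset\p{d+1}$, and its unobstructedness $H^1(\caln_{C/Y})=0$ would be checked directly, exploiting the good properties of elliptic normal curves in $\p{d+1}$ together with the restriction $\caln_{Y/\p{d+1}}|_C$. Producing such a spanning curve on each $Y_d$ is the first genuinely geometric point and is likely to require some care for the various values $d\in\{1,\dots,5\}$.

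For the inductive step, assume a smooth non-degenerate unobstructed elliptic curve $C_e$ of degree $e=d+k$. I would choose a general line $\ell\subset Y$, so that $\caln_{\ell/Y}\simeq\calo_\ell^{2}$ and $\ell$ meets $C_e$ transversally at a single point $p$, and form the nodal curve $C'=C_e\cup\ell$. Then $p_a(C')=p_a(C_e)+p_a(\ell)+1-1=1$ and $\deg(C')=e+1$, so $C'$ has the correct invariants, and it is non-degenerate because it contains $C_e$. The heart of the argument is to smooth $C'$ to a smooth elliptic curve of degree $e+1$ while preserving the vanishing $H^1(\caln)=0$. For this I would use the nodal-smoothing technique of Hartshorne--Hirschowitz: the restriction sequences exhibiting $\caln_{C'/Y}|_{C_e}$ and $\caln_{C'/Y}|_{\ell}$ as the elementary modifications $\caln_{C_e/Y}(p)$ and $\caln_{\ell/Y}(p)$ at the node, combined with the Mayer--Vietoris sequence for $C'=C_e\cup_p\ell$, reduce $H^1(\caln_{C'/Y})=0$ to the inductive vanishing $H^1(\caln_{C_e/Y})=0$ (which forces $H^1(\caln_{C_e/Y}(p))=0$) and to $H^1(\caln_{\ell/Y}(p))=H^1(\calo_\ell(1)^{2})=0$, the surjectivity onto the fibre at $p$ being ensured by the transversality of the node. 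Vanishing of $H^1(\caln_{C'/Y})$ makes $[C']$ a smooth point of the Hilbert scheme, and its general deformation smooths the node to a smooth elliptic curve of degree $e+1$ with $H^1(\caln)=0$, closing the induction.

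The step I expect to be the main obstacle is twofold. First is the base case: the existence of a spanning elliptic normal curve of degree $d+2$ on each $Y_d$, which is geometric and likely case-dependent in $d$. Second, in the inductive step, is the cohomological bookkeeping guaranteeing simultaneously smoothability and the persistence of $H^1(\caln)=0$ across the node; in particular one must verify that a general line meets $C_e$ in a single transverse point and has split normal bundle $\calo_\ell^{2}$, which rests on the geometry of the Fano surface of lines on $Y$.
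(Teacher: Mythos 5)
Your inductive step coincides with the paper's: the paper (Lemma \ref{lem:smoothing-elliptic}) also attaches an \emph{ordinary} line $l$ (i.e.\ $\caln_{l/Y}\simeq\calo_l^2$) meeting the curve quasi-transversely at one point $p$, and applies the Hartshorne--Hirschowitz machinery exactly as you describe: the Mayer--Vietoris-type sequence $0\to\caln_{C'}\to\caln_{C'}|_C\oplus\caln_{C'}|_l\to\caln_{C'}\otimes\calo_p\to0$ together with the two elementary-modification sequences at the node yields $H^1(\caln_{C'})=0$ and surjectivity of $H^0(\caln_{C'})\to H^0(\calo_p)$, hence smoothability and unobstructedness, and then $h^0(\caln_{C'})=2(d_C+1)$ gives the dimension, matching your computation $\chi(\caln)=2(d+k)$. (One small caution: $\caln_{C'}|_C$ is an elementary modification of $\caln_{C/Y}$ at $p$, not literally the twist $\caln_{C/Y}(p)$; but since it sits in an extension of $\calo_p$ by $\caln_{C/Y}$, the $H^1$-vanishing goes through as you intend.)

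The genuine divergence --- and the gap, which you correctly flagged as the main obstacle --- is the base case $k=2$. Your plan to exhibit directly an elliptic normal curve of degree $d+2$ spanning $\p{d+1}$ does not even make sense for $d\in\{1,2\}$, where $H_Y$ is not very ample and $Y$ is not embedded in $\p{d+1}$; and for $d\ge3$ the existence and unobstructedness of such a spanning curve on $Y$ would require case-by-case geometric work that you leave open. The paper sidesteps this entirely with a degenerate curve plus a dimension count: take a general hyperplane section $S\in|\calo_Y(1)|$, a smooth del Pezzo surface of degree $d$, realize $S$ as the blowup of $\p{2}$ at $9-d$ points, and pull back a general plane cubic through $7-d$ of them; this produces a smooth elliptic curve $C\subset S$ of degree $d+2$ which is unobstructed in $Y$ with $h^0(\caln_{C/Y})=2d+4$. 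Although $C$ itself lies in a hyperplane section (so is degenerate), the family of degree-$(d+2)$ elliptic curves contained in hyperplane sections has dimension only $2d+3$, so the generically smooth $(2d+4)$-dimensional component through $[C]$ cannot consist of degenerate curves, and its general member is smooth, non-degenerate and unobstructed. Note this also fixes the right notion of non-degeneracy for all $d$ (not contained in any member of $|\calo_Y(1)|$), uniformly in $d\in\{1,\dots,5\}$. With this base case substituted in, the rest of your argument is essentially the paper's proof.
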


We will prove the proposition by induction on $k$.
From a smooth, unobstructed elliptic curve $C\subset Y$ of degree $d_C$, we  construct a smooth, unobstructed elliptic curve of degree $d_C+1$, by \textit{smoothing} a nodal curve $C'$ consisting of the union of $C$ and of an \textbf{ordinary line}, i.e. such that $\caln_l\simeq \cccO_l^2$, meeting $C$ quasi-transversely (meaning with distinct tangent vectors) at a point $p$.

\begin{Lemma}\label{lem:smoothing-elliptic}
Let $C\subset Y$ be a smooth, unobstructed elliptic curve on $Y$ and let $l\subset Y$ be an ordinary line meeting $C$ quasi-transversely at a point.
Then $C\cup l$ deforms to a smooth and unobstructed curve.
\end{Lemma}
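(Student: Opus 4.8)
The plan is to study the smoothing through the deformation theory of $C':=C\cup l$ inside the Hilbert scheme, via its normal sheaf. Since $C$ and $l$ are smooth and meet quasi-transversely at the single point $p$, the union $C'$ is nodal, hence locally complete intersection; its normal sheaf $\caln_{C'}$ is therefore defined, and deformations of $C'$ in $Y$ are governed by $H^0(\caln_{C'})$ (the tangent space) and $H^1(\caln_{C'})$ (the obstructions, so that $h^1(\caln_{C'})=0$ gives unobstructedness in the sense of Lemma \ref{lem:curve-unobstructed}). Observe also that $C'$ has arithmetic genus $p_a(C')=p_a(C)+p_a(l)+1-1=1$ and degree $\deg(C)+1$, so a smoothing of $C'$ is exactly a smooth elliptic curve of degree $\deg(C)+1$, as required for the inductive step in Proposition \ref{prop:elliptic}.

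First I would set up the restriction sequence for the normal sheaf. Restricting $\caln_{C'}$ to the two branches and using the standard fact that at a node $\caln_{C'}|_C\simeq \caln_C(p)$ and $\caln_{C'}|_l\simeq \caln_l(p)$, together with the identification of the ideal of $C$ inside $C'$ with $\calo_l(-p)$, one obtains the short exact sequence
\[
0\to \caln_l \to \caln_{C'}\to \caln_C(p)\to 0.
\]
Now $l$ is ordinary, so $\caln_l\simeq \calo_l^2$ and $H^1(\caln_l)=H^1(\calo_{\D{P}^1}^2)=0$. On the other hand $C$ is unobstructed, i.e. $H^1(\caln_C)=0$; twisting by $p$ and using the skyscraper sequence $0\to \caln_C\to \caln_C(p)\to \caln_C(p)|_p\to 0$ yields $H^1(\caln_C(p))=0$ as well. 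Feeding these vanishings into the cohomology of the displayed sequence gives $H^1(\caln_{C'})=0$. Hence the Hilbert scheme is smooth at $[C']$ of dimension $h^0(\caln_{C'})=\chi(\caln_{C'})$, which already yields unobstructedness of $C'$ and, by semicontinuity, of a general deformation.

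Next I would verify that a general deformation genuinely smooths the node, rather than deforming $C'$ equisingularly. For this I use the node-smoothing sequence
\[
0\to \caln'_{C'}\to \caln_{C'}\to T^1\to 0,
\]
where $T^1\simeq \C_p$ is the length-one first-order smoothing space of the node and $\caln'_{C'}$ the equisingular normal sheaf. It suffices to show that the induced map $H^0(\caln_{C'})\to H^0(T^1)=\C$ is nonzero. Since $Y$ is smooth and the two branches have distinct tangent directions at $p$, the node is a genuine smoothable node, and this map is surjective; combined with $H^1(\caln_{C'})=0$, the first-order smoothing lifts to an actual one-parameter deformation smoothing the node. Thus the equisingular locus is a proper (codimension-one) subscheme of the smooth Hilbert scheme at $[C']$, so the general nearby curve is smooth, and connectedness of $C'$ forces it to be a smooth connected, hence irreducible, elliptic curve.

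The main obstacle is precisely this last step: passing from the cohomological vanishing $H^1(\caln_{C'})=0$, which only provides unobstructedness and a large family of deformations, to the genuine \emph{smoothing} of the node. The crux is the nonvanishing of the node-smoothing map $H^0(\caln_{C'})\to T^1_p$, and the hypotheses that $l$ is ordinary and meets $C$ quasi-transversely (distinct tangent directions at $p$) are exactly what make the node a smoothable node in the smooth threefold $Y$ and guarantee this. Once the node is smoothed, semicontinuity of $h^1(\caln)$ transfers unobstructedness to the resulting smooth elliptic curve of degree $\deg(C)+1$.
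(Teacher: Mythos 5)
Your overall strategy is the one the paper follows (it is the Hartshorne--Hirschowitz method of \cite{HH}: vanishing of $H^1(\caln_{C'})$ plus surjectivity of the node-smoothing map $H^0(\caln_{C'})\to H^0(T^1)$), but as written your argument has two concrete problems. First, the local identifications at the node are wrong in a threefold: at a node of $C'=C\cup l$ one does \emph{not} have $\caln_{C'}|_C\simeq \caln_C(p)$ and $\caln_{C'}|_l\simeq\caln_l(p)$. Those formulas are the rank-one (surface) formulas; for rank-2 normal bundles the branch restrictions are \emph{elementary modifications}, raising the degree by $1$ in the direction of the tangent of the other branch, i.e.\ they sit in $0\to \caln_C\to \caln_{C'}|_C\to \calo_p\to 0$ (the paper's \eqref{eq:elm-elliptic}) and similarly for $l$ (\eqref{eq:elm-line}), not full twists by $p$ which raise the degree by $2$. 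Consequently your displayed sequence $0\to\caln_l\to\caln_{C'}\to\caln_C(p)\to0$ cannot exist: it would give $\chi(\caln_{C'})=\chi(\calo_l^2)+\chi(\caln_C(p))=2\deg C+4$, whereas for the l.c.i.\ curve $C'$ of arithmetic genus $1$ on the index-$2$ threefold $Y$ one has $\chi(\caln_{C'})=-K_Y\cdot C'=2(\deg C+1)$. The paper works instead with the Mayer--Vietoris sequence \eqref{eq:normal-union}, $0\to \caln_{C'}\to \caln_{C'}|_C\oplus \caln_{C'}|_l\to \caln_{C'}\otimes \calo_p\to 0$, combined with \eqref{eq:elm-elliptic} and \eqref{eq:elm-line}. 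This error is repairable, since the $H^1$-vanishings you want still follow from the correct sequences ($H^1(\caln_C)=0$ and, for the ordinary line, $h^1(\caln_{C'}|_l(-p))=0$).

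The second problem is the genuine gap: the surjectivity of $H^0(\caln_{C'})\to H^0(T^1)\simeq \C$, which you yourself identify as the crux, is asserted rather than proved. It does \emph{not} follow from $H^1(\caln_{C'})=0$: writing $0\to\caln'_{C'}\to\caln_{C'}\to T^1\to 0$, the cokernel of $H^0(\caln_{C'})\to H^0(T^1)$ injects into $H^1(\caln'_{C'})$, so one needs control of the equisingular normal sheaf (or an equivalent branch-wise statement), and "the node is a genuine smoothable node because the tangent directions are distinct" is precisely the conclusion one is trying to establish, not an argument. This is exactly where the hypotheses must do work, and where the paper uses them: unobstructedness of $C$ gives surjectivity of $H^0(\caln_{C'}|_C)\to H^0(\calo_p)$ via \eqref{eq:elm-elliptic}, and ordinariness of $l$ gives $h^1(\caln_{C'}|_l(-p))=0$, hence surjectivity of $H^0(\caln_{C'}|_l)\to H^0(\caln_{C'}\otimes\calo_p)$ via \eqref{eq:elm-line}; feeding these into \eqref{eq:normal-union} yields both $H^1(\caln_{C'})=0$ and the surjectivity of $H^0(\caln_{C'})\to H^0(\calo_p)$, which are the hypotheses of \cite[Proposition 1.1]{HH} granting that $C'$ is smoothable and unobstructed. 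To complete your proof you should replace the unjustified assertion by this branch-wise computation (or by a direct proof that $H^1(\caln'_{C'})=0$).
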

\begin{proof}
We apply \cite[Theorem 4.1]{HH} and we refer to loc. cit. for further details.
Denote by $C':=C\cup l$ and by $p$ the intersection point $p:=C\cap l$. The normal sheaf $\caln_{C'}$ of $C'$ fits in a short exact sequence:
\begin{equation}\label{eq:normal-union}
    0\rightarrow \caln_{C'}\rightarrow \caln_{C'}|_C\oplus \caln_{C'}|_l\rightarrow \caln_{C'}\otimes \cccO_p\rightarrow 0.
\end{equation}
Let us consider the restriction $\caln_{C'}|_C$. This is a rank-2 vector bundle on $C$ fitting in:
\begin{equation}\label{eq:elm-elliptic}
0\rightarrow \caln_C\rightarrow \caln_{C'}|_C\rightarrow \cccO_p\rightarrow 0;
\end{equation}
since by assumption $C$ is unobstructed, $H^1(\caln_{C'}|_C)=0$ and $H^0(\caln_{C'}|_C)\to H^0(\cccO_p)$ is surjective.
According to \cite{HH}, the restriction $\caln_C'|_l$ fits in an exact sequence of $\cccO_l$ modules:
\begin{equation}\label{eq:elm-line}
0\rightarrow \caln_{C'}|_l(-p)\rightarrow \caln_{C'}|_l\rightarrow \caln_{C'}\otimes\cccO_p\rightarrow 0;
\end{equation}
since also $\caln_{C'}|_l$ arises from an extension class in $\Ext^1(\cccO_p,\caln_l)$ and since $l$ is ordinary, $h^1(\caln_{C'}|_l(-p))=0$ so that
$h^1(\caln_{C'}|_l)=0$ as well and $H^0(\caln_{C'}|_l)\to H^0(\caln_{C'}\otimes \cccO_p)$ is surjective.
These results, together with (\ref{eq:normal-union}), lead us to conclude that $H^1(\caln_{C'})=0$ and $H^0(\caln_{C'})\to H^0(\cccO_p)$ is surjective.
Then, applying  \cite[Proposition 1.1]{HH}, we conclude that $C'$ is a smoothable and unobstructed elliptic curve.
\end{proof}


\begin{proof}[Proof of Proposition \ref{prop:elliptic}]
We argue by induction on $k$. For $k=2$ this can be found e.g. in \cite[Proof of Theorem D]{Faenzi}. We sketch here the key steps leading to the result. To begin with we consider a general hyperplane section $S$ of $Y$, so $S$ is a smooth Del Pezzo surface of degree $d$. It is easy to construct a smooth degree $d+2$ elliptic curve $C$ on $S$ that is unobstructed on $Y$. This is done realising $S$ as the blow-up of $\D{P}^2$ along $9-d$ points $p_1, \ldots, p_{9-d}$, and pulling back a general plane cubic passing through $7-d$ among $p_1,\ldots,p_{9-d}$. For such a curve we have $h^0(\caln_C)=2d+4$ and $h^1(\caln_C)=0$. However, a dimension count shows that the family of elliptic curves of degree $d+2$ which are contained in a hyperplane section has dimension $2d+3$. Accordingly, $C$ belongs to a generically smooth $(2d+4)$-dimensional component $\mathcal{H}_{d+2}$ whose general element is a smooth, unobstructed and non-degenerate curve.

Let us now pass to the inductive step. For $k>2$, consider a smooth and unobstructed curve $C$ of degree $d_C:=d+k$, corresponding to a general point in $\mathcal{H}_{d_C}$.
Recall that for a general point $p\in Y$ we have a finite number of lines passing through $p$. Since $C$ is general, we can find an ordinary line $l$ meeting $C$ transversely at a point $p\in C$. The union $C'=C\cup l$ is an elliptic curve of degree $d_C+1$. According to Lemma \ref{lem:smoothing-elliptic}, this curve is unobstructed.  From the short exact sequences \eqref{eq:normal-union}, \eqref{eq:elm-elliptic} and \eqref{eq:elm-line}, we compute $h^0(\caln_{C'})=h^0(\caln_{C'}|_C)+h^0(\caln_{C'}|_l)-2=h^0(\caln_C)+h^0(\caln_l)=2(d_C+1)=2(d+k+1)$.
Then $C'$ defines a smooth point of a $2(d+k+1)$-dimensional component $\mathcal{H}_{d+k+1}$ of $\Hilb_{(d+k+1)t}$. Since, still according to Lemma \ref{lem:smoothing-elliptic}, $C'$ deforms to a smooth curve $\Tilde{C}$, a general point of $\mathcal{H}_{d+k+1}$ corresponds to a smooth elliptic curve on $Y$. Finally, since $C'$ is unobstructed and non-degenerate, assuming $\Tilde{C}$ general enough, we have that, by semicontinuity, the same holds for $\Tilde{C}$.
\end{proof}

\begin{Lemma}\label{lem:rank-2}
For all $k\ge 2$ there exists an unobstructed slope-stable rank-2 instanton bundle $E_2^k$ fitting in a short exact sequence
\begin{equation}\label{eq:'t Hooft-index2}
0\rightarrow \cccO_{Y}(-1)\rightarrow E\rightarrow \cali_C(1)\rightarrow 0
\end{equation}
with $C$ a smooth, non-degenerate and unobstructed curve in $\mathcal{H}_{d+k}$. Moreover such a bundle $E_2^k$ has splitting $(0,0)$ on the general line.
\end{Lemma}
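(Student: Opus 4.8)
The plan is to produce $E_2^k$ by feeding a general curve of the component $\mathcal{H}_{d+k}$ from Proposition \ref{prop:elliptic} into the Serre correspondence packaged in Lemma \ref{lem:rank2-thooft-i}. Since $i_X=2$ we have $r_X=0$, $q_X=1$ and $F_0\simeq\calo$, so the relevant extension group is $\Ext^1(\cali_C(1),\calo(-1))$ and the sought bundle is exactly the middle term of a sequence of the shape \eqref{eq:rank2-thooft}.

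First I would choose $C$ to be a general point of $\mathcal{H}_{d+k}$. By Proposition \ref{prop:elliptic} such a $C$ is a smooth, non-degenerate, unobstructed elliptic curve of degree $d+k$; in particular it is a connected l.c.i. curve with $p_a(C)=1$, which is precisely the input demanded by Lemma \ref{lem:rank2-thooft-i} in the even-index case. Thus $C$ is exactly the kind of curve to which that lemma applies.

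Next I would check that a nowhere-vanishing extension class exists, so that the resulting sheaf is locally free and sits in \eqref{eq:'t Hooft-index2}. Because $C$ is connected, $\Ext^1(\cali_C(1),\calo(-1))\simeq H^0(\calo_C)^*$ is one-dimensional, so there is, up to scale, a unique nonzero class $e$. To see that $e$ is nowhere vanishing I would run the local-to-global spectral sequence: $\inhom(\cali_C,\calo)=\calo_Y$ gives $H^1(\inhom(\cali_C(1),\calo(-1)))=H^1(\calo_Y(-2))=0$ by Kodaira vanishing (as $\calo_Y(-2)=\omega_Y$), whence $e$ is identified with a section of $\inext^1(\cali_C(1),\calo(-1))$. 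By the computation underlying Lemma \ref{lem:curve-unobstructed} this local Ext sheaf equals $\omega_C\simeq\calo_C$, and the generator of $H^0(\calo_C)$ is the nowhere-zero constant section; hence $e$ generates the local Ext sheaf at every point and the corresponding $E=E_2^k$ is a vector bundle fitting in \eqref{eq:'t Hooft-index2}.

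Finally I would read off all the asserted properties directly from Lemma \ref{lem:rank2-thooft-i}: part \ref{unob-i} transfers unobstructedness from $C$ to $E$; part \ref{unob-iii}, since $i_X=2$ is even and $C$ is non-degenerate, yields $\mu$-stability; and part \ref{unob-iv} gives that the general line is not jumping, i.e. $h^1(E|_l(-1))=0$. As $c_1(E)=0$ by \eqref{eq:'t Hooft-index2}, a non-jumping line forces $E|_l\simeq\calo_l\oplus\calo_l$, the splitting $(0,0)$. The genuine mathematical content is entirely imported: Proposition \ref{prop:elliptic} (via the smoothing argument of Lemma \ref{lem:smoothing-elliptic}) supplies a good curve, and Lemma \ref{lem:rank2-thooft-i} converts it into the bundle, so no serious further obstacle remains beyond bookkeeping. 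The one point deserving care is the local freeness of $E$, namely confirming that the unique extension class is nowhere vanishing, which is why I single out the local Ext computation above.
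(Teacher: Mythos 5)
Your proof is correct and follows essentially the same route as the paper, which simply invokes the Serre correspondence of Lemma \ref{lem:rank2-thooft-i} (together with Lemma \ref{lem:rank2-thooft+unobstructed}) applied to a general curve of the component $\mathcal{H}_{d+k}$ from Proposition \ref{prop:elliptic}. Your local-to-global computation showing that the unique nonzero class in $\Ext^1(\cali_C(1),\calo(-1))\simeq H^0(\calo_C)^*$ is nowhere vanishing (hence that $E$ is locally free) is a worthwhile verification of a hypothesis the paper leaves implicit, since for a connected elliptic curve $\inext^1(\cali_C(1),\calo(-1))\simeq\omega_C\simeq\calo_C$ and any nonzero section is nowhere zero.
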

\begin{proof}
Once again this is a consequence of Lemma \ref{lem:rank2-thooft-i}.
\end{proof}

Again, actually all minimal rank-2 instantons $E$ (i.e. rank-2 instantons having charge 2) are 't Hooft instantons, since $\chi(E(1))>0$.
We can finally adapt the argument of Theorem \ref{thm:exist-P3} to check the existence of slope-stable instanton bundles of rank $r$ for all $r\ge 2$.
We omit the detailed proof since there is no major modification with respect to Theorem \ref{thm:exist-P3}.

\begin{theorem} \label{thm:existence-even}
For all $n\ge 2$ and $k\ge n$ there exists a slope-stable unobstructed $(n,k)$-instanton bundle $E$ on $Y$ satisfying the following:
\begin{enumerate}[label=\roman*)]
\item
the cokernel sheaf $F$ of a general map $\cccO_{Y}(-1)\xrightarrow{s} E$ is an unobstructed slope-stable torsion-free sheaf of homological dimension at most 1 such that \begin{align*}
    &\ext^i(F,\cccO_{Y}(-j))=0 && \mbox{for all $i\ne 1$ and $j \in \{0,1\}$}\\
    &h^i(F)=0, && \mbox{for all $i\ne 1$}.
\end{align*}
\item For any integer $q$ with $1\le q\le n-1$, we have $h^0(\bigwedge ^q E)=0$.
\item For a general line $l\subset Y,\ E|_l\simeq \cccO_l^n$.
\end{enumerate}

\end{theorem}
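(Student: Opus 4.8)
The plan is to mirror, essentially verbatim, the inductive argument of Theorem \ref{thm:exist-P3}, replacing $\D{P}^3$ by $Y$ and the lower bound $\lceil n/2\rceil$ by $n$, so I will argue by induction on the rank $n$. For the base case $n=2$, Lemma \ref{lem:rank-2} (combined with Proposition \ref{prop:elliptic}) furnishes an unobstructed $\mu$-stable rank $2$ 't Hooft instanton $E_2^k$ for every $k\ge 2=n$, fitting in \eqref{eq:'t Hooft-index2} with $C\in\mathcal{H}_{d+k}$ smooth, non-degenerate and unobstructed. From this sequence one checks that the cokernel $\cali_C(1)$ of the section $\calo(-1)\to E_2^k$ has homological dimension $\le 1$ and satisfies the $\Ext$- and cohomology-vanishings in item (i); the required $h^0(\bigwedge^q E)=0$ for $1\le q\le n-1$ reduces to $h^0(E_2^k)=0$, which is $\mu$-stability, and the generic splitting $\calo_l^2$ is the content of Lemma \ref{lem:rank-2}.

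For the inductive step, assume the theorem for rank $n$ and $k\ge n$, and take $k\ge n+1$. First I would produce a strictly $\mu$-semistable, Gieseker-unstable $(n+1,k)$-instanton $E_{n+1}^k$ from a non-split class in $\Ext^1(E_n^k,\calo)$: the vanishings $\ext^3(E_n^k,\calo)=0$ (stability) and $\ext^2(E_n^k,\calo)=0$ (from item (ii) of the induction, exactly as in Step \ref{step2}) together with $\chi(E_n^k,\calo)<0$ guarantee such a class exists, and Lemma \ref{lem:ext-unobstructed} gives unobstructedness of $E_{n+1}^k$. The only genuinely new point compared to the $\D{P}^3$ case is arithmetic: I must recompute $\chi(E_n^k,\calo)$ on $Y_d$ and verify it is negative precisely when $k\ge n$, which is where the sharper bound $k\ge n$ (versus $\lceil n/2\rceil$) enters; this is a routine Riemann--Roch computation using $\ch(E_n^k)=(n,0,-k,0)$, the index $i_Y=2$ and the degree $d$. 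Lifting a general $s'\in H^0(E_n^k(1))$ to $s\in H^0(E_{n+1}^k(1))$ and forming the cokernel $F_{n+1}^k$ as in the diagram \eqref{cd-non-inst-onemore}, I obtain the unobstructedness and $\mu$-stability of $F_{n+1}^k$ by the arguments of Steps \ref{step3} and \ref{step4}, which use only $c_1=1$, stability of $F_n^k$, and Lemma \ref{lem:ext-unobstructed}, all index-independent.

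The deformation argument of Steps \ref{step5} and \ref{step6} transfers verbatim, working now in the Gieseker moduli space $\calm_Y(\ch(F_{n+1}^k))$. The crucial dimension inequality in Step \ref{step5} hinges on the positivity $\chi(\calo,F_n^k)=\chi(E_n^k)>0$; this is again a Riemann--Roch check on $Y_d$, and it is the arithmetic heart of why the bound is $k\ge n$. Granting it, $F_{n+1}^k$ deforms to a $\mu$-stable sheaf with no global sections, and the relative $\inext^1$ construction of Step \ref{step6}, invoking \cite[Theorem 1.4, Corollary 4.5]{Lange} and the base-change results, builds a family $\mathbf{E}$ over $\D{P}(\cale)$ whose general member is an unobstructed $(n+1,k)$-instanton bundle satisfying (i); the vanishing $H^0(\bigwedge^q\mathbf{E}_x)=0$ for $1\le q\le n$ follows by taking exterior powers of \eqref{eq:semistable} and a semicontinuity-plus-generic-section argument, yielding $\mu$-stability through Hoppe's criterion \cite[Theorem 3]{jardim-menet-prata-sa-earp}.

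I expect the main obstacle to be purely bookkeeping rather than structural: since the entire machinery (extensions, unobstructedness via Lemma \ref{lem:ext-unobstructed}, the characterization of destabilizing subsheaves in Lemma \ref{lem:ext-ss}, the universal-extension construction) is already established and index-agnostic, the real work is confirming that the numerical inputs $\chi(E_n^k,\calo)<0$ and $\chi(E_n^k)>0$ hold exactly on the range $k\ge n$ for every Del Pezzo degree $d\in\{1,2,3,4,5\}$. The subtlety is that $Y$ is not a single variety but five deformation families, so I would make the Riemann--Roch computation uniform in $d$ and check that the sign conditions are insensitive to $d$; once that is secured, the proof is a faithful adaptation of Theorem \ref{thm:exist-P3}.
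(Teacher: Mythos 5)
Your proposal coincides with the paper's proof: the paper's entire argument for Theorem \ref{thm:existence-even} is the single remark that the steps are those of Theorem \ref{thm:exist-P3} transplanted to $Y$, with the base case supplied by Lemma \ref{lem:rank-2} (via Proposition \ref{prop:elliptic}) and the numerical threshold becoming $k\ge n$, exactly as you lay out. One caution on the arithmetic you defer: on $Y_d$ one finds $\chi(E_n^k)=\chi(E_n^k,\calo)=n-k$ (uniformly in $d$, since $\chi(\calo_Y(-1))=0$), so your two stated sign conditions $\chi(E_n^k,\calo)<0$ and $\chi(E_n^k)>0$ are literally contradictory --- the second is a sign slip inherited from the paper's Step \ref{step5}, where the dimension count actually requires $\ext^1(F_{n+1}^k,F_{n+1}^k)-\ext^1(F_n^k,F_n^k)-\ext^1(F_n^k,\calo)+1=-\chi(\calo,F_n^k)=-\chi(E_n^k)=k-n>0$; with $k\ge n+1$ in the inductive step both conditions collapse to the single inequality $k>n$, independent of the degree $d$, and the rest of your adaptation goes through as written.
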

\begin{proof}
The steps of the proof are equivalent to the ones we used for Theorem \ref{thm:exist-P3}.
\end{proof}

\begin{corollary}
For all $n\ge 2, \ k\ge n$, there exists a generically smooth  $(2kn-n^2+1)$-dimensional component of the moduli space $\calmi_Y(n,k)$ of $(n,k)$-instanton bundles whose general point is a slope-stable locally free instanton having general splitting $0^n$.
\end{corollary}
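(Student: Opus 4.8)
The plan is to read the statement off directly from Theorem \ref{thm:existence-even}, which already supplies, for every $n\ge 2$ and $k\ge n$, a $\mu$-stable unobstructed $(n,k)$-instanton bundle $E$ on $Y$ whose restriction to a general line is $\calo_l^n$. Non-emptiness of $\calmi_Y(n,k)$ is then immediate, and the remaining content is to produce a single generically smooth component through $[E]$ of the advertised dimension and to check that its general member keeps the listed properties. Since $\calmi_Y(n,k)$ is an open subscheme of Maruyama's moduli space $\calm_Y(\gamma(n,k))$, I would study the deformation theory of $E$ as a point of the latter.

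First I would exploit unobstructedness: the vanishing $\Ext^2(E,E)=0$ kills the obstruction space, so $\calm_Y(\gamma(n,k))$ is smooth at $[E]$ and its local dimension equals $\ext^1(E,E)$. As $E$ is $\mu$-stable it is simple, whence $\hom(E,E)=1$; moreover $\Ext^3(E,E)\cong \Hom(E,E\otimes\omega_Y)^{\ast}=\Hom(E,E(-2))^{\ast}$ vanishes, because a nonzero map between the $\mu$-stable bundles $E$ and $E(-2)$ of equal rank is impossible once $\mu(E(-2))=\mu(E)-2<\mu(E)$ (the image would be a quotient of $E$, hence of slope $\ge\mu(E)$, and a subsheaf of $E(-2)$, hence of slope $\le\mu(E)-2$). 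Hence
\[
\ext^1(E,E)=\hom(E,E)+\ext^2(E,E)-\ext^3(E,E)-\chi(E,E)=1-\chi(E,E).
\]

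It then remains to evaluate $\chi(E,E)$ by Hirzebruch--Riemann--Roch. Using $\ch(E)=\gamma(n,k)=(n,0,-k,0)$ and $c_1(E)=0$ one gets $\ch(E^{\vee})=(n,0,-k,0)$ as well, so $\ch(E^{\vee}\otimes E)=(n^2,0,-2kn,0)$. Pairing against $\td(Y)$, whose relevant pieces are $\td_1=H_Y$ (as $-K_Y=2H_Y$) and $\int_Y\td_3=\chi(\calo_Y)=1$, the only surviving contributions to $\chi(E,E)=\int_Y\ch(E^{\vee}\otimes E)\td(Y)$ are $n^2\int_Y\td_3-2kn\int_Y l_Y\cdot H_Y=n^2-2kn$, using $l_Y\cdot H_Y=1$. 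Therefore $\chi(E,E)=n^2-2kn$ and $\ext^1(E,E)=2kn-n^2+1$, exactly the claimed dimension.

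Finally I would invoke openness: local freeness, $\mu$-stability, the instanton cohomology vanishings of Definition \ref{def:instanton}, and trivial splitting over a general line are all open conditions on $\calm_Y(\gamma(n,k))$ (the last by semicontinuity of $h^1(\,\cdot\,|_l(-1))$ in a family). Since $[E]$ is a smooth point lying in this open instanton locus, it belongs to a unique irreducible component, which is therefore generically smooth of dimension $2kn-n^2+1$ and whose general member is a $\mu$-stable locally free $(n,k)$-instanton with splitting $0^n$. I do not expect any serious obstacle here: the whole argument is a standard deformation-theoretic dimension count, the single computational point being the Riemann--Roch evaluation of $\chi(E,E)$, with all genuine geometric input already packaged in Theorem \ref{thm:existence-even}.
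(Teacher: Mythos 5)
Your proposal is correct and follows exactly the route the paper intends: the corollary is stated as an immediate consequence of Theorem \ref{thm:existence-even}, with the dimension obtained from $\ext^1(E,E)=1-\chi(E,E)$ (using simplicity, $\Ext^2(E,E)=0$ from unobstructedness, and $\Ext^3(E,E)\simeq\Hom(E,E(-2))^*=0$ by stability) together with the Riemann--Roch evaluation $\chi(E,E)=n^2-2kn$, and openness of the listed properties. Your computation and the openness/semicontinuity argument are accurate, so there is nothing to correct.
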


\begin{corollary}
For all $n\ge 2, \ k\ge n$, the Gieseker moduli space \mbox{$\calm_Y(\gamma(n,k)-\ch(\cccO_{Y}(-1)))$} of Gieseker-semistable torsion-free sheaves with Chern character $\gamma(n,k)-\ch(\cccO_{Y}(-1))$
admits a generically smooth component of dimension $n(d+2-n)+2k(n-1)$ whose general point is a slope-stable sheaf.
\end{corollary}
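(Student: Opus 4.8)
The plan is to exhibit an explicit unobstructed, $\mu$-stable point of the Gieseker moduli space and then read off the dimension of the component it determines. Concretely, I would take the instanton bundle $E$ of Theorem \ref{thm:existence-even} and the cokernel sheaf $F$ of a general section $\calo(-1)\xrightarrow{s}E$. By that theorem $F$ is torsion-free, $\mu$-stable and unobstructed, and $\ch(F)=\gamma(n,k)-\ch(\calo(-1))$, so $[F]$ is a point of $\calm_Y(\gamma(n,k)-\ch(\calo(-1)))$. The unobstructedness $\Ext^2(F,F)=0$ guarantees that $[F]$ is a smooth point of the moduli space, hence lies in a unique irreducible component, which is therefore generically smooth; and since $\mu$-stability is an open condition, the general member of this component is again $\mu$-stable.

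It then remains to compute the dimension of this component, which equals $\ext^1(F,F)$ at the smooth point $[F]$. First I would pin down the other $\Ext$ groups: by $\mu$-stability $\hom(F,F)=1$, by unobstructedness $\ext^2(F,F)=0$, and, since $K_Y=\calo(-2)$, Serre duality gives $\ext^3(F,F)=\hom(F,F(-2))=0$ again by stability (as $F(-2)$ has strictly smaller slope). Consequently $\ext^1(F,F)=1-\chi(F,F)$, and the problem reduces to an Euler-characteristic computation.

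To evaluate this cleanly I would avoid a direct four-term Riemann--Roch on $F$ and instead exploit the defining sequence $0\to\calo(-1)\to E\to F\to 0$ together with the bilinearity of $\chi$ on Chern characters:
\begin{equation*}
\chi(F,F)=\chi(E,E)-\chi(E,\calo(-1))-\chi(\calo(-1),E)+\chi(\calo(-1),\calo(-1)).
\end{equation*}
Here $\chi(\calo(-1),\calo(-1))=\chi(\calo_Y)=1$, while the instanton side is already known: the preceding corollary gives $\ext^1(E,E)=2kn-n^2+1$, whence $\chi(E,E)=1-\ext^1(E,E)=n^2-2kn$ (using $\ext^i(E,E)=0$ for $i=2,3$). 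The two cross terms are ordinary cohomology computations, $\chi(\calo(-1),E)=\chi(E(1))$ and, by Serre duality, $\chi(E,\calo(-1))=-\chi(E(-1))$. A Hirzebruch--Riemann--Roch calculation on $Y$, using $-K_Y=2H$, $H^3=d$ and the identity $H\cdot c_2(Y)=12$ (equivalent to $\chi(\calo_Y)=1$), yields $\chi(E(1))=nd+2n-2k$ and $\chi(E(-1))=0$.

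Substituting everything back gives $\ext^1(F,F)=\ext^1(E,E)+\chi(E(1))-1=(2kn-n^2+1)+(nd+2n-2k)-1=n(d+2-n)+2k(n-1)$, which is the asserted dimension. The only genuine input is Theorem \ref{thm:existence-even} (existence of an unobstructed $\mu$-stable $F$ with the right invariants); the remaining steps are formal, the sole mild obstacle being the bookkeeping in the Riemann--Roch evaluation of $\chi(E(\pm1))$, which is routine once the Del Pezzo identities above are in hand.
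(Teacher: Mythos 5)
Your proposal is correct and matches the route the paper intends: the corollary is stated as an immediate consequence of Theorem \ref{thm:existence-even}, whose cokernel sheaf $F$ of a general section $\calo(-1)\to E$ is exactly the unobstructed, $\mu$-stable smooth point you exhibit, and the dimension is then $\ext^1(F,F)=1-\chi(F,F)$ as you compute. Your Riemann--Roch bookkeeping checks out ($\chi(E,E)=n^2-2kn$, $\chi(E(1))=nd+2n-2k$, $\chi(E(-1))=0$, consistent with the instanton vanishing $h^i(E(-1))=0$), giving $nd+2n-n^2+2kn-2k=n(d+2-n)+2k(n-1)$ as asserted.
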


\section{Instantons on Fano threefolds of odd index}

\label{section:odd}

Now we deal with Fano threefolds of odd index. In these cases the invariance of the Chern character with respect to $\RHom(\ \cdot\ , \cccO_{X}(-r_X))$ imposes several constraints. Notably, the rank $r$ of an instanton sheaf must be even, we write $r=2n$. 
To prove the existence of slope-stable instanton bundles, we will need to modify non-trivially the methods applied in the even-index cases.
The central idea remains to construct strictly slope-semistable sheaves that are not Gieseker-stable, more specifically having $F_0$ as their maximal destabilizing subsheaf, and then to prove that these deform to slope-stable ones.
For $i_X$ odd the strictly slope-semistable sheaves that we construct will arise from extensions of slope-stable instantons by a minimal instanton $F_0$, which this time will no longer be decomposable. Note that this is consistent with the requirement $r=2n$.

\subsection{Instantons of high rank on 3-dimensional quadrics}

\label{section:quadric}

We start by treating the case of the smooth quadric hypersurface $Q\subset \D{P}^4$, the only smooth Fano threefold of index 3.
On $Q$, the minimal instanton is the \textbf{spinor bundle} $\cals$. This is the only rank-2 slope-stable instanton bundle of charge 1.
The Chern character $\gamma(n,k)\in \oplus_{i=1}^3 H^{i,i}(Q,\mathbb{Z})\simeq \D{Z}$ of a $(n,k)$-instanton $E_n^k$ on $Q$
has the form:
$$\gamma(n,k)=n\: \ch(\cals) - k\:\ch(\cccO_l)=\left(2n,-n,-k,\frac{k}{2}-\frac{n}{6}\right).$$

We recollect some well-known features of the spinor bundle that will be needed later -- for further details we refer e.g. to \cite{AS,Ott}.
The bundle $\cals$ is exceptional, namely it is simple and $\Ext^p(\cals,\cals)=0$ for $p=1,2,3$.
Via the Serre correspondence, $\cals$ can be obtained from any line $l\subset Q$ and a non-zero extension $e\in \Ext^1(\cali_l,\cccO_{Q}(-1))\simeq H^1(\cccO_l(-2))\simeq \C$:
\begin{equation}\label{eq:Serre-spinor}
0\rightarrow \cccO_{Q}(-1)\rightarrow \cals \rightarrow \cali_l\rightarrow 0.
\end{equation}

The spinor bundle $\cals$ is uniform, namely, given any line $l\subset Q$,
we have $\caln_{l/X}\simeq \cals(1)|_l\simeq \cccO_l\oplus \cccO_l(1)$ hence $\cals$ has splitting type $(-1,0)$ on $l$.
The Hilbert scheme of lines $\Hilb_{t+1}(Q)$ is identified with $\D{P}(H^0(\cals(1)))\simeq \D{P}^3$.
The dual $\cals^*\simeq \cals(1)$ is an Ulrich bundle, that is to say it satisfies $H^i(\cals(-i))=0$ for $i<3$ and $H^i(\cals^*(-i))=0$ for $i>0$.
The bundle $\cals^*$ is globally generated and the surjective evaluation map $\cccO_{Q}^4\simeq \cccO_{Q}\otimes H^0(\cals(1))\to \cals(1)$ induces an embedding $Q\hookrightarrow \mathbb{G}(2,4)$ whose image is a hyperplane section of $\mathbb{G} (2,4)$. This way, $\cals$ is identified with the pullback of the tautological rank-2 vector bundle $\calu$. Pulling back the tautological exact sequence on $\mathbb{G} (2,4)$ one sees that $\cals$ fits in:
\begin{equation}\label{eq:spinor-Euler}
0\rightarrow \cals \rightarrow \cccO_{Q}^4\rightarrow \cals^*\rightarrow 0.
\end{equation}

In the next lemma, we determine the minimal charge of Gieseker-stable $(n,k)$-instantons.

\begin{Lemma}\label{lem:min-quadric}
Let $E_n^k$ be a Gieseker-stable $(n,k)$-instanton on $Q$, not isomorphic to $\cals$. Then
$$k\ge\lceil \frac{n}{3}\rceil.$$
\end{Lemma}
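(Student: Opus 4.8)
The plan is to squeeze the charge from below by computing the Euler pairing $\chi(\cals,E)$ and showing it is non-positive. Since $\ch(E)=n\,\ch(\cals)-k\,\ch(\calo_l)$, bilinearity of $\chi$ on the numerical Grothendieck group gives $\chi(\cals,E)=n\,\chi(\cals,\cals)-k\,\chi(\cals,\calo_l)$. The spinor bundle is exceptional, so $\chi(\cals,\cals)=1$; and since $\cals$ has splitting type $(-1,0)$ on every line $l$, one has $\cals^{*}|_{l}\simeq\calo_l(1)\oplus\calo_l$, whence $\chi(\cals,\calo_l)=\chi(\cals^{*}|_{l})=3$. Therefore $\chi(\cals,E)=n-3k$, and the whole lemma reduces to proving $\chi(\cals,E)\le 0$, i.e. that among the groups $\Ext^{i}(\cals,E)$ only $\Ext^{1}$ survives.

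The top and middle groups vanish for slope and instanton reasons. By Serre duality $\Ext^{3}(\cals,E)\simeq\Hom(E,\cals(-3))^{*}$, and any nonzero map would send the $\mu$-semistable $E$ of slope $-\tfrac12$ into $\cals(-3)$ of slope $-\tfrac72$, which is impossible; so $\Ext^{3}(\cals,E)=0$. For $\Ext^{2}(\cals,E)=H^{2}(\cals^{*}\otimes E)=H^{2}(\cals(1)\otimes E)$ I would tensor the Euler sequence \eqref{eq:spinor-Euler} by $E$, obtaining $0\to\cals\otimes E\to E^{4}\to\cals(1)\otimes E\to 0$. Here the instanton vanishing $H^{2}(E)=0$ (condition \ref{instanton-ii} of Definition \ref{def:instanton} with $t=1$) kills the middle term's $H^2$, while $H^{3}(\cals\otimes E)\simeq\Hom(E,\cals(-2))^{*}=0$ again by the slope inequality $-\tfrac52<-\tfrac12$; the long exact sequence then forces $H^{2}(\cals(1)\otimes E)=0$.

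The delicate point is the vanishing of $\Hom(\cals,E)$, since $E$ is only assumed Gieseker stable and hence $\cals$ could a priori embed in $E$ as a subsheaf of the same slope, which $\mu$-semistability alone cannot exclude. Here I would first record the weaker bound $k\ge 0$: indeed $\chi(E)=-k$, while $\mu$-semistability gives $H^{0}(E)=H^{3}(E)=0$ and the instanton condition gives $H^{2}(E)=0$, so $\chi(E)=-h^{1}(E)\le 0$ (this is Lemma \ref{lem:min-k-gen}). Now $\cals$ and $E$ are Gieseker stable and non-isomorphic, so a nonzero morphism $\cals\to E$ would force the strict inequality of reduced Hilbert polynomials $p_{\cals}<p_{E}$. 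However, a direct computation gives $\chi(\cals(m))=\tfrac{2}{3}m(m+1)(m+2)$ and $\chi(E(m))=n\chi(\cals(m))-k(m+1)$, so that after normalising $p_{\cals}(m)-p_{E}(m)=\tfrac{3k}{2n}(m+1)\ge 0$ because $k\ge 0$; this contradicts $p_{\cals}<p_{E}$, forcing $\Hom(\cals,E)=0$.

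Combining the three vanishings yields $\chi(\cals,E)=-\ext^{1}(\cals,E)\le 0$, i.e. $n-3k\le 0$, so $k\ge n/3$ and hence $k\ge\lceil n/3\rceil$ as $k$ is an integer. I expect the main obstacle to be exactly the $\Hom$-vanishing step: under mere Gieseker stability the embedding $\cals\hookrightarrow E$ cannot be ruled out by slopes, and it is the interplay between the a priori bound $k\ge 0$ and the reduced Hilbert polynomial comparison that closes the argument; the remaining vanishings are comparatively routine consequences of stability, Serre duality, and the Euler sequence for $\cals$.
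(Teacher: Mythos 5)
Your proposal is correct and follows essentially the same route as the paper's proof: compute $\chi(\cals,E)=n-3k$ by Riemann--Roch, kill $\Ext^2(\cals,E)$ via the Euler-type sequence \eqref{eq:spinor-Euler} together with $H^2(E)=0$ and a slope vanishing, and then exclude $\Hom(\cals,E)\ne 0$ by comparing reduced Hilbert polynomials using $k\ge 0$ from Lemma \ref{lem:min-k-gen} and Gieseker stability. You even make explicit two steps the paper leaves implicit ($\Ext^3(\cals,E)=0$ and the precise polynomial comparison $p_\cals\ge p_E$), and the only blemish is a harmless normalisation-dependent constant in $p_\cals-p_E$, which does not affect the sign and hence the conclusion.
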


\begin{proof}
Applying Riemann-Roch we compute $\chi(\cals, E_{n}^k)=\chi(E_{n}^k,\cals)= n-3k$
and $\chi(E_{n}^k)= -k$.
By definition $H^2(E_{n}^k)=0$. Also, stability of the involved sheaves implies $\Ext^3(\cals^*, E_{n}^k)=0$.

Then, from \eqref{eq:spinor-Euler} we compute $\Ext^2(\cals, E_{n}^k)=0$. This means that whenever $\chi(\cals, E_{n}^k)>0$ there exists a morphism $\cals\to E_{n}^k.$
Looking at this morphism one sees that, if $k\ge 0$, a necessary condition to have slope-semistability (cf. Lemma \ref{lem:min-k-gen}) is $p_{\cals}\ge p_{E_{n}^k}$ with equality holding if and only if $k=0$.

This proves that, unless $E_n^k\simeq \cals$,  Gieseker-stability requires $3k\ge n$ hence $k\ge \lceil \frac{n}{3}\rceil$.
\end{proof}

The short exact sequence \eqref{eq:Serre-spinor} tells us that $\cals$ might be thought of as a minimal 't Hooft bundle on $Q$. From Lemma \ref{lem:rank2-thooft+unobstructed}, applying the construction to curves supported on disjoint lines, we get the following result.
This is also studied in \cite[\S 2.3]{Faenzi}.

\begin{Lemma}\label{lem:rk2-thooft-quadric}
For all $k \ge 0$, the quadric threefold $Q$ carries rank-2 't Hooft slope-stable and unobstructed instanton bundles $E$ of charge $k$.
Moreover a general bundle of this kind satisfies
$H^1(E|_{R_d})=0$ for a general rational curve $R_d$ of degree $d=1,2$.
\end{Lemma}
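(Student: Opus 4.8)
The plan is to realise each bundle via Serre's correspondence from a disjoint union of lines, read off $\mu$-stability, unobstructedness and the charge directly from Lemma \ref{lem:rank2-thooft-i}, and then treat the splitting over rational curves separately, the line case being immediate and the conic case requiring a semicontinuity argument. So the only genuinely new input beyond the already established results is the balancedness of $E|_{R_2}$ for a general conic.

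First I would set $C_k$ to be a disjoint union of $k+1$ lines on $Q$. Such configurations exist because the Hilbert scheme of lines is the smooth threefold $\mathbb{P}^3$ and incidence of two lines is a divisorial condition, so $k+1$ general lines are pairwise disjoint. Then $C_k$ is a l.c.i. curve with $\deg(C_k)=k+1$ and $p_a(C_k)=1-(k+1)=1-\deg(C_k)$, which is exactly the input required by Lemma \ref{lem:rank2-thooft-i} in the odd-index case $i_X=3$ (where $1-r_X=0$, so the relevant space is $\Ext^1(\cali_{C_k},\calo(-1))\simeq H^0(\calo_{C_k})^*$). A nonzero class there produces, by that lemma, a rank $2$ 't Hooft bundle $E$ which is automatically $\mu$-stable by part (ii); a Chern class computation from \eqref{eq:rank2-thooft} gives $c_1(E)=-H$ and $c_2(E)=k+1$, which via the charge formula is charge $k$ (and for $k=0$ one recovers the spinor bundle $\cals$ of \eqref{eq:Serre-spinor}). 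For unobstructedness I would invoke part (i): it suffices that $C_k$ be unobstructed, and since each line has $\caln_{l_i/Q}\simeq\cals(1)|_{l_i}\simeq\calo_{l_i}\oplus\calo_{l_i}(1)$ and the $l_i$ are disjoint, $\caln_{C_k}=\bigoplus_i\caln_{l_i}$ has $H^1=0$.

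For the splitting over rational curves the case $d=1$ is immediate: the proof of Lemma \ref{lem:rank2-thooft-i}(iv) already shows that restricting \eqref{eq:rank2-thooft} to a general line $l$ disjoint from $C_k$ yields $E|_l\simeq\calo_l\oplus\calo_l(-1)$, whence $H^1(E|_l)=0$. For $d=2$ I would restrict \eqref{eq:rank2-thooft} to a general smooth conic $R_2$ disjoint from $C_k$; since $\cali_{C_k}|_{R_2}=\calo_{R_2}$ is locally free, this gives a short exact sequence $0\to\calo_{R_2}(-2)\to E|_{R_2}\to\calo_{R_2}\to 0$ on $R_2\simeq\mathbb{P}^1$. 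As $\chi(E|_{R_2})=0$, one has $H^1(E|_{R_2})=0$ if and only if this extension is non-split, i.e. $E|_{R_2}\simeq\calo_{R_2}(-1)^{\oplus 2}$ rather than $\calo_{R_2}\oplus\calo_{R_2}(-2)$.

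The main obstacle is precisely this non-splitting, which I would establish by semicontinuity after degenerating the conic. Within the family of conics on $Q$, a smooth conic specializes flatly to a line pair $l_1\cup_p l_2$ (two lines through a common point $p$, which exist since the lines through a point of $Q$ sweep out the quadric cone $T_pQ\cap Q$). Choosing $l_1,l_2$ general so that both are non-jumping for $E$, so that $E|_{l_i}\simeq\calo_{l_i}\oplus\calo_{l_i}(-1)$, and so that the two sub-line-bundles $\calo_{l_i}\hookrightarrow E|_{l_i}$ have transverse images in the fibre $E_p$, the Mayer--Vietoris sequence $0\to E|_{l_1\cup l_2}\to E|_{l_1}\oplus E|_{l_2}\to E_p\to 0$ has $H^1(E|_{l_i})=0$ together with an isomorphism $H^0(E|_{l_1})\oplus H^0(E|_{l_2})\xrightarrow{\sim}E_p$, forcing $H^1(E|_{l_1\cup l_2})=0$. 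Since $E$ is locally free, $h^1(E|_{C_t})$ is upper semicontinuous along the flat family of conics, so $H^1(E|_{R_2})=0$ for the general smooth conic, hence for a general bundle $E$ and a general conic. The two points needing care are the transversality at $p$, which is generic in the choice of the two lines, and the flatness of the degeneration of smooth conics to line pairs.
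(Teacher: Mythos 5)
Your construction half is exactly the paper's: Serre's correspondence applied to $C_k$ a union of $k+1$ disjoint lines, with $\mu$-stability, unobstructedness and the splitting on a general line all imported from Lemma \ref{lem:rank2-thooft-i}, and the charge computation is correct. One small inaccuracy there: Lemma \ref{lem:rank2-thooft-i} requires a \emph{nowhere vanishing} extension class in $\Ext^1(\cali_{C_k},\calo(-1))\simeq \C^{k+1}$, i.e.\ a class with nonzero component on each of the $k+1$ lines; a merely nonzero class can vanish on some component and then the resulting sheaf fails to be locally free along that line. A general class works, so this is a one-line fix.

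The conic case is where you genuinely diverge from the paper, and where there is a gap. The paper chooses $R_2$ to meet $C_k$ transversely in exactly one point $p$: then $\cali_{C_k}\otimes\calo_{R_2}\simeq \calo_{R_2}(-p)\oplus\calo_p$, and local freeness of $E|_{R_2}$ forces the kernel of $E|_{R_2}\twoheadrightarrow\calo_{R_2}(-p)$ to be a degree $-1$ line bundle, so $E|_{R_2}\simeq\calo_{R_2}(-p)^{\oplus 2}$ and $H^1(E|_{R_2})=0$ with no extension-class analysis at all; openness on the space of conics ($\simeq$ a Grassmannian) concludes. Your disjoint conic instead leaves the class of $0\to\calo_{R_2}(-2)\to E|_{R_2}\to\calo_{R_2}\to 0$ undetermined, and your repair by degeneration to a line pair hinges on the assertion that for two general lines $l_1,l_2$ through a common general point $p$ the distinguished lines $\Lambda_{l_i}\subset E_p$ (images of $H^0(E|_{l_i})$ under evaluation) are distinct. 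As written this is not a formal genericity statement: it amounts to the non-constancy of the map $B_p\to\D{P}(E_p)$, $l\mapsto \Lambda_l$, on the pencil $B_p\simeq\D{P}^1$ of lines through $p$, and a priori nothing prevents this map from being constant for your particular $E$ (for the spinor bundle one can check non-constancy by hand via the identification $Q\simeq\Lgr(2,4)$, but that does not transfer to 't~Hooft bundles of charge $k$). The claim is in fact true and can be established — for instance, constancy would let the sections $s_l$, normalized at $p$, glue to a nonzero section of $E$ over the hyperplane section $Y_p=T_pQ\cap Q$, contradicting $h^0(E|_{Y_p})=0$, which follows from $h^0(E)=h^1(E(-1))=0$; but this requires its own argument (including handling possible jumping lines in the pencil and regularity at the vertex of the cone), none of which you supply. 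Granting the transversality, the rest of your degeneration (Mayer--Vietoris, upper semicontinuity of $h^1$ along the flat family, irreducibility of the family of conics) is correct; still, the cleanest fix is the paper's: choose the conic incident to $C_k$ in one point, which bypasses the transversality issue entirely.
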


\begin{proof}
Consider the union $C$ of $k+1$ disjoint lines $l_0, \ldots, l_k$ lying on $Q$. Then $C$ is a l.c.i. curve of degree $k+1$ and arithmetic genus $-k$.
Note that unobstructedness holds obviously on each line on $Q$ and this implies unobstructedness of the curve $C$ as well.
From Lemma \ref{lem:rank2-thooft+unobstructed}, a vector bundle $E$ arising from  a nowhere vanishing extension class $[e]\in \Ext^1(\cali_{C},\cccO_{Q}(-1))\simeq \D{C}^{k+1}$ is a slope-stable 't Hooft bundle which is unobstructed. Finally
let us check that a general line and a general conic are not jumping for $E$.

For a line $l$ such that $l\cap l_i=\emptyset$ for all $i\in \{0,\ldots, k\}$, we have $\cali_{C}\otimes \cccO_l\simeq \cccO_l$ hence $E|_l\simeq \cccO_l\oplus \cccO_l(-1).$
A general line $l\subset Q$ is disjoint from each line $l_i$. Indeed, in the projective space $\D{P}^3$ of lines in $Q$, those meeting a fixed line $l_i$ are parametrised by a plane $\D{P}^2_{l_i}$ hence the aforementioned lines correspond to points in the dense open subscheme $\D{P}^3\setminus \bigcup_{i=0}^k\D{P}^2_{l_i}$.

Assuming now $C$ general, we can suppose that the points $[l_i]\in \D{P}^3$, for $i=0,\ldots,k$, are in general position -- in particular no triples of points are aligned. Choose one of these points, say $[l_0]$, and a general point $[l']\in \D{P}^3$, distinct from $[l_i], \ i=1,\ldots, k.$ A smooth conic $R_2$ in $\langle l_1, l'\rangle\cap Q$ meets  $C$ in just one point $p$ so that $\cali_{C}\otimes \cccO_{R_2}\simeq \cccO_{R_2}(-p)\oplus \cccO_p$. Accordingly $E|_{R_2}\simeq \cccO_{R_2}(-p)^{\oplus 2}$. Since the vanishing of $H^1(E|_{R_2})$ is an open condition on the Hilbert scheme of conics on $Q$,
we conclude that a general conic on $Q$ is not jumping for $E$.
\end{proof}


The existence of rank-2 't Hooft bundles suggests using them as base of an inductive argument inspired by Theorem \ref{thm:exist-P3},
affording slope-stable instanton bundles of any rank $r\ge 2$ that still admit a global section after a twist by $\cccO_{Q}(1)$. The inductive step will be based on the fact that, whenever $E_{n}^k$ has charge $k$ greater than the minimum $k_0^n$, we have $\Ext^1(E_{n}^k,\cals)>0$. This allows us to construct strictly slope-semistable bundles, which then deform to slope-stable instantons.

\begin{theorem}\label{thm:exists-quadric}
    For all $n\ge 1$ and $k\ge \lceil\frac{n}{3}\rceil$ there exists a slope-stable unobstructed 't Hooft $(n,k)$-instanton bundle $E$ on $Q$ satisfying the following.
    \begin{enumerate}[label=\roman*)]
        \item \label{quadric-i}
                    We have $\ext^i(E,\cals)=0$ for $i\ne 1$.
        \item \label{quadric-ii}
        for $s\in H^0(E(1))$ general, the cokernel sheaf of  $\cccO_{Q}(-1)\xrightarrow{s}E$ is an unobstructed slope-stable torsion-free sheaf $F$ of homological dimension at most 1 such that
        \begin{align*}
            &\ext^2(F,\cals)=0, &&\Hom(\cals, F)=0, \\
            &\Ext^i(F,\cccO_{Q}(-1))=0, && \mbox{for $i\ne 1$};
        \end{align*}
        \item \label{quadric-iii} for a general rational curve $R_d\subset Q$ of degree $d=1,2$, we have $H^1(E|_{R_d})=0$.
        \end{enumerate}
        \end{theorem}
\begin{proof} The proof is similar in spirit to the one of Theorems \ref{thm:exist-P3} and \ref{thm:existence-even}.
However, there are several substantial differences, so we carry out the argument in detail.

We construct
slope-semistable instanton bundles $E_{n+1}^k$ via non-split extensions in
$\Ext^1(E_n^k,\cals)$ for $E_n^k$ a $(n,k)$-instanton satisfying the theorem. We will use a different method to show that the cokernel $F_{n+1}^k$ of a global section of $E_{n+1}^k$, obtained lifting a general $s\in H^0(E_n^k(1))$, flatly deforms to a sheaf that does not admit $\cals$ as a subsheaf. By construction $F_{n+1}^k$ will be an extension of $F_n^k$ by $\cals$, where $F_n^k$ is the cokernel of $s$. This time we will not be able to rely on Hoppe’s criterion, so we will need to slightly adjust our approach.

Let us briefly outline the main idea behind this step of the proof, as this type of argument will be widely used throughout the rest of the paper.
The locus of points in $\calm_{Q}(\ch(F_{n+1}^k))$ corresponding to sheaves admitting a quotient having Chern character $\ch(F_n^k)$ is a closed subscheme $Z\subset \calm_{Q}(\ch(F_{n+1}^k)).$
To show the existence of the aforementioned deformation of $F_{n+1}^k$, we will prove that there is a unique component of $\calm_{Q}(\ch(F_{n+1}^k))$ passing through $[F_{n+1}^k]$.
We show that this component is smooth at $[F_{n+1}^k]$ and of dimension $\ext^1(F_{n+1}^k,F_{n+1}^k)$ and that it cannot be entirely contained in $Z$. The last point will be done computing that, at least locally around $[F_{n+1}^k]$, one has $\dim(Z)< \ext^1(F_{n+1}^k,F_{n+1}^k)$.

\setcounter{step}{0}
\begin{step}[Check the base of the induction]
For $n=1$ we proved that the 't Hooft bundles constructed in Lemma \ref{lem:rk2-thooft-quadric} are unobstructed, slope-stable and have the expected splitting on a general line.
One checks that, given the union $C_m$ of $m$ disjoint lines, the ideal sheaf $\cali_{C_m}$ satisfies \ref{quadric-ii}, so that for any 't Hooft bundle $E$ constructed from $C_m$ we have  $\Ext^2(E,\cals)=0$ and, moreover, a general section $s\in H^0(E(1))$ satisfies \ref{quadric-ii} as well. All these conditions are of course open.
\end{step}

For the inductive step, suppose that the theorem holds for a given integer $n$.
Let us take an integer $k$ such that $k\ge \lceil \frac{(n+1)}{3}\rceil$ and prove the result for $(n+1,k)$-instantons of all such $k$.

\begin{step}[Construction of an instanton of higher rank]
By the inductive assumption, there exists a $(n,k)$ 't Hooft bundle $E_{n}^k$ satisfying the theorem and moreover such a bundle satisfies $\chi(E_{n}^k, \cals)=n-3k \le -1$. Therefore, there exists a non-zero element $[e]\in \Ext^1(E_{n}^k,\cals)$ which defines a non-split short exact sequence:
\begin{equation}\label{eq:ext-ss}
 0\rightarrow \cals \to E_{n+1}^k\rightarrow E_{n}^k\rightarrow 0.
\end{equation}
Here, $E_{n+1}^k$ is a strictly slope-semistable, Gieseker-unstable $(n+1,k)$-instanton.
Applying Lemmas \ref{lem:ext-ss} and \ref{lem:ext-unobstructed}, we deduce that $E_{n+1}^k$ is unobstructed and that $E_{n}^k$ is its only torsion-free quotient  having slope $-\frac{1}{2}$.
From the short exact sequence \eqref{eq:ext-ss} it is also immediate to check that, since $E_{n}^k$ satisfies \ref{quadric-iii}, the same holds for $E_{n+1}^k.$
\end{step}

Our final goal is to deform $E_{n+1}^k$ to an unobstructed slope-stable 't Hooft instanton.
We start by looking at the cokernel sheaf $F^k_{n+1}$ of a global section of $E_{n+1}^k$.

\begin{step}[Check unobstructedness of $F_{n+1}^k$]
As for the even-index case, we start by lifting a global section $s\in H^0(E_{n}^k(1))$ satisfying \ref{quadric-ii} to $s'\in H^0(E_{n+1}^k(1))$. Note that taking such a lift is possible due to the vanishing of $H^1(\cals(1))$. This leads to a commutative diagram:
\begin{equation}\label{cd-non-inst-quadric}
\begin{tikzcd}
&  &\cccO_{Q}(-1)\arrow[r, equal] \arrow[d, "s'"] &\cccO_{Q}(-1)\arrow[d, "s"] &  \\
0  \arrow[r] & \cals\arrow[r]\arrow[d, equal] & E_{n+1}^k\arrow[r]\arrow[d, two heads] & E_{n}^k\arrow [r]\arrow[d, two heads]& 0\\
0\arrow[r] &\cals\arrow[r] & F_{n+1}^{k}\arrow[r]& F_{n}^k \arrow[r]
& 0\\
\end{tikzcd}
\end{equation}

Note that the vanishing $\Ext^2(F_{n}^k, F_{n}^k)=0=\Ext^2(F_{n}^k,\cals)=0$ is ensured by the inductive hypothesis. Also, we have $\Ext^2(\cals, F_{n}^k)=0$, since $\Ext^2(\cals, E_{n}^k)=0$, see the proof of Lemma \ref{lem:min-quadric}. Then, the sheaf $F_{n+1}^k$ is unobstructed due to Lemma \ref{lem:ext-unobstructed}.
\end{step}

\begin{step}[Show that $F_{n+1}^k$ is slope-stable]
From the last row of Diagram \eqref{cd-non-inst-quadric} we readily check that $F_{n+1}^{k}$ does not admit subsheaves of degree $\le 0$. Therefore the slope of any subsheaf $L$ of $F_{n+1}^{k}$ having even rank $2m$ for $m\le n$, resp. odd rank $2m+1$ for $m\le n-1$, and degree $-d, \ d>0$, can be written in the form
$$\mu(L)= -\frac{1}{2}+ \frac{m-d}{2m}, \qquad \mbox{or} \qquad \mu(L)= -\frac{1}{2} + \frac{2(m-d)+1}{2(2m+1)}.$$ 

Let us suppose now that $L$ is a slope-semistable subsheaf of $F_{n+1}^{k}$ of rank $\le 2n$. If the composition $L\to F_{n}^k$ is 0, $L$  injects into $\cals$ hence $\mu(L)\le -\frac{1}{2}< \mu(F_{n+1}^{k})$ so this case is excluded. 
Then the induced map $L\to F_{n}^k$ is non-zero. Let us denote by $G\subset F_n^k$ the image of this map.
If $\rk(G)< 2n-1$,  by the stability assumption on $F_{n}^k$, we must have $$\mu(L)< \mu(F_{n}^k)= -\frac{1}{2}+ \frac{1}{2(2n-1)}.$$
Now, if $L$ has even rank $2m$ and degree $-d$, the last inequality leads to $$(m-d)<\frac{m}{2n-1}\le 1,$$
so that $(m-d)\le 0$ and $\mu(L)\le - \frac{1}{2}< \mu(F_{n+1}^k)$, so $L$ would not destabilize $F^k_{n+1}$.
Then $L$ has odd rank $2m+1$, so the inequality yields $$2(m-d)+1< \frac{2m+1}{2n-1}\le 1,$$ so that $\frac{2(m-d)+1}{2(2m+1)}\le 0$ and consequently $\mu(L)\le -\frac{1}{2}< \mu(F_{n+1}^k)$, which is excluded.
\medskip

To finish the proof of the slope-stability, it remains to treat the case $\rk(G)=2n-1$ and $\mu(G)=\mu(F_{n}^k)$, which implies $\dim(\supp(F_{n}^k/G))\le 1$. Denote by $K$ the kernel of the surjection $L\to G$. We have $K\ne 0$, since otherwise $F_{n+1}^k/L\simeq \cals\oplus F_{n}^k/G$, as $\ext^1(F_{n}^k/G,\cals)=h^2(\cals\otimes F_{n}^k/G(-2))=0$. But in this case we would get the existence of a surjection $F_{n+1}^k\to \cals$ which is impossible since the last row of \eqref{cd-non-inst-quadric} does not split. Therefore $\rk(K)=1$, $$\mu(F_{n+1}^k/L)=\mu(\cals/K)\ge 0> \mu(F_{n+1}^k)$$ so that
$L$ does not destabilize $F_{n+1}^k$.
\end{step}

In the next step, we use our main intuition on the construction of slope-stable deformations, essentially based on a parameter count.
\begin{step}[Deform $F_{n+1}^k$ to a slope-stable sheaf having no quotient of type $F_{n}^k$]
Let us  denote by $v$ and $v'$, the Chern characters of $F_{n+1}^k$ and $F_{n}^k$, respectively. We show that $F_{n+1}^k$ deforms, in the Gieseker moduli space $\calm_Q(v)$ of semistable sheaves with Chern character $v$, to a slope-stable sheaf that admits no torsion-free quotients with Chern character $v'$.

The locus $Z_{v'}\subset \calm_Q(v)$ parametrising sheaves admitting a torsion-free quotient of Chern character $v'$ is closed in $\calm_Q(v)$ (see \cite[\S 2.3]{HL}). Note that $[F_{n+1}^k]$ is a smooth point of this moduli space, in particular it belongs to a unique irreducible component $Y$ of $\calm_Q(v)$. Hence, to prove our claim, it is enough to show that $ \dim(Z_{v'}\cap Y)< \ext^1(F_{n+1}^k,F_{n+1}^k)$, as this will ensure that $Y\not\subset Z_{v'}$.

We observe that actually, for $[F]\in Z_{v'}$, the kernel $L$ of a surjection $F\twoheadrightarrow T$ with $\ch(T)=v'$ must be exactly $\cals$. Indeed, one first observes that, if $L$ was not slope-stable, it would admit a rank-1 torsion-free subsheaf of degree $\ge 0$, which is clearly impossible because of the semistability of $F$. Then $L$ must be slope-stable and the claim follows,
since $\cals$ is the unique slope-stable torsion-free sheaf on $Q$ with Chern character $\ch(\cals)$, as we already recalled -- see e.g. \cite{AS}.
\medskip

Suppose now, by contradiction that $Y\subset Z_{v'}$ and let $\mathbb{F}_t$ be a deformation of $F_{n+1}^{k}$ in $Y$. For $t$ general we would then have that $\mathbb{F}_t$ is a slope-stable sheaf satisfying \ref{quadric-ii} fitting in:
\begin{equation}\label{eq:subsheaves-quadric}
    0\rightarrow \cals\rightarrow  \mathbb{F}_t\rightarrow F'\rightarrow 0
\end{equation}
for $F'$ an unobstructed slope-stable sheaf in $\calm_Q(v')$.
But the family of sheaves fitting in a short exact sequence of this form has dimension at most $\ext^1(F_{n}^{k},F_{n}^{k})+ \ext^1(F_{n}^k,\cals)-1$.
So the assumption $Y\subset Z_{v'}$ implies 
$$ \ext^1(F_{n+1}^k,F_{n+1}^k)\le \ext^1(F_{n}^k,F_{n}^k)+\ext^1(F_{n}^k,\cals)-1.$$
However, a direct computation shows that:
\begin{align*}
   \ext^1(F_{n+1}^k,F_{n+1}^k)-\ext^1(F_{n}^k,F_{n}^k)-\ext^1(F_{n}^k,\cals)+1 &= \\
   -\chi(F_{n+1}^k,F_{n+1}^k)+\chi(F_{n}^k,F_{n}^k)+\chi(F_{n}^k,\cals)+1 &= 
   3k-n>0, 
\end{align*}
by the assumptions on $n$ and $k$. So this is a contradiction and our step is thus proved.
\end{step}

\begin{step}[Use the universal extension to construct a slope-stable deformation of $E_{n+1}^k$]
Applying the same arguments of the proof of Theorem \ref{thm:exist-P3} we can now construct a family of instantons specializing to $E_{n+1}^k$ using universal extensions.
\begin{substep}[Define the universal extension]
We consider a family defined by a coherent sheaf $\mathbf{F}$ on $Q\times U$, where $U\subset \calm_Q(v)$ is an open neighborhood of $[F_{n+1}^k]$, whose general element satisfies \ref{quadric-ii} and does not lie in $Z_{v'}$. Since $\ext^i(\mathbf{F}_t, \cccO_{Q}(-1))=0$ for all $i\ne 1$, we deduce that \mbox{$\cale:=\inext^1_{p_1}(\mathbf{F}, \cccO_{U\times Q}(-1))$}
commutes with base change and is locally free. Also, 
there is a universal extension on $\D{P}(\cale)\times Q$:
\begin{equation}
0\rightarrow \cccO_{\D{P}(\cale)\times Q}\otimes p_1^*\cccO_{\D{P}(\cale)}(-1)\rightarrow \mathbf{E}\rightarrow \hat{\mathbf{F}}\rightarrow 0
\end{equation}
where $\hat{\mathbf{F}}$ is the pullback of $\mathbf{F}$ to $\D{P}(\cale)\times Q$ -- we refer to the proof of Theorem \ref{thm:exist-P3} and the references therein for further details.
\end{substep}

To conclude this step our goal is to show that, for $x\in \D{P}(\cale)$ general enough, $\mathbf{E}_x$ is slope-stable.

\begin{substep}\label{substep-sing-1}[Make preliminary observations about slope-semistability and duality]
We first note that on $\D{P}(\cale)$ there exists an open dense subset $V$ of points $x$ such that $\mathbf{E}_x$ is a slope-semistable vector bundle -- recall that slope-semistability and local freeness are open conditions.
Next, observe that the map $\pi_{\D{P}(\cale)} : \D{P}(\cale) \to U$ is flat and of finite type, hence
 $\pi_{\D{P}(\cale)}(V)$ is open in $U$. Up to shrinking $U$, we can suppose that $\mathbf{E}$ is a family of slope-semistable sheaves.

 We also note that the slope-stability of $\mathbf{E}_x$ is equivalent to the slope-stability of $\mathbf{E}_x^*$. Indeed,  consider the dual family $\mathbf{E}^*:=\inhom(\mathbf{E}, \cccO_{\D{P}(\cale)\times X})$. Since for all $x\in \D{P}(\cale),\ \mathbf{E}_x$ is locally free 
 we have that $(\mathbf{E}^*)_x\simeq \mathbf{E}_x^*$.
\end{substep}

\begin{substep}\label{substep-sing-2}[Consider the Hilbert polynomials of potentially destabilizing quotients]
For any $x\in \D{P}(\cale)$, the sheaf $(\mathbf{E}^*)_x$ is strictly slope-semistable if and only if it admits a torsion-free quotient having Hilbert polynomial $R$ such that $\mu(R)=\frac{1}{2}.$
Then we consider the following set of polynomials:
\begin{align*}
A:=\{R\in \D{Q}[t]\mid \mu(R)=\frac{1}{2} \ \text{and}\ \exists\ x\in \D{P}(\cale) \ \text{s.t.}\ & \mathbf{E}^*_x \ \text{admits a torsion-free quotient} \\ & \text{with Hilbert polynomial $R$}.\}
\end{align*}
The set $A$ is finite, see \cite[Lemma 1.7.9 and Proposition 2.3.1]{HL}.

Incidentally, we warn the reader that in loc. cit. the "usual" slope $\mu(R)$ of a polynomial $R=\sum_{i=0}^3 \frac{\alpha_i(R)}{i!} t^i$ is actually replaced by the quantity $\hat{\mu}(R):=\frac{\alpha_{3}(R)}{\alpha_2(R)}$. Nevertheless since $\mu(R)$ differs from $\hat{\mu}(R)$ only by the constant factor $\alpha_3(\cccO_{Q})$ and the constant term $\alpha_2(\cccO_{Q})$, the statement still holds.

Therefore the locus $W^{ss}$ of points $x\in \D{P}(\cale)$ such that $\mathbf{E}_x$ is strictly slope-semistable is closed.
To see this, note that the locus $W^{ss}$ is the union of closed subschemes $W_R, \ R\in A$, where $W_R$ is the image of the natural morphism $\Quot(\mathbf{E}^*,R)\to \D{P}(\cale)$, the closedness of $W_R$ being ensured by the projectivity of this morphism.
\end{substep}

\begin{substep}\label{substep-sing-3}[Restrict the set of Hilbert polynomials of destabilising quotients to a singleton]
We claim that, up to shrinking $U$, we can suppose that actually $A$ consists of the single element $\{P_{\cals^*}\}$.
 Set $x_0 \in \D{P}(\cale)$ for the point corresponding to $[E_{n+1}^k]$.
 The first assertion is that, without loss of generality, we may consider only the polynomials $R$ of $A$ such that $x_0\in W_R$. Indeed, if $x_0$ does not lie in $W_R$, we may replace $\D{P}(\cale)$ with $\bigcap_{R\not\in A_{x_0}} \D{P}(\cale)\setminus W_R$.
 Indeed, the latter is an open dense neighborhood of the point $x_0$.

 The second assertion is that, by Lemma \ref{lem:ext-ss} the polynomials in $A_{x_0}$ are necessarily of the form
  $P_{\cals^*}+l$, where $l=a t + b$ a polynomial of degree at most one such that $l \gg 0$, which means $a\ge 0$.
 Indeed, we note that these are the only possible Hilbert polynomials of quotients of $E_{n+1}^*$, as their torsion-free part must be $\cals^*$ and their torsion part can have at most dimension one, due to semistability. Then, if $R$ is not of this form we would have that $x_0\not\in W_R$, against the first assertion.

The third assertion is that a rank-2 torsion-free sheaf with Hilbert polynomial $P_{\cals^*} + l$ with $l$ as described above is slope-unstable.
Indeed, there is no Gieseker-semistable (hence slope-semistable) rank-2 torsion-free sheaf $G$ on $Q$ with $c_1=1$ and $P_G \gg P_{\cals}^*$. To see this, notice that if $a_i>0$, we would have $c_2(G)<c_2(\cals^*)$ which would contradict the Bogomolov inequality.
So we should have $a=0$, which implies  $G^{**}\simeq \cals^*$ so that $b\le 0=-\mathrm{length}(\cals^*/G)$. 

Therefore, since we are assuming that each sheaf in $\D{P}(\cale)$ is slope-semistable, there is no $x\in\D{P}(\cale)$ such that $\mathbb{E}_x^*$ admits a torsion-free quotient of Hilbert polynomial $P_{\cals^*}+l \gg P_{\cals^*}$. This ends the proof of our claim $A=\{P_{\cals^*}\}$.
\end{substep}

Since we know that $W_{P_{\cals^*}}$ is a proper closed subscheme of $\D{P}(\cale)$ (as $Z_{P'}\subsetneq U$), we can conclude that a general element of $\mathbf{E}$ is slope-stable.

Finally, by semicontinuity, as $E_{n+1}^k$ has splitting $(-1^{n+1}, 0^{n+1})$ on the general line, the same will hold for $x\in \D{P}(\cale)$ general. The statement about conics has the same proof.
\end{step}
\end{proof}

 Let us now consider the moduli space $\calmi_{Q}(n,k)$ of $(n,k)$-instanton bundles. The previous Theorem shows that $\calmi_{Q}(n,k)$ is non-empty and that it contains points corresponding to unobstructed and slope-stable bundles. We deduce the next result.
\begin{corollary}
There exists a generically smooth $(1-n^2+6nk)$-dimensional component of the moduli space $\calmi_{Q}(n,k)$ of $(n,k)$-instanton bundles whose general point is a slope-stable locally free instanton having general splitting $(0^n,-1^n)$.
\end{corollary}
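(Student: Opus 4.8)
The plan is to deduce the statement directly from Theorem \ref{thm:exits-quadric} by a deformation-theoretic and Riemann--Roch argument. First I would take the $\mu$-stable unobstructed $(n,k)$-instanton bundle $E$ produced by that theorem, whose splitting on a general line is $(0^n,-1^n)$. Since $E$ is $\mu$-stable it is simple, so $\hom(E,E)=1$, and unobstructedness means precisely $\ext^2(E,E)=0$. Recalling that $\calmi_Q(n,k)$ is an open subscheme of Maruyama's moduli space $\calm_Q(\gamma(n,k))$, the vanishing $\ext^2(E,E)=0$ guarantees that $[E]$ is a smooth point of $\calm_Q(\gamma(n,k))$, hence lies on a unique irreducible component, which is generically smooth of dimension $\ext^1(E,E)$.

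It then remains to compute $\ext^1(E,E)$. I would first show $\ext^3(E,E)=0$. By Serre duality $\ext^3(E,E)\simeq \hom(E,E(-3))^*$, and any nonzero map $E\to E(-3)$ would have image $I$ that is at once a quotient of the $\mu$-stable $E$ and a subsheaf of the $\mu$-stable $E(-3)$; when $\rk I<\rk E$ this forces $\mu(E)<\mu(I)<\mu(E(-3))=\mu(E)-3$, which is absurd, while if $I$ has full rank then torsion-freeness makes $f$ injective and the comparison of first Chern classes fails. Hence $\hom(E,E(-3))=0$. With $\hom(E,E)=1$ and $\ext^2(E,E)=\ext^3(E,E)=0$, the alternating sum gives $\ext^1(E,E)=1-\chi(E,E)$. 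A Hirzebruch--Riemann--Roch computation, using $\ch(E)=\gamma(n,k)=(2n,-n,-k,\tfrac{k}{2}-\tfrac{n}{6})$ and the Todd class of $Q$, yields $\chi(E,E)=n^2-6nk$ (as a sanity check, the exceptional spinor bundle $\cals$ is the $(1,0)$-instanton and indeed gives $\chi(\cals,\cals)=1$). Thus $\ext^1(E,E)=1-n^2+6nk$, which is the claimed dimension.

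Finally I would address the behaviour of the general point of this component. Local freeness, $\mu$-stability, the instanton cohomology vanishings, and the condition $h^1(E|_l(-1))=0$ defining a non-jumping line are all open conditions in a flat family, and $E$ itself satisfies them; by semicontinuity a general member of the unique component through $[E]$ is therefore a $\mu$-stable locally free $(n,k)$-instanton bundle whose splitting on a general line is $(0^n,-1^n)$. The main technical point is the Riemann--Roch computation of $\chi(E,E)$, though it is routine; the only genuinely structural step is the slope argument yielding the vanishing of $\ext^3(E,E)$.
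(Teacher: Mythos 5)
Your proposal is correct and follows essentially the same route as the paper, which deduces this corollary directly from Theorem \ref{thm:exits-quadric}: non-emptiness plus unobstructedness and $\mu$-stability of the constructed bundle give a generically smooth component of dimension $\ext^1(E,E)=1-\chi(E,E)=1-n^2+6nk$, with the remaining properties open and hence generic. You merely make explicit the routine steps the paper leaves implicit (the vanishing of $\ext^3(E,E)$ by slope comparison of $\mu$-stable sheaves, the Riemann--Roch evaluation with the spinor-bundle sanity check, and semicontinuity for the splitting type), all of which are sound.
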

As moreover we constructed elements $E$ of $\calmi_{Q}(n,k)$ admitting a global section $\cccO_{Q}(-1)\to E$ whose cokernel is unobstructed and slope-stable as well, we also deduce the following result.

\begin{corollary}
For all $n\ge 2, \ k\ge \lceil \frac{n}{3}\rceil$, there exists a generically smooth component of the Gieseker moduli space $\calm_{Q}(\gamma(n,k)-\ch(\cccO_{Q}(-1)))$ of dimension $n(4-n)+k(6n-1)$ whose general point is a slope-stable sheaf.
\end{corollary}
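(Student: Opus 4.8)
The plan is to read this statement off directly from the cokernel sheaves produced in Theorem \ref{thm:exits-quadric}. Fix $n\ge 2$ and $k\ge \lceil n/3\rceil$, and let $E$ be a $\mu$-stable unobstructed $(n,k)$-instanton bundle satisfying that theorem. Choosing a general section $s\in H^0(E(1))$, item \ref{quadric-ii} provides a cokernel sheaf $F$ sitting in
\[ 0\to \calo(-1)\xrightarrow{s} E\to F\to 0, \]
which is torsion-free, $\mu$-stable, unobstructed and of homological dimension at most $1$. Additivity of the Chern character in this sequence gives $\ch(F)=\ch(E)-\ch(\calo(-1))=\gamma(n,k)-\ch(\calo(-1))$, so $[F]$ defines a point of $\calm_{Q}(\gamma(n,k)-\ch(\calo(-1)))$.

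First I would argue that this point is smooth. Since $F$ is unobstructed, $\Ext^2(F,F)=0$, and the obstruction to smoothness of $\calm_{Q}$ at $[F]$ lies in this group; hence $[F]$ is a smooth point and therefore belongs to a unique irreducible component $Y$, which is generically smooth. As $\mu$-stability is an open condition, a general member of $Y$ is again $\mu$-stable, which yields the last assertion of the corollary.

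It then remains to compute $\dim Y=\ext^1(F,F)$. Here $\hom(F,F)=1$ because $F$ is $\mu$-stable, hence simple; $\ext^2(F,F)=0$ by unobstructedness; and $\ext^3(F,F)=\hom(F,F\otimes K_Q)^*=\hom(F,F(-3))^*=0$, since any nonzero map $F\to F(-3)$ would have image simultaneously a quotient of $F$ (of slope $\ge\mu(F)$) and a subsheaf of $F(-3)$ (of slope $\le\mu(F)-3$), which is absurd. Consequently
\[ \dim Y=\ext^1(F,F)=1-\chi(F,F), \]
and a Riemann--Roch computation on $Q$ with $\ch(F)=\gamma(n,k)-\ch(\calo(-1))$ evaluates this to $n(4-n)+k(6n-1)$. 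The only genuine work is this last Euler-characteristic computation; everything structural --- existence, stability, unobstructedness and the homological dimension of $F$ --- is already furnished by Theorem \ref{thm:exits-quadric}, so I expect no real obstacle beyond bookkeeping. As a consistency check, one may compare the outcome with the fibration $\D{P}(\cale)\to U$ used in that proof, whose fibre dimension $\ext^1(F,\calo(-1))-1$ accounts precisely for the difference with the instanton moduli dimension $1-n^2+6nk$.
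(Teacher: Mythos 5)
Your overall route coincides with the paper's: the corollary is stated there as an immediate consequence of Theorem \ref{thm:exits-quadric}, read off from the cokernel $F$ of a general section $\calo(-1)\to E$ exactly as you do, and your structural steps are all sound --- smoothness of $[F]$ from $\ext^2(F,F)=0$, simplicity of $F$ from $\mu$-stability, $\ext^3(F,F)=\hom(F,F(-3))^*=0$ by Serre duality and slope comparison, and openness of $\mu$-stability on the unique component through $[F]$.

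The one genuine gap is precisely the step you wave off as ``bookkeeping'': the Riemann--Roch evaluation, which is the entire quantitative content of the statement, and which you assert rather than perform. Carrying it out: on $Q$ one has $\ch(\cals)=(2,-1,0,\tfrac16)$ and $\ch(\calo_l)=(0,0,1,-\tfrac12)$ (note the paper's displayed $\gamma(n,k)$ carries a sign slip in $\ch_3$, which however does not enter the Euler pairing below), whence $\chi(E,E)=n^2-6nk$ (consistent with the instanton component dimension $1-n^2+6nk$), $\chi(\calo(-1),E)=\chi(E(1))=4n-2k$ and $\chi(E,\calo(-1))=-k$. Bilinearity of $\chi(-,-)$ applied to $\ch(F)=\gamma(n,k)-\ch(\calo(-1))$ then gives
\[
\ext^1(F,F)=1-\chi(F,F)=n(4-n)+3k(2n-1)=n(4-n)+k(6n-3),
\]
and not $n(4-n)+k(6n-1)$ as you claim (agreeing with, but not checking, the printed formula). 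Two cross-checks confirm the coefficient $6n-3$: first, the identical computation on $\D{P}^3$ and on Del Pezzo threefolds reproduces exactly the printed dimensions $n(4-n)+4k(n-1)$ and $n(d+2-n)+2k(n-1)$ of the two parallel corollaries; second, at $n=1$ the cokernel is $\cali_C$ with $C$ a union of $k+1$ disjoint lines on $Q$, for which $\ext^1(\cali_C,\cali_C)=h^0(\caln_C)=3(k+1)$, matching $n(4-n)+k(6n-3)=3+3k$ but not $3+5k$. Even your own consistency check via $\D{P}(\cale)\to U$ supports this: $\ext^1(F,\calo(-1))=k+1$, so $\dim\D{P}(\cale)=\bigl(n(4-n)+k(6n-3)\bigr)+k$, which equals the instanton dimension $1-n^2+6nk$ plus the fibre dimension $h^0(E(1))-1=4n-2k-1$, whereas the value $k(6n-1)$ would break this identity. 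So the printed dimension in the corollary appears to be an arithmetic slip for $k(6n-3)$, and your proposal, by deferring the computation and asserting agreement, inherits it rather than detecting it.
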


\subsection{Instantons on Fano threefolds of index 1}

\label{section:prime}

We finally treat the case of the Fano threefolds of index one (also referred to as prime Fano threefolds). We restrict the framework to \textbf{non-hyperelliptic} prime Fano threefolds, which means that the ample generator $H_X$ of the Picard group is actually very ample.
We recall that prime Fano threefolds are classified, up to deformation equivalence, by their \textbf{genus} $g$: this is the integer defined as the genus of a codimension 2 smooth linear section of $X$, or equivalently by the equality $H_X^3=2g-2$. 
The genus $g$ of a non-hyperelliptic smooth prime Fano threefold $X$ takes values in $\{3,\ldots, 12\} \setminus \{11\}$. Also, when $g=3,$ $X$ must be a quartic hypersurface in $\D{P}^4$.

All through the current section, $X$, or $X_g$, denotes a prime Fano threefold with very ample generator of $\Pic(X)$ and $g$ denotes its genus.
As we saw in section \ref{sect:gen}, given an instanton $E$ on $X$, there is a unique pair of integers $(n,k)$ such that
\[\gamma(n,k):=\ch(E)=n \ch(F_0) - k \ch(\cccO_l(-1)),\]
where $F_0$ is a minimal instanton bundle of rank 2. In particular $\rk(E)=2n$.
When $g$ is even, the minimal instanton $F_0$ is a Mukai bundle according to the
terminology used in \cite{BMK}.
We recall a few basic properties of $F_0$. Put:
\[
m_g=\left\lceil \frac{g}{2}\right\rceil+1.
\]

In the next lemma we recollect results from \cite{Cili-prime, ciliberto} or \cite{BMK}, for the case where $g$ is even. To state it, we recall that a vector bundle $E$ on a threefold $X$ is said to be ACM if it has no intermediate cohomology, namely $H^i(E(m))=0$, for all integers $m$ and for $i=1,2$.

\begin{Lemma}\label{lem:minimal-index1}
Let $X$ be a smooth prime Fano threefold of genus $g$ and such that $-K_X$ is very ample. Then the following hold:
\begin{enumerate}[label=\roman*)]
\item\label{lem-i} there are no slope-semistable rank-2 sheaves on $X$ with $c_1=-1,\ c_2<m_g$ and $c_3=0$;
\item\label{lem-ii} $\calm_X(\gamma(1,0))$ is non-empty of dimension $\delta$, where $\delta$ is the remainder of the division of $g$ by 2. Every point in $\calm_X(\gamma(1,0))$ corresponds to a slope-stable locally free instanton bundle;
    \item\label{lem-iii} $\calm_X(\gamma(1,0))$ admits a reduced component whose general point is an unobstructed bundle except for the case where $g=4$ and $X$ is contained in a singular quadric;
    \item\label{lem-iv} if $g\ge 6$, for any pair of points $[F_0], \ [F_0']$ in $\calm_X(\gamma(1,0))$, $\ext^2(F_0,F_0')=0$.
    In particular $\calm_X(\gamma(1,0))$ is smooth;
    \item \label{lem-v} if $g\ge 6$, for all $[F_0]\in \calm_X(\gamma(1,0)),$ $F_0(1)$ is globally generated and ACM.
\end{enumerate}
\end{Lemma}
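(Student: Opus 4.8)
The plan is to identify $F_0$ with the Mukai bundle of $X_g$ and to organise the five assertions around two recurring facts: the self-duality $F_0^\ast\simeq F_0(1)$, valid for any rank $2$ sheaf with $c_1=-1$, and the vanishings forced by $\mu$-stability. Stability gives $\Hom(\calo,F_0)=0$ and, by Serre duality, $h^3(F_0)=h^0(F_0(-1))^\ast=0$, while self-duality plus Serre duality yield $h^2(F_0)=h^1(F_0)$; a Riemann--Roch computation then shows that the charge-dependence of $\chi(F_0)$ cancels, so $\chi(F_0)=0$, and in rank $2$ the instanton cohomology of Definition \ref{def:instanton} collapses to the single condition $h^1(F_0)=0$ as in Remark \ref{rmk:rank2}. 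The genuinely geometric inputs --- existence of $F_0$, the value of the minimal charge, and unobstructedness --- will be imported from \cite{Cili-prime} and \cite{BMK}; what remains for me to carry out is the gluing and the extraction of \ref{lem-iv} and \ref{lem-v}.

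For \ref{lem-i} I would argue by restriction to a general K3 section. Since $-K_X=H_X$, a general $S\in|\calo_X(1)|$ is a smooth K3 surface with $H_S^2=H_X^3=2g-2$, and a $\mu$-semistable rank $2$ sheaf $E$ with the prescribed invariants restricts, by the restriction theorems, to a semistable sheaf $E|_S$ of Mukai vector $v=(2,-H_S,s)$, where $s$ is read off from $c_2(E)$. A short computation gives $v^2=4c_2(E)-2g-6$, and the necessary condition $v^2\ge -2$ translates into $c_2(E)\ge \lceil g/2\rceil+1=m_g$; conversely the minimal value $c_2=m_g$ is exactly the case $v^2=-2$ for $g$ even and $v^2=0$ for $g$ odd, which matches the expected moduli dimension $\delta$ on $S$. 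Part \ref{lem-ii} then follows from Mukai's construction of $F_0$ together with this restriction analysis: $\mu$-stability and local freeness are recollected from \cite{BMK}, and the dimension statement $\dim\calm_X(\gamma(1,0))=\delta$ will drop out once the obstruction space is computed in \ref{lem-iv}.

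The crux, and the step I expect to be the main obstacle, is \ref{lem-iv}. I would compute $\chi(F_0,F_0')=1-\delta$ by Riemann--Roch, note that $\ext^3(F_0,F_0')=\Hom(F_0',F_0(-1))^\ast=0$ by stability, and that $\hom(F_0,F_0')$ is $1$ or $0$ according to whether the two bundles coincide; everything then hinges on proving $\ext^2(F_0,F_0')=0$ for $g\ge 6$. By Serre duality this is $\Ext^1(F_0',F_0(-1))^\ast$, which I would attack through the self-duality $F_0\simeq F_0^\ast(-1)$ together with the minimal ACM resolution of $F_0(1)$; the bound $g\ge 6$ is precisely what makes the relevant $\Ext^1$ vanish, whereas for $g\le 5$ the argument degenerates, which is the source of the $g=4$ exception in \ref{lem-iii}. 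Granting this vanishing, $\ext^1(F_0,F_0)=1-\chi(F_0,F_0)=\delta$, so $\calm_X(\gamma(1,0))$ is smooth of dimension $\delta$; \ref{lem-iii} then follows by isolating the reduced component and invoking the unobstructedness statement of \cite{Cili-prime}, the singular-quadric case for $g=4$ being excluded exactly because \ref{lem-iv} is unavailable there.

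Finally, for \ref{lem-v} I would establish that $F_0(1)$ is ACM by checking the intermediate vanishings $h^i(F_0(1+t))=0$ for $0<i<3$ and all $t$: these follow from the instanton condition \ref{instanton-ii}, from Lemma \ref{basic-vanishing}, and from the self-duality, with the minimal charge ensuring that no extra cohomology survives once $g\ge 6$. Global generation of $F_0(1)$ I would deduce either from the ACM resolution just obtained or from Mukai's embedding of $X_g$ into a Grassmannian built from $H^0(F_0(1))$; this being the content recollected from \cite{BMK}, my role here is mainly to verify that the cited hypotheses, namely $g\ge 6$ and $-K_X$ very ample, are in force.
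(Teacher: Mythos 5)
Your proposal cannot be compared to an argument in the paper, because the paper gives none: Lemma \ref{lem:minimal-index1} is stated explicitly as a recollection of results from \cite{Cili-prime} and \cite{BMK}, with no proof supplied. Measured on its own terms, most of what you propose is sound, and in places it reconstructs exactly the mechanism the paper deploys elsewhere. Your K3-restriction argument for \ref{lem-i} is correct: for a general $S\in|-K_X|$ one has $\Pic(S)=\D{Z}H_S$, Maruyama's theorem (rank $2<\dim X$) makes $E|_S$ $\mu$-semistable, and since $c_1(E|_S)=-H_S$ has odd degree on a Picard-rank-one K3, $E|_S$ is in fact $\mu$-stable, hence simple, so $v^2=4c_2-2g-6\ge -2$, i.e. $c_2\ge m_g$ (you should make the simpleness step explicit, since $v^2\ge-2$ is a priori a statement about simple sheaves). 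As for the crux \ref{lem-iv}, your sketch is completable precisely along the lines the paper itself uses later for higher-rank instantons (Lemma \ref{lem:ext2-index1}): from the evaluation sequence $0\to K_0\to \calo^{m_g+1-\delta}\to F_0(1)\to 0$ one gets $\Ext^2(F_0,F_0')\simeq \Hom(F_0'(1),K_0^*)^*$, and Hoppe's criterion makes $K_0$ stable with $\mu(K_0^*)=1/\rk(K_0)\le 1/3<1/2$ exactly when $g\ge 6$; there is no circularity in using the globally generated presentation here, since that input (item \ref{lem-v}) is independently imported from \cite{BF,BMK}.

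The one genuine gap is in \ref{lem-ii}: you assert the dimension statement "will drop out once the obstruction space is computed in \ref{lem-iv}", but \ref{lem-iv} is only available for $g\ge 6$, while \ref{lem-ii} claims $\dim \calm_X(\gamma(1,0))=\delta$ for all $g$. For $g\in\{4,5\}$ one has $m_g+1-\delta=4$, so $K_0$ has rank $2$ and is itself a minimal instanton; the slope argument collapses and $\ext^2(F_0,F_0')$ need not vanish (this is also why your causal reading of the $g=4$ exception in \ref{lem-iii} as "\ref{lem-iv} being unavailable" is off: \ref{lem-iv} is equally unavailable at $g=5$, where \ref{lem-iii} nevertheless holds, so the singular-quadric exception is a genuinely geometric fact from the literature, not a formal consequence). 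Without \ref{lem-iv} you only get $\delta\le \dim_{[F_0]}\calm_X(\gamma(1,0))\le \delta+\ext^2(F_0,F_0)$, so for $g\in\{4,5\}$ the dimension and reducedness must be cited from \cite{Cili-prime,BMK} along with existence — which is, in effect, what the paper does for the whole lemma. A smaller soft spot of the same kind occurs in \ref{lem-v}: the instanton vanishings plus self-duality only control $h^1(F_0(-t))$ and $h^2(F_0(t))$ for $t\ge 0$, so the remaining ACM vanishings (already $h^1(F_0(1))=0$) do not "survive the minimal charge" formally and are themselves part of the cited content.
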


It may be useful to display the Chern character of a $(n,k)$-instanton on $X$. It has the form
\begin{equation}\label{eq:chern-1}
\gamma(n,k)=\ch(E_n^k)=2n-n H_X+(n(g-2-m_g)-k)l_X+\left(\frac{(k+m_g)}{2}+ \frac{(1-g)}{3}\right)p_X.
\end{equation}
\subsubsection{Rank-2 instanton bundles}
The first step towards the construction of instantons of high rank is the rank-2 case, since these are the base of our inductive argument.
Recall that for rank-2 vector bundles of degree $-1$, slope-stability and Gieseker-stability coincide.
Also, recall the terminology of an \textbf{ordinary threefold}. A line $l$ contained in a smooth Fano threefold of Picard rank one and index one has normal bundle $\cccO_l \oplus \cccO_l(-1)$, so $l$ is \textbf{ordinary}, or
$\cccO_l(1) \oplus \cccO_l(-2)$, in which case $l$ is obstructed. In the latter case, $l$ is a singular point of the Hilbert scheme of lines in $X$, which is of pure dimension $1$.
We say that $X$ is \textbf{ordinary} if $X$ contains an ordinary line. By \cite{gruson-skiti-nagaraj, prokhorov:exotic}, the threefold $X$ is ordinary if $g \ge 9$ and $X$ is not the Mukai-Umemura threefold.
\medskip

The existence of rank-2 instanton bundles
is known in the following cases:
\begin{theorem}\label{thm:index1-rank2}
Let $X$ be a Fano threefold with $\Pic(X)=\langle -K_X \rangle$ and $-K_X$ very ample.
\begin{itemize}
    \item There exists a slope-stable unobstructed instanton bundle on $X$ with $c_2=m$ for $m\in  \{m_g,\ldots, g+3\}$ except for $g=4$, $c_2=m_g$ and $X$ contained in a singular quadric.
    \item There exists a slope-stable ACM instanton bundle with $c_2=m$ on $X$ for \mbox{$m\in \{m_g,\ldots, g+3\}.$}
     \item If $X$ is ordinary, there exists a slope-stable unobstructed rank-2 instanton bundle of $c_2=m$, for all $m\ge m_g$.
 \end{itemize}
\end{theorem}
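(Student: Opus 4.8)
The plan is to derive all three statements from Serre's correspondence in the form established in Lemma \ref{lem:rank2-thooft-i}, which for $i_X=1$ translates the construction of a rank $2$ instanton bundle $E$ with $c_2=m$ into the construction of a connected l.c.i. curve $C\subset X$ of arithmetic genus $1$ and degree $m$, fitting into $0\to\calo(-1)\to E\to\cali_C\to 0$ as in \eqref{eq:rank2-thooft}. By that lemma $E$ is automatically a $\mu$-stable 't Hooft bundle, and it is unobstructed as soon as $C$ is, i.e. as soon as $h^1(\caln_{C/X})=0$. The whole problem is thereby reduced to exhibiting elliptic curves of the prescribed degree with the appropriate unobstructedness, non-degeneracy and, for the ACM statement, projective normality; I would organise the three bullets according to the available supply of such curves.

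For the first bullet I would appeal to the explicit curve constructions of the cited literature. The minimal case $m=m_g$, together with the non-existence of semistable sheaves of smaller $c_2$, is recorded in Lemma \ref{lem:minimal-index1}; the unobstructedness of $F_0$ (hence of the associated $E$) holds there except precisely in the excluded case $g=4$, $c_2=m_g$, $X$ contained in a singular quadric, by Lemma \ref{lem:minimal-index1}(\ref{lem-iii}). For the remaining degrees $m\in\{m_g,\ldots,g+3\}$ the construction of smooth, non-degenerate, unobstructed elliptic curves is supplied by \cite{BF} and \cite{Cili-prime}, and Serre's correspondence then returns the stable unobstructed instanton bundles.

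For the ACM statement I would first reduce it to a property of the curve. Since $\rk E=2$ and $\det E=\calo(-1)$ one has $E^*\cong E(1)$, so Serre duality yields $H^2(E(t))\cong H^1(E(-t))^*$; twisting $0\to\calo(-1)\to E\to\cali_C\to 0$ and using that $\calo_X$ has no intermediate cohomology gives $H^1(E(t))\cong H^1(\cali_C(t))$ for all $t$. Hence $E$ is ACM exactly when $H^1(\cali_C(t))=0$ for all $t$, that is, when $C$ is projectively normal. The second bullet therefore amounts to producing aCM elliptic curves of each degree $m\in\{m_g,\ldots,g+3\}$, again furnished by the cited constructions.

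For the third bullet I would run an inductive smoothing argument entirely parallel to Lemma \ref{lem:smoothing-elliptic}. Starting from a smooth unobstructed elliptic curve $C$ of degree $m\ge m_g$, I attach an ordinary line $l$ (with $\caln_{l/X}\cong\calo_l\oplus\calo_l(-1)$) meeting $C$ quasi-transversely at one point; then $C\cup l$ is an elliptic curve of degree $m+1$, and the normal-bundle computation of \eqref{eq:normal-union}--\eqref{eq:elm-line} gives $H^1(\caln_{C\cup l/X})=0$ together with surjectivity of $H^0(\caln_{C\cup l/X})$ onto the fibre at the node, so that \cite[Proposition 1.1]{HH} smooths $C\cup l$ to a smooth unobstructed elliptic curve of degree $m+1$. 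Iterating from the minimal curve yields curves, and hence instanton bundles, of every degree $m\ge m_g$. The main obstacle is exactly this step: it requires an ordinary line through a general point of $C$, which is guaranteed only under the ordinariness hypothesis. This is precisely why the unbounded range was classically available only for ordinary $X$, and it is the limitation that Proposition \ref{prop:rank2-index1} is designed to remove by a different argument based on elementary transformations and deformation.
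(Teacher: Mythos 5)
Your first two bullets match the paper, which disposes of them exactly as you do, namely by citation: the range $m\in\{m_g,\ldots,g+3\}$, the ACM bundles, and the excluded case $g=4$ with $X$ in a singular quadric are all imported from \cite{Cili-prime} (together with Lemma \ref{lem:minimal-index1}), the bundles being produced by Serre's correspondence from smooth unobstructed elliptic curves of those degrees; your reduction of the ACM property of $E$ to projective normality of $C$ via $E^*\simeq E(1)$ is correct and is a reasonable account of what happens inside the cited construction.

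The third bullet is where you genuinely diverge from the paper, and where your argument has a gap. The paper (following \cite{BF,Faenzi}, as recalled in Remark \ref{rmk:et} and reused in Proposition \ref{prop:rank2-index1}) never constructs elliptic curves of large degree: it performs an elementary transformation $0\to F\to E\to \calo_l(-1)\to 0$ of an instanton bundle along an ordinary line and shows by a dimension count that the non-locally-free deformations of $F$ form a proper sublocus of the moduli space, so $F$ deforms to a locally free stable instanton of charge one higher. Your smoothing induction instead transplants Lemma \ref{lem:smoothing-elliptic} from the Del Pezzo setting, but the key vanishing $h^1(\caln_{C'}|_l(-p))=0$ does not transfer. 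In Lemma \ref{lem:smoothing-elliptic}, ``ordinary'' means $\caln_l\simeq\calo_l^2$, so $\caln_{C'}|_l$, being a locally free degree-one up-modification of $\caln_l$, is forced to be $\calo_l(1)\oplus\calo_l$ and the vanishing is automatic. On an index-one threefold an ordinary line has $\caln_l\simeq\calo_l\oplus\calo_l(-1)$, as you yourself note; then $\caln_{C'}|_l$ is a degree-zero modification which can be either $\calo_l^2$ or $\calo_l(1)\oplus\calo_l(-1)$, and in the unbalanced case $h^1(\caln_{C'}|_l(-p))=h^1(\calo_l\oplus\calo_l(-2))=1$, so the Hartshorne--Hirschowitz criterion fails. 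Excluding the unbalanced case requires controlling the tangent direction of $C$ at $p$ inside the fibre $\caln_{l,p}$ relative to the canonical subbundle $\calo_l\subset\caln_l$, and your proposal supplies no such control. Moreover, on a prime Fano threefold the Hilbert scheme of lines is a curve, so lines sweep out only a surface in $X$: the existence of an ordinary line meeting a general member of your family of elliptic curves quasi-transversely, and with generic attachment direction, is a nontrivial incidence statement that must be proved, not assumed. These two points are precisely why the paper's remark before Theorem \ref{thm:exist-index-1} warns that for $i_X=1$ one cannot expect to produce instantons with $h^0(E(1))\neq 0$ (equivalently, elliptic curves) of arbitrary charge by such methods --- your induction, were it complete, would yield exactly those --- and why the literature resorts to the elementary-transformation argument, which sidesteps curve existence altogether.
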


The existence result when $X$ carries ordinary lines, presented in \cite{Faenzi} and going back to \cite{BF}, is obtained inductively, performing elementary transformations
of rank-2 unobstructed slope-stable instanton bundles along
$\cccO_l(-1)$ for $l\subset X$ an ordinary line, see Remark \ref{rmk:et}.

The existence for charges ranging in $\{m_g,\ldots, g+3\}$ can be found in \cite{Cili-prime, ciliberto} and is based on the proof of the existence of smooth, unobstructed elliptic curves on $X$ of degree \mbox{$d\in\{m_g,\ldots ,g+3\}$}. The instanton bundles are obtained from these elliptic curves via the Serre construction.

\medskip

Combining the results of \cite{Cili-prime, ciliberto} and adopting the method used in \cite{Faenzi}, we can prove the existence of rank-2 instanton bundles of charge greater than the minimal one on the Fano threefolds of index one and genus at least three. This can be done using the technique of the elementary transformation used in \cite{Faenzi}, with the only difference being that we will be performing these along torsion instantons (see Definition \ref{def:rank0-inst}) supported on unobstructed conics, instead of lines. Note that $X$ might indeed fail to carry unobstructed lines (so $X$ is \textbf{exotic}) but it will contain unobstructed smooth conics.
The torsion instantons supported on such a curve $C$ are of the form $\cccO_C(-p)$ for a point $p\in C$.
Since we are assuming $H_X$ very ample, the torsion instantons supported on lines and smooth conics are elements of the Gieseker moduli spaces $\calm_X(0,0, d,\frac{-d}{2})$ of purely one-dimensional sheaves with Hilbert polynomial $dt$, with $d\le 2$.

\begin{Lemma}\label{lem:rank0-deg2}
Let $d \le 2$ be an integer and $Q$ be a coherent sheaf of pure dimension one having Hilbert polynomial $dt$ and such that $h^i(Q)=0$ for $i=0,1$. Then one of the following holds
\begin{enumerate}[label=\roman*)]
    \item $Q\simeq \cccO_l(-1)$ for a line $l\subset X$, or:
    \item $Q\simeq \cccO_C(-p)$ for a smooth conic $C\subset X$ and a point $p\in C$, or:
    \item \label{Lemma-Q-iii} there are two possibly coincident lines $l, \: l'$ on $X$ such that  $Q$ fits in $$0\to \cccO_l(-1)\to Q\to \cccO_{l'}(-1)\to 0.$$
\end{enumerate}
\end{Lemma}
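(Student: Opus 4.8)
The hypotheses say precisely that $Q$ is a rank zero instanton of degree $d\le 2$ in the sense of Definition \ref{def:rank0-inst}: it is pure of dimension one, and $h^0(Q)=h^1(Q)=0$ forces $\chi(Q)=0$ (the higher cohomology vanishes for dimension reasons). The plan is to read the degree of $\supp(Q)$ off the Hilbert polynomial and then split into cases according to the one-dimensional support. Since $H_X$ is very ample, when $d=1$ the reduced support is a line $l\simeq \D{P}^1$; here $Q$ is a pure rank one sheaf on the smooth curve $l$, hence a line bundle $\calo_l(a)$, and $\chi(Q)=a+1=0$ gives $a=-1$, which is the first alternative.

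For $d=2$ the reduced support is a smooth conic, a single line, or a pair of distinct lines. If it is an integral conic $C\simeq\D{P}^1$ of degree two, then $Q$ is a line bundle $\calo_C(D)$ on it with $\chi(Q)=\deg_C(D)+1=0$, so $\deg_C(D)=-1$ and $Q\simeq\calo_C(-p)$ for a point $p\in C$, the second alternative. If the support is a pair of \emph{disjoint} lines $l\neq l'$, then $Q$ splits as the direct sum of its restrictions to the two components, each a line bundle; additivity of cohomology forces each summand to have vanishing $H^0$ and $H^1$, hence $Q\simeq\calo_l(-1)\oplus\calo_{l'}(-1)$, a split instance of \ref{Lemma-Q-iii}.

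The remaining and genuinely delicate configurations are two lines $l\neq l'$ meeting at a point $p$, and a single line (the coincident case $l=l'$, in which $Q$ has generic rank two). In both I would manufacture the short exact sequence of \ref{Lemma-Q-iii} by a filtration. For meeting lines, let $K\subseteq Q$ be the maximal subsheaf supported set-theoretically on $l$; as $K$ agrees with $Q$ away from $l'$ it has generic rank one on $l$, and being a subsheaf of a pure sheaf it is pure, hence $K\simeq\calo_l(a)$. For the double line one instead takes a saturated rank one subsheaf $\calo_l(a)\hookrightarrow Q$. In either situation the quotient is a line bundle on the appropriate line, so one obtains $0\to\calo_l(a)\to Q\to\calo_{l'}(b)\to 0$ with $a+b=-2$; the vanishings then rigidify the degrees, since $H^0(\calo_l(a))\hookrightarrow H^0(Q)=0$ forces $a\le -1$ while $H^1(\calo_{l'}(b))$ is a quotient of $H^1(Q)=0$, forcing $b\ge -1$, whence $a=b=-1$.

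I expect the main obstacle to be exactly the purity of the quotient at the singular point of the support, i.e. proving that $Q/K$ carries no torsion at the node $p$ (respectively that the saturated quotient in the double line case is again a line bundle). This is what pins down the two-step structure and prevents a length one sheaf from hiding at $p$. I would settle it by maximality: any zero-dimensional subsheaf of $Q/K$ would lift to a subsheaf of $Q$ strictly containing $K$ and still supported on $l$, contradicting the choice of $K$. Once purity is secured, the relation $a+b=-2$ together with the two cohomological vanishings is purely formal, and the three alternatives are thereby exhausted.
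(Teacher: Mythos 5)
Your reduction of the two delicate cases (meeting lines and the double line) to an extension $0\to\calo_l(a)\to Q\to\calo_{l'}(b)\to 0$ with $a+b=-2$ is essentially sound — the purity of the quotient via maximality of $K$, resp.\ saturation, works, with the small caveat that torsion of $Q/K$ supported away from the node is excluded by purity of $Q$ itself rather than by maximality of $K$. But the final rigidification is a non sequitur: the constraints $a\le -1$, $b\ge -1$, $a+b=-2$ do not force $a=b=-1$; every pair $(a,b)=(-1-c,\,-1+c)$ with $c\ge 0$ satisfies all three, and even the remaining identity extracted from the long exact sequence, $h^0(\calo_{l'}(b))=h^1(\calo_l(a))$, holds automatically since both sides equal $b+1$. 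Worse, this is not a removable bookkeeping issue: there genuinely exist sheaves $Q$ satisfying the hypotheses for which your canonical filtration is unbalanced. Take a non-split extension $0\to\calo_l(-2)\to Q\to\calo_{l'}\to 0$ whose connecting map $H^0(\calo_{l'})\to H^1(\calo_l(-2))$ is nonzero: such $Q$ is pure, has Hilbert polynomial $2t$ and $h^0(Q)=h^1(Q)=0$, and its maximal subsheaf supported on $l$ is exactly $\calo_l(-2)$, because any strictly larger one would induce a nonzero subsheaf of the pure sheaf $\calo_{l'}$ supported at the node, which is zero. So for this $Q$ your construction returns $(a,b)=(-2,0)$, and your argument stops short of the asserted conclusion. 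These $Q$ do occur in the paper's own case analysis, as extensions of $\calo_p$ by $\calo_D(-1)$ for $D$ a singular conic, when the induced extension of $\calo_p$ by $\calo_{l'}(-1)$ is the non-split one, i.e.\ isomorphic to $\calo_{l'}$.

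The lemma remains true for such $Q$ because they admit a \emph{different} filtration of the required shape, and producing it is precisely the hardest step of the paper's proof, which proceeds differently throughout: it uses $\chi(Q(1))=d>0$ to get a nonzero map $\calo(-1)\to Q$ with image $\calo_D(-1)$ for a Cohen--Macaulay curve $D\subseteq\supp(Q)$, then analyses $D$; in the problematic subcase above it shows $\Hom(\calo_{l'}(-1),Q)\ne 0$ (using $\hom(\calo_{l'}(-1),\calo_l(-2))=0$ and $\ext^1(\calo_{l'}(-1),\calo_l(-2))=1$, including the coincident-lines situation) and that any nonzero such map is injective with cokernel $\calo_l(-1)$ — in other words, the roles of sub and quotient must be swapped relative to your maximal filtration. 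So your proof needs one additional idea in the unbalanced case $a\le -2$: for instance, prove directly that then $\Hom(\calo_{l'}(-1),Q)\ne 0$ and rebuild the sequence from that morphism. Your treatment of the remaining cases — $d=1$, the integral conic, and disjoint lines — is correct and reaches the paper's conclusions by a somewhat more direct route (support filtration instead of twisted sections), but as written the proof of alternative \ref{Lemma-Q-iii} is incomplete.
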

\begin{proof}
We start by recalling that, for a curve, being Cohen-Macaulay is equivalent to having neither embedded nor isolated points. A one-dimensional sheaf $R$ is supported on a Cohen-Macaulay curve if $h^0(R(t))=0$ for some integer $t$.
Looking at $Q$, since $\chi(Q(1))=d>0$, we have $h^0(Q(1))>0$, therefore there exists a non-zero morphism
$\cccO_{X}(-1)\to Q$ whose image must be of the form $\cccO_D(-1)$ for a Cohen-Macaulay curve $D\subset \supp(Q)$. 

In particular $\deg(D)\le 2$.
If $\deg(D)=1$, $D$ must be a line $l$ hence $Q$ fits in
$$ 0\rightarrow \cccO_l(-1)\rightarrow Q\rightarrow Q'\rightarrow 0.$$

Here, $Q'$ has Hilbert polynomial $(d-1)t$. If $d=1$, then $Q'=0$ and $Q\simeq \cccO_l(-1)$. If $d=2$, since $H^i(Q')=0$ for $i=0,1$, we get that $Q'$ is a sheaf of pure dimension one and $P_{Q'}(t)=t+1.$ Applying the same argument as above, we conclude that $Q'\simeq \cccO_l'(-1)$ for a line $l'$.

In the case $\deg(D)=2$, so that, necessarily $d=2$, $Q$ fits in
$$ 0\rightarrow \cccO_D(-1)\rightarrow Q\rightarrow Z\rightarrow 0$$
for $Z$ a zero-dimensional scheme of length $1+p_a(D)$.
This implies $p_a(D)\ge -1$.
If $D$ is integral it is a smooth conic, therefore $Z$ consists of a point $p\in D$, as otherwise $Q\simeq\cccO_p\oplus \cccO_D(-1)$ contradicting the fact that $Q$ has pure dimension one. Then $Q$ is a degree-1 line bundle on $D$.
Finally, we show that if $D$ is not integral, $Q$ is always of the form \ref{Lemma-Q-iii}.

If $p_a(D)=-1$, $Q\simeq \cccO_D(-1)$ and this follows from the fact that, for a Cohen-Macaulay degree 2
curve $D$ of arithmetic genus $-1$,  $\cccO_D$  arises as a non-split extension $\Ext^1(\cccO_{l'}, \cccO_{l})$ for two possibly coincident lines $l,\ l'$.

It remains to analyse the case when $D$ is a singular plane conic and $Z\simeq \cccO_p$ for $p\in D$.
For a curve $D$ of this kind, $\cccO_D(-1)$  fits in
$$0\rightarrow \cccO_l(-2)\rightarrow \cccO_D(-1)\rightarrow \cccO_{l'}(-1)\rightarrow 0$$
for two possibly coincident lines $l, \: l'$. Then we have a commutative diagram:
\begin{equation}
\begin{tikzcd}
& \cccO_{l}(-2)\arrow[equal]{r}\arrow[d] &\cccO_{l}(-2)\arrow[d, ] & \\
0  \arrow[r] & \cccO_D(-1)\arrow[r]\arrow[d, two heads] & Q \arrow[r]\arrow[d, two heads] & \cccO_p\arrow [r]\arrow[equal]{d}& 0\\
0\arrow[r] &\cccO_{l'}(-1)\arrow[r] & Q'\arrow[r]& \cccO_p \arrow[r]
& 0
\end{tikzcd}
\end{equation}

If $Q'\simeq \cccO_{l'}(-1)\oplus \cccO_p$, the sheaf $Q$ surjects onto $\cccO_{l'}(-1)$ and the kernel of this surjection must necessarily be $\cccO_l(-1)$.
If $Q'\simeq \cccO_{l'}$, we get $\Hom(\cccO_{l'}(-1), Q)\ne 0$, indeed $\hom(\cccO_{l'}(-1),\cccO_{l}(-2))>\ext^1(\cccO_{l'}(-1),\cccO_{l}(-2))$. This last inequality is obvious if $l\neq l'$, while for $l=l'$ it holds
since, for a line $l$ supporting a non-reduced conic, we must have $\cali_l|_l\simeq \cccO_l(-1)\oplus \cccO_l(2)$ and $\Hom(\cali_l,\cccO_l(-1))\simeq \Ext^1(\cccO_l,\cccO_l(-1))\simeq \D{C}$. 
We refer also to \cite[Lemma 4.2]{BF} for this argument. 
Note that this is consistent with the fact that there exists  a unique structure of a conic supported on $l$.
A morphism $\cccO_{l'}(-1)\to Q$ must be injective and
again we find that its cokernel must be $\cccO_l(-1)$.
\end{proof}
Using this result we can prove the existence of rank-2 instanton bundles having charge strictly greater than the minimal one.

\begin{proposition}\label{prop:rank2-index1}Let $X$ be a Fano threefold of index one and such that $-K_X$ is very ample. Moreover, assume that $X$ is not contained in a singular quadric for $g=4$.
Then, for all $k\ge 0$, there exists a $(1,k)$-instanton bundle $E$ satisfying the following:
\begin{enumerate}[label=\roman*)]
    \item \label{prop-i}  $E$ is slope-stable and unobstructed;
    \item \label{prop-ii} for a sufficiently general smooth rational curve $R_d\subset X$, of degree $d\in\{1,2\}$, we have $H^1(E|_{R_d})=0$.
\end{enumerate}
If $g=4$ and $X$ is contained in a singular quadric the same statement holds for all $k\ge 1$.
\end{proposition}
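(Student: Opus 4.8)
The plan is to argue by induction on the charge, raising it by $2$ at each step via an elementary transformation along an unobstructed conic, as in Remark~\ref{rmk:et}, but replacing the lines used there (which are obstructed when $X$ is exotic) by conics, of which $X$ always carries an unobstructed smooth one. For the base of the induction I invoke Theorem~\ref{thm:index1-rank2}: since $m_g=\lceil g/2\rceil+1$ gives $g+3-m_g\ge 3$ for every admissible genus, that result already produces $\mu$-stable unobstructed rank $2$ instanton bundles of two consecutive charges, namely $k\in\{0,1\}$ in general and $k\in\{1,2\}$ in the excluded case $g=4$, $X$ inside a singular quadric. Being obtained from unobstructed elliptic curves by Serre's correspondence, these satisfy \ref{prop-ii} by the analogue of Lemma~\ref{lem:rank2-thooft-i}\,\ref{unob-iv}. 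As two consecutive base charges are available and each transformation adds $2$, the induction reaches every $k\ge0$ (respectively $k\ge1$).

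For the inductive step, start from a $\mu$-stable unobstructed $(1,k)$-instanton $E$ satisfying \ref{prop-ii}, and pick a smooth conic $C\subset X$ that is simultaneously unobstructed and non-jumping for $E$; such $C$ exists as both conditions are dense open on the (two-dimensional, generically smooth) Hilbert scheme of conics. By Lemma~\ref{lem:rank0-deg2} the sheaf $\calo_C(-p)\simeq\calo_{\D{P}^1}(-1)$ is a rank zero instanton of degree $2$, and $H^1(E|_C)=0$ forces $E|_C\simeq\calo_{\D{P}^1}(-1)^{\oplus2}$, whence $\hom(E,\calo_C(-p))=2$ and there is a surjection $E\twoheadrightarrow\calo_C(-p)$. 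Its kernel $F$ is a $\mu$-stable $(1,k+2)$-instanton with $F^{**}\simeq E$, $F^{**}/F\simeq\calo_C(-p)$ and $c_3(F)=0$; it is torsion-free but non-reflexive. Crucially, $F$ is \emph{unobstructed}: the exact sequences give $\Ext^2(E,F)=\Ext^3(E,F)=0$, so $\Ext^2(F,F)\cong\Ext^3(\calo_C(-p),F)\cong\Hom(F,\calo_C(-p)\otimes\omega_X)^*$, and the last group vanishes because, by adjunction and $E|_C\simeq\calo_{\D{P}^1}(-1)^{\oplus2}$, both the sub and the quotient of $F|_C$ have no nonzero map to $\calo_C(-p)\otimes\omega_X\simeq\calo_{\D{P}^1}(-3)$.

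Consequently $[F]$ is a smooth point of $\calm_X(\gamma(1,k+2))$, lying on a unique component $Y$ of dimension $\ext^1(F,F)=2(k+2)+\delta$, where $\delta\in\{0,1\}$ is the remainder of $g$ modulo $2$ as in Lemma~\ref{lem:minimal-index1}. It remains to show that $Y$ is not contained in the non-reflexive locus $N\subset\calm_X(\gamma(1,k+2))$, which (as the Chern character has $c_3=0$ and the rank is $2$, so that reflexive $=$ locally free here) coincides with the non-locally-free locus. Stratifying $N$ by the degree of the one-dimensional cokernel $(F')^{**}/F'$, its top stratum (degree $2$) is dominated by the data of a charge-$k$ instanton bundle, a conic, and a surjection, giving
\[
\dim N=\dim\calmi_X(1,k)+2+\bigl(\hom(E,\calo_C(-p))-1\bigr)=(2k+\delta)+2+1<2(k+2)+\delta.
\]
Hence $Y\not\subset N$, so a general $E'\in Y$ is locally free, $\mu$-stable, unobstructed (a smooth point of $Y$), and still a $(1,k+2)$-instanton. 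Condition \ref{prop-ii} passes to $E'$ by semicontinuity: for a general rational curve $R_d$ of degree $d\le2$ disjoint from $C$ one has $F|_{R_d}\simeq E|_{R_d}$, which is balanced by the inductive hypothesis.

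The main obstacle is precisely the dimension estimate $\dim N<2(k+2)+\delta$, which is tight: the top stratum already has dimension exactly one less than $\dim Y$, leaving no slack, so every ingredient — the dimension $2k+\delta$ of the instanton moduli $\calmi_X(1,k)$, the two-dimensionality of the Hilbert scheme of conics, and the count $\hom(E,\calo_C(-p))=2$ of surjections — must be computed exactly. Moreover the bound has to be verified for the deeper strata of $N$, where the one-dimensional cokernel has higher degree and $(F')^{**}$ is a reflexive sheaf of smaller charge; controlling these, as in \cite{Faenzi}, is what guarantees that $Y$ escapes $N$ and hence that the induction propagates both local freeness and unobstructedness to charge $k+2$.
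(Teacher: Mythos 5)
Your strategy coincides with the paper's: induction on the charge in steps of two, starting from the two consecutive base charges supplied by Theorem \ref{thm:index1-rank2}, performing an elementary transformation $0\to F\to E\to \calo_{R_2}(-p)\to 0$ along an unobstructed non-jumping conic, proving unobstructedness of $F$ by the same $\Ext$ computations, and then deforming $F$ away from the non-locally-free locus by a dimension count (your numbers $2k+\delta+3<2(k+2)+\delta$ for the degree-two stratum agree with the paper's). However, two steps that you treat as routine are in fact the real content, and as written they are gaps.

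First, the base case of \ref{prop-ii} for conics. There is no ``analogue of Lemma \ref{lem:rank2-thooft-i}\,\ref{unob-iv}'' to invoke: that lemma deals with lines, and its mechanism fails for conics when $i_X=1$. If a conic $R_2$ is \emph{disjoint} from the elliptic curve $C$ giving $E$ via Serre, restricting $0\to\calo(-1)\to E\to\cali_C\to 0$ exhibits $E|_{R_2}$ as an extension of $\calo_{R_2}$ by $\calo_{R_2}(-2)$, which may well be the jumping splitting $\calo_{R_2}\oplus\calo_{R_2}(-2)$. To force $E|_{R_2}\simeq\calo_{R_2}(-p)^{\oplus 2}$ one needs $C$ to meet $R_2$ in \emph{exactly one} point, and producing such a $C$ for a general conic is where the paper spends most of its proof: one must show that the family $\calh_p$ of elliptic curves of the relevant degree through a general point $p$ is a nonempty codimension-two subscheme of the Hilbert scheme, that a second general point imposes independent conditions (so the general member of $\calh_p$ misses the rest of $R_2$), with a separate argument via elliptic fibrations on hyperplane sections for $g=3$ and for $g=4$ with $X$ in a singular quadric, where $h^0(E(1))=2$ and the naive count breaks down. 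Without this the induction never starts for $d=2$, and the inductive step itself — which must \emph{choose} a non-jumping conic — has nothing to stand on.

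Second, the degree bound in the deformation argument. You stratify the non-locally-free locus $N$ by the degree of the one-dimensional cokernel and concede that the deeper strata must be ``controlled as in \cite{Faenzi}''; but your top-stratum estimate, being tight, proves nothing about $Y\not\subset N$ unless all strata are bounded. The paper's point is that the deeper strata are not needed at all: deforming $F$ along a curve $T$, semicontinuity gives $h^2(\mathbf{F}_t(n))\le h^2(F(n))=-2n$ for $n\ll 0$, while if $\mathbf{F}_t$ is not locally free then by \cite[Theorem 24]{CM} it sits in $0\to\mathbf{F}_t\to\mathbf{F}_t^{**}\to Q\to 0$ with $\mathbf{F}_t^{**}$ a \emph{locally free} $\mu$-stable instanton — not merely a ``reflexive sheaf of smaller charge'' as you write; this is the rank-two structure theorem, which fails in higher rank — and $Q$ a rank-zero instanton of degree $d$, whence $h^2(\mathbf{F}_t(n))=h^1(Q(n))=-dn$ and $d\le 2$. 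Only then does Lemma \ref{lem:rank0-deg2} reduce $Q$ to the three listed families, and only the two counts for $d=1$ (codimension $2-1=1$) and $d=2$ (codimension $4-3=1$) are required; note you computed only the $d=2$ one. The missing idea is precisely this semicontinuity bound combined with the identification of the double dual as an instanton bundle; supplying it turns your sketch into the paper's proof.
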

\begin{proof}
We prove the result by induction on $k$.
We know from Theorem \ref{thm:index1-rank2} the existence of a $(1,k)$-instanton satisfying \ref{prop-i} for $k\in \{0,1\}$ except for the case where $g=4$ and $X$ is contained in a singular quadric, in which case the result holds for $k\in\{1,2\}$.
A $(1,k)$-instanton as just described is always a 't Hooft instanton, as $\chi(E(1))>0$ and $h^2(E(1))=0$. Thus, it fits in a short exact sequence:
$$ 0\rightarrow \cccO_{X}(-1)\rightarrow E\rightarrow \cali_C\rightarrow 0,$$
for $C$ a l.c.i. elliptic curve on $X$ of degree $m_g+k$.

From this short exact sequence one checks that a general line and a general conic are not jumping for $E$. Indeed, for any line $l\subset X$ disjoint from $C$, it is immediate to compute that $E|_l\simeq \cccO_l(-1)\oplus \cccO_l$. To verify that the same holds for a general conic $R_2$, we recall the following facts.

First, for a general point $p\in X$, the family of l.c.i. elliptic curves passing through $p$ is parametrised by a smooth
codimension 2 subscheme $\calh_p$ of the Hilbert scheme $\calh_{m_g+k,1}$ of l.c.i. elliptic curves of degree $m_g+k$.

Second, for such a point $p$ we have that
\begin{equation}\label{eq:curve-point}
h^0(\caln_{C/X}(-p))=h^0(\caln_{C/X})-2, \qquad h^1(\caln_{C/X}(-p))=0, \qquad \mbox{for all curves $C$ in $\calh_p$},
\end{equation}
so that for all $(1,k)$-instantons $E$, $h^0(E(1)\otimes \cali_p)=h^0(E(1))-2, \ h^i(E(1)\otimes \cali_p)=0$ for $i\ne 0$.

Note that this argument fails a priori for $(1,1)$-instantons on a prime Fano threefold $X$ of genus $g=3$ and for $(1,2)$-instantons on $X$ of genus 4 --  we need to look at this situation to treat the case when $X$ is contained in a singular quadric. Indeed, for a general bundle $E$ of this kind we have $h^0(E(1))=2$. Nevertheless we do have that for $p\in X$ general, $\calh_p$ is non-empty.
Indeed, we argue as follows. Note that for a bundle $E$ of such a kind and a point $p\in X$ we get a short exact sequence:
$$ 0\rightarrow \cali_p\rightarrow E(1)\otimes \cali_p\rightarrow \cali_C(1)\otimes \cali_p\rightarrow 0.$$

We deduce that for a general curve $[C]\in \calh_{3+k,1}$ (so that, in particular the curve might be assumed to be normal, according to \cite{Cili-prime, ciliberto}), if $E$ is a $(1,k)$-instanton bundle obtained from $C$ by the Serre correspondence, given a point $p$, one has $h^0(E(1)\otimes \cali_p)\ne 0$ only if $p$ belongs to the linear span $\langle C\rangle$ of $C$.
More precisely, one can identify the pencil $\D{P}(H^0(E(1)))$ with a complete linear system of elliptic curves on a hyperplane section $S_E$ of $X$ admitting an elliptic fibration, and for $S_E$ general, $H^0(E(1)\otimes \cali_p)$ corresponds to the unique elliptic curve in this linear system passing through $p$. Thus a general element in $\calh_p$ corresponds to a hyperplane section through $p$ admitting an elliptic fibration. We can then conclude that for $p$ general $\calh_p$ is a smooth codimension 2 subscheme of $\calh_{3+k,1}$ and that its general element satisfies \eqref{eq:curve-point}.

\medskip
Third, still assuming $p$ general, we also have that a general point $q\in X$ distinct from $p$ imposes 2 linear conditions on each vector space $H^0(E(1))$ that are independent from \mbox{$H^0(E(1)\otimes \cali_p)$}. Therefore the degree $m_g+k$ elliptic curves passing through both $p$ and $q$ are parametrised by a codimension 4 subscheme of $\calh_{m_g+k,1}$.

\medskip

As a consequence of these facts we have that for a general conic $R_2$ (which, without loss of generality, we can assume to be unobstructed), and for a general point $p\in R_2$, a general element $[C]\in\calh_p$ satisfies $C\cap R_2=\{p\}$ so that $\cali_C\otimes \cccO_{R_2}\simeq \cccO_{R_2}(-p)\oplus \cccO_p$. For an instanton $E$, obtained through the Serre correspondence from such a curve $C$, $E|_{R_2}$ is an extension of two degree $-1$ line bundles on $R_2$, so that $E|_{R_2}\simeq \cccO_{R_2}(-p)^2$ or, equivalently,  $h^1(E|_{R_2})=0$.
To show that the proposition holds for higher values of $k$ it will be sufficient to prove the following claim:
\begin{claim}
Let $E, \: R_2, \: \phi$ be, respectively, a $(1,k)$-instanton bundle satisfying \ref{prop-i}, \ref{prop-ii},
an unobstructed smooth conic $R_2\subset X$ that is not jumping for $E$ and an epimorphism \mbox{$\phi: E\twoheadrightarrow \cccO_{R_2}(-p), \ p\in R_2$.}
Then $F:=\ker(\phi)$ deforms to a slope-stable and unobstructed $(1,k+2)$ instanton bundle which satisfies \ref{prop-i}, \ref{prop-ii}.
\end{claim}

Let us prove the claim (we follow essentially \cite{Faenzi}). By construction, we have
\begin{equation}\label{eq:et-conic}
0\rightarrow F\rightarrow E\rightarrow \cccO_{R_2}(-p)\rightarrow 0.
\end{equation}

One checks that $F$ is slope-stable, since $E=F^{**}$ and $E$ is slope-stable by assumption. Also, $H^i(F)=0$ for $i=1,2$. Note that from \eqref{eq:et-conic} we have that $F$ satisfies \ref{prop-ii}. Indeed, there is a non-empty open subset of the Hilbert scheme of lines, resp. conics, in $X$ such that a curve $R'$ of such subset is disjoint from $R_2$. For the curve $R'$, we have $E|_{R'}\simeq F|_{R'}$.

Let us check unobstructedness of $F$. Applying $\Hom(\:\cdot\:, F)$ and $\Hom(E, \: \cdot\:)$ to $\eqref{eq:et-conic}$, by the assumptions on $E$, we have \[\Ext^2(E,E)=0, \qquad \Ext^2(E,\cccO_{R_2}(-p))\simeq H^2(E^*|_{R_2}(-p))=0,\]
so that $\Ext^2(E,F)=0$. Restricting \eqref{eq:et-conic} to $R_2$ we check the vanishing of $\Ext^3(\cccO_{R_2}(-p),F)\simeq \Hom(F,\cccO_{R_2}(-3p))^*=0$. This  shows $\Ext^2(F,F)=0$.

Consider now the vector $\gamma(1,k+2)$ in $\oplus_{0 \le i \le 3}H^{i,i}(X)$, see \eqref{eq:chern-1}. We have that $F$ corresponds to a smooth point in the Gieseker-Maruyama moduli space $\calm_X(\gamma(1,k+2))$.
Since $F$ is unobstructed, it can indeed be deformed as a smooth point of this moduli space. So there exists a family $\mathbf{F}$, i.e. a sheaf on  $X\times T$, flat over a smooth curve $T$ with a marked point $0\in T$, whose central fibre $\mathbf{F}_0$ is isomorphic to $F$ and whose general element $\mathbf{F}_t$ satisfies the cohomological vanishing of \eqref{def:instanton} and \ref{prop-i}, \ref{prop-ii}.
Indeed, all these conditions are open.

We show that for $t$ sufficiently general, $\mathbf{F}_t$ is locally free, hence it is an instanton bundle.
If  $\mathbf{F}_t$ is not locally free, according to \cite[Theorem 24]{CM}
 it would fit in a short exact sequence:
\begin{equation}\label{eq:et-transformation}
0\rightarrow \mathbf{F}_t\rightarrow \mathbf{F}_t^{**}\rightarrow Q\rightarrow 0,
\end{equation}
where the double dual $\mathbf{F}_t^{**}$ is a slope-stable $(1,k+2-d)$ instanton bundle and $Q$ is a torsion instanton of degree $d$.

By semicontinuity of cohomology, we find that, for $n\ll 0$, $h^2(\mathbf{F}_t(n))=h^1(Q(n))=-\chi(Q(n))=-dn\le -2n$, hence $d\le 2$.
From Lemma \ref{lem:rank0-deg2}, we know that $Q$ can only belong to 3 families of 1-dimensional sheaves. We show that all the corresponding families of non-locally free instantons are divisorial in $\calmi_X(1,k+2)$. This suffices to conclude that for $t\in T$ general, $\mathbf{F}_t$ is an instanton bundle.

If $d=1$, we have $Q\simeq \cccO_l(-1)$ for a line $l\subset X$.
Sheaves of this kind belong to a family of dimension one, therefore the general non-locally free deformations of $F$ that are singular along a line are parametrised by a family of dimension at most $\ext^1(\mathbf{F}_t^{**},\mathbf{F}_t^{**})+1$ for $\mathbf{F}^{**}_t$ a slope-stable unobstructed $(1,k+1)$ instanton bundle. But by Riemann-Roch we compute that $\ext^1(\mathbf{F}_t,\mathbf{F}_t)=\ext^1(\mathbf{F}_t^{**},\mathbf{F}_t^{**})+2$.

If $d=2$, $c_2(\mathbf{F}^{**}_t)=m$ and $Q$ is either a degree -1 line bundle on a smooth conic or it arises from an extension class in $\Ext^1(\cccO_{l'}(-1),\cccO_{l}(-1))$. Now, as $c_2(\mathbf{F}_t)=c_2(\mathbf{F}_t^{**})+2$, by Riemann-Roch we compute that $\ext^1(\mathbf{F}_t,\mathbf{F}_t)=\ext^1(\mathbf{F}_t^{**},\mathbf{F}_t^{**})+4$. But the families of instantons that are singular along a degree 2 curve have dimension $\ext^1(\mathbf{F}_t^{**},\mathbf{F}_t^{**})+3$. Indeed, $Q$ varies in a 2-dimensional family, and we have a one-dimensional family of epimorphisms $\mathbf{F}_t^{**}\twoheadrightarrow Q$. We can thus conclude that a general deformation of $F$ is an instanton bundle that, by semicontinuity, satisfies \ref{prop-i} and \ref{prop-ii}. 
\end{proof}

\begin{remark}\label{rmk:very-ample-1}
The hypothesis that $-K_X$ is very ample is necessary for the argument used in Proposition \ref{prop:rank2-index1}.
Indeed, this assumption ensures that a curve of degree $2$ in $X$ is rational, which implies that the dimension of the family of torsion instantons on $X$ supported on degree 2 curves is 2.
This will no longer be true if $-K_X$ is not very ample. For example, if $X$ is a degree 2 covering of a Fano threefold $Y$, we have irrational curves of degree 2 obtained pulling back lines on $Y$. The torsion instantons supported on a curve of this kind are not all isomorphic, actually they vary in a family of dimension one.
\end{remark}

\subsubsection{Instanton bundles of higher rank}
Let us now pass to the higher-rank range.
The approach will be similar to what we did earlier, namely, we produce strictly slope-semistable instanton bundles that can then be deformed into slope-stable instantons.

However, this time we meet some additional difficulty. In the first place, with our approach, we can no longer hope to prove the existence of instanton bundles $E$ such that $h^0(E(1))\ne 0$: in the rank-2 cases this indeed only holds for a values of $c_2$ varying in $\{m_g,\ldots, g+3 \}$.
The second difficulty, actually the main one, is the identification of the minimal charge: we are indeed able to determine its exact value only when $g\ge 4$.
This computation is a consequence of the following lemma.

\begin{Lemma} \label{lem:ext2-index1} \label{lem:min-index-1}
Let $E$ be a $(n,k)$-instanton on $X$. Then the following hold.
\begin{enumerate}[label=\alph*)]
    \item \label{ext vanishing}  If $g\ge 6$ then:
    \begin{enumerate}[label=\roman*)]
        \item \label{ext-van-1} $\ext^2(F_0,E)=0;$
        \item \label{ext-van-2} $\ext^2(E,F_0)=0.$
    \end{enumerate}
Moreover, items \ref{ext-van-1} and \ref{ext-van-2} hold for $g\in \{4,5\}$ as well if $E$ is slope-stable and Gieseker-semistable, respectively, and not a minimal instanton.
    \item \label{semi-stable-1} If $g\ge 6$, then $k\ge 0$. The same holds for $g\in \{4,5\}$ and $E$ Gieseker-semistable.
    \item \label{minimal-charge} If $g\ge 6$ and $E$ is Gieseker-stable and not a minimal instanton, then $k\ge 1$ for $g$ odd and $k\ge n$ for $g$ even. The same holds for $g\in\{4,5\}$ if $E$ is slope-stable.
\end{enumerate}
\end{Lemma}

\begin{proof}[Proof of \ref{ext vanishing}]
Let us consider a minimal rank-2 instanton bundle $F_0$.
Set $\delta$ for the remainder of the division by $2$ of $g$.
It is well-known (see for instance \cite{BF}) that the evaluation map $H^0(F_0^*)\otimes \cccO_{X}\to F_0^*$ is surjective, in other words $F_0^*=F_0(1)$ is globally generated. Also, the evaluation map has a locally free kernel $K_0$, so that, in view of the equality $m_g+1-\delta= h^0(F_0(1))=\chi(F_0(1))$, we have a short exact sequence of vector bundles:
\begin{equation}\label{eq:gg}
    0\rightarrow K_0\rightarrow \cccO_{X} ^{m_g+1-\delta}\rightarrow F_0^*\rightarrow 0
\end{equation}

The vector bundle $K_0$ is slope-stable. We reproduce here a well-known argument to show it, for the reader's convenience.
From \eqref{eq:gg} we can check that the bundle $K_0$ satisfies $H ^0(\wedge^i K_0)=0$ for $i=1,\ldots, \rk(K_0)$. Indeed, from its very definition, first of all we have $h^0(K_0)=0$. Then, all other vanishing are deduced, recursively, from the inclusion
$$\wedge^i K_0\hookrightarrow \cccO_{X}^{m_g + 1- \delta} \otimes \wedge^{i-1} K_0,$$ valid for all integers $i > 0$.
Such inclusion, in turn, comes from tensoring \eqref{eq:gg} with $\wedge^{i-1} K_0$ and using the inclusion $\wedge^i K_0 \hookrightarrow K_0 \otimes \wedge^{i-1} K_0$.
Therefore $K_0$ is slope-stable by Hoppe's criterion, see \cite[Theorem 3]{jardim-menet-prata-sa-earp}. Note that the assumption $g \ge 6$ guarantees $m_g+1-\delta \ge 5$ so $\rk(K_0) \ge 3$, hence
\begin{equation} \label{slope K}
\mu(K_0^*) = \frac{1}{\rk(K_0)} \le \frac 13.
\end{equation}

Let us now take an instanton sheaf $E$ and apply $\Hom(\cdot, E)$ to the dual of \eqref{eq:gg}.
Doing so we get an isomorphism $\Ext^2(F_0,E)\simeq\Ext^3(K_0^*,E)\simeq \Hom(E(1),K_0^*)^*$. Now, since $E(1)$ is slope-semistable of slope $1/2$, by \eqref{slope K} we get  $\Hom(E(1),K_0^*)=0$ and \ref{ext vanishing} is proved.
Applying $\Hom(E,\cdot)$ to \eqref{eq:gg} tensored by $\cccO_{X}(-1)$ we get that, as $\Ext^2(E,\cccO_{X}(-1))\simeq H^1(E)^*=0$, $\Ext^2(F_0,E)$ injects into $\Ext^3(E,K_0(-1))\simeq \Hom(K_0,E)^*.$
As $c_1(K_0)=-1$ and $\rk(K_0)\ge 3$ for $g\ge 6$, the slope-semistability of $E$ ensures that $\hom(K_0,E)=0$.

Finally, let us prove item \ref{ext vanishing} for $g \in \{4,5\}$, in which case $m_g+1-\delta=4$ and $K_0\simeq K_0^*(-1)$ is also a minimal instanton. If $E$ is slope-stable and not a minimal instanton, $\Hom(E(1),K_0^*)=0$ which again allows us to conclude that $\Ext^2(F_0,E)=0$.
If $\Ext^2(E,F_0)\ne 0,$ we would have $\Hom(K_0,E)\ne 0.$
Now, such a morphism must necessarily be injective which cannot happen if $k\ge 1$ and $E$ is Gieseker-semistable.
\end{proof}

\begin{remark}
We cannot hope to get such a strong statement for the case $g=3$, namely for $X\subset \D{P}^4$ a smooth quartic hypersurface.
For now, note that in this case the cokernel $F_0$ of the coevaluation map $F_0\hookrightarrow \cccO_{X}^3$ is $\cali_l(1)$, where $l\subset X$ is a line.
Therefore we see that, already in rank 2, taking a rank-2 instanton $E$ corresponding to an elliptic curve $C'$ containing $l$ to get $\hom(E, \cali_l)=\ext^3(\cali_l(1),E)=\ext^2(F_0,E)\ne 0.$
\end{remark}

\begin{proof}[End of the proof of Lemma \ref{lem:ext2-index1}]
To prove points \ref{semi-stable-1} and \ref{minimal-charge} we start by computing $\chi(E,F_0)=\chi(F_0,E)$. Setting again $\delta$ to be the remainder of the division of $g$ by $2$, this is done applying Riemann-Roch, which leads to:
\[\chi(E,F_0)=\chi(F_0,E)=(1-\delta) n -k. \]

Let us prove item \ref{semi-stable-1}. We argue by induction on $n$. For $n=1$ the statement holds due to Lemma \ref{lem:minimal-index1}. Let us suppose now that $n>1$. If $g\ge 6$, $\ext^2(F_0,E)=0$, so that $k<0$ implies $\hom(F_0,E)>0$. A non-zero morphism $F_0 \to F$ must necessarily be injective so that its cokernel should be a $(n-1,k)$ instanton, which is not possible due to the inductive hypothesis. For the cases $g\in \{4,5\}$ the same argument applies if  $\ext^2(F_0,E)=0$, which in particular is true for $E$ slope-stable.
Then, we assume $\ext^2(F_0,E)\ne 0$ so that, by \eqref{eq:gg} $\ext^3(K_0^*,E)=\hom(E,K_0)\ne 0.$
But assuming $E$ Gieseker-semistable, a morphism $E\to K_0$ can exist only for $k\ge 0$.
\bigskip

Finally, let us prove \ref{minimal-charge}. Let $E$ be a $(n,k)$-instanton satisfying the statement. In particular, by item \ref{ext vanishing}, $\ext^2(F_0,E)=0$. The stability assumption on $E$ and the fact that $E$ is not a minimal instanton, ensure that $\Hom(F_0,E)=0$ so that $\chi(F_0,E)<0$.
This proves that, when $g$ is even, we must have $k \ge n$. So \ref{minimal-charge} is proved in this case. When $g$ is odd, this gives $k \ge 0$.

To prove $k \ge 1$ we need a little extra work.
We consider $\gamma=\gamma(1,0)=\ch(F_0)$ and we look at the moduli space $M:=\calm_X(\gamma)$.
Assume $g$ is odd, hence $\chi(E,F_0)=-k$. It is proved in \cite{BF} that, when $g$ is odd, $M$ is a connected projective curve, possibly singular and/or reducible, and that $M$ is a fine moduli space. Therefore, there exists a universal sheaf $\cale $ over $X \times M$ such that any closed point $y$ of $M$ corresponds to a sheaf $\cale_y$ which is a minimal instanton $F_0$.
Denoting by $p$ and $q$ the projections from $X \times M$ onto $X$ and $M$, by Lemma \ref{lem:ext2-index1}, we have:
\begin{equation} \label{due che si annullano}
R^k q_*(p^*(E) \otimes \cale^*)=0, \qquad \mbox{for $k \ne 0,1$.}
\end{equation}
Also, $q_*(p^*(E)\otimes \cale^*) =0$, as $H^0(E \otimes \cale^*_y)=0$ if $y$ is a sufficiently general point of each component of $M$. On the other hand, in view of \eqref{due che si annullano}, we may use base change of cohomology, so that the sheaf $R^1 q_*(p^*(E)\otimes \cale^*)$ is supported at a point $y \in M$ if and only if
\[
\ext^1_X(\cale_y,E) = \hom_X(\cale_y,E) \ne 0,
\]
where the equality of the first two quantities follows from Riemann-Roch and Lemma \ref{lem:ext2-index1}. Note that, if $E \simeq \cale_z$ for some point $z \in M$, then
\[
\ext^1_X(\cale_y,\cale_z) = \hom_X(\cale_y,\cale_z) = 1 \Leftrightarrow z=y,
\]
otherwise, if $z \ne y$, we have $\ext^1_X(\cale_y,\cale_z) = \hom_X(\cale_y,\cale_z) = 0$. This implies that $R^1 q_*(p^*(\cale_z) \otimes \cale^*) \simeq  \cccO_z$.
Since $\ch(E)=n\ch(F_0)=n\ch(\cale_z)$, we deduce,
via Grothendieck-Riemann-Roch, that
\[
\ch(R^1 q_*(p^*(E) \otimes \cale^*)) = n \ch(\cccO_z).
\]
Therefore, the support of $R^1 q_*(p^*(E) \otimes \cale^*)$ is not empty, hence there is $y \in M$ such that $\hom_X(\cale_y,E) \ne 0$. This is impossible if $E$ is slope-stable and not isomorphic to $\cale_y$.
This finishes the proof of \ref{minimal-charge}.
\end{proof}

For all $n\ge 2$, we finally prove the existence of slope-stable $(n,k)$-instanton bundles for any values of $k\ge n$ for $g$ even (hence for any values of $k$ greater than or equal to the minimal one), provided that for $g=4$, $X$ is not contained in a singular quadric, and for any $k\ge 1$ for $g$ odd and greater than 3.
Once again, we construct, inductively, strictly slope-semistable bundles that can be deformed into slope-stable ones. The primary difference with respect to the cases where the index is greater than 1 lies in the moduli space where this deformation occurs. Indeed, since for $i_X=1$, rank-2 't Hooft bundles do not necessarily exist for arbitrary values of the charge, our inductive method cannot produce sheaves that admit global sections only after a twist by $\cccO_{X}(1)$. For this reason the deformation will take place in a moduli space of simple sheaves rather than in a moduli space of extensions.
\begin{theorem}\label{thm:exist-index-1}
Let $X$ be a Fano threefold with $\Pic(X)=\langle- K_X \rangle$ and genus $g\ge 4$. Assume that moreover $X$ is not contained in a singular quadric whenever $g=4$. Then, for all $n\ge 1$ and  all $k\ge n$ if $g$ even, resp. for all $k\ge 1$ if $g$ odd, there exists a $(n,k)$-instanton bundle $E_n^k$ such that the following hold:
\begin{enumerate}[label=\roman*)]
    \item \label{theorem-i} $E_n^k$ is slope-stable and unobstructed;
    \item \label{theorem-ii} for a general rational curve $R_d\subset X$ of degree $d\in\{1,2\}$, we have $H^1(E_n^k|_{R_d})=0$;
    \item \label{theorem-iii} $\Ext^2(F_0,E_n^k)=\Ext^2(E_n^k, F_0)=0$
    for all minimal instantons $F_0$.

\end{enumerate}
\end{theorem}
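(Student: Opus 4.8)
The plan is to argue by induction on $n$, reproducing the strategy of Theorems \ref{thm:exist-P3}, \ref{thm:existence-even} and \ref{thm:exits-quadric}: first produce a strictly $\mu$-semistable $(n+1,k)$-instanton as an extension of a $\mu$-stable $(n,k)$-instanton by a minimal instanton $F_0$, then deform it to a $\mu$-stable one. The one structural difference, already anticipated in the text, is that here one cannot pass to the cokernel of a section and run a universal-extension argument over a Gieseker moduli space, because rank-$2$ 't Hooft bundles (hence sections of $E(1)$) are unavailable for most charges; instead the deformation will be performed inside the moduli space $\Spl_X$ of simple sheaves.

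For the base case $n=1$, items \ref{theorem-i} and \ref{theorem-ii} are exactly Proposition \ref{prop:rank2-index1}, while item \ref{theorem-iii} follows from Lemma \ref{lem:ext2-index1}, since a $(1,k)$-instanton with $k\ge 1$ is $\mu$-stable and not minimal. For the inductive step, assume a $\mu$-stable $(n,k)$-instanton $E_n^k$ satisfying the theorem is given, with $k$ in the range required for $(n+1,k)$, namely $k\ge n+1$ for $g$ even and $k\ge 1$ for $g$ odd. Writing $\delta$ for the residue of $g$ modulo $2$, the vanishings $\hom(E_n^k,F_0)=\ext^3(E_n^k,F_0)=0$ (by $\mu$-stability) together with $\ext^2(E_n^k,F_0)=0$ (item \ref{theorem-iii}) give
\[
\ext^1(E_n^k,F_0)=-\chi(E_n^k,F_0)=k-(1-\delta)n>0,
\]
so that a non-split extension
\[
0\to F_0\to E_{n+1}^k\to E_n^k\to 0
\]
exists and defines a locally free $(n+1,k)$-instanton. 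By Lemma \ref{lem:ext-ss} it is strictly $\mu$-semistable with $E_n^k$ as its unique torsion-free quotient of slope $-\tfrac12$; by Lemma \ref{lem:ext-unobstructed} (using $\ext^2(F_0,E_n^k)=\ext^2(E_n^k,F_0)=0$ and the unobstructedness of $E_n^k$ and of $F_0$, Lemma \ref{lem:minimal-index1}) it is unobstructed; and item \ref{theorem-ii} for $E_{n+1}^k$ follows by restricting the sequence to a general $R_d$. A short diagram chase, using $\hom(F_0,E_n^k)=0$ and the injectivity of the connecting map $\hom(E_n^k,E_n^k)\to\Ext^1(E_n^k,F_0)$ sending the identity to the extension class, shows moreover that $E_{n+1}^k$ is simple, hence a smooth point of $\Spl_X$ of local dimension $\ext^1(E_{n+1}^k,E_{n+1}^k)$.

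It then remains to deform $E_{n+1}^k$ to a $\mu$-stable sheaf inside $\Spl_X$. The locus of sheaves in $\Spl_X$ admitting a torsion-free quotient of Chern character $\gamma(n,k)$ is closed, and by Lemma \ref{lem:ext-ss} together with the fact that every point of $\calm_X(\gamma(1,0))$ is a minimal instanton (Lemma \ref{lem:minimal-index1}), a sheaf in this locus is an extension of a $(n,k)$-instanton by a minimal instanton; such extensions form a family of dimension at most $\ext^1(E_n^k,E_n^k)+\delta+\ext^1(E_n^k,F_0)-1$. Using the additivity of $\chi(-,-)$ along the defining sequence, together with $\chi(F_0,F_0)=1-\delta$ and $\chi(E_n^k,F_0)=(1-\delta)n-k$, one finds that the expected dimension $\ext^1(E_{n+1}^k,E_{n+1}^k)$ exceeds this bound by exactly $k-(1-\delta)n>0$ in the allowed range. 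Hence the unique component of $\Spl_X$ through $[E_{n+1}^k]$ is not contained in the strictly $\mu$-semistable locus, so its general member is a $\mu$-stable $(n+1,k)$-instanton; it is locally free and unobstructed by openness, satisfies item \ref{theorem-ii} by semicontinuity, and satisfies item \ref{theorem-iii} by Lemma \ref{lem:ext2-index1}, which completes the induction.

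The main obstacle is the deformation step, and specifically the dimension count carried out in $\Spl_X$: having lost access to a projective Gieseker moduli space and to the universal-extension construction (available in the even and index-$3$ cases precisely because 't Hooft bundles provide sections after twisting), one must control the strictly $\mu$-semistable locus near $[E_{n+1}^k]$ directly. This requires both the simplicity of $E_{n+1}^k$, so that $\Spl_X$ is smooth there of the expected dimension, and a careful identification — via Lemma \ref{lem:ext-ss} and the rigidity of minimal instantons — of the destabilizing sheaves as extensions of a $(n,k)$-instanton by an $F_0$, so that the inequality $k-(1-\delta)n>0$ exactly separates the two dimensions. The whole argument hinges on the $\Ext^2$-vanishing of Lemma \ref{lem:ext2-index1}, which simultaneously propagates unobstructedness through the induction and pins down the minimal charge.
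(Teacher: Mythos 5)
Your proposal is correct and takes essentially the same route as the paper's own proof: the same induction with base case Proposition \ref{prop:rank2-index1} and Lemma \ref{lem:ext2-index1}, the same non-split extension $0\to F_0\to E_{n+1}^k\to E_n^k\to 0$ governed by Lemmas \ref{lem:ext-ss} and \ref{lem:ext-unobstructed}, the same simplicity argument placing $[E_{n+1}^k]$ in $\Spl_X(\gamma(n+1,k))$, and the identical dimension count $\ext^1(E_{n+1}^k,E_{n+1}^k)-\dim Z_{\gamma(n,k)}=k-(1-\delta)n=-\chi(E_n^k,F_0)>0$ separating the component through $[E_{n+1}^k]$ from the strictly semistable locus. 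The only differences are expository (you spell out the $\chi$-additivity and the connecting-map diagram chase), so no changes are needed.
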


The main idea behind the proof is the same as the one used for indices $\{2,3,4\}$, namely, to construct inductively some $(n+1,k)$-instantons, which would be unobstructed and strictly slope-semistable, via non-split extensions in $\Ext^1(E_n^k, F_0)$, where $E_n^k$ is an $(n,k)$-instanton coming from the previous step of the induction, and then to show that they deform into slope-stable instantons.

The key difference lies in the moduli space where this deformation takes place and the main reason for this difference is that on $X$ we do not have existence of rank-2 't Hooft bundles for every value of the charge. This time, we will deform $E_{n+1}^k$ in a moduli space of simple sheaves $\Spl_X(\gamma(n+1,k))$.
To achieve this, we will once again use a dimension counting argument to show that the only component of $\Spl_X(\gamma(n+1,k))$ through $[E_{n+1}^k]$ cannot be entirely contained in the closed subscheme $Z_{\gamma(n,k)} \subset \Spl_X(\gamma(n+1,k))$, which parametrises sheaves admitting quotients with Chern character $\gamma(n,k).$

\setcounter{step}{0}
\begin{proof}
The argument is by induction, namely, for given $n$, we take care of the minimal value of $k$ and work by induction on $k$; this allows us to settle the induction step and move to higher $n$.
\begin{step}[Check the base of the induction]
For $n=1$, \ref{theorem-i} and \ref{theorem-ii} hold due to Proposition \ref{prop:rank2-index1} whilst \ref{theorem-iii} is true by Lemma \ref{lem:min-index-1}.
\end{step}

\begin{step}[Construction of an instanton of higher rank]
Suppose now that the theorem holds for $n\ge 1$ and let $k$ be an integer $\ge n+1$ for $g$ even, resp. $k\ge 1$ for $g$ odd.
By the inductive hypothesis there exists a slope-stable
$(n,k)$-instanton $E_n^k$ satisfying the Theorem. Since $\chi(E_n^k,F_0)<0$ and $\Ext^3(E_n^k,F_0)=0$ by semi-stability, we have $\ext^1(E_n^k,F_0)\ne 0$ hence there exists a non-split extension:
\begin{equation}\label{eq:ext-index-1}
 0\rightarrow F_0\rightarrow E_{n+1}^k\rightarrow E_n^k\rightarrow 0.
\end{equation}

The sheaf $E_{n+1}^k$ is a $(n+1,k)$-instanton bundle and from Lemma \ref{lem:ext-ss} we know that $E_{n+1}^k$ is a strictly slope-semistable sheaf and that moreover $E_{n}^k$ is its only torsion-free quotient having slope $-\frac{1}{2}.$
The sheaf $E_{n+1}^k$ is unobstructed, as it follows from Lemma \ref{lem:ext-unobstructed} and it satisfies \ref{theorem-ii} since this holds for both $F_0$ and $E_n^k$. 

Note that the sheaf $E_{n+1}^k$ is not Gieseker-stable, as it is destabilized by $F_0$. Still, we may consider it as a point in the moduli space $\Spl_X(\gamma(n+1,k))$ of simple sheaves having Chern character $\gamma(n+1,k)$. For an account on this moduli space
we refer to \cite{AK,KO}. 

The fact that $E_{n+1}^k$ is simple can easily be proved by the short exact sequence \eqref{eq:ext-index-1}. Indeed, the slope-stability of $F_0$ and $E_n^k$ implies $\Hom(E_n^k,F_0)=0=\Hom(F_0,E_n^k)$. Accordingly, we get $\Hom(E_{n+1}^k,F_0)=0$, as otherwise the short exact sequence would split, and \[\Hom(E_{n+1}^k,E_{n+1}^k)\simeq \Hom(E_{n+1}^k,E_n^k)\simeq \Hom(E_{n}^k,E_n^k)\simeq \D{C}.\]
\end{step}
\begin{step}[Deform $E_{n+1}^k$ to a slope-stable instanton in the moduli space of simple sheaves]
The unobstructedness of $E_{n+1}^k$ ensures that the corresponding point $[E_{n+1}^k]$ belongs to a unique component $Y$ of $\Spl_X(\gamma(n+1,k))$ which has dimension $\ext^1(E_{n+1}^k, E_{n+1}^k)$.
Applying the same argument presented in the proof of Theorem \ref{thm:exists-quadric}, we have that the strictly slope-semistable simple sheaves are parametrised by a closed subscheme $Z\subset \Spl_X(\gamma(n+1,k))$. Thus, in order to prove that $[E_{n+1}^k]$ deforms to a slope-stable sheaf, it will be enough to prove that $Y\cap Z$ is a strict closed subscheme of $Y$.

Next, notice the following fact. A torsion-free sheaf with Chern character $\gamma(n+1,k)$ is strictly slope-semistable if and only if it admits a torsion-free quotient with slope $-\frac{1}{2}$. We deduce that $Z$ can be written as a finite union $Z=\cup_{v\in A} Z_v$ where each $Z_v$ is the closed subscheme parametrising sheaves admitting a torsion-free quotient with Chern character $v=(v_0,\ldots,v_3)$ with $\frac{v_0}{v_1}=-\frac{1}{2}$.
Here, the finiteness of $A$ follows from \cite[Lemma 1.7.9]{HL}.

Without loss of generality, we can assume that the set $A$ is actually the singleton $\{\gamma(n,k)\}$. Since on $X$ there are no slope-semistable rank-2 torsion-free sheaves with $c_1=-1$ and Hilbert polynomial $P>P_{F_0}$, the same arguments presented in the proof of Theorem \ref{thm:exists-quadric}, Steps \ref{substep-sing-1}-\ref{substep-sing-3}, allow indeed to conclude what follows: a general strictly slope-semistable deformation $E'$ of $E_{n+1}^k$ is a locally free sheaf such that the only torsion-free quotients of slope $\frac{1}{2}$ of the dual $E'^*$ have Hilbert polynomial $P_{F_0^*}$. 
But as every slope-semistable rank-2 torsion-free sheaf with Hilbert polynomial $P_{F_0^*}$ is the dual of a minimal instanton, this is equivalent to state that all torsion-free quotients of strictly slope-semistable deformations of $E_{n+1}^k$ are $(n,k)$-instantons, so that $A=\{\gamma(n,k)\}$.  

Therefore, we have that $Y\subset Z$ if and only if $Y\subset Z_{\gamma(n,k)}$. Suppose that this was the case.
Since all slope-semistable torsion-free sheaves with
Chern character $\gamma(1,0)$ are actually slope-stable instanton bundles (in view of \cite[Proposition 3.4]{BF}), we deduce that all closed points of $Z_{\gamma(n,k)}$ correspond to sheaves arising from extension classes in $\Ext^1(F,F_0)$ for $F$ a $(n,k)$-instanton.
Let us consider now an open neighborhood $U\subset Y$ of $[E_{n+1}^k]$.
Up to shrinking $U$, we can suppose that the latter is disjoint from $Z_v$ for every $v\in A, \ v\ne \gamma(n,k).$ This implies that given a sheaf $E$ corresponding to a point in $U,$ all torsion-free quotients of $E$ having Chern character $\gamma(n,k)$ must be slope-stable.
Therefore all points $[E]$ in $U$ admit a unique (up to isomorphism) torsion-free slope-stable quotient $F$ with Chern character $\gamma(n,k)$. This will in particular ensure that $F$ satisfies $\ext^2(F_0,F)=\ext^2(F,F_0)=0$ (this vanishing is obtained applying Lemma \ref{lem:min-index-1}).
Summing up we would obtain:
\begin{equation}\label{eq:dim-chiuso}
\dim(Z_{\gamma(n,k)})=\dim(\calm_X(\gamma(1,0)))+ \ext^1(F,F)+\ext^1(F,F_0)-1,
\end{equation}
which from a direct computation would lead to \[\ext^1(E_{n+1}^k, E_{n+1}^k)-\dim(Z_{\gamma(n,k)})= k-n\chi(F_0,F_0)=-\chi(E_n^k,F_0),\]
which is strictly greater than zero by the assumptions on $n$ and $k$. This is a contradiction since we were assuming $U\subset Z_{\gamma(n,k)}.$
A general deformation of $[E_{n+1}^k]$ in $\Spl_X(\gamma(n+1,k))$ is therefore a slope-stable instanton bundle. Since moreover $E_{n+1}^k$ is unobstructed and satisfies \ref{theorem-ii}, as it can be checked immediately from the defining extension, and \ref{theorem-iii}, the same will hold, by semicontinuity, for a general deformation.
\end{step}
\end{proof}

This ends the proof of the main theorem from the introduction.
Also this time, as a direct consequence of Theorem \ref{thm:exist-index-1}, we get the following.
\begin{corollary}
If $g$ is even, resp. $g$ is odd, for all $n\ge 2$, $k\ge n$, resp. $k\ge 1$, there exists a generically smooth $1-n^2+2nk$, resp. $(1+2nk)$-dimensional component of the moduli space $\calmi_X(n,k)$ of $(n,k)$-instanton bundles whose general point is a slope-stable locally free instanton having general splitting $(0^n,-1^n)$.
\end{corollary}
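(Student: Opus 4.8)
The plan is to deduce this Corollary directly from Theorem~\ref{thm:exist-index-1} together with a routine deformation-theoretic dimension count, so the only real work is the computation of $\ext^1(E,E)$ and the extraction of the precise splitting type.

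First I would fix $n\ge 2$ and $k\ge n$ (resp.\ $k\ge 1$) and invoke Theorem~\ref{thm:exist-index-1} to produce a $\mu$-stable, unobstructed, locally free $(n,k)$-instanton $E=E_n^k$ satisfying \ref{theorem-ii} and \ref{theorem-iii}. Since $E$ is $\mu$-stable it is simple, so $\hom(E,E)=1$, and unobstructedness means $\ext^2(E,E)=0$; hence $[E]$ is a smooth point of Maruyama's moduli space $\calm_X(\gamma(n,k))$. As local freeness and $\mu$-stability are open conditions, $[E]$ lies in the open subscheme $\calmi_X(n,k)$ and belongs to a unique irreducible component $Y$, which is generically smooth of dimension $\ext^1(E,E)$.

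Next I would compute $\ext^1(E,E)$. By Serre duality $\ext^3(E,E)\simeq \hom(E,E(-1))^*$, and this vanishes since any nonzero map $E\to E(-1)$ would force $\mu(E)\le \mu(E(-1))=\mu(E)-1$, contradicting $\mu$-stability. Thus $\ext^1(E,E)=1-\chi(E,E)$, and $\chi(E,E)$ depends only on $\gamma(n,k)$. To evaluate it I would expand $\ch(E)=n\,\ch(F_0)-k\,\ch(\calo_l(-1))$ by bilinearity of the Euler pairing $\chi(A,B)=\int_X \ch(A)^\vee\,\ch(B)\,\td(X)$. Three inputs suffice: $\chi(\calo_l(-1),\calo_l(-1))=0$, because the relevant products of codimension $\ge 2$ classes vanish on a threefold; $\chi(F_0,\calo_l(-1))=\chi(\calo_l(-1),F_0)=1$, deduced by comparing $\chi(F_0,E_n^k)=(1-\delta)n-k$ from Lemma~\ref{lem:min-index-1} with the expansion $\chi(F_0,E_n^k)=n\,\chi(F_0,F_0)-k\,\chi(F_0,\calo_l(-1))$ and using Remark~\ref{rmk:chern-symm}; and $\chi(F_0,F_0)=1-\delta$, since $F_0$ is simple with $\ext^3(F_0,F_0)=0$ by stability and $\ext^1(F_0,F_0)=\delta$, $\ext^2(F_0,F_0)=0$ at a general point by Lemma~\ref{lem:minimal-index1}. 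Collecting terms gives $\chi(E,E)=n^2(1-\delta)-2nk$, whence $\dim Y=\ext^1(E,E)=1-n^2(1-\delta)+2nk$, which is $1-n^2+2nk$ for $g$ even ($\delta=0$) and $1+2nk$ for $g$ odd ($\delta=1$).

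Finally, for the splitting statement I would note that $E$ is $\mu$-semistable of slope $-\tfrac12$, so by a Grauert--M\"ulich type restriction argument its restriction to a general line $l$ is a semistable, hence balanced, bundle on $\p1$ of rank $2n$ and degree $-n$; the only such splitting is $\calo_l^{\oplus n}\oplus \calo_l(-1)^{\oplus n}$, and this is the generic splitting by upper semicontinuity of the splitting type (alternatively this follows inductively by restricting the defining extension $0\to F_0\to E_{n+1}^k\to E_n^k\to 0$ to a general line, where $F_0$ restricts to $\calo_l\oplus\calo_l(-1)$). The main point to get right is not a genuine obstacle but the bookkeeping of the Euler-pairing expansion and the correct tracking of the parity $\delta$ through $\chi(F_0,F_0)$; once Theorem~\ref{thm:exist-index-1} is granted, everything is formal.
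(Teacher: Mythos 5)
Your proposal is correct and is essentially the argument the paper leaves implicit: the paper states this corollary as a ``direct consequence'' of Theorem \ref{thm:exist-index-1}, and the content to supply is exactly what you supply, namely that $\mu$-stability gives $\hom(E,E)=1$ and $\ext^3(E,E)\simeq\Hom(E,E(-1))^*=0$, unobstructedness gives $\ext^2(E,E)=0$, so $[E]$ is a smooth point of a component of dimension $\ext^1(E,E)=1-\chi(E,E)$, and the Euler pairing expanded bilinearly along $\ch(E)=n\ch(F_0)-k\ch(\calo_l(-1))$ (with $\chi(F_0,F_0)=1-\delta$, $\chi(F_0,\calo_l(-1))=\chi(\calo_l(-1),F_0)=1$, $\chi(\calo_l(-1),\calo_l(-1))=0$) yields $\chi(E,E)=n^2(1-\delta)-2nk$, hence the stated dimensions.

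One caveat on the splitting step: the Grauert--M\"ulich route you propose first does not work here. On a prime Fano threefold the Hilbert scheme of lines is one-dimensional, so lines sweep out only a surface in $X$ and do not form a covering family; moreover ``semistable of slope $-\tfrac12$'' is vacuous on $\mathbb{P}^1$, and even the gap-type conclusion cannot be extracted this way. Note also that the vanishing $H^1(E|_l)=0$ from Theorem \ref{thm:exist-index-1}\ref{theorem-ii} alone does not force balancedness: a splitting such as $(1,0^{n-2},-1^{n+1})$ also has vanishing $H^1$; one needs in addition $h^0(E|_l(-1))=0$ to exclude positive summands. Your parenthetical alternative is the correct (and the paper's) argument: the constructed extension $0\to F_0\to E_{n+1}^k\to E_n^k\to 0$ restricted to a general line splits, since $F_0|_l\simeq\calo_l\oplus\calo_l(-1)$ by Lemma \ref{lem:rank2-thooft-i} and all the relevant $\Ext^1$ groups on $\mathbb{P}^1$ between summands of degrees $0$ and $-1$ vanish, giving $E_{n+1}^k|_l\simeq\calo_l^{n+1}\oplus\calo_l(-1)^{n+1}$; then both $h^1(E|_l)=0$ and $h^0(E|_l(-1))=0$ are open conditions, so the balanced splitting passes to the general deformation by semicontinuity, exactly as in the final step of the proof of Theorem \ref{thm:exits-quadric}.
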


The existence of ACM instantons on prime Fano threefolds is proved in \cite{Cili-prime} and \cite{BF} (in the ordinary case), as we recollected in Theorem \ref{thm:index1-rank2}. We deduce the following:

\begin{corollary}\label{cor:ACM}
Let $X$ be a prime Fano threefold of genus $g\ge 4$ and assume that $X$ is not contained in a singular quadric for $g=4$.
\begin{itemize}
\item if $g$ is odd, there exists a slope-stable ACM $(n,k)$-instanton bundle for all $ n\ge 2$ and all $k\in \{1,\ldots, g+3-m_g\}$;
\item if $g$ is even, there exists a slope-stable ACM $(n,k)$-instanton bundle for $n\in \{2,\dots,g+3-m_g\}$ and $k\in \{n,\ldots ,g+3-m_g\}$.
\end{itemize}
\end{corollary}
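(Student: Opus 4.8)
The plan is to re-run the inductive construction of Theorem \ref{thm:exist-index-1}, taking care to keep the ACM property alive along the whole induction. The base of the induction is the rank $2$ case: by Theorem \ref{thm:index1-rank2} there is a $\mu$-stable ACM rank $2$ instanton bundle with $c_2=m$ for every $m\in\{m_g,\dots,g+3\}$. Since a rank $2$ instanton with $c_2=m$ is precisely a $(1,k)$-instanton with $k=m-m_g$, this produces $\mu$-stable ACM $(1,k)$-instantons for all $k\in\{0,\dots,g+3-m_g\}$; in particular, taking $k=0$ (equivalently $m=m_g$) we fix once and for all an ACM minimal instanton $F_0$, whose ACM property is also guaranteed by Lemma \ref{lem:minimal-index1}\ref{lem-v} when $g\ge 6$.

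For the inductive step I would build, exactly as in Theorem \ref{thm:exist-index-1}, a strictly $\mu$-semistable $(n+1,k)$-instanton $E_{n+1}^k$ as a non-split extension
\[
0\rightarrow F_0\rightarrow E_{n+1}^k\rightarrow E_n^k\rightarrow 0,
\]
where $E_n^k$ is a $\mu$-stable ACM $(n,k)$-instanton supplied by the induction and $F_0$ is the fixed ACM minimal instanton. Such an extension exists whenever $\ext^1(E_n^k,F_0)\neq 0$, which by Lemma \ref{lem:min-index-1} holds as soon as $\chi(E_n^k,F_0)=(1-\delta)n-k<0$, i.e. for $k\ge 1$ when $g$ is odd and for $k\ge n+1$ when $g$ is even. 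The crucial observation is that the extension preserves the ACM property: applying the long exact cohomology sequence to the twist of the displayed sequence by $\calo(m)$, the vanishing of $H^i(F_0(m))$ and $H^i(E_n^k(m))$ for $i=1,2$ and all $m$ forces $H^i(E_{n+1}^k(m))=0$ for $i=1,2$ and all $m$. Hence $E_{n+1}^k$ is an ACM $(n+1,k)$-instanton of the \emph{same} charge $k$.

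It then remains to deform $E_{n+1}^k$ to a $\mu$-stable bundle without destroying the ACM property. The $\mu$-stable deformation inside $\Spl_X(\gamma(n+1,k))$ is produced verbatim as in Theorem \ref{thm:exist-index-1}, using Lemmata \ref{lem:ext-ss}, \ref{lem:ext-unobstructed} and the dimension count showing that the component of $\Spl_X(\gamma(n+1,k))$ through $[E_{n+1}^k]$ is not contained in the locus $Z_{\gamma(n,k)}$. The point to add is that the ACM condition is \emph{open} along any flat family: a flat family over a base of finite type is bounded, so there is a uniform integer $N$ with $H^1(\mathbf E_t(m))=H^2(\mathbf E_t(m))=0$ for $|m|>N$ and all $t$, while for the finitely many remaining twists $m\in\{-N,\dots,N\}$ semicontinuity makes the non-vanishing loci closed. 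Thus the ACM locus in the component through $[E_{n+1}^k]$ is open and non-empty (it contains $[E_{n+1}^k]$), and in an irreducible component it meets the equally open, non-empty $\mu$-stable locus of Theorem \ref{thm:exist-index-1}. A general deformation of $E_{n+1}^k$ is therefore a $\mu$-stable, locally free, ACM $(n+1,k)$-instanton.

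Finally I would do the bookkeeping of the admissible pairs $(n,k)$. Each extension leaves the charge $k$ unchanged, so a $\mu$-stable ACM $(n,k)$-instanton is reached through a chain of extensions $(1,k)\to(2,k)\to\cdots\to(n,k)$ starting from a rank $2$ ACM instanton of charge $k$; this already forces $k\le g+3-m_g$. When $g$ is odd each step only needs $k\ge 1$, giving all $n\ge 2$ and $k\in\{1,\dots,g+3-m_g\}$. When $g$ is even the step from rank $m$ to rank $m+1$ needs $k\ge m+1$, so reaching rank $n$ forces $k\ge n$, hence also $n\le g+3-m_g$, giving $n\in\{2,\dots,g+3-m_g\}$ and $k\in\{n,\dots,g+3-m_g\}$. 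The only genuinely new input beyond Theorem \ref{thm:exist-index-1} is the openness of the ACM condition, and this is the step I expect to be the main (if mild) obstacle to pin down carefully, since it rests on the uniform regularity bound for the flat family rather than on a single semicontinuity statement.
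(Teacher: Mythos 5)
Your proposal is correct and follows essentially the same route as the paper, whose proof of Corollary \ref{cor:ACM} consists precisely of re-running the construction of Theorem \ref{thm:exist-index-1} while noting that an extension of ACM bundles is ACM and that being ACM is an open condition. Your added justification of the openness of the ACM locus (uniform vanishing bound from boundedness plus semicontinuity for the finitely many remaining twists) and the explicit bookkeeping of the admissible pairs $(n,k)$ only spell out details the paper leaves implicit.
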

\begin{proof}
The result is obtained applying the construction of the proof of Theorem \ref{thm:exist-index-1}, just noticing that an extension of ACM bundles is ACM and that being ACM is an open condition.
\end{proof}

\section{Stability of restriction to K3 surfaces}

\label{sec:restriction}

Let $X$ be a Fano threefold of index $i_X$ and Picard number one and let $S$ be an element in the linear system $|\cccO_X(-K_X)|$. Then, if $S$ is general enough, it is a smooth $K3$ surface of Picard rank one. Let now $E$ be a $(n,k)$-instanton bundle. We investigate the stability of the restriction $E|_S$. We set $k_0^n$ to be the integer defined in the Main Theorem \ref{thm:main}.
We move on to show Theorem \ref{mainthm:anticanonical-stable}.

\begin{theorem} \label{thm:anticanonical-stable}
Let $X$ be a Fano threefold of index $i_X$ with $g\ge 4$ and $X$ not contained in a singular quadric when $g=4$ for $i_X=1$. Let $S$ be a general element in the linear system $|\cccO_X(-K_X)|$. Then for all integers $n,k$ with $n\ge 2-r_X, \ k\ge k_0^n$, there exists an open non-empty subscheme $\calr_S(n,k)\subset \calmi_X(n,k)$ parametrising slope-stable $(n,k)$-instanton $E$ on $X$ such that $E|_S$ is slope-stable.
\end{theorem}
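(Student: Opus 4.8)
The plan is to argue by induction on the rank, running parallel to the inductive constructions of Theorems \ref{thm:exist-P3}, \ref{thm:existence-even}, \ref{thm:exits-quadric} and \ref{thm:exist-index-1}. Two general facts organise the argument. First, for $S$ general the Noether--Lefschetz theorem gives $\Pic(S) \simeq \Z H_S$ with $H_S = H_X|_S$, so that slopes on $S$ have the same shape as on $X$ and the only saturated rank-one subsheaves of a bundle on $S$ are line bundles $\calo_S(a)$. Second, $\mu$-stability of $E|_S$ is an open condition on the parameter space $Y$ of the component we have constructed; hence the locus $\calr_S(n,k)$ of instantons with $\mu$-stable restriction is automatically open, and it suffices to exhibit, inside each such component, a single bundle $E$ with $E|_S$ $\mu$-stable.

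For the base case $E$ has rank $2$, so every proper saturated subsheaf of $E|_S$ is a line bundle $\calo_S(a)$, and a destabilizing one would have slope $\ge \mu(E|_S)$, i.e. $a\ge 0$; equivalently $H^0(E|_S(b))\ne 0$ for some $b\le 0$. I would rule this out using the restriction sequence $0 \to E(b-i_X)\to E(b)\to E|_S(b)\to 0$: stability of $E$ gives $H^0(E(b))=0$ for $b\le 0$, while Lemma \ref{basic-vanishing} gives $H^1(E(b-i_X))=0$ for $b\le 0$ (writing $b-i_X=-q_X-m$ with $m=q_X+r_X-b\ge 0$). Hence $H^0(E|_S(b))=0$ for all $b\le 0$, and $E|_S$ is $\mu$-stable. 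This settles rank $2$ uniformly in all four index cases.

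For the inductive step, recall that the strictly $\mu$-semistable instanton $E_{n+1}^k$ fits in $0\to F_0\to E_{n+1}^k\to E_n^k\to 0$ (with $F_0=\calo$ when $i_X$ is even). Restricting to $S$ exhibits $E_{n+1}^k|_S$ as an extension of the $\mu$-stable sheaves $E_n^k|_S$ and $F_0|_S$ of equal slope, so by Lemma \ref{lem:ext-ss} applied on $S$ it is strictly $\mu$-semistable, with $E_n^k|_S$ as its unique torsion-free slope-$\mu$ quotient. As $\mu$-semistability is open in the flat family $\{E|_S\}_{[E]\in Y}$ and holds at the base point $[E_{n+1}^k]$, the restriction of a general deformation is again $\mu$-semistable; since such a general deformation is moreover a $\mu$-stable instanton bundle by the cited theorems, the proof reduces to showing that its restriction to $S$ is \emph{stable}, not merely semistable.

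The remaining point is to show that the closed locus $Z_S\subset Y$ of those $[E]$ with $E|_S$ strictly $\mu$-semistable is proper. A sheaf in $Z_S$ has a torsion-free quotient $Q$ on $S$ of slope $\mu(E|_S)$; boundedness leaves finitely many Chern characters for $Q$, the relevant one being $\ch(E_n^k|_S)$. I would bound $\dim Z_S$ by parametrizing pairs $(E, E|_S\twoheadrightarrow Q)$ and comparing with $\dim Y=\ext^1_X(E_{n+1}^k,E_{n+1}^k)$, exactly as in Step 5 of Theorems \ref{thm:exits-quadric} and \ref{thm:exist-index-1}; the expected strict inequality $\dim Z_S<\dim Y$ forces $Z_S\ne Y$. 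The main obstacle is precisely this dimension count: unlike the existence proofs, where the destabilizing quotient lived on $X$, it now lives on the K3 surface $S$, so one must control the interaction of the restriction map $Y\dashrightarrow \calm_S(n,k)$ with the strictly semistable boundary and convert the estimate into a $\chi$-computation on $X$ together with the Mukai pairing on $S$, relating $\Ext$-groups on $X$ to those on $S$ through the restriction sequence and Serre duality on $S$. Once this inequality is established uniformly in the four index cases, openness of $\mu$-stable restriction upgrades the single good bundle to the open non-empty subscheme $\calr_S(n,k)$.
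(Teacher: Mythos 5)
Your base case and the overall skeleton (genericity of $S$ via Noether--Lefschetz, openness of $\mu$-stable restriction, finiteness of the set of destabilizing Hilbert polynomials, reduction to properness of the bad locus) all match the paper's proof. But there are two genuine gaps in your inductive step. The smaller one: to apply Lemma \ref{lem:ext-ss} on $S$ you need the restricted extension class $e|_S$ to be \emph{non-zero}, which is not automatic. The paper establishes it by showing that the restriction map $\Ext^1(E_n^k,F_0)\to \Ext^1(E_n^k|_S,F_0|_S)$ is injective, using the vanishing $\Ext^1(E_n^k,F_0(-i_X))\simeq \Ext^2(F_0,E_n^k)^*=0$; this is exactly what Lemma \ref{lem:min-quadric} and Lemma \ref{lem:ext2-index1} (and condition \ref{theorem-iii} of Theorem \ref{thm:exist-index-1}) are carried through the induction for. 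Your proposal never verifies non-splitness, and without it the point $[E_{n+1}^k]$ could a priori lie in the component $Z_{P_{F_0|_S}}$ of the bad locus as well, breaking the localization to a single Hilbert polynomial.

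The decisive gap is that the properness of $Z_S$ --- which you correctly identify as the crux --- is left as an ``expected strict inequality'' $\dim Z_S<\dim Y$, and the dimension count you sketch is not how the paper proceeds, nor is it likely to close as stated: the image of the restriction map is Lagrangian in $\calm_S(n,k)$ (Theorem \ref{mainthm:anticanonical-stable} ii), hence half-dimensional, and neither the fibres of $Y \to \calm_S(n,k)$ nor the dimension of the strictly semistable boundary on $S$ are controlled by $\chi$-computations on $X$; in the existence proofs the destabilizing quotient lived on $X$, where the relevant $\Ext$-groups were computable, and that is what made Step 5 work there. The paper replaces the count by a cohomological argument: any destabilizing subsheaf of $E'|_S$ with the relevant Hilbert polynomial is an elementary transformation of a stable sheaf with character $\ch(F_0|_S)$; since $\chi(F_0|_S,F_0|_S)=2$, for $i_X>1$ the bundle $F_0|_S$ is the \emph{unique} stable sheaf with that character on the K3 (for $i_X=1$, where $\calm_S(\gamma_S(1,0))$ may be positive-dimensional, a relative argument with a quasi-universal family and base change localizes the destabilization at $F_0|_S$ in a neighbourhood of $[E]$). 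Hence $E'|_S$ fails to be $\mu$-stable only if $\Hom(F_0|_S,E'|_S)\ne 0$, and from the restriction sequence this group is pinched between $\Hom(F_0,E')=0$ (slope stability) and $\Ext^1(F_0,E'(-i_X))\simeq \Ext^2(E',F_0)^*=0$ (the instanton vanishing for $i_X$ even, condition \ref{quadric-i} of Theorem \ref{thm:exits-quadric} for $i_X=3$, condition \ref{theorem-iii} of Theorem \ref{thm:exist-index-1} for $i_X=1$). So no dimension estimate on $S$ is needed at all; without either this mechanism or a fully worked-out substitute for your inequality, your proof is incomplete at its central step.
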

\begin{proof}
We prove the result by induction on $n$, along several steps.

\setcounter{step}{0}

\begin{step}[The base of the induction] \label{K3-step1}
Let $S\subset X$ be a smooth anticanonical divisor having Picard rank one -- recall that these conditions are satisfied for $S$ general in $|\cccO_X(-K_X)|$. Denote by $H_S$ the restriction of $H_X\in |\cccO_X(1)|$ to $S$, so that $\Pic(S)\simeq \D{Z}H_S$.
For any $n=2-r_X$ and $k\ge k_0^n$, we consider a slope-stable $(n,k)$-instanton $E$.

Without loss of generality, we may assume that moreover $E$ satisfies Theorem
\ref{thm:exist-P3}, \ref{thm:exists-quadric}, \ref{thm:existence-even},  \ref{thm:exist-index-1}, for $i_X=4,\:3,\:2,\:1$, respectively.
Consider the short exact sequence:
\begin{equation}\label{eq:restriction-anticanonical}
0\rightarrow E(-i_X)\rightarrow E\rightarrow E|_S\rightarrow 0.
\end{equation}

As $\Pic(S)\simeq \D{Z}$ and $E|_S$ is a rank-2 vector bundle with $c_1(E|_S)=-r_X H_S$, we see that $E|_S$ is slope-stable. Indeed from \eqref{eq:restriction-anticanonical}, since $H^0(E)$ and $H^1(E(-i_X))$ both vanish, $H^0(E|_S)=0$, which is enough to prove slope-stability since $\Pic(S)\simeq \D{Z}H_S$.
\end{step}

So Step \ref{K3-step1} is proved and we can pass to the induction step.

\begin{step}[Constructing and restricting an instanton of rank $n+1$]

For $n\ge 2-r_X, k\ge k_0^{n+1}$ we take a slope-stable $(n,k)$-instanton bundle $E_n^k$ satisfying the statement of the Theorem. Let $F_0$ be a minimal instanton on $X$. Incidentally, recall that this implies $F_0\simeq \cccO_{X}$ when $i_X$ is even, while, when $i_X$ is odd, $F_0$ is a minimal rank-2 instanton bundle.

Note that $\ext^1(E_n^k,F_0(-i_X))=\ext^2(F_0,E_n^k)=0$. This is clear for even index since in that case $F_0\simeq \cccO_X$. For odd index this is shown in Lemmas \ref{lem:min-quadric} and \ref{lem:ext2-index1}.

Then, by the short exact sequence
$$ 0\rightarrow F_0(-i_X)\rightarrow F_0\rightarrow F_0|_S\rightarrow 0, $$
we see that the following linear map is injective
$$\Ext^1(E_n^k,F_0)\to \Ext^1(E_n^k,F_0|_S)\simeq H^1({E_n^k}^*\otimes_{\cccO_S} F_0|_S)\simeq \Ext^1(E_n^k|_S, F_0|_S).$$

So, given a non-zero element $e$ in $\Ext^1(E_n^k,F_0)$ and denoting by $E$ the $(n+1,k)$-instanton defined by $e$, we have that $E|_S$ fits in a non-split short exact sequence, defined by $e|_S$:
$$ 0\rightarrow F_0|_S\rightarrow E|_S\rightarrow E_n^k|_S\rightarrow 0.$$

Since by assumption $E_n^k|_S$ is slope-stable and $\Pic(S)\simeq \D{Z}H_S$, Lemma \ref{lem:ext-ss} implies that $E|_S$ is strictly slope-semistable and $E_n^k|_S$ is its only torsion-free quotient having slope $\frac{-r_X}{2}$.
\end{step}

We have thus constructed an $(n+1,k)$-instanton and checked its restriction to $S$. The next goal is to show that, for a general deformation $E'$ of $E$, $E'|_S$ can be "slope-destabilized"
at most by $F_0|_S$. The proof will essentially consist in a dimension argument, which will involve explaining this last sentence rigorously.

\begin{step}[The case $i_X \ne 1$] \label{K3-step2}
Let us suppose, at first, that $i_X>1$. From the proofs of Theorems \ref{thm:exist-P3}, \ref{thm:exists-quadric}, \ref{thm:existence-even}, we know that $E$ is given by a point in a projective bundle $\D{P}(\cale)\to U$ over an open subscheme $U$ of $\calm_X(v)$, the moduli space of torsion-free Gieseker-semistable sheaves having Chern character $$v:=\gamma(n,k)-\ch(\cccO_{X}(-1))+\ch(F_0).$$
Moreover there exists an open neighborhood $\calv$ of $[E]$ in $\D{P}(\cale)$ admitting a family $\mathbb{E}\in \Coh(\calv\times X)$ such that, for all $x\in \calv$, distinct from $[E]$, the bundle $\mathbb{E}_x$ satisfies Theorem \ref{thm:existence-even}, \ref{thm:exists-quadric}, \ref{thm:existence-even} for $i_X=4,\:3,\:2$ respectively.
Let us now denote by $\mathbb{E}|_S$ the pullback of $\mathbb{E}$ to $\calv\times S$.
The locus of point $x\in \calv$ such that ${(\mathbb{E}|_S)}_x$ is not slope-stable is closed in $\calv.$ Indeed, let $A$ be the set of degree 2 polynomials $P\in \D{Q}[t]$ with $\mu(P)=\frac{-r_X}{2}$ and such that there exists $x\in \calv$ for which $(\mathbb{E}|_S)_x$ admits a pure 2-dimensional quotient $F$ with $P_F=P$.
The set $A$ is finite and for all $P\in A$, the points $x\in \calv$ such that ${(\mathbb{E})|_S}_x$ admits a pure 2-dimensional quotient $F$ with $P_F=P$ define a closed subscheme $Z_P\subset \calv$, we refer again to \cite[§ 2.3]{HL}.

Notice that for $P\ne P_{E_n^k|_S}$, the closed subset $Z_P$ is strictly contained in $\calv$, as $[E]$ itself does not belong to $Z_P$.
Moreover the only Gieseker-stable sheaf on $S$ with $\ch(F_0|_S)$ is $F_0|_S$ itself, see for instance \cite[Theorem 6.1.6]{HL}, as $F_0|_S$ is Gieseker-stable and $\chi(F_0|_S,F_0|_S) = 2$.
We deduce that for a general $x\in \calv$, letting $E':=\mathbb{E}_x$, $(E'|_S)$ is not slope-stable only if $\Hom(F_0|_S,E'|_S)\ne 0$.

Let us check that this is a contradiction. From \eqref{eq:restriction-anticanonical}, applying $\Hom(F_0,\cdot)$, we get:
$$ 0\rightarrow \Hom(F_0,E')\rightarrow \Hom(F_0,E'|_S)\rightarrow \Ext^1(F_0,E'(-i_X)).$$
Then, it suffices to show that
both the left side and the right side terms of this sequence vanish.
However, this is easy to check, indeed, first, we have $\Hom(F_0,E')=0$ by slope-stability. Second, we have
$\Ext^1(F_0,E'(-i_X))\simeq \Ext^2(E',F_0)^*$, and this vanishes by the instanton cohomological vanishing when $i_X$ is even, while for $i_X=3$ this vanishing follows by the assumption that $E'$ satisfies Theorem \ref{thm:exists-quadric}.
This concludes Step \ref{K3-step2} and proves the theorem for $i_X \ne 1$.
\end{step}

\begin{step}[The case $i_X=1$]For the case $i_X=1$ the minimal instanton $F_0|_S$ may fail to be isolated in its moduli space -- actually this occurs when the genus is odd. So the previous argument needs some further adjustment before we can get to the same conclusion.

Let $\calt\to \calv\subset \Spl(\gamma(n+1,k))$ be an étale neighborhood of $[E]$ admitting a family of sheaves $\cale$ over $\calt\times X$ specializing to $E$. Consider then the pullback of $\cale$ to $\calt\times S$, which will be simply denoted by $\cale|_S$.
 We claim that, up to shrinking $\calv$, we can suppose that for all $x\in \calt$, the sheaf $\cale_x|_S$ is slope-semistable and that moreover it can at most be slope-destabilized by the restriction to $S$ of a minimal instanton on $X$.

Recall that, as we learned from the proof of Theorem \ref{thm:exist-index-1}, up to shrinking $\calv$, we have that 
all slope-semistable deformations of $E$ belong to the closed subscheme $Z_{\gamma(n,k)}$ image of $\Quot(\cale, \gamma(n,k))$.
Since by semicontinuity we can assume that for every point $x\in \calt, \ {\cale_x}|_S$ is simple, we get a well defined restriction map
$$\rho: \calv\to \Spl_S(\gamma_S(n+1,k)), \ \hspace{2mm} E\mapsto E|_S.$$

Up to replacing $\Spl_S(n+1,k)$ by an appropriate neighborhood $\calu$ of $[E|_S]$ we suppose the existence of a Poincaré bundle $\mathbf{E}$ on $\calu$. This allows us to define a closed subscheme $W_{\gamma_S(n,k)}$, 
image of $\Quot(\mathbf{E}, \gamma_S(n,k))\xrightarrow{\pi}\calu$, that  parametrises bundles on $S$ having quotients with Chern character $\gamma_S(n,k).$
The same arguments used to characterize the general slope-semistable deformations of $E$ lead to conclude that a general deformation of $E|_S$ is not slope-stable if and only if it admits a quotient having Chern character $\gamma_S(n,k).$
Note that this relies on the fact that, on $S$ as well as on $X$, there are no  semistable torsion-free rank-2 sheaves having Hilbert polynomial strictly greater than $P_{F_0|_S}$.

The deformations of $E$ whose restriction to $S$ admits a quotient having Chern character $\gamma_S(n,k):=\ch(E_n^k|_S)$ are parametrised by the closed 
subscheme $Z_{\gamma_S(n,k)}$ image of $\Quot(\cale|_S, \gamma_S(n,k))\xrightarrow{\pi} \calu$. This closed subscheme is the preimage $\rho^{-1}(W_{\gamma_S(n,k)})$. The characterization of $W_{\gamma(n,k)}$ presented above allows us to conclude that $Z_{\gamma_S(n,k)}$ parametrises sheaves whose restriction to $S$ is strictly $\mu$-semistable. 

By construction, we have $Z_{\gamma(n,k)}\subset Z_{\gamma_S(n,k)}$. We prove that these schemes actually coincide at the point $[E]$.
From the equality $Z_{\gamma_S(n,k)}=\rho^{-1}(W_{\gamma_S(n,k)})$, we get:
$$\dim T_{[E]}Z_{\gamma_S(n,k)}\le \dim(d_{[E]}\rho^{-1}T_{[E|_S]}W_{\gamma_S(n,k)})=\dim(T_{[E|_S]}W_{\gamma_S(n,k)}\cap \Ext^1(E,E)). $$
To understand the right side of this equation we recall that, due to the vanishing of $\Ext^2(E,E),$ the space $\Ext^1(E,E)$ injects via $d_{[E]}\rho$ into $\Ext^1(E|_S,E|_S)$, because $d_{[E]}\rho$ identifies with 
the linear map $\Ext^1(E,E)\to \Ext^1(E,E_S)$ induced by $E\twoheadrightarrow E_S$. 
This last point was already observed by
\cite{tyurin:fano-vs-CY}.

Note also that the cokernel of this injection is $\Ext^2(E,E(-1))\simeq \Ext^1(E,E)^*$ which tells us that $\Ext^1(E,E)$ is a maximal isotropic subspace of $\Ext^1(E|_S,E|_S)$ with respect to the skew-symmetric form $\omega$ defined by the Yoneda pairing.
We explicitly compute the dimension of $T_{[E|_S]}W_{\gamma_S(n,k)}\cap \Ext^ 1(E,E).$
Denote by $q_S$ the epimorphism $E|_S\twoheadrightarrow E_n^k|_S$. We recall the tangent space to the Quot scheme $\Quot(\mathbf{E},\gamma_S(n,k))$ at the point $q_S\in \pi^{-1}(E|_S)$ fits in an exact sequence (see e.g. \cite{HL}, Proposition 2.2.7):
$$ 0\rightarrow \Hom(F_0|_S,E_n^k|_S)\to T_{q_S}\Quot(\mathbf{E},\gamma_S(n,k))\xrightarrow{d_{q_S}\pi} T_{[E|_S]}\calu \to \Ext^1(F_0|_S, E_n^k|_S).$$
As $T_{[E|_S]}\calu\simeq \Ext^1(E|_S,E|_S)$ and as $\Hom(F_0|_S,E_n^k|_S)=0$ by the inductive hypothesis, we deduce that $T_{[E|_S]}W_{\gamma_S(n,k)}$ identifies with the kernel of the linear map:
$$ \sigma_S:\Ext^1(E|_S,E|_S)\simeq T_{E|_S}\calu\to \Ext^1(F_0|_S,E^n_k|_S).$$
Observe that $\sigma_S$ is nothing but the composition of the canonical linear maps
$\Ext^1(E|_S,E|_S)\to \Ext^1(E|_S,E_n^k|_S)$ and $\Ext^1(E|_S,E_n^k|_S)\to \Ext^1(F_0|_S,E_n^k|_S)$, see e.g. \cite{HL}, Section 2.2.

Therefore, the  space $T_{[E|_S]}W_{\gamma_S(n,k)}\cap \Ext^1(E,E)$ is the kernel of 
the restriction of $\sigma_S$ to $\Ext^1(E,E)$. But the latter factors through $\Ext^1(F_0,E)$, giving rise to the commutative diagram:
\begin{equation}\label{cd-k3}
\begin{tikzcd}[column sep=20pt]
0  \arrow[r,] & \Ext^1(E,E)\arrow[r]\arrow[d, "\sigma", two heads] & \Ext^1(E|_S,E|_S)\arrow[r, "\sigma"]\arrow[d, "\sigma_S"] & \Ext^1(E,E)^*\arrow [r]\arrow[d ]& 0\\
0\arrow[r] & \Ext^1(F_0,E_n^k)\arrow[r] & \Ext^1(F_0|_S, E_n^k|_S)\arrow[r]& \Ext^1(E_n^k, F_0)^* \arrow[r]
& 0
\end{tikzcd}
\end{equation}
where the exactness of the rows is due to the vanishing of $\ext^2(E,E),\: \ext^2(F_0,E_n^k)$ and $\ext^2(E_n^k,F_0)$.

Let us now look at the first column. The morphism $\sigma$ is nothing but the composition of the natural linear maps $\Ext^1(E,E)\to \Ext^1(E,E_n^k)$ and $\Ext^1(F_0,E_n^k)$ induced by $E\xrightarrow{q} E_n^k\to 0$. Since $\ext^2(E,F_0)=\ext^2(E_n^k,E_n^k)=0$, $\sigma$ is surjective. 
As moreover $\Hom(F_0,E_n^k)=0$ and $T_{[E]}\calt\simeq \Ext^1(E,E)$ by \cite[Proposition 2.2.7]{HL}, we get a short exact sequence:
$$ 0\rightarrow T_{[E]}Z_{\gamma(n,k)}\to \Ext^1(E,E)\to \Ext^1(F_0,E_n^k)\to 0.$$
Now, from the diagram we deduce that the linear space $T_{[E|_S]}W_{\gamma_S(n,k)}\cap \Ext^1(E,E)$ identifies with  $\ker(\sigma)\simeq T_{[E]}Z_{\gamma(n,k)}$.
This vector space has dimension $\ext^1(E,E)-\ext^1(F_0,E_n^k)$. 

But we know from the proof of Theorem \ref{thm:exist-index-1}-\eqref{eq:dim-chiuso}, that the local dimension of $Z_{\gamma(n,k)}$ at $[E]$ indeed is at least $\ext^1(E,E)-\ext^1(F_0,E_n^k)=\ext^1(E,E)-\chi(F_0,E_n^k)$.
We conclude that $Z_{\gamma(n,k)}$ and $Z_{\gamma_S(n,k)}$ coincide at $[E]$ so that the same will hold in a neighborhood of this point. 
A general deformation of $E$ is thus slope-stable and the same will hold for its restriction to $S$.

\end{step}
\end{proof}

This proves Theorem \ref{thm:anticanonical-stable}.
Along the way we have proved Theorem \ref{mainthm:anticanonical-stable} as well.

\section{Fano threefolds with curvilinear Kuznetsov component}

\label{section:curvilinear}

We briefly treat the cases of those Fano threefolds $X$ whose Kuznetsov component $\Ku(X)\subset D^b(X)$ is equivalent to the derived category $D^b(\Gamma)$ of a curve $\Gamma$, briefly, $X$ has a curvilinear Kuznetsov component. These are the Del Pezzo threefold $Y_4$ of degree 4 and the prime Fano threefolds $X_g$ of genus $g\in\{7,9,10\}.$

\subsection{Reminders about Fano threefolds with curvilinear Kuznetsov component}

We recall that the Kuznetsov component $\Ku(X)$ can be thought of as the "non-trivial" component of $\cccO_{X}(q_X)^{\perp}$, or equivalently of ${}^{\perp}\cccO_{X}(-q_X))$. If $X$ is a threefold of one of the types mentioned at the beginning of this section, we have that $\cccO_{X}(q_X)^{\perp}=\langle \Ku(X), \calf \rangle$ where $\calf$ is an exceptional bundle and $\Ku(X)$ is an indecomposable triangulated category. This situation is quite different from the case when $H^3(X)=0$, where $\Ku(X)$ is strongly generated by a full exceptional collection, we will treat this other case further on.

It turns out that the curve $\Gamma$ is identified with a certain fine moduli space of vector bundles on $X$. 
This moduli space is fine and admits a universal bundle $\cale$ on $X\times \Gamma$.
So, we will implicitly identify points of $\Gamma$ with 
vector bundles from that moduli space without further mention.
The equivalence $D^b(\Gamma)\to \Ku(X)$ is induced by a Fourier-Mukai functor having $\cale$ as kernel:
$$\Phi(-):=Rp_*(q^* (-)\otimes \cale).$$
The functor $\Phi$ is fully faithful and admits a left and a right adjoint $\Phi^{*}, \ \Phi^{!}$ defined as follows:
\begin{equation}
    \begin{split}
\Phi^{!}: & D^b(X)\to D^b(\Gamma), \hspace{3mm} \Phi^{!}(-):=Rq_*(p^* (-)\otimes \cale^*\otimes \omega_{\Gamma})[1];\\
\Phi^{*}: & D^b(X)\to D^b(\Gamma), \hspace{3mm} \Phi^{*}(-):=Rq_*(p^* (-)\otimes \cale^*\otimes \omega_{X})[3];
\end{split}
\end{equation}
The equivalence $\Phi:D^b(\Gamma)\to \Ku(X)$ gives rise to the following decomposition of $D^b(X):$
\begin{equation}\label{eq:dec-dc-curve}
D^b(X)=\langle \cccO_{X}(-q_X), \calf^*, \Phi(D^b(\Gamma))\rangle .
\end{equation}

Recall that we are dealing with Fano threefolds which are either isomorphic to $Y_4$ or to $X_g$ for $g\in \{7,9,10\}$.
For each of these threefolds $X$, we will recall the main features of the curve $\Gamma$, of the sheaf $\cale$ and of the derived category $D^b(X)$. We will then study the objects $\Phi^{!}(E(r_X))$ for $E$ a $(n,k)$-instanton on $X$.
We will essentially mimic what is done in the rank-2 cases, treated e.g. in \cite{Faenzi,BF-7, BF-9,Kuz-hs,Kuz-hilb, kapustka-ranestad, FV}. We will adopt these as the main references throughout the current section.

\subsubsection{The Del Pezzo threefold of degree 4} A Del Pezzo threefold $Y_4$ is the complete intersection of 2 quadrics in $\D{P}^5$. In this case, the curve $\Gamma$ is a smooth genus 2 curve, double cover of the line $\D{P}^1$ corresponding to the pencil of quadrics defining $X$, ramified along 6 points given by the singular quadrics in the pencil. As it turns out, the curve $\Gamma$ is isomorphic to the moduli space $\calm_{Y_4}(v)$ of Gieseker-stable rank-2 sheaves having Chern character $v\in \oplus H^{i,i}(Y_4), \ v=2+H_{Y_4}-\frac{1}{3} p_{Y_4}.$ The universal bundle $\cale$ of this moduli space has the following topological invariants:
$$ c_1(\cale)=H_{Y_{4}}+N, \hspace{2mm} c_2(\cale)=2l_{Y_4}+H_{Y_4}M+\eta$$
where $M$ and $N$ are divisors on $\Gamma$ of respective degrees $m$ and $2m-1$. Here $m$ is an arbitrary integer and $\eta$ a class in $H^3(Y_4)\otimes H^1(\Gamma)$ such that $\eta^2=4$. In order to simplify the upcoming computations, we set $m=2$. In addition to this we have that for all $y\in \Gamma$, $\cale_y$ is a slope-stable ACM and globally generated bundle. The evaluation morphism gives rise to a short exact sequence:
\begin{equation}\label{eq:curve-Y_4}
 0\rightarrow \cale_{\bar y}(-1)\rightarrow H^0(\cale_y)\otimes \cccO_{Y_4}\to \cale_y\rightarrow 0,
 \end{equation}
where $\bar y$ is the conjugate point to $y$ in the double cover $\Gamma\to \D{P}^1$.
The derived category $D^b(Y_4)$ of $Y_4$ admits the following semiorthogonal decomposition
  $$ D^b(Y_4)=\langle \cccO_{Y_4}(-1),\cccO_{Y_4},  \Phi(D^b(\Gamma))\rangle. $$

\subsubsection{Prime Fano threefolds of genus 10} Let us consider a 7-dimensional complex vector space $V\simeq \D{C}^7$ and a general 3-form $\omega\in \bigwedge ^3 V^*.$ The stabilizer of $\omega$ in $GL(V)$ is the 14-dimensional group $G_2\subset SO(V).$ Let us consider the Grassmannian $\Gr(2,V)$ and the tautological 2 bundle $\calu$. The form $\omega$ defines a $G_2$-invariant global section $s_{\omega}$ of the rank-5 vector bundle $\calu^{\perp}(1)$. The zero locus of $s_{\omega}$ is a smooth $G_2$-homogeneous  5-dimensional Fano manifold, namely the $G_2$ Grassmannian $G_2\Gr(2,V)$, which is the adjoint variety of type $G_2$.
The vector space $\wedge^2 V$ decomposes as the direct sum of two irreducible $G_2$ representations $\wedge^2 V\simeq V\oplus W$ with $W$ isomorphic to the Lie algebra $\mathfrak{g}_2$. The $G_2$ Grassmannian $G_2\Gr(2,V)$ is non-degenerate in $\D{P}(W)$ and its projective dual $G_2\Gr(2,V)^{\vee}$ is a sextic hypersurface in $\D{P}(W^*).$

Consider now a general line $\D{P}(A)\simeq \D{P}^1\subset \D{P}(W^*).$ The linear section $\D{P}(A^{\perp})\cap G_2\Gr(2,V)$ is a smooth prime Fano threefold $X_{10}\subset \D{P}^{11}$ of genus 10. In the pencil $\D{P}(A)$ there exists 6 singular hyperplane sections given by the points in $\D{P}(A)\cap G_2\Gr(2,V)^{\vee}$. We consider curve $\Gamma$ of genus 2 obtained as double cover $\Gamma\to \D{P}(A)$, ramified along these 6 points.

The curve $\Gamma$ identifies with the fine moduli space $\calm_{X_{10}}(v)$ of slope-stable sheaves with Chern character  $v=3+2H_{X_{10}}+9l_{X_{10}}+\frac{1}{2}p_{X_{10}}.$ The universal sheaf $\cale$ on $X\times \Gamma$ has rank 3 and Chern classes:
$$ c_1(\cale)=2H_{X_{10}}+K_C, \hspace{2mm} c_2(\cale)=27l_{X_{10}}+3H_{X_{10}}p_C+\eta, \hspace{2mm} c_3(\cale)=7p_{X_{10}}+22l_{X_{10}}p_C$$
where $\eta$ is a class in $H^3(X_{10})\otimes H^1(\Gamma)$ such that $\eta^2=4$.
For all $y\in \Gamma$, $\cale_y$ is locally free, slope-stable, globally generated and fits in an exact sequence:
\begin{equation}\label{eq:curve-X_{10}}
 0\to \cale_y(-1)\to \cccO_{X}^6\to {\calu^*}^3\to \cale_y\to 0
 \end{equation}
 for $\calu$ the restriction to $X_{10}$ of the rank-2 tautological bundle on $\Gr(2,V)$, see \cite{kapustka-ranestad, FV}. We recall that $c_1(\calu)=-1.$
The derived category $D^b(X_{10})$ decomposes as
$$ D^b(X_{10})=\langle\cccO_{X_{10}},\calu^{*}, \Phi(D^b(\Gamma))\rangle. $$

\subsubsection{Prime Fano threefolds of genus 9}
Let us consider a 6-dimensional complex vector space $V$ endowed with a non-degenerate skew-symmetric form $\omega$. The maximal $\omega$-isotropic linear subspaces of $V$ are parametrised by the Lagrangian Grassmannian $\Lgr(3,6)\subset \D{P}^{13}$, a smooth 6-dimensional projective variety.
For a general $\D{P}(A)\subset {\D{P}^{13}}^{*}, \ \D{P}(A)\simeq \D{P}^2$, the linear section $\Lgr(3,6)\cap \D{P}(A^{\perp})$ is a smooth prime Fano threefold $X_9$ of genus 9, non-degenerate in $\D{P}^{10}.$

The projective dual $\Lgr(3,6)^{\vee}$ to $\Lgr(3,6)$ is a quartic hypersurface in ${\D{P}^{13}}^{*}$ whose linear section $\Lgr(3,6)^{\vee}\cap \D{P}(A)$ is a smooth (by generality of $\D{P}(A)$) plane curve $\Gamma$ of degree 4, defined as the homological projective dual to $X_9$.

The curve $\Gamma$ identifies with the fine moduli space $\calm_{X_{9}}(v)\simeq \calm_{X_{9}}(\gamma(1,0))$ of slope-stable sheaves with Chern character  $v=2+H_{X_9}+2l_{X_9}-\frac{1}{3}p_{X_9}$. The isomorphism $\calm_{X_9}(v)\simeq \calm_{X_9}(\gamma(1,0))$ is given by $\cale_y\mapsto \cale_y^*\simeq \cale_y(-1)$.
We have the following invariants:
$$ c_1(\cale)=H_{X_9}+N, \hspace{2mm} c_2(\cale)=6L_{X_9}+H_{X_9}M+\eta.$$
Here, $M$ and $N$ are divisors on $\Gamma$ of degrees $m$ and $2m-1$ and $\eta\in H^3(X_9)\otimes H^1(\Gamma)$ satisfies $\eta^2=6.$
The integer $m$ can be chosen arbitrarily, however we fix $m=3$, which will make some computations easier.
For each point $y\in \Gamma,$ $\cale_y$ is a slope-stable ACM bundle on $X_9$ fitting in
\begin{equation}\label{eq:universal-9}
    0\rightarrow \cccO_{X_{9}}\rightarrow \calu^*\rightarrow \cale_y\rightarrow \cccO_{C}\rightarrow 0,
\end{equation}
where $C\subset X_9$ is a conic and $\calu$ is the restriction to $X_9$ of the rank-3 tautological bundle on $\Lgr(3,6).$ We recall that $\calu$ is a rank-3 vector bundle of first Chern class $c_1(\calu)=-1$.
We have the following decomposition of the derived category $D^b(X_9):$
$$D^b(X_9)=\langle\cccO_{X_{9}}, \calu^*, \Phi(D^b(\Gamma))\rangle.$$

\subsubsection{Prime Fano threefolds of genus 7}
Consider the orthogonal Grassmannian $\Ogr(5,10)$ parametrising 4-dimensional linear subspaces contained in a smooth 8-dimensional quadric hypersurface $Q$ in $\D{P}^9$. The variety $\Ogr(5,10)$ is smooth, of dimension 10 and has 2 connected components $\Sigma_{\pm}$. Each component spans a 15-dimensional linear spaces $\langle \Sigma_{\pm}\rangle \simeq \D{P}(V_{\pm})\simeq \D{P}^{15}.$ We denote by $\calu_{\pm}$ the restriction to $\Sigma_{\pm}$ of the tautological rank-5 bundle on $\Ogr(5,10)$.
For a sufficiently general linear subspace $\D{P}(A)\simeq \D{P}^8$ of $\D{P}(V_+)$, the resulting linear section $\D{P}(A)\cap \Sigma_+$ is a smooth prime Fano threefold $X_7$ of genus 7.

Let us consider the orthogonal $\D{P}(A^{\perp})\simeq \D{P}^6\subset \D{P}(V_{-})$. By the generality assumption on $A$, $\D{P}(A^{\perp})\cap \Sigma_{-}\subset \D{P}(V_{-})$ is a smooth curve $\Gamma$. This curve has genus 7 and is referred to as the homological projective dual to $X_7$.
The curve $\Gamma$ is the moduli space of Gieseker-stable bundles with Chern character $v\in \oplus_i H^{i,i}(X_7), \ v=2+ H_{X_7}+l_{X_7}-\frac{1}{2}p_{X_7}$. So $\Gamma\simeq\calm_{X_{7}}(v)\simeq \calm_{X_7}(\gamma(1,0))$. Like for the case of $g=9$, all bundles in this moduli space are duals of minimal rank-2 instantons.
The universal bundle $\cale$ on $X\times \Gamma$ has Chern classes:
$$ c_1(\cale)=H_{X_7}+H_{\Gamma}, \hspace{2mm} c_2(\cale)=\frac{7}{12}H_{X_{7}}H_{\Gamma}+5l_{X_7}+\eta$$
for $\eta\in H^3(X_{7})\otimes H^1(\Gamma)$ a class such that $\eta^2=14$.
Moreover it fits in
\begin{equation}\label{eq:universal-7}
    0\rightarrow \cale^*\rightarrow \calu_{-} \rightarrow \calu_{+}^*\rightarrow \cale\rightarrow 0
    \end{equation}
for $\calu_{\pm}$ the pullback of $\calu_{\pm}$ to $X\times \Gamma\subset \Sigma_+\times \Sigma_-.$ The restriction of $\calu_+$ to $X$ will still be denoted by $\calu_+$. This is a rank-5 vector bundle on $X$ having $c_1(\calu_+)=-2.$
As already recalled in \S \ref{section:prime}, for all $y\in \Gamma,$ $\cale_y$ is a locally free slope-stable and globally generated bundle.
The derived category $D^b(X_7)$ decomposes as
$$ D^b(X_7)=\langle \cccO_{X_{7}}, \calu^*_{+}, \Phi(D^b(\Gamma)) \rangle.$$

\subsection{The canonical resolution of instantons}
For a slope-stable $(n,k)$-instanton $E$ on $X$,
we look at the object $\Phi^{!}(E(r_X))$ in $D^b(\Gamma).$ Let us denote by $\calf$ the exceptional object on $X$ such that $^{\perp}\cccO_{X}(-q_X)=\langle \calf^*, \Phi(D^b(\Gamma))\rangle$. We recall that $\calf\simeq \cccO_{X}$ for $X=Y_4$, $\calf\simeq \calu$ for $X_g, $ $g\in \{9,10\}$ and $\calf\simeq \calu_+$ for $X_7$.
In the following lemma, $X$ is of type $Y_4$ or $X_g$ with $g \in \{7,9,10\}$.

\begin{Lemma}\label{lem:hdi}
Let $E$ be a slope-stable $(n,k)$-instanton on $X$ with $k \ge 1$.
\begin{enumerate}[label=\roman*)]
    \item \label{Ku-i} We have $\ext^i(\cale_y, E(r_X))=0$, for all $i\ne 1$, and all $y\in \Gamma$.
\item \label{Ku-ii} If $X=Y_4$ or $X=X_{10}$, $\hom(\calf^*,E(r_X))=0$.
\end{enumerate}
\end{Lemma}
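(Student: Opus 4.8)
The plan is to separate the outer cohomological degrees $0$ and $3$, where $\mu$-stability alone suffices, from the middle degree $2$, which is delicate, and to organise the latter according to whether $\cale_y$ is a twist of a minimal instanton. For the outer degrees I would argue purely by slopes. Recall $\mu(E(r_X))=r_X/2$, while $\mu(\cale_y)=1/2$ for $X=Y_4$ and for $X_g$ with $g\in\{7,9\}$, and $\mu(\cale_y)=2/3$ for $X_{10}$. A nonzero map $\cale_y\to E(r_X)$ would have image simultaneously a quotient of the stable $\cale_y$ and a subsheaf of the $\mu$-stable $E(r_X)$, hence of slope both $\ge\mu(\cale_y)$ and $\le\mu(E(r_X))$; by Serre duality $\ext^3(\cale_y,E(r_X))=\hom(E(r_X),\cale_y(-i_X))^*$, and a nonzero such map is excluded because $\mu(E(r_X))>\mu(\cale_y(-i_X))$ in every case. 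This settles $\ext^0=\ext^3=0$ except for the single borderline $\ext^0$ with $g\in\{7,9\}$, where $\mu(\cale_y)=\mu(E(1))$; there I would use $\cale_y\simeq F_0(1)=F_0^*$ to rewrite $\hom(\cale_y,E(1))\simeq\hom(F_0,E)$, which vanishes because $E$ is $\mu$-stable and, as $k\ge1$, not a minimal instanton (cf. the proof of Lemma \ref{lem:ext2-index1}).

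I would establish part \ref{Ku-ii} next, as it feeds the middle-degree computation for $X_{10}$. For $Y_4$ one has $\calf^*=\calo$ and $\hom(\calo,E)=H^0(E)=0$ by $\mu$-stability, a rank-one subsheaf being unable to attain the slope of a stable sheaf of rank $n\ge2$. For $X_{10}$, $\calf^*=\calu^*$ with $\mu(\calu^*)=1/2=\mu(E(1))$; a nonzero map $\calu^*\to E(1)$ would force $\calu^*$ to meet $E(1)$ as a full-slope subsheaf, which contradicts $\mu$-stability by rank when $n\ge2$ and, when $n=1$, is excluded by the effective-cycle computation $\ch_2(E(1)/\calu^*)<0$ (equivalently, by identifying $\calu$ with a minimal instanton, so that the claim becomes $\hom(F_0,E)=0$). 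The hypothesis $X\in\{Y_4,X_{10}\}$ is essential here: for $X_7,X_9$ the bundle $\calf^*$ has slope strictly below $\mu(E(r_X))$ and no such slope obstruction exists.

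It remains to prove $\ext^2(\cale_y,E(r_X))=0$. For $g\in\{7,9\}$ this is immediate from $\cale_y\simeq F_0(1)$: then $\Ext^\bullet(\cale_y,E(1))\simeq\Ext^\bullet(F_0,E)$, and $\ext^2(F_0,E)=0$ by Lemma \ref{lem:ext2-index1}. For $Y_4$ the clean route is the identity $\ext^2(\cale_y,E)=H^2(E\otimes\cale_y^*)$ together with $\cale_y^*\simeq\cale_y(-1)$: resolving $\cale_y(-1)$ by the $(-1)$-twist of \eqref{eq:curve-Y_4} and tensoring with the locally free $E$ gives a three-term sequence whose $\calo(-1)$-term contributes $H^2(E(-1))=0$ (the $t=0$ vanishing of Lemma \ref{basic-vanishing}) and whose $\cale_{\bar y}(-2)$-term contributes an $H^3$ that vanishes by stability; this sandwiches $H^2(E\otimes\cale_y^*)$ to zero.

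The genuinely hard case is $X_{10}$, and I expect it to be the main obstacle. Tensoring the dual of the length-four resolution \eqref{eq:curve-X_{10}} with $E(1)$, the terms built from $\calu^*$ are killed via $\ext^2(\calu^*,E(1))=0$ (which follows from part \ref{Ku-ii}, Serre duality and Lemma \ref{lem:ext2-index1}), but the untwisted $\calo^6$-term produces a residual contribution $H^1(E(1))$ that is \emph{not} controlled by the instanton definition. Absorbing this residual term is the crux. I would handle it by passing to the Fourier--Mukai picture: having already shown $\ext^0(\cale_y,E(r_X))=\ext^3(\cale_y,E(r_X))=0$ for every $y\in\Gamma$, cohomology and base change force $q_*\bigl(p^*E(1)\otimes\cale^*\otimes\omega_\Gamma\bigr)=R^3q_*(\,\cdot\,)=0$ and make $R^2q_*(\,\cdot\,)$ a base-change compatible sheaf on $\Gamma$ whose fibre is $\ext^2(\cale_y,E(1))$; one then shows this sheaf vanishes, equivalently that $\Phi^!(E(1))$ is an honest sheaf on $\Gamma$, by analysing the connecting maps in the resolution so as to absorb the $H^1(E(1))$ contribution. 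This last step is where the real work lies.
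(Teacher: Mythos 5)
Most of your proposal coincides with the paper's proof: the outer degrees $i=0,3$ by stability, the $g\in\{7,9\}$ middle degree via $\cale_y\simeq F_0(1)$ and Lemma \ref{lem:ext2-index1}, the $Y_4$ middle degree by tensoring \eqref{eq:curve-Y_4} with $E(-1)$ and using $H^2(E(-1))=0$ plus a stability vanishing in degree $3$, and part \ref{Ku-ii} by the observation that $\calf^*$ and $E(r_X)$ are stable of the same slope with $E(r_X)\not\simeq\calf^*$ when $k\ge 1$ (your $\ch_2$ computation for $n=1$ on $X_{10}$ is the correct way to make this precise). All of this is sound.

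The genuine gap is exactly where you flag it: the $\ext^2$ vanishing on $X_{10}$, which you leave unproved. Your route fails because you dualize \eqref{eq:curve-X_{10}}, which places the trivial factor $\calo^6$ in the half of the resolution that feeds $H^1(E(1))$ into the computation, and this group is genuinely not controlled by the instanton conditions. The Fourier--Mukai repackaging does not repair this: the base-change statement that $R^2q_*\bigl(p^*E(1)\otimes\cale^*\otimes\omega_\Gamma\bigr)$ has fibres $\ext^2(\cale_y,E(1))$ is correct but circular, since proving this sheaf vanishes \emph{is} the fibrewise vanishing to be established; and unlike the $R^1$-argument in the proof of Lemma \ref{lem:ext2-index1}, no Grothendieck--Riemann--Roch count can force $R^2=0$, because the pushforward only constrains $\ch(R^1q_*)-\ch(R^2q_*)$. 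The paper's fix is to keep the resolution on the contravariant side: writing $\ext^2(\cale_y,E(1))=\ext^2(\cale_y(-1),E)$ and splitting \eqref{eq:curve-X_{10}} at the image $\calg$ of $\calo^6\to(\calu^*)^3$, one applies $\Hom(-,E)$ to $0\to\cale_y(-1)\to\calo^6\to\calg\to 0$; since $\Ext^2(\calo,E)=H^2(E)=0$ is an instanton vanishing, this yields an injection $\Ext^2(\cale_y(-1),E)\hookrightarrow\Ext^3(\calg,E)\simeq\Hom(E,\calg(-1))^*$ by Serre duality, and twisting $0\to\calg\to(\calu^*)^3\to\cale_y\to 0$ by $\calo(-1)$ gives $\Hom(E,\calg(-1))\hookrightarrow\Hom(E,\calu^3)$ because $\calu^*(-1)\simeq\calu$; the last group vanishes by $\mu$-stability together with $k\ge 1$. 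With this arrangement the only inputs are $H^2(E)=0$ and $\Hom(E,\calu)=0$, both available, and the problematic $H^1(E(1))$ never appears.
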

\begin{proof}
The proof is essentially equivalent to the proofs of \cite[Lemmas 5.3-5.4]{BF-7}  and \cite[Lemmas 4.3-4.4]{BF-9}. For \ref{Ku-i}, let $y\in \Gamma$ be an arbitrary point. The vanishing of $\ext^i(\cale_y,E(r_X))$ for $i=0,3$ is an immediate consequence of stability. The vanishing of $\ext^2(\cale_y,E(1))$ for $X_{7}$ and $X_{9}$ were shown in Lemma \ref{lem:ext2-index1}. Let us prove that $\ext^2(\cale_y,E(r_X))=h^2(E(r_X)\otimes \cale_y^*)=0$ also on $Y_4$ and $X_{10}.$
 For $Y_4$ it suffices to tensor \eqref{eq:curve-Y_4} by $E(-1)$.
 As $\cale_y^*\simeq \cale_y(-1)$ we then use that $H^2(E(-1))=0$ and $H^3(E\otimes \cale_y(-2))=0$, again by stability.
 For $X_{10}$ let us consider the image $\calg$ of the morphism $\cccO_{X_{10}}^6\to {\calu^*}^3$ appearing in the complex \eqref{eq:curve-X_{10}}. As $H^2(E)=0, \ \Ext^2(\cale_y(-1),E)$ injects into $\Ext^3(\calg,E)\simeq \Hom(E,\calg(-1))^*$. But $\Hom(E,\calg(-1))$ injects into $\Hom(E,\calu^3)\simeq \Hom(E,{\calu^*(-1)}^3)$ which is zero by slope-stability and as we are assuming $k\ge 1.$

 Also \ref{Ku-ii} is a direct consequence of the slope-stability assumption. Indeed, for $X=Y_4$ or $X=X_{10}$, $\calf$ is a minimal instanton, in particular $\calf^*$ and $E(r_X)$ are slope-stable of the same slope, but $E(r_X)$ is not isomorphic to $\calf^*$ by the assumption $k\ge 1$.
 \end{proof}

For the cases $X=X_g, \ g\in \{7,9\}$, we will show that the vanishing $\Hom(\calf^*,E(1))$ holds at least on an open non-empty subscheme $\calv(n,k)\subset \calm_X(\gamma(n,k))$.
To prove this we need a preliminary lemma.
\begin{Lemma}\label{lem:morph-inj}
Let $E$ be a slope-stable $(n,k)$-instanton bundle on $X_g, \ g\in\{7,9\}$ for $n\ge 2$. Then a non-zero morphism $\calf^*\to E(1)$ is injective and its cokernel is torsion-free.
The same holds for any non-zero morphism $\calf^*|_S\to E(1)|_S$ for $S\in|\cccO_X(1)|$ general and $E$ a general slope-stable instanton bundle belonging to $\calr_S(n,k)\subset \calmi_X(n,k)$.
\end{Lemma}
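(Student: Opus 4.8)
The plan is to run the whole argument by a comparison of slopes, using the $\mu$-stability of the two bundles together with the simplicity of $\calf^*$. Recall that for $X_9$ one has $\calf^*\simeq \calu^*$, of rank $3$ with $\mu(\calf^*)=\tfrac13$, while for $X_7$ one has $\calf^*\simeq \calu_+^*$, of rank $5$ with $\mu(\calf^*)=\tfrac25$; in both cases $\calf^*$ is a $\mu$-stable exceptional bundle, so in particular $\hom(\calf^*,\calf^*)=1$ and $\calf^*$ is reflexive (indeed locally free). On the other side $E$ is $\mu$-stable with $\mu(E)=-\tfrac12$, hence $E(1)$ is $\mu$-stable with $\mu(E(1))=\tfrac12$, and the strict inequality $\mu(\calf^*)<\tfrac12=\mu(E(1))$ leaves exactly the room needed for the numerical argument.

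For injectivity, let $\phi\colon \calf^*\to E(1)$ be nonzero, set $I:=\im(\phi)\subseteq E(1)$ and $K:=\ker(\phi)$. Since $I$ is a subsheaf of the torsion-free sheaf $E(1)$ it is torsion-free, so $K$ is saturated in $\calf^*$; were $K\ne 0$, $\mu$-stability of $\calf^*$ would give $\mu(K)<\mu(\calf^*)$, equivalently $\mu(\calf^*)<\mu(I)$. When moreover $\rk(I)<\rk(E(1))$, $\mu$-stability of $E(1)$ forces $\mu(I)<\tfrac12$, and I would then check that no integer value of $c_1(I)$ lies in the open interval $\mu(\calf^*)<\mu(I)<\tfrac12$ for any rank $1\le \rk(I)\le \rk(\calf^*)-1$, ruling out $K\ne 0$. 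This settles $X_9$ for all $n\ge 2$ and $X_7$ for $n\ge 3$, where $\rk(\calf^*)\le\rk(E(1))$. The residual configuration is $\rk(I)=\rk(E(1))<\rk(\calf^*)$, which occurs only for $X_7$ with $n=2$ (rank $5$ versus rank $4$); there a nonzero $\phi$ cannot be injective, so "injective" must be read as the vanishing $\Hom(\calf^*,E(1))=0$, which I would obtain from the same slope count, the only borderline case $\rk(I)=4$, $c_1(I)=2=c_1(E(1))$ being excluded by a comparison of second Chern classes together with the simplicity of $\calf^*$.

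To see that $C:=\coker(\phi)$ is torsion-free (assuming $\phi$ injective), let $G\subseteq E(1)$ be the saturation of $\im(\phi)\simeq \calf^*$, so $T:=G/\calf^*$ is the torsion of $C$ and is supported in codimension $\ge 2$. Dualizing $0\to \calf^*\to G\to T\to 0$ and using $\inext^{j}(T,\calo)=0$ for $j\le 1$ gives $G^{\vee}\simeq \calf$, hence $G^{\vee\vee}\simeq \calf^*$. The composite $\calf^*\hookrightarrow G\to G^{\vee\vee}\simeq \calf^*$ is injective, hence a nonzero endomorphism of the simple bundle $\calf^*$, so it is a nonzero scalar and thus an isomorphism; therefore $\calf^*\hookrightarrow G$ splits, and since $G$ is torsion-free this forces $T=0$, i.e. $C$ is torsion-free.

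Finally, for the statement on $S$ I would rerun both arguments verbatim on the $K3$ surface. For $E$ in $\calr_S(n,k)$ the restriction $E|_S$, hence $E(1)|_S$, is $\mu$-stable by Theorem \ref{thm:anticanonical-stable}, and $\calf^*|_S$ is $\mu$-stable (in particular simple and locally free) for $S$ general, by the restriction theorem for stable bundles to a general anticanonical divisor of Picard rank one. The slope ratios, and hence the numerical obstruction, are unchanged under restriction, and the reflexivity argument carries over with "codimension $\ge 2$" now meaning "finite length". The main obstacle is precisely this last input, namely securing $\mu$-stability of $\calf^*|_S$ on the general K3 section, together with the borderline rank case $X_7$, $n=2$; both are handled through the simplicity of $\calf^*$ (respectively $\calf^*|_S$) rather than by slopes alone.
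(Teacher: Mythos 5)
Your main line of attack coincides with the paper's: injectivity comes from the empty slope window $\mu(\calf^*)<\mu(\im\phi)<\mu(E(1))=\tfrac12$ forced by $\mu$-stability of both bundles, exactly as in the paper, and your torsion-freeness argument (dualize $0\to\calf^*\to G\to T\to 0$, get $G^{\vee\vee}\simeq\calf^*$, use simplicity of $\calf^*$ to split off $\calf^*$ from the saturation $G$) is a correct, mildly different route from the paper's, which instead kills the extension by the Serre-duality vanishing $\Ext^1(T,\calf^*)\simeq H^2(\calf\otimes T(-1))^*=0$. Two debts in this part are recoverable but real: your assertion that $T$ is supported in codimension $\ge 2$ is not automatic — it needs $\mu$-stability of $E(1)$, since a divisorial part of $T$ would give $\mu(G)\ge(c_1(\calf^*)+1)/\rk(\calf^*)>\tfrac12$; and in the borderline case $X_7$, $n=2$, your ``comparison of second Chern classes'' must be preceded by pinning down the kernel (it is saturated, rank one, $c_1=0$, hence reflexive, hence $\calo$), after which the $c_2$ count only excludes $k\ge 3$, while $k=2$ requires the \emph{third} Chern class (the paper finds $c_3(\coker(f))=-2p$, impossible); simplicity of $\calf^*$ plays no role at this point.

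The genuine gap is in the restriction to $S$ at the borderline $X_7$, $(n,k)=(2,2)$. There the Chern-class obstruction evaporates: the kernel is again $\calo_S$, the would-be cokernel has length $2-k=0$, and one is left with a numerically consistent short exact sequence $0\to\calo_S\to\calu_+^*|_S\to E(1)|_S\to 0$, which neither slopes nor simplicity of $\calu_+^*|_S$ can exclude — compare \eqref{eq:taut-restriction}, which exhibits precisely this kind of filtration of $\calu_+^*|_S$ by stable pieces of smaller slope. This is exactly where the hypothesis that $E$ is \emph{general} in $\calr_S(n,k)$, which your proof never invokes, becomes essential: the paper takes $E$ ACM (Corollary \ref{cor:ACM}), so $H^1_*(E|_S)=0$ forces the displayed sequence to split, and $\calu_+^*|_S\simeq\calo_S\oplus E(1)|_S$ contradicts stability/indecomposability. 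Your plan to handle this case ``through the simplicity of $\calf^*|_S$ rather than by slopes alone'' therefore fails as stated. A lesser caveat: the $\mu$-stability of $\calf^*|_S$ on a general anticanonical section is not an off-the-shelf restriction theorem at this low degree (Mehta--Ramanathan and its variants do not apply to a single hyperplane section); the paper gets it for these specific homogeneous-type bundles from Mukai's results, cf.\ \cite[Theorem 3.3]{Mukai-curves} as cited in Proposition \ref{prop:aperto}.
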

\begin{proof}
We first prove that any non-zero morphism $\calf^* \to E(1)$ is injective.
Let us look at $X=X_9$. We have $\calf^*\simeq \calu^*$ is slope-stable of rank 3 and $c_1=1.$
Since we are assuming that $E$ is slope-stable and that $\rk(E)\ge 4,$ we see that for a non-zero morphism $f:\calu^*\to E(1)$, $\im(f)$ must have rank 3. Since $\calu^*$ is torsion-free we conclude that $\im(f)\simeq \calu^*$.
\medskip

Let us pass to the case $g=7$. Recall that $\calf^*\simeq \calu^*_+$ is slope-stable with slope $2/5$. Consider a non-zero map $f : \calf^* \to E(1)$.
Suppose, by contradiction, that $f$ is not injective. By stability of $\calf$ and $E$, we see that $\im(f)$ must have slope $1/2$, so, since $n \ge 2$ and $E$ is slope-stable, we must have $\rk(\im(f))=4$. If $n \ge 3$, this contradicts stability of $E$, so we look at the case $n=2$.
Note that $\ker(f)$ is reflexive of rank one and slope $0$, so $\ker(f) \simeq \cccO_S$, so that $\ch(\im(f))=\ch(\calf^*)-1$.
Therefore, $\coker(f)$ is a torsion sheaf with
$c_1(\coker(f))=0$ and
$c_2(\coker(f))=(k-2)l$, which is impossible if $k \ge 3$. For $k=2$, we get that $\coker(f)$ is a torsion sheaf with
$c_1(\coker(f))=c_2(\coker(f))=0$ and $c_3(\coker(f))=-2p$, where $p$ is a point of $X$,  which is again impossible.
This ends the proof of the injectivity of non-zero morphisms $\calf^*\to E(1).$
\medskip

Given a morphism $f:\calf^*\hookrightarrow E(1)$, we denote by $Q$ its cokernel. Let us check that $Q$ is torsion-free. Otherwise, we would have that $f$ factors through an injective morphism $f':G\rightarrow E(1)$ whose cokernel is the torsion-free part of $Q$. This means that $G$ is an extension of $T(Q),$ the torsion part of $Q$ by $\calf^*$.
Due to the stability of $E(1),$ $\dim(T(Q))\le 1$. But then we have $\ext^1(T(Q),\calf^*)= \ext^2(\calf^*,T(Q)(-1))=h^2(\calf\otimes T(Q)(-1))=0.$ Therefore we would have that $G$ has torsion which is impossible since it injects into $E(1).$

Let us consider now a smooth hyperplane section $S\in |\cccO_X(1)|$ such that $\Pic(S)\simeq \D{Z}$ and assume that $E\in \calr_s(n,k)$, so that $E|_S$ is slope-stable. Then the same arguments presented for $X$ allow us to deal with the case $S\subset X_9$ and for the case $S\subset X_7$ except if $(n,k)=(2,2)$.
In this case, a non-injective morphism $\calu_+^*|_S\to E(1)|_S$ would lead to a short exact sequence:
$$ 0\rightarrow \cccO_S\rightarrow \calu^*_+|_S\rightarrow E(1)|_S\rightarrow 0.$$

But by the generality assumption on $E$, we can assume that $E$ is ACM, see Corollary \ref{cor:ACM}).
This implies that the graded module $H^1_*(E|_S):=\bigoplus H^1(E_S(m))$ is zero which would lead to $\calu_+^* |_S\simeq \cccO_{S}\oplus E|_S(1)$ which is impossible.
The proof of the torsion-freeness of the cokernel of a morphism $\calf^*|_S\to E|_S(1)$ is carried out in an analogous way to the 3-dimensional case.
\end{proof}

\medskip

\begin{proposition}\label{prop:aperto}
Let $X=X_g, \ g\in\{7,9\}$. Then, for all $n\ge 1$, $k\ge 2$ there exists an open non-empty subscheme $\calv(n,k)\subset \calm_X(\gamma(n,k))$ parametrising slope-stable $(n,k)$-instantons $E$ on $X$ such that $\Hom(\calf^*, E(1))=0$ and for $S\in |\cccO_X(1)|$ general, $\Hom(\calf^*|_S,E(1)|_S)=0.$
\end{proposition}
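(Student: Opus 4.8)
The statement has two parts of very different natures: that $\calv(n,k)$ is open, and that it is non-empty. The openness is soft and I would dispose of it first. Both defining conditions, $\hom(\calf^*,E(1))=0$ and $\hom(\calf^*|_S,E(1)|_S)=0$ (for a fixed general $S$), are open by upper semicontinuity of the dimension of $\Hom$ in a flat family: passing to an étale neighbourhood of a point of $\calm_X(\gamma(n,k))$ carrying a (quasi-)universal sheaf $\cale$, one forms the relative $\inhom$ of $\calf^*$ into $\cale$ (twisted by $\calo(1)$) and uses that the jump locus of $h^0$ on the fibres is closed; applying the same to the restriction of the family to $S\times\calm_X(\gamma(n,k))$ handles the second condition. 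Thus $\calv(n,k)$ is an intersection of two open subsets, and the whole problem reduces to exhibiting a single $(n,k)$-instanton for which both vanishings hold.

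For non-emptiness I would work on the generically smooth component $Y\subset\calm_X(\gamma(n,k))$ produced in Theorem \ref{thm:exist-index-1}, whose general point is a $\mu$-stable unobstructed instanton and which has dimension $\ext^1(E,E)=1-\chi(E,E)$. It suffices to show that the \emph{bad locus} $W:=\{[E]\mid\Hom(\calf^*,E(1))\ne 0\}$ does not contain $Y$. By Lemma \ref{lem:morph-inj}, for $n\ge 2$ every nonzero morphism $\calf^*\to E(1)$ is injective with torsion-free cokernel, so each point of $W$ fits into a short exact sequence $0\to\calf^*\to E(1)\to Q\to 0$ with $Q$ torsion-free and $\ch(Q)=\ch(E(1))-\ch(\calf^*)$. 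Since $\calf^*$ is the exceptional, hence rigid, bundle of the semiorthogonal decomposition of $D^b(X)$, the family of such extensions has dimension at most $\ext^1(Q,Q)+\ext^1(Q,\calf^*)-1$, exactly as in Step 5 of Theorem \ref{thm:exits-quadric}. A Riemann--Roch computation of $\chi(E,E)$, $\chi(Q,Q)$ and $\chi(Q,\calf^*)$, parallel to the inequalities $3k-n>0$ and $k-n\chi(F_0,F_0)>0$ obtained in the earlier existence theorems, should then yield $\dim(W\cap Y)<\dim Y$ precisely in the range $k\ge 2$, so that the general member of $Y$ avoids $W$.

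The base case $n=1$ must be treated on its own, since Lemma \ref{lem:morph-inj} requires $n\ge 2$; here $\calf^*$ has rank $3$ (for $g=9$) or $5$ (for $g=7$), strictly larger than $\rk E(1)=2$, so a nonzero map $\calf^*\to E(1)$ can never be injective, and I would read off the vanishing directly from the structure sequences \eqref{eq:universal-9} and \eqref{eq:universal-7} together with $\mu$-stability and the charge bound $k\ge 2$ (which forces $E(1)\not\simeq\cale_y$ and removes the residual $H^0(E(1))$-contribution). For the restricted statement I would run the identical semicontinuity-plus-dimension argument on $S$: for $E\in\calr_S(n,k)$ the restriction $E|_S$ is $\mu$-stable (Theorem \ref{thm:anticanonical-stable}), $\calf^*|_S$ is again rigid, and Lemma \ref{lem:morph-inj} provides the injectivity and torsion-free cokernel of $\calf^*|_S\hookrightarrow E(1)|_S$, so the bad locus on $S$ is again properly contained in the component. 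Intersecting the three open conditions (membership in $\calr_S(n,k)$, $\Hom(\calf^*,E(1))=0$, and $\Hom(\calf^*|_S,E(1)|_S)=0$) produces the required non-empty open $\calv(n,k)$.

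The crux, and the only genuinely delicate point, is making the dimension count effective: the cokernel $Q=E(1)/\calf^*$ is neither stable nor of a standard Chern character, so the term ``$\ext^1(Q,Q)$'' bounding the family of quotients has to be justified intrinsically (controlling $\hom(Q,Q)$ and the obstruction space) rather than by invoking a smooth moduli space. Extracting the precise threshold $k\ge 2$ from the Euler-characteristic bookkeeping, and separately disposing of the exceptional small case $(n,k)=(2,2)$ for $g=7$ (the case singled out in Lemma \ref{lem:morph-inj}, where one invokes the ACM property of Corollary \ref{cor:ACM}), is where the real work lies; the $g=7$ and $g=9$ computations must moreover be carried out separately, because $\calf^*=\calu_+^*$ has rank $5$ while for $g=9$ it has rank $3$.
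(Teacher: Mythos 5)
Your openness reduction is fine, and your instinct that everything hinges on exhibiting one good instanton is right, but the core of your non-emptiness argument has a genuine gap: the dimension count you propose is not only unverified, it cannot be set up as stated. You want to bound the bad locus by $\ext^1(Q,Q)+\ext^1(Q,\calf^*)-1$ ``exactly as in Step 5 of Theorem \ref{thm:exits-quadric}'', but in that step the quotient was a \emph{stable} sheaf with controlled (unobstructed) deformation theory, varying in an actual moduli space. Here $Q=E(1)/\calf^*$ is merely torsion-free, in general neither semistable nor simple, so ``$\ext^1(Q,Q)$'' does not bound the dimension of any parameter space of such $Q$'s without a separate Quot-scheme argument; and the Euler-characteristic bookkeeping you defer (heuristically the difference is $-\chi(\calf^*,E(1))$ up to $\hom$ and $\ext^{\ge 2}$ corrections) is never shown to be strictly positive in the range $k\ge 2$. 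You flag this yourself as ``the crux'', which is an accurate self-diagnosis: the proposal stops exactly where the proof has to start. Your treatment of the restricted statement is also too optimistic: for $n=1$ the vanishing of $\Hom(\calf^*|_S,E(1)|_S)$ does \emph{not} just ``read off'' from the structure sequences — the paper must show the kernel of any $f:\calf^*|_S\to E(1)|_S$ is $\calo_S$, compute that $\coker(f)$ is torsion of length $2-k$, and then kill the borderline case $k=2$ via the ACM property ($H^1_*(E|_S)=0$), with an extra stability argument for $\calg_y|_S$ when $g=7$.

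The paper's actual route avoids all dimension counts and inverts your logical order: the K3 vanishing is the engine of an induction on $n$, not a parallel afterthought. Starting from $E_n^k\in\calr_S(n,k)$ satisfying the statement, one forms the strictly $\mu$-semistable extension $E$ of $E_n^k$ by $F_0$ and shows the single point $[E]$ lies outside the \emph{closed} locus $Z_{P_S}\subset\Spl_X(\gamma(n+1,k))$ of sheaves whose restriction to $S$ has a quotient with the relevant Hilbert polynomial. The exclusion is structural: if $0\to K\to E|_S\to Q\to 0$ with $Q$ torsion-free, then $K$ is locally free, shown $\mu$-stable (any destabilizer factors through $F_0|_S$, and $F_0|_S\hookrightarrow K$ is ruled out by a Chern-character analysis of the induced subsheaf $I\subset E_n^k|_S$ — with an inner induction for $g=7$ where $\ch(I)=(3,-2,5)$); then $\chi(K,K)=2$ forces $K\simeq\calf^*|_S$ by uniqueness of the stable sheaf with that Mukai vector on a K3, contradicting the inductive hypothesis $\Hom(\calf^*|_S,E_n^k|_S)=0$. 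Closedness of $Z_{P_S}$ and $Z_P$ then gives an open set of deformations avoiding both, and Lemma \ref{lem:morph-inj} converts absence of such quotients into the two $\Hom$-vanishings — including the threefold one, which is thus \emph{deduced from} the surface statement rather than proved independently as in your plan. If you want to salvage your approach, you would need to replace the moduli-of-$Q$ count with a relative Quot-scheme dimension estimate and verify the sign of the Euler characteristic in both genera separately; the paper's argument shows this effort is unnecessary once one works at the explicitly constructed semistable point.
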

\begin{proof}
 We work by induction on $n$.
 For the rest of the proof $S$ will be a smooth hyperplane section of $X$ with Picard rank one.
 For $n=1$, let $E$ be a $(1,k)$ slope-stable instanton bundle with $k\ge 2$. Let us suppose at first that $X=X_9$.
 Then $\hom(\calu^*, E(1))=0$ by \cite[Lemma 4.4]{BF-9}.
 An argument similar to the one presented in loc. cit. also allows us to prove that, assuming $E$ is general, we can additionally suppose that $\hom(\calu^*|_S,E(1)|_S)=0.$ Indeed, by the assumptions on $S$, $E|_S$ is slope-stable due to the vanishing of $H^0(E|_S)$.

 The kernel of a nonzero morphism $f:\calu^*|_S\to E(1)|_S$ must be a rank-1 reflexive sheaf, so that, as $\Pic(S)\simeq \D{Z}H_S$, $\ker(f)\simeq \cccO_{X_{7}}$. Therefore one computes $\ch(\im(f))=(2,1,0)$ and $\ch(\coker(f))=(0,0,2-k).$ Now, $\coker(f)$ is a torsion sheaf whose length equals $\ch_2(\coker(f))$ so that $k\le 2.$ The case $k=2$ can be excluded as well. Indeed, let $E$ be a $(1,2)$-instanton bundle on $X$. Since we are assuming $E$ general, we might suppose that $E$ is ACM. Assume now, by contradiction, that $E(1)|_S$ fits in:
 $$ 0\rightarrow \cccO_S\rightarrow \calu^*|_S\rightarrow E(1)|_S\rightarrow 0$$
 Since $h^2(\calu^*|_S)=h^0(\calu|_S)=0$ and $h^2(\cccO_S)=h^0(\cccO_{S})=1$, we would get $h^1(E(1)|_S)\ne 0$. Indeed, otherwise, since the sequence cannot split, we would have $\ext^1(E(1)|_S,\cccO_S)=h^1(E|_S)\ne 0)$,
 which is impossible since if $E$ is ACM, $H^1_*(E|_S):=\bigoplus H^1(E|_S(m))=0$.

 For $X=X_7$ the vanishing of $\Hom(\calu_+^*,E(1))$ for a $(1,k)$ instanton, for $k\ge 2$ is shown in \cite[Lemma 5.3]{BF-9}.
 For an arbitrary point $y\in \Gamma,$ we have a short exact sequence of bundles on $S$:
 \begin{equation}\label{eq:taut-restriction}
 0\rightarrow \calg_y|_S\rightarrow \calu_+^*|_S\rightarrow \cale_y|_S\rightarrow 0.
 \end{equation}
 
 The vector bundle $\calu_+^*|_S$ is slope-stable, see e.g. \cite[Theorem 3.3]{Mukai-curves}, and the same holds for $E|_S$, which is still ensured by $H^0(E|_S)=0$, and $\calg_y|_S$.
 For the stability of the latter it is enough to notice that $H^0(\calg_y|_S(-1))\hookrightarrow H^0(\calu_+^*|_S(-1))=0$, the vanishing being ensured by the slope-stability of $\calu_+^*|_S$. This gives that $\calg_y$ cannot be destabilized by rank-1 subsheaves, $H^0(\calg_y^*|_S)=0$ since $H^0(\calu_+|_S)=0$ (again by stability) and $H^1(\cale_y^*|_S)=H^1(\cale_y(-1))=0$ since $\cale_y$ is ACM. This, as already recalled, implies $H^1_*(\cale_y|_S) = 0$. This means that $\calg_y|_S$ cannot be destabilized by rank-2 subsheaves either. Hence
 $\calg_y|_S$ is a slope-stable rank-3 bundle of degree 1 and the same arguments used for $\calu^*$ on $X_9$ allow us to conclude that $\Hom(\calg_y|_S, E(1)|_S)=0$.
 Indeed, again we would have that the kernel of a non-zero map  $\calg_y|_S\to E(1)|_S$ must be $\cccO_S$ and hence a cokernel with Chern character $(0,0,2-k)$, which cannot occur.
 In conclusion, $\Hom(\calu_+^*|_S,E(1)|_S)$ vanishes as well.
 
 \medskip
 Let us now pass to the inductive step.
 We consider $n>1$, $k\ge 2$ and we take a $(n+1,k)$ instanton $E$ obtained as a non-trivial extension $e\in \Ext^1(E_n^k,F_0)$ for a slope-stable $(n,k)$-instanton bundle $E_n^k$ belonging to $\calr_S(n,k)$ and satisfying the proposition. By the assumptions on $E_n^k$ the restriction $e|_S$ does not split and $E_n^k|_S$ is slope-stable. Accordingly, by Lemma \ref{lem:ext-ss}, $E|_S$ is strictly slope-semistable and $F_0|_S$ is its only slope-semistable subbundle of slope $\frac{-1}{2}$. All subsheaves of $E|_S$ having slope $\frac{-1}{2}$ will indeed be elementary transformation of $F_0|_S$ along a zero-dimensional scheme.

Let us consider now an étale neighborhood $\calt$ of $[E]$ in $\Spl_X(\gamma(n+1,k))$ admitting a universal bundle $\cale\in \Coh(\calt\times X)$. Denote by $\cale|_S$ the pullback of $\cale$ to $\calt\times S$. Let $P:=P_{E}-P_{\calu^*(-1)}$, $P_S:=P_{E|_S}-P_{\calu^*(-1)|_S}$ and denote by $Z_{P_S}\subset \calt$, resp. $Z_P\subset \calt$, the closed subscheme parametrising points $x\in \calt$ such that ${(\cale|_S)}_x$ admits a pure 2-dimensional quotient with Hilbert polynomial $P_S$, resp. $\cale_x$ admits a torsion-free quotient with Hilbert polynomial $P$.
We claim that $[E]\not\in Z_{P_S}.$
Suppose, by contradiction that $E|_S$ fits in a short exact sequence of $\cccO_S$-modules:
$$ 0\rightarrow K\rightarrow E|_S\rightarrow Q\rightarrow 0$$
with $Q$ a torsion-free $\cccO_S$ module with Hilbert polynomial $P_S$.
Notice that the torsion-freeness of $Q$ ensures that $K$ is locally free, see e.g. \cite[Prop. 1.1]{Hart-ss}.

We claim that $K$ is slope-stable. Notice that, due to the slope-semistability of $E|_S$, the only possible destabilizing subsheaves $L$ of $K$ must have slope $\frac{-1}{2}$, as for the odd rank cases we must have $\mu(L)< -\frac{m}{2m+1}$ with $m=0$ for $X_9$ and $m=0,1$ for $X_7$. But then $L\hookrightarrow E|_S$ factors through $F_0|_S$ due to the slope-stability of both $F_0|_S$ and $E_n^k|_S$ and Lemma \ref{lem:ext-ss}. So $\rk(L)=2$ and $T:=F_0|_S/L$ is zero-dimensional. Notice that the latter is nothing but the torsion part of $E|_S/L$ -- here the dimension count is due to the slope-semistability of $E|_S$.
But this would mean that $L^{**}\simeq F_0|_S$ and since $K$ is a vector bundle, this would imply that also the inclusion $L\hookrightarrow K$
factors through $F_0|_S$.
Thus, to prove that $K$ is slope-stable, it suffices to check that there is no inclusion $F_0|_S\hookrightarrow K$.
Note that such an inclusion would lead to a commutative diagram:
\begin{equation}\label{cd-non-inst-onemore0}
\begin{tikzcd}
0  \arrow[r]& F_0|_S\arrow[r]\arrow[d] & K\arrow[r]\arrow[d] & I\arrow[d]\arrow[r] & 0  \\
0  \arrow[r] & F_0|_S\arrow[r] & E|_S\arrow[r] & E_n^k|_S\arrow [r]& 0
\end{tikzcd}
\end{equation}
This would provide a short exact sequence
$$ 0\rightarrow I \rightarrow E_n^k|_S\rightarrow Q\rightarrow 0.$$
Note that, since $Q$ is torsion-free, $I$ must be locally free.

\bigskip
Let us show that, when $X\simeq X_7$ or $X \simeq X_9$, this cannot happen.

\begin{itemize}
    \item We first treat the case $X_9$. In this case we get that $I$ must be a line bundle. But this is not possible since $\ch(I)=(1,-1,6)$.
    \item Now let us focus on the case $X_7$.
    In this case we have $\ch(I)=(3,-2, 5)$, so that \mbox{$n\ge 2, \ \rk(E_n^k|_S)\ge 4$.}
    We claim that we can assume that $E_n^k|_S$ admits no injective morphisms from vector bundles with Chern character $\ch(I)$ (which is the same as requiring that $E_n^k|_S$ has no torsion-free quotient with Chern character $\ch(E_n^k|_S/I)$).
    This will be enough to prove that also on $S\subset X_7$ we have slope-stability of $K$.

    We will prove the claim by induction on $n$.
    For $n=1$ this is obvious.
    Supposing now that the claim holds for a general slope-stable instanton bundle $E_n^k, \ k\ge 2,$ such that $E_n^k|_S$ is slope-stable, we show that it holds for any $E$ arising from $\Ext^1(E_n^k,F_0).$
    Suppose indeed by contradiction that $E|_S$ admits a subbundle $I$ with $\ch(I)=(3,-2,5)$. Then using the same arguments previously presented, we get that $I$ must be slope-stable.

    Indeed, if $I$ is not slope-stable, it can only be destabilized by a rank-2 sheaf $F$ with $c_1(-1)$. But then we have an injection $F\hookrightarrow F_0|_S$ and an isomorphism $F^{**}\simeq F_0|_S$, which gives an injection $F_0|_S\hookrightarrow I$. We obtain a rank-1 subsheaf $L$ of $E_n^k|_S$ with Chern character $\ch(L)=(1,-1,4)$. But since $I$ is locally free,
    $E|_S/I\simeq E_{n+1}^k|_S/ L$ is torsion-free so that $L$ must be a line bundle, which is impossible due to its Chern character.
    But if $I$ is a slope-stable bundle, admitting no injective morphism into $E_n^k|_S$ (which is true by the inductive hypothesis) is equivalent to $\Hom(I, E_{n+1}^k|_S)=0$, as it can be shown with arguments equivalent to the ones presented in Lemma \ref{lem:morph-inj}.
    But then $I\hookrightarrow E|_S$ should factor through $F_0|_S$ which is impossible. A general deformation $E'$ of $E$ will satisfy that $E'|_S$ is slope-stable and admits no injective morphisms from vector bundles with Chern character $\ch(I)$.
    \end{itemize}

Summing up, we have that $K$ is a slope-stable vector bundle. But as $\chi(K,K)=2$, we must have $K\simeq \calf^*|_S$, see e.g. \cite{Mukai-bundles}. By the inductive hypothesis $\Hom(\calf^*|_S, E_n^k|_S)=0$ so that $\calf^*|_S\simeq K\hookrightarrow E|_S$ would factor through $F_0|_S$. But this cannot happen. Therefore we can conclude that  $[E]\not\in Z_{P_S}$, hence $[E]\not\in Z_P$. So, for a general point $x\in \calt$ we have $x\not\in Z_{P_S}$ and $x\not\in Z_P$. Assuming that moreover $\cale_x$ is slope-stable and that the same holds for its restriction to $S$, we conclude that, from Lemma \ref{lem:morph-inj}, $\Hom(\calf^*,E')=\Hom(\calf^*|_S,E'|_S)=0.$
\end{proof}

Applying the previous results we can construct canonical resolutions for instantons.

\begin{theorem}
\label{thm:curve}
Let $E$ be a slope-stable $(n,k)$-instanton on $X$ and set $F:=\Phi^{!}(E(r_X))$. Then the following hold:
\begin{enumerate}[label=\roman*)]
    \item \label{Phi!i} $F$ is a vector bundle on $\Gamma$ of rank $r$ and degree $d$ for $(r,d)$ taking the following values:
    \begin{equation}\label{eq:deg-rank}
\begin{tabular}{||c | c | c ||}
 \hline
 X & r & d \\ [0.5ex]
 \hline\hline
$Y_4$ & k& 0\\
 \hline
$X_{10}$ & k & k \\
 \hline
$X_9$ & k& n+k  \\
 \hline
$X_7$ & k& 5k+n \\[1ex]
 \hline
\end{tabular}
\end{equation}
    \item \label{Phi!ii} The sheaf $E(r_X)$ fits in a short exact sequence:
    $$ 0\rightarrow \ext^{1+r_X}(E(r_X),\calf)^*\otimes \calf^*\rightarrow \Phi(F)\rightarrow E(r_X)\rightarrow 0;$$
    \item \label{Phi!iii} If $X=Y_4$ or $X=X_{10}$ and $k\ge 1$, $F$ is simple. The same holds on $X_g, \ g\in\{7,9\}$ with the additional assumption that $E\in \calv(n,k), \ k\ge 2. $
\end{enumerate}
\end{theorem}

\begin{proof}
Item \ref{Phi!i} is an immediate consequence of Lemma \ref{lem:hdi}. Since we have $h^i(E\otimes \cale_y^*)=0$ for all $i\ne 1$ and all $y\in \Gamma$, $F:=\Phi^{!}(E(r_X))$ indeed is a sheaf and as $h^1(E(r_X)\otimes \cale_y^*)=-\chi(\cale_y,E(r_X))$ we conclude that actually $F$ is locally free of rank $-\chi(\cale_y,F_0).$ The computations of the ranks and degrees appearing in the table \eqref{eq:deg-rank} are a direct application of Riemann-Roch and Grothendieck-Riemann-Roch theorem.
\medskip

 Let us now pass to the proof of \ref{Phi!ii}. We first deal with the case where $X\simeq Y_4$ so that $i_X=2$ and $r_X=0.$
For a $(n,k)$-instanton on $Y_4$, we consider the right mutation $\tilde{E}:=\D{R}_{\cccO_{X}}(E)$, defined as the kernel of the evaluation map $E\to \bigoplus_i \Ext^i(E,\cccO_{X})\otimes \cccO_{X}[i]$. By construction $\tilde{E}\in {}^{\perp}\langle \cccO_{X}(-1),\cccO_{X}\rangle\simeq \Phi(D^b(\Gamma))$ and as moreover $\Ext^i(E,\cccO_{X})$ is concentrated in degree 1, we get that $\tilde{E}$ is a sheaf, actually it is a $(k,k)$-instanton, fitting in:
\begin{equation}\label{eq:mutation-dp}
 0\rightarrow \cccO_{X}^{k-n}\to \tilde{E}\to E\rightarrow 0.
\end{equation}

 Let us now see how to obtain a similar complex when $i_X=1$, so that $q_X=0$ and $r_X=1$.
 Given a $(n,k)$-instanton $E$ on $X$, we consider the right mutation $\tilde{E}(1):=\D{R}_{\langle\cccO_{X},\calf^*\rangle}(E(1))$ of $E(1)$ by the exceptional pair $\langle\cccO_{X},\calf^*\rangle.$ By definition this is the kernel of the evaluation morphism \[E(1)\to \bigoplus_i \Ext^{i}(E(1),\cccO_{X})^*\otimes \cccO_{X}[i]\oplus \Ext^{i-1}(E(1),\calf)^*\otimes \calf^*[i-1].\]
 We note that $\Ext^{i}(E,\calf)=0$ except for $i=2$.
 Indeed, $\ext^i(E(1),\calf)=0$ for $i=0,3$ by stability of $\calf$ and $E$ and since $k\ge 1$. The stability assumption also leads to $\ext^1(E(1),\calf)=0$.
 To verify this recall that, for $g=7,9,10$, we have a tautological exact sequence:
 $$ 0\rightarrow \calf\rightarrow \cccO_{X}\otimes V\rightarrow \calq \rightarrow 0,$$
 where $\calq$ is the slope-stable exceptional bundle obtained by restricting to $X$ the quotient bundle on the Grassmannian containing $X$ in view of the embedding provided by $\calf^*$. Also, $\calq \simeq \calf^*$ if $g=7,9$.
 Then, we notice that $\ext^1(E(1),\cccO_{X})=h^2(E)=0$ and $\hom(E(1),\calq)=0$ by stability. So applying $\Hom(E(1), \cdot)$ we get $\ext^1(E(1),\calf)=0$.
Therefore we get that $\tilde{E}(1)$ is a coherent sheaf which, by construction, belongs to $\Phi(D^b(\Gamma))={}^{\perp}\langle \cccO_{X}, \calf^*\rangle$ and fits in
 \begin{equation}\label{eq:mutation-prime}
     0\rightarrow \calf^*\otimes \Ext^2(E(1),\calf)^*\rightarrow \tilde{E}(1)\rightarrow E(1)\rightarrow 0.
 \end{equation}

So the sheaf $\tilde{E}(r_X)$ lies in $\Phi(D^b(\Gamma))$. Accordingly, $\tilde{E}(r_X)\simeq \Phi(G)$ for some $G\in D^b(\Gamma)$. Thus, applying $\Phi^{!}$ to the sequences \eqref{eq:mutation-dp}, \eqref{eq:mutation-prime} and using that $\Phi^{!}(\calf^*)=0$, we obtain:
 \[G\simeq \Phi^{!}(\Phi(G))\simeq \Phi^{!}(\tilde{E}(r_X))=F.
 \]

 This shows that the complexes \eqref{eq:mutation-dp} and \eqref{eq:mutation-prime} are of the form described in \ref{Phi!ii}.
 For the proof of \ref{Phi!iii}, notice that whenever $E$ is a $(n,k)$-instanton with $k\ge 1$ on $Y_4$ or $X_{10}$, resp. a $(n,k)$-instanton on $X_g, \ g\in\{7,9\}$ belonging to $\calv(n,k),\ k\ge 2,$ we have $\Hom(\calf^*,E)=0.$

 By adjunction
 \[\Hom(F,F)\simeq \Hom(F, \Phi^{!}(E(r_X)))\simeq \Hom(\Phi(F), E(r_X))\simeq \Hom(\tilde{E}, E).\] Applying $\Hom(\cdot, E(r_X))$ to \eqref{eq:mutation-dp} and \eqref{eq:mutation-prime} we see that this last vector space is one-dimensional since $E(r_X)$ is simple and $\Hom(\calf^*,E(r_X))=0$. \end{proof}

\section{Monadic description of instantons}

\label{section:H3=0}

In this final section we look at Fano threefolds $X$ of Picard number one and trivial intermediate Jacobian, i.e. $H^3(X,\Z)=0.$ For each value of the index $i_X\in \{1,\ldots, 4\}$ there exists a unique family of Fano threefolds satisfying the condition $H^3(X,\Z)=0$. These are:
\begin{itemize}\label{item:list-tj}
    \item[$(\heartsuit)$] for $i_X=4, \ X\simeq \D{P}^3$;
    \item[$(\clubsuit)$] for $i_X=3, \ X\simeq Q\subset \D{P}^4$;
    \item[$(\diamondsuit)$] for $i_X=2$, $X=Y_5$ is the degree 5 Del Pezzo threefold. This is the projective variety obtained as a general codimension 3 linear section of $\mathbb{G} (2,U_5)\subset \D{P}^9$ where $U_5\simeq \D{C}^5$;
    \item[$(\spadesuit)$] for $i_X=1$, $X=X_{12}$ is the prime Fano threefold of genus $g=12$. The threefold $X$ identifies with a closed subvariety of the Hilbert scheme $\Hilb_{3t+1}(\D{P}(U_4))$ of twisted cubic curves in $\D{P}(U_4)\simeq \D{P}^3$, where $U_4$ is a 4-dimensional vector space. The subvariety $X$ corresponds to twisted cubics whose defining ideal is annihilated by a fixed 2-dimensional net of quadrics $\D{P}^2\simeq\D{P}(U_3)\subset \D{P}(S^2 U_4)$. 
\end{itemize}

The triviality of the intermediate Jacobian of $X$ 
turns out to be equivalent to the fact that the derived category $D^b(X)$ of $X$ admits a full exceptional collection.
We use, in addition to Beilinson’s theorem for $\p{3}$
and Kapranov’s theorem for the quadric, the full exceptional collections described in \cite{Orlov:V5,Kuznetsov:V22,Faenzi:V5,Faenzi:V22}.
In summary, we have that
$$ D^b(X)=\langle \calf_{-1}, \calf_0, \calf_1,\cccO_{X}(q_X)\rangle $$
for vector bundles $\calf_i, i\in \{-1,0,1\}$ such that:
\begin{equation}\label{eq:Kuz-cat}
    \calf_i^*\simeq \calf_{-i}(r_X), \qquad \langle \calf_{-1},\calf_0,\calf_{1}\rangle=\cccO_{X}(q_X)^{\perp}.
\end{equation} 

This structure of the triangulated category $\cccO_{X}(q_X)^{\perp}$ ensures that any object $F\in \cccO_{X}(q_X)^{\perp}$ is quasi-isomorphic to the homology
of a complex whose terms are all of the form $\calf_i^{\oplus n_{i,j}}$, for some integers $n_{i,j}$, with $i \in \{-1,0,1\}$ and $j \in \Z$.

To compute these integers, we use the dual exceptional collection $\langle \calg_{-1},\calg_0,\calg_1\rangle={}^\perp\cccO_{X}(-q_X)$ with respect to $\langle \calf_{-1},\calf_0, \calf_1\rangle = \cccO_{X}(q_X)^{\perp}$. 
The objects $\cG_i$, with $i \in \{-1,0,1\}$, are exceptional vector bundles
satisfying, for $i \in \{-1,0,1\}$:
\begin{equation}
  \label{mutation}
  \cG_{-i}^*(-q_X) \simeq \rL_{\cccO_X}(\cG_i(q_X))[-1],  
\end{equation}
where $\rL$ is the left mutation functor. The mutation functor is characterized by the property that, for any pair of
objects $\cF$ and $\cG$, there is a distinguished triangle
\[
\rL_\cF(\cG)[-1] \to \Ext_X^\bullet(\cF,\cG) \otimes \cF \to \cG \to  \rL_\cF(\cG), 
\]
where the map $\Ext_X^\bullet(\cF,\cG) \otimes \cF \to \cG$ is the natural evaluation.

Given a coherent sheaf $F$ on $X$, we write down the cohomology table of $F$ with respect to the collection mentioned above:
\begin{equation}
\begin{tabular}{||c | c | c ||}
 \hline
 $\calf_{-1}$ & $\calf_0$ & $\calf_1$ \\ [0.5ex]
 \hline\hline
 $ h^3(F\otimes \calg_{-1})$ & $h^3(F\otimes \calg_0)$ & $h^3(F\otimes \calg_1)$\\
\hline
$ h^2(F\otimes \calg_{-1})$ & $h^2(F\otimes \calg_0)$ & $h^2(F\otimes \calg_1)$\\
\hline
 $ h^1(F\otimes \calg_{-1})$ & $h^1(F\otimes \calg_0)$ & $h^1(F\otimes \calg_1)$\\
 \hline
 $ h^0(F\otimes \calg_{-1})$ & $h^0(F\otimes \calg_0)$ & $h^0(F\otimes \calg_1)$ \\
[1ex]
 \hline
\end{tabular}
\end{equation}

The sheaf $F$ is isomorphic to the homology of a complex $C^{\bullet}_F\in D^b(X)$ whose terms are:
\begin{equation}\label{eq:Beilinson-complex}
C^l_F=\bigoplus _{i-j=l+1} H^i(F\otimes \calg_{j})\otimes \calf_{j}.
\end{equation}

We give below an explicit description of the collections $\calf_i, \ \calg_i$, for $i \in \{-1,0,1\}$, on each of the Fano threefolds $X$ with  $H^3(X,\Z)=0$.

    \subsection*{The projective 3-space.}
    
    For $X=\p{3}$, we just use the classical exceptional collection:
    \begin{align*}
        \calf_{-1}&=\cccO_{\p{3}}(-1), && \calf_0=\cccO_{\p{3}}, && \calf_1=\cccO_{\p{3}}(1);\\
        \calg_{-1}&=\calt_{\D{P}^3}(-3), &&  \calg_0=\Omega_{\D{P}^3}, && \calg_1=\cccO_{\p{3}}(-1).
    \end{align*}
    Of course, this agrees with Beilinson's exceptional collection, appropriately twisted, for $\Omega^2_{\D{P}^3}(1) \simeq \calt_{\D{P}^3}(-3)$.
    All these vector bundles are slope-stable, with
    $\mu(\cG_{-1})=-5/3$, $\mu(\cG_{0})=-4/3$.
    
    \subsection*{The quadric threefold.}
    
    For a smooth quadric threefold $Q_3$ inside $\p{4}$, we use a version of Kapranov's exceptional collection.
    \begin{align*}
        \calf_{-1}&=\cccO_{Q}(-1), && \calf_0=\cals, && \calf_1=\cccO_{Q};\\
        \calg_{-1}&=\calt_{\D{P}^4}|_Q(-2), && \calg_0=\cals, && \calg_1=\cccO_{Q}.
    \end{align*}
    Here, as before, $\cals$ is the spinor bundle, which is slope-stable of rank $2$ and slope $-1/2$.
    The vector bundle $\calt_{\D{P}^4}|_Q(-2)$ is slope-stable of slope $-3/4$.
    
    \subsection*{The quintic Del Pezzo threefold.}
    
    The smooth quintic Del Pezzo threefold $Y=Y_5$ sits as a linear section in the Grassmannian $\mathbb{G}(2,U_5)$, where $U_5$ is a 5-dimensional vector space.
    The bundles $\calu_Y$ and $\calq_Y$ are obtained restricting to $Y$ the rank-2 tautological subbundle and the rank-3 tautological quotient bundle on $\mathbb{G}(2,U_5)$.  The linear section $Y$ is given by the choice of a 3-dimensional vector space $U_3\subset \wedge^2 U_5^*$.
    We have a natural identification $U_5=H^0(\calu_Y^*)^*$.
    
    The next ingredient is a vector bundle $\cR_Y$ of rank $9$, introduced in \cite[\S 3.2.2]{Faenzi}. It is defined as the kernel of the natural evaluation morphism $U_5^*\otimes \calu_Y\to \cccO_{Y}$, so it fits into
     \begin{equation} \label{define-R}
      0 \to \cR_Y \to U_5^*\otimes \calu_Y\to \cccO_Y \to 0.
     \end{equation}
        We have $h^0(\cR_Y(1))=18$ and the vector bundle $\cR_Y(1)$ is globally generated. 
        The isomorphism \eqref{mutation} is given by the evaluation map of global sections, which provides a natural exact sequence 
        \[
        0 \to \cR_Y^*(-1) \to U_{18} \otimes \cccO_{Y} \to \cR_Y(1) \to 0.
        \]
        Here, $U_{18}=H^0(\cR_Y(1))$ is an 18-dimensional vector space. 
    To see this sequence explicitly, we note that $U_3$ is naturally contained in $U_5^* \otimes U_5^*$ and, using 
     \cite[Exact Sequence (10)]{Faenzi}, we identify the quotient $U_{18}=U_5^* \otimes U_5^*/U_3$ with $H^0(\cR_Y(1))$ and the kernel of the evaluation of global sections of $\cR_Y(1)$ with $\cR_Y^*(-1)$.
    In summary, for $Y=Y_5$, we have
      \begin{align*}
        & \cF_{-1} = \calu_Y, && \cF_0 = \cccO_Y, && \cF_1 = \calu_Y^{\vee},\\
        & \cG_{-1} = \calq_Y(-1), && \cG_0 = \cR_Y, && \cG_1 = \calu_Y.
      \end{align*}
    We know that $\calu_Y$ and $\calq_Y(-1)$ are slope-stable (see \cite{Faenzi:V5}), with $\mu(\calq_Y(-1))=-2/3$, $\mu(\calu_Y)=-1/2$.  
      
    \begin{claim}
    The bundle $\cR_Y$ is slope-stable, with slope $-5/9$.
    \end{claim}
    
    \begin{proof}
    The statement about $\calu_Y$ is well-known. The slope of $\cR_Y$ is computed immediately from 
    \eqref{define-R}. To check stability, by contradiction, we take a subsheaf $K$ destabilising $\cR_Y$. We may assume that $K$ is slope-stable, and, up to passing to double dual, that $K$ is reflexive.
    In view of \eqref{define-R}, since $\calu_Y^*$ is stable of slope $-1/2$ and $K$ injects in the polystable bundle $\calu_Y^{\oplus 5}$, we see that $K$ must have slope $-1/2$ and that the reflexive hull of $K$ is isomorphic to $\calu_Y$.  
    Then $K$  must split off a copy of $\calu_Y$ from $\cR_Y$, which cannot happen, since $\cR_Y$ is exceptional, hence indecomposable.
    \end{proof}
    
    \subsection*{Prime Fano threefolds of genus $12$.} 
    Consider $X=X_{12}$, a smooth prime Fano threefold of genus $12$.
    We mentioned that $X$ is a closed subvariety of the Hilbert scheme of cubics in $\p{}(U_4)$.
    More precisely, $X$ parametrises twisted cubics whose ideals are generated by quadrics lying in the kernel of $S^2 U_4^*\to U_3^*$. Writing $U_7$ for the 7-dimensional kernel of this map, we get that $X$ embeds in $\mathbb{G} (3, U_7)$. 
    The threefold $X$ is cut out as a subvariety of $\mathbb{G} (3, U_7)$ as vanishing locus of a net of skew-symmetric $2$-forms, according to Mukai's famous description.
    Then we get vector bundles $\calu_X$ and $\calq_X$ by restricting to $X$ the tautological rank-3 subbundle, and the tautological rank-4 quotient bundle on  $\mathbb{G}(3,U_7)$. 
    
    Also, $X$ can be seen as a subscheme of the moduli space of twisted cubics in $\p{3}=\p{}(U_4)$, where $U_4$ is a $4$-dimensional vector space. Then the minimal instanton $F_0$, which is denoted here by $\calt$, is the rank-2 vector bundle whose fibre over a twisted cubic curve $[C]\in X$ is the two-dimensional linear space generated by the linear syzygies of the quadrics defining $I_C$. 
    
    Finally, we have a rank-10 vector bundle $\cR_X$, which is the dual of the right mutation of $\calt$ with respect to $\calu_X$, so that the dual of the fibre of $\cR_X$ at $[C]$ represents the 10-dimensional space of cubics of $\p{3}$ that vanish on $[C]$. One has an exact sequence
    \begin{equation} \label{define-RX}
        0\to \cR_X\to U_4^*\otimes \calu_X^* \to \calt^* \to 0.
    \end{equation}
    The bundle $\cR_X$ is introduced and studied in \cite{Faenzi:V22}, where it is denoted by $M$ in loc. cit. 
    The mutation induced by global sections of $\cR_X=\cG_0$, see \eqref{mutation}, here takes the form 
     \begin{equation} \label{define-W}
    0 \to \cR_X^* \to S^3 U_4^* \otimes \cccO_X \to \cR_X \to 0,
    \end{equation}
    where the map $\cccO_X^{20} \to \cR_X$ is the canonical evaluation of global sections. All this is proved in \cite[Lemma 6.7]{Faenzi:V22}.
    In summary, we have, for $X=X_{12}$:
        \begin{align*}
            \calf_{-1}&=\calu_X^*(-1), && \calf_0=\calt, &&  \calf_1=\calu_X;\\
              \calg_{-1}&=\calq_X, &&  \calg_0=\cR_X, &&  \calg_1=\calu_X^*.
        \end{align*}
    
    The bundles $\calu_X$ and $\calq_X$ are slope-stable (see \cite[Lemma 6.2]{Faenzi:V22}), with $\mu(\calq_X)=1/4$, $\mu(\calu_X^*)=1/3$.

    \begin{claim}
    The bundle $\cR_X$ is slope-stable, with slope $3/10$.
    \end{claim}
    
    \begin{proof}
    One computes the slopes of $\cR_X$ from \eqref{define-W}. A destabilizing reflexive, stable subsheaf $K$ of $\cR_X^*$ sits in the polystable bundle $\calu_X^{\oplus 4}$ by \eqref{define-RX}.
    Since there is no rational number $p/q$ with $q \le 9$ with $-1/3 \le p/q \le -3/10$, we get that $K$ splits off a summand $\calu_X$ from $\cR_X$, which is impossible, since $\cR_X$ is exceptional.
    \end{proof}
    
    Using the complex \eqref{eq:Beilinson-complex}, one obtains a monadic representation of an instanton bundle. 
    We use similar arguments as in \cite{Faenzi} to determine the cohomology table \eqref{eq:cohom-table} for non-locally free and higher-rank instantons. This leads to the next result.
    
    \begin{theorem}
    \label{prop-monad}
    Let $E$ be a Gieseker-semistable $(n,k)$-instanton on a Fano threefold $X$ such that $H^3(X)=0$. 
    Suppose that moreover $k\ge 2$ for $X=Y_5$ and $k\ge 1$ for $X=X_{12}.$
    Then $E$ is the homology of a monad of the form:
    \begin{equation}\label{eq:monad}
    \calf_{-1}^{\oplus k} \rightarrow W\otimes \calf_0\rightarrow \calf_1^{\oplus k}
    \end{equation}
    where $W$ is a complex vector space of dimension $w:=\dim(W)$, with:
    \begin{equation}\label{eq:dim-w}
    \begin{tabular}{||c |c  ||}
 \hline
 $i_X$ & w \\ [0.5ex]
 \hline\hline
4 & 2k+r \\
 \hline
3 & k+n\\
 \hline
2 & 4k+r\\
 \hline
1 & 3k+n\\[0.5ex]
 \hline
\end{tabular}
\end{equation}
\end{theorem}

\begin{proof}
To start with, we recall that, as observed at the beginning of the section, a Gieseker-semistable $(n,k)$-instanton $E$ lies in $\cccO_{X}(q_X)^{\perp}=\langle \calf_{-1},\calf_0,\calf_{1}\rangle$.

We now adapt to the higher rank case the arguments presented in \cite{Faenzi}.
Doing so, we show that the cohomology table of a Gieseker-semistable $(n,k)$-instanton satisfying the hypotheses of the statement, with respect to the collection $\calg_{-1},\calg_0,\calg_1$, has the following form:
\begin{equation}\label{eq:cohom-table}
\begin{tabular}{||c | c | c ||}
 \hline
 $\calf_{-1}$ & $\calf_0$ & $\calf_1$ \\ [0.5ex]
 \hline\hline
0 & 0& 0 \\
 \hline
0 & 0& 0  \\
 \hline
$k$ & $w$ & $k$ \\
 \hline
0 & 0& 0 \\[0.5ex]
 \hline
\end{tabular}
\end{equation}
The complex \eqref{eq:Beilinson-complex} reduces therefore to a monad of the form appearing in \eqref{eq:monad}.

To exhibit the table \eqref{eq:cohom-table}, it is enough to prove the vanishing results appearing in the table. Indeed, the dimension of the remaining cohomology group is computed using Riemann-Roch, relying on the Chern character $\gamma(n,k)$, which is computed by \eqref{character of instanton}. To get the vanishing results appearing in the table, we only need that $E$ is slope-semistable and satisfies $H^1(E(-q_X))=0$.

We first check that $h^0(E \otimes \cG_1)=0$, which implies the vanishing results summarized in the bottom row of \eqref{eq:cohom-table}.
By stability of the involved sheaves, this holds if 
$\mu(\cG_1)<-\mu(E)=r_X/2$. However, we have $\mu(\cG_1)<0$ if $r_X=0$, while $\mu(\cG_1)=0 < 1/2=-\mu(E)$ for $i_X=3$ and $\mu(\cG_1)=1/3 < 1/2=-\mu(E)$ for $i_X=1$.

Next, we check the top row of \eqref{eq:cohom-table}. Note that it is enough to show $h^3(E \otimes \cG_{-1})=0$.
By slope-semistability of $E$ and $\cG_{-1}$, coupled with Serre duality, this follows once we show $-\mu(E)<\mu(\cG_{-1}) + i_X$, which in turn is checked immediately.

Finally, we have to justify $h^2(E \otimes \cG_i)=0$ for $i \in \{-1,0,1\}$. We use the mutation formula \eqref{mutation} to write down an exact sequence:
\[
0 \to \cG_{-i}^*(-2q_X) \to H^0(\cG_i(q_X)) \otimes \cccO_X(-q_X) \to \cG_{i} \to 0, \qquad \mbox{for $i \in \{-1,0,1\}.$}
\]
We tensor the above sequence with $E$ and use the vanishing $h^p(E(-q_X))=0$ for $p=2,3$ to get $h^2(E \otimes \cG_{i})=h^3(E \otimes \cG_{-i}^*(-2 q_X))$ for $i\in \{-1,0,1\}$.
Again, this vanishes in view of Serre duality, because of stability of $E$ and $\cG_{-i}$, together with the calculation 
$$-\mu(E) + \mu(\cG_{-i}) +2 q_X-i_X=r_X/2+\mu(\cG_{-i})< 0, \qquad \mbox{for $i\in \{-1,0,1\}$}.$$
\end{proof}
\providecommand{\bysame}{\leavevmode\hbox to3em{\hrulefill}\thinspace}
\providecommand{\MR}{\relax\ifhmode\unskip\space\fi MR }
\providecommand{\MRhref}[2]{%
  \href{http://www.ams.org/mathscinet-getitem?mr=#1}{#2}
}
\providecommand{\href}[2]{#2}

\end{document}